\newcommand{\N}{{\mathbb N}}
\newcommand{\R}{{\mathbb R}}
\newcommand{\RP}{\mathbb{RP}}
\newcommand{\Z}{{\mathbb Z}}
\newcommand{\cT}{{\mathcal T}}
\newcommand{\AdS}{\mathbb{A}\rm{d}\mathbb{S}^3}
\newcommand{\HS}{\mathbb{HS}^3_1}
\newcommand{\HSS}{\mathbb{HS}^2}
\newcommand{\bAdS}{\mathbb{A}\rm{d}\mathbb{S}}
\newcommand{\bHS}{\mathbb{HS}}
\newcommand{\bH}{\mathbb{H}}
\newcommand{\PO}{\rm{PO}}
\newcommand{\PSL}{\rm{PSL}}
\newcommand{\Isom}{\rm{Isom}}
\newcommand{\cP}{\mathcal{P}}
\newcommand{\cA}{\mathcal{A}}
\newcommand{\E}{\mathbb{E}}
\newcommand{\cG}{\mathcal{G}}
\newcommand{\HH}{\mathbb{H}}
\newcommand{\cV}{\mathcal{V}}
\newcommand{\rO}{\rm{O}}
\newcommand{\Stab}{\rm{Stab}}
\newcommand{\bigzero}{\mbox{\normalfont\large\bfseries 0}}
\newcommand{\sgn}{\rm{sgn}}
\DeclareMathOperator{\area}{Area}
\DeclareMathOperator{\Graph}{\textsf{Graph}}
\DeclareMathOperator{\polyg}{\textsf{polyg}}
\newtheorem{theorem}{\rm\bf Theorem}[section]
\newtheorem{proposition}[theorem]{\rm\bf Proposition}
\newtheorem{lemma}[theorem]{\rm\bf Lemma}
\newtheorem{corollary}[theorem]{\rm\bf Corollary}
\newtheorem{definition}[theorem]{\rm\bf Definition}
\newtheorem{remark}[theorem]{\rm\bf Remark}
\newtheorem{observation}[theorem]{\rm\bf Observation}
\newtheorem{claim}[theorem]{\rm \bf Claim}
\newtheorem {Claim}{\rm\bf Claim}
\newtheorem*{condition1}{\bf{\textit{Proof for Condition (i)}}}
\newtheorem*{condition2}{\bf{\textit{Proof for Condition (ii)}}}
\newtheorem*{condition3}{\bf{\textit{Proof for Condition (iii)}}}
\newtheorem*{condition4}{\bf{\textit{Proof for Condition (iv)}}}
\newtheorem{Step}{\rm\bf Step}
\newtheorem{Step_metrics}{\rm\bf Step}
\def\interieur#1{\mathord{\mathop{\kern 0pt #1}\limits^\circ}}
\begin{document}

\title[Hyperideal anti-de Sitter polyhedra]{Hyperideal polyhedra in the 3-dimensional anti-de Sitter space}

\author{Qiyu Chen}
\address{Qiyu Chen:
School of Mathematics, South China University of Technology,
510641, Guangzhou, P. R. China.}
\email{chenqy0121@gmail.com}
\thanks{Q. Chen was partially supported by Shanghai Postdoctoral Excellence Program (No. 2018018) and China Postdoctoral
Science Foundation Grant (No. 2019M661326).}

\author{Jean-Marc Schlenker}
\address{Jean-Marc Schlenker:
University of Luxembourg, Mathematics Research Unit,
Maison du nombre, 6 avenue de la Fonte
L-4364 Esch-sur-Alzette, Luxembourg}
\email{jean-marc.schlenker@uni.lu}
\thanks{JMS was partially supported by FNR grants INTER/ANR/15/11211745, OPEN/16/11405402 and O20/14766753. The second author also acknowledge support from U.S. National Science Foundation grants DMS-1107452, 1107263, 1107367 ``RNMS: GEometric structures And Representation varieties'' (the GEAR Network).}

\date{v2, \today}

\begin{abstract}
  We study hyperideal polyhedra in the 3-dimensional anti-de Sitter space $\AdS$, which are defined as the intersection of the projective model of $\AdS$ with a convex polyhedron in $\mathbb{RP}^3$ whose vertices are all outside of $\AdS$ and whose edges all meet $\AdS$.
  We show that hyperideal polyhedra in $\AdS$ are uniquely determined by their combinatorics and dihedral angles, as well as by the induced metric on their boundary together with an additional combinatorial data, and describe the possible dihedral angles and the possible induced metrics on the boundary.
  \bigskip

  \noindent Keywords: hyperideal polyhedra, anti-de Sitter, dihedral angle, induced metric.

\end{abstract}

	\maketitle
	
    \section{Introduction}

\subsection{Hyperbolic polyhedra}

The main motivation here is the beautiful theory of polyhedra in the 3-dimensional hyperbolic space $\HH^3$. Using the Klein (projective) model of $\HH^3$ as a ball in $\R^3$, it is possible to consider ``compact'' hyperbolic polyhedra that are fully contained in $\HH^3$, but also ``ideal'' polyhedra --- those with their vertices on the boundary of $\HH^3$ --- and ``hyperideal'' polyhedra (as an extension of ideal polyhedra) --- with all their vertices outside of $\HH^3$, but all edges intersecting $\HH^3$.

Those hyperbolic polyhedra can be described in terms of two types of boundary quantities.
\begin{itemize}
\item The induced metric on the part of the boundary contained in $\HH^3$. For compact polyhedra, this is a hyperbolic metric with cone singularities of angle less than $2\pi$ on the sphere, and Alexandrov \cite{alex} proved that each such metric is obtained on a unique compact polyhedron (up to isometries). %\footnote{QY: added ``up to isometries", to make it more precise, also below.}
    For hyperideal polyhedra, the induced metrics are complete hyperbolic metrics on punctured spheres, possibly of infinite area, and each such metric is obtained on a unique hyperideal polyhedron (up to isometries), see \cite{rivin-comp,shu}, and the metric has finite area if and only if the polyhedron is ideal.

  \item Ideal and hyperideal polyhedra are characterized by their combinatorics and dihedral angles, and the possible dihedral angles are described by a simple set of linear equations and inequalities \cite{Andreev-ideal,rivin-annals,bao-bonahon,rousset1}. It is not yet known whether compact hyperbolic polyhedra are uniquely determined by their combinatorics and dihedral angles --- this statement is known as the ``hyperbolic Stoker conjecture'', see \cite{stoker} --- but this holds locally \cite{mazzeo-montcouquiol}. However those polyhedra are uniquely determined by the {\em dual metric}, the induced metric on the dual polyhedron in the de Sitter space \cite{HR}.
\end{itemize}
Understanding hyperbolic polyhedra in terms of their dihedral angles is relevant in a number of areas of hyperbolic geometry, where polyhedra are used as ``building blocks'' to construct hyperbolic manifolds or orbifolds.

We are particularly interested here in hyperideal polyhedra, so we recall the result of Bao and Bonahon \cite{bao-bonahon} on their dihedral angles. A graph is said to be \emph{3-connected} if it cannot be disconnected or reduced to a single point by removing 0, 1 or 2 vertices and their incident edges. Let $\Gamma$ be a 3-connected planar graph and let $\Gamma^*$ denote the dual graph of $\Gamma$. For each edge $e\in E(\Gamma)$, we denote by $e^{*}\in E(\Gamma^{*})$ the dual edge of $e$. Let $\theta: E(\Gamma)\rightarrow \mathbb{R}$ be a weight function on $E(\Gamma)$. It was shown in \cite{bao-bonahon} that $\Gamma$ is isomorphic to the 1-skeleton of a hyperideal polyhedron in $\mathbb{H}^3$ with exterior dihedral angle $\theta(e)$ at the edge $e\in E(\Gamma)$ if and only if the weight function $\theta: E(\Gamma)\rightarrow \mathbb{R}$ satisfies the following four conditions:
\begin{enumerate}[(1)]
\item $0<\theta(e)<\pi$ for each edge of $\Gamma$.
\item If $e^*_1, \ldots, e^*_k$ bound a face of $\Gamma^*$, then $\theta(e_1)+\cdots+\theta(e_k)\geq 2\pi$.
\item If $e^*_1, \ldots, e^*_k$ form a simple circuit which does not bound a face of $\Gamma^*$, then $\theta(e_1)+\cdots+\theta(e_k)> 2\pi$.
\item If $e^*_1, \ldots, e^*_k$ form a simple path starting and ending on the boundary of the same face of $\Gamma^*$, but not contained in the boundary of that face, then $\theta(e_1)+\cdots+\theta(e_k)> \pi$.
\end{enumerate}

This implies that $\Gamma$ is isomorphic to the 1-skeleton of a hyperideal polyhedron in $\mathbb{H}^3$ if and only if there is a weight function $\theta: E(\Gamma)\rightarrow \mathbb{R}$ satisfying Conditions (1)-(4). Note that the equality in Condition (2) holds if and only if the vertex of $\Gamma$, dual to the face of $\Gamma^*$ bounded by the edges $e^*_1, e^*_2, \ldots, e^*_k$, is ideal (see e.g. \cite{bao-bonahon}).

\subsection{Anti-de Sitter geometry}\label{subsec:AdS geodesics and planes}

The anti-de Sitter (AdS) space can be considered as a Lorentzian cousin of hyperbolic space. We provide a more detailed description in Section \ref{sect:AdS} and just recall some key properties here. We refer the reader to e.g. \cite{bonsante-seppi:anti} for more details and analysis.

The (projective model of) 3-dimensional AdS space is defined as the projectivization in $\RP^3$ of the set of timelike vectors in $\R^{2,2}$, which is $\R^4$ equipped with a bilinear symmetric form $\langle \cdot,\cdot \rangle_{2,2}$ of signature $(2,2)$:
$$ {\AdS} = \{[x]\in \RP^{3}~|~ \langle x,x\rangle _{2,2}<0\} ~. $$

It is a Lorentzian space of constant curvature $-1$. $\AdS$ is geodesically complete but not simply connected, its fundamental group is $\Z$. There is a deep analogy between quasifuchsian hyperbolic manifolds and the so-called globally hyperbolic maximal compact (GHMC) AdS spacetimes, see \cite{mess,mess-notes,bonsante-seppi:anti}.

We consider $\AdS$ as an oriented and time-oriented space, so that there is at each point a notion of future and past time directions.

\subsection{Anti-de Sitter polyhedra}

We are interested in the notion of hyperideal polyhedra, given by the following definition, which extends the notion of ideal polyhedra in $\AdS$.

Note that the polyhedra we consider throughout the paper are \emph{convex} in $\RP^3$ (i.e. they are contained in an affine chart of $\RP^3$ and are convex in that affine chart). A polyhedron in $\RP^3$ is \emph{non-degenerate} if its boundary bounds a non-empty (3-dimensional) domain in $\RP^3$, otherwise, it is \emph{degenerate} (in this case the polyhedron is a two-sided polygon and its boundary is identified with the two copies of the polygon glued along the union of its edges).

\begin{definition}
A (convex) hyperideal polyhedron in $\AdS$ is the intersection of $\AdS$ with a convex polyhedron $P'$ in $\mathbb{RP}^3$ whose vertices are all outside of $\AdS$ and whose edges all intersect $\AdS$.
\end{definition}

It is clear that ideal polyhedra are special cases of hyperideal polyhedra. Without causing confusion, when we talk about the vertices, edges, faces and 1-skeleton of a hyperideal polyhedron $P$, we always mean the corresponding vertices, edges, faces and 1-skeleton of the polyhedron $P'$ in $\mathbb{RP}^3$ with $P'\cap\AdS=\mathnormal{P}$, and we will sometimes consider $P'$ as a ``hyperideal AdS polyhedron''.

Let $v$ be a vertex of a hyperideal AdS polyhedron. We say that $v$ is \emph{ideal} if $v$ lies on the boundary of $\AdS$. We say that $v$ is \emph{strictly hyperideal} if $v$ lies outside of the closure of $\AdS$ in $\mathbb{RP}^3$. 
A hyperideal AdS polyhedron is said to be \emph{ideal} (resp. \emph{strictly hyperideal}) if all its vertices are ideal (resp. strictly hyperideal).

\subsection{Hyperideal polyhedra in $\AdS$}\label{subsec:polyhedra}

Let $P$ be a hyperideal AdS polyhedron. Note that all the edges and faces of $P$ are space-like (see Lemma \ref{lm:face_spacelike} for more details). A face of $P$ is called \emph{future-directed} (resp. \emph{past-directed}), if the outward-pointing unit normal vector is future-pointing (resp. past-pointing) at each point of the interior of that face. Therefore, the faces of $P$ are sorted into two types (i.e. future-directed and past-directed), delimited on $\partial P$ by a \emph{Hamiltonian cycle}, which is a closed path following edges of $P$ and visiting each vertex of $P$ exactly once. We call this Hamiltonian cycle the \emph{equator} of $P$, see e.g. \cite{DMS}. 

Each hyperideal polyhedron $P$ in $\AdS$ with $N$ ($N\geq 3$) vertices is associated to a \emph{marking}, which is an identification, up to isotopy, of the equator of $P$ with the oriented $N$-cycle graph (whose vertices are labelled consecutively) so that the induced ordering of the vertices is positive with respect to the orientation and time-orientation of $\AdS$. 
Let $\tilde{\mathcal{P}}_N$ ($N\geq 4$) be the space of all marked non-degenerate hyperideal polyhedra in $\AdS$ with $N$ vertices.  Note that the group $\Isom_0(\AdS)$ of orientation and time-orientation preserving isometries of $\AdS$ has a natural action on $\tilde{\mathcal{P}}_N$. We denote by $\mathcal{P}_{N}$ the quotient space of $\tilde{\mathcal{P}}_N$ by this action of $\Isom_0(\AdS)$. For simplicity, we call both an element of $\mathcal{P}_N$ and its representative a hyperideal polyhedron in $\AdS$ henceforth.

Fix an orientation on $\Sigma_{0,N}$ (where $\Sigma_{0,N}$ is a 2-sphere with $N\geq 3$ marked points) and an oriented simple closed curve $\gamma$ on $\Sigma_{0,N}$ which visits each marked point exactly once, and then label the marked points in order along the path. Following the same notations as in \cite{DMS}, we call the component of $\Sigma_{0,N}\setminus\gamma$ on the left (resp. right) side of $\gamma$ the \emph{top} (resp. \emph{bottom}).
It is natural to identify each hyperideal polyhedron in $\AdS$ with $\Sigma_{0,N}$ via the isotopy class of the map taking each vertex to the corresponding marked point, the equator to $\gamma$, and the union of future-directed (resp. past-directed) faces to the top (resp. bottom) of $\Sigma_{0,N}$.

It is a classical theorem of Steinitz that a graph $\Gamma$ is isomorphic to the 1-skeleton of a (non-degenerate convex) polyhedron in $\R^3$ if and only if $\Gamma$ is planar and 3-connected, see e.g. \cite[Chapter 4]{ziegler:polytopes}.
Let $\Graph(\Sigma_{0,N},\gamma)$ ($N\geq 4$) be the set of 3-connected graphs embedded in $\Sigma_{0,N}$, such that the vertices are located at the marked points and the edge set contains the edges of $\gamma$, up to a homeomorphism isotopic to the identity (fixing each marked point). Via the marking, any hyperideal polyhedron in $\tilde{\mathcal{P}}_N$ has its 1-skeleton identified to a unique graph in $\Graph(\Sigma_{0,N},\gamma)$.
If two hyperideal polyhedra $P_1, P_2\in\tilde{\mathcal{P}}_N$ are equivalent, then their 1-skeletons are isomorphic to the same graph in $\Graph(\Sigma_{0,N},\gamma)$. Therefore, the 1-skeleton of an element $P$ of $\mathcal{P}_N$ is always well-defined and is identified to a graph  $\Gamma$ in $\Graph(\Sigma_{0,N},\gamma)$. We say that $\Gamma$ is \emph{realized} as the 1-skeleton of $P$, or directly that $\Gamma$ is the 1-skeleton of $P$.

Let $\Sigma$ be an oriented piecewise totally geodesic space-like surface in $\AdS$. Let $H$ and $H'$ be two (totally geodesic) faces of $\Sigma$ meeting along a common space-like geodesic segment $l$ and let $U$ be a small convex neighborhood in $\RP^3$ of an interior point $x$ of $l$, which intersects $\Sigma$ at only two faces $H$, $H'$. We say that $\Sigma$ is \emph{convex} (resp. \emph{concave}) at $l$ if the component of $U\setminus \Sigma$ lying on the other side of the time-like normal vectors (determined by the orientation of $\Sigma$) is
convex (resp. concave) in $\RP^3$ (see e.g. Figure \ref{fig:dihedral angles}).

We define the \emph{exterior dihedral angle} at $l$ of $\Sigma$ as follows. Consider the isometry of $\AdS$ that fixes the complete space-like geodesic $L\supset l$ point-wise and maps the plane of $H'$ to the plane of $H$, it is a hyperbolic rotation in the group $\rm{O}(1,1)$ of a time-like plane orthogonal to $L$ with the rotation amount, say $|\theta|$, such that
\begin{equation}\label{def:angle amount}
\cosh|\theta|=|\langle n,n'\rangle_{2,1}|~,
\end{equation}
where $n$ (resp. $n'$) is the oriented unit normal vector to $H$ (resp. $H'$) at a point $x\in L$ (note that the set of unit timelike tangent vectors at $x$ is isometric to $\{x\in\R^{2,1}: \langle x,x\rangle_{2,1}=-1\}$, two copies  of the hyperboloid model of hyperbolic 2-space). $|\theta|$ is thus valued in $\R_{\geq 0}$ rather than in the circle $S^1$ (see e.g. \cite{DMS}).
The \emph{sign} of $\theta$ is defined as follows. If $H$ and $H'$ lie in the opposite space-like quadrants (locally divided by the light-cone of $l$ in $\AdS$), we take $\theta$ to be non-negative (resp. positive) if the surface $\Sigma$ is convex (resp. strictly convex) at $l$, and negative if $\Sigma$ is strictly concave at $l$. If $H$ and $H'$ lie in the same space-like quadrants, we take $\theta$ to be non-positive (resp. negative) if the surface $\Sigma$ is is convex (resp. strictly convex) at $l$, and positive if $\Sigma$ is strictly concave at $l$ (see Figure \ref{fig:dihedral angles} for instance).

\begin{figure}
\begin{subfigure}[b]{0.46\textwidth}
\centering
\begin{tikzpicture}[scale=1.1]
\draw[white][fill=gray,opacity=0.4] (-1,3.5) .. controls (-1.35,3.4) and (-1.35,3.4) .. (-1.35,3.4) .. controls (-1.25,3.05) and (-0.7,3.1) .. (-0.65,3.4) .. controls (-0.65,3.4) and (-1,3.5) .. (-1,3.5);
\draw [dashed](-2.5,2) -- (0.5,5);
\draw [dashed](0.5,2) -- (-2.5,5);
\draw (-1,3.5) .. controls (-1.15,3.15) and (-1.15,3.15) .. (-1.15,3.15) .. controls (-3.9,2.4) and (-3.9,2.4) .. (-3.9,2.4) .. controls (-3.7,3.1) and (-3.7,3.1) .. (-3.7,3.1) .. controls (-2.9,3.3) and (-2.9,3.3) .. (-2.9,3.3);
\draw (-1,3.5) .. controls (-1.15,3.15) and (-1.15,3.15) .. (-1.15,3.15) .. controls (1.6,2.3) and (1.6,2.3) .. (1.6,2.3) .. controls (1.9,2.95) and (1.9,2.95) .. (1.9,2.95) .. controls (0.9,3.25) and (0.9,3.25) .. (0.9,3.25);
\draw [thick](-1,3.5) .. controls (-2.8,3) and (-2.8,3) .. (-2.8,3);
\draw [thick](-1,3.5) .. controls (0.8,2.95) and (0.8,2.95) .. (0.8,2.95);
\node at (-3.35,2.85) {$H$};
\node at (1.4,2.8) {$H'$};
\node at (1.2,5.1) {$F$};
\node at (-1.15,2.9) {$l$};
\node at (-0.75,3.55) {$x$};
\draw[-latex](-1,3.5)--(-0.6,4.2);
\draw[-latex](-1,3.5)--(-1.1,4.15);
\node at (-1.25,4.35) {$n'$};
\node at (-0.5,4.35) {$n$};
\draw (-2.9,5.4) .. controls (-2.9,5.4) and (0.9,5.4) .. (0.9,5.4) .. controls (0.9,5.4) and (0.9,2.9) .. (0.9,2.9) .. controls (0.9,2.9) and (0.9,2.9) .. (0.9,2.9);
\draw (0.9,2.5) .. controls (0.9,1.6) and (0.9,1.6) .. (0.9,1.6) .. controls (-2.9,1.6) and (-2.9,1.6) .. (-2.9,1.6) .. controls (-2.9,2.65) and (-2.9,2.65) .. (-2.9,2.65);
\draw (-2.9,2.95) .. controls (-2.9,2.95) and (-2.9,2.95) .. (-2.9,5.4);
\draw[densely dotted] (0.9,2.9) .. controls (0.9,2.5) and (0.9,2.5) .. (0.9,2.5);
\draw[densely dotted] (-2.9,2.95) .. controls (-2.9,2.65) and (-2.9,2.65) .. (-2.9,2.65);
\draw[densely dotted] (-2.9,3.3) .. controls (-0.9,3.8) and (-0.9,3.8) .. (-0.9,3.8) .. controls (0.9,3.25) and (0.9,3.25) .. (0.9,3.25);
\draw [densely dotted](-0.9,3.8) .. controls (-1,3.5) and (-1,3.5) .. (-1,3.5);
\end{tikzpicture}
\end{subfigure}
\begin{subfigure}[b]{0.46\textwidth}
\centering
\begin{tikzpicture}[scale=1.1]
\draw[white][fill=gray,opacity=0.4] (2,-0.5) .. controls (2.35,-0.25) and (2.35,-0.25) .. (2.35,-0.25) .. controls (2.5,-0.35) and (2.5,-0.5) .. (2.4,-0.65) .. controls (2.4,-0.65) and (2.4,-0.65) .. (2,-0.5);
\draw[dashed] (0.5,-2) -- (3.5,1) -- cycle;
\draw[dashed] (0.5,1) -- (3.5,-2);
\draw (1.35,-0.65)        (1.35,-0.65);
\draw (2,-0.5)     .. controls (1.35,-0.65) and (1.35,-0.65) .. (1.35,-0.65);
\draw [thick](2,-0.5) .. controls (3.8,0.85) and (3.8,0.85) .. (3.8,0.85);
\draw[thick] (2,-0.5) .. controls (3.75,-1.25) and (3.75,-1.25) .. (3.75,-1.25);
\node at (4.4,1.3) {$H$};
\node at (4.35,-1.4) {$H'$};
\node at (1.1,-0.7) {$l$};
\node at (1.95,-0.7) {$x$};
\node at (2.05,1.8) {$F$};
\draw (0.1,1.5) .. controls (3.9,1.5) and (3.9,1.5) .. (3.9,1.5) .. controls (3.9,0.95) and (3.9,0.95) .. (3.9,0.95);
\draw (3.9,0.55) .. controls (3.9,-1.3) and (3.9,-1.3) .. (3.9,-1.3);
\draw(3.9,-1.7) .. controls (3.9,-2.3) and (3.9,-2.3) .. (3.9,-2.3) .. controls (3.9,-2.3) and (0.1,-2.3) .. (0.1,-2.3) .. controls (0.1,1.5) and (0.1,1.5) .. (0.1,1.5);
\draw[densely dotted] (3.9,0.95) .. controls (3.9,0.55) and (3.9,0.55) .. (3.9,0.55);
\draw[densely dotted] (3.9,-1.3) .. controls (3.9,-1.7) and (3.9,-1.7) .. (3.9,-1.7);
\draw [densely dotted](2.65,-0.35) .. controls (3.9,0.55) and (3.9,0.55) .. (3.9,0.55);
\draw [densely dotted](2,-0.5) .. controls (2.65,-0.35) and (2.65,-0.35) .. (2.65,-0.35);
\draw (2,-0.5) .. controls (1.35,-0.65) and (1.35,-0.65) .. (1.35,-0.65);
\draw (1.35,-0.65)     .. controls (1.75,-0.35) and (1.75,-0.35) .. (1.75,-0.35);
\draw [densely dotted](2.65,-0.35) .. controls (3.9,-0.85) and (3.9,-0.85) .. (3.9,-0.85);
\draw (3.9,-0.85) .. controls (5.55,-1.55) and (5.55,-1.55) .. (5.55,-1.55) .. controls (4.3,-1.9) and (4.3,-1.9) .. (4.3,-1.9) .. controls (1.35,-0.65) and (1.35,-0.65) .. (1.35,-0.65);
%\draw (3.5,1);
\draw(3.9,1.3) .. controls (3.9,1.3) and (3.9,1.3) .. (3.9,1.3) .. controls (3.9,1.3) and (4.35,1.65) .. (4.35,1.65) .. controls (4.35,1.65) and (5.55,1.85) .. (5.55,1.85) .. controls (5.55,1.85) and (3.9,0.55) .. (3.9,0.55);
\draw [densely dotted](1.75,-0.35) .. controls (3.9,1.3) and (3.9,1.3) .. (3.9,1.3);
\draw[-latex](2,-0.5)--(2.45,0.15);
\draw[-latex](2.2,-1)--(2.3,-1.25);
\draw [densely dotted](2,-0.5) .. controls (2.2,-1) and (2.2,-1) .. (2.2,-1);
\node at (2.3,0.3) {$n$};
\node at (2.35,-1.4) {$n'$};
\end{tikzpicture}
\end{subfigure}
\caption{\small{Two examples of $\Sigma$ which is convex at $l=H\cap H'$ (with normal vectors $n$ and $n'$ to the spacelike planes of $H$ and $H'$ at $x\in l$). Here $F$ is the timelike plane orthogonal to $l$ at $x$, and the shaded region is the component of $(U\setminus\Sigma)\cap F$ lying on %opposite
the other side of %normal vectors
$n$ and $n'$ for a convex neighborhood $U$ of $x$. In the left (resp. right) picture, the intersection lines $F\cap H$ and $F\cap H'$ (shown in bold lines) lie in the opposite (resp. the same) spacelike quadrant separated by the lightlike geodesics (shown in dashed lines) in $F$ and the exterior dihedral angle at $l$ is positive (resp. negative).}}
\label{fig:dihedral angles}
\end{figure}
Let $P$ be a convex polyhedron in $\AdS$. The boundary $\partial P$ of $P$ in $\AdS$ equipped with outward-pointing normal vectors is an oriented piecewise totally geodesic space-like surface in $\AdS$. The \emph{exterior dihedral angle} of $P$ at an edge $e$ is defined as the exterior dihedral angle at the edge $e$ of the oriented surface $\partial P$.

If $\Sigma$ is an oriented piecewise totally geodesic surface in $\AdS$ and $H$, $H'$ are totally geodesic space-like and time-like faces of $\Sigma$ meeting orthogonally along %a
a common space-like geodesic $l$, we define the \emph{exterior dihedral angle} $\theta$ at $l$ between $H$ and $H'$ to be zero (see e.g. Case (b) in Definition \ref{def:angles}).

Unless otherwise stated, the dihedral angles we consider throughout this paper are exterior dihedral angles.

\subsection{Main results}
\label{ssc:main}

We can now state the main results, describing the dihedral angles and induced metrics on hyperideal polyhedra in $\AdS$.

\begin{definition}\label{def:admissible angles}
Let $\Gamma\in \Graph(\Sigma_{0,N},\gamma)$. We denote by $\Gamma^*$ the dual graph of $\Gamma$ and by $e^*\in E(\Gamma^*)$ the dual edge of $e$. We say that a function $\theta: E(\Gamma)\rightarrow \R$ is $\gamma$-admissible if it satisfies the following four conditions:
\begin{enumerate}[(i)]
\item $\theta(e)<0$ if $e$ is an edge of the equator $\gamma$, and $\theta(e)>0$ otherwise.
\item If $e^*_1, \ldots, e^*_k$ bound a face of $\Gamma^*$, then $\theta(e_1)+\cdots+\theta(e_k)\geq 0$.
\item If $e^*_1, \ldots, e^*_k$ form a simple circuit which does not bound a face of $\Gamma^*$,  and such that exactly two of the edges are dual to edges of $\gamma$, then $\theta(e_1)+\cdots+\theta(e_k)> 0$.
\item If $e^*_1, \ldots, e^*_k$ form a simple path starting and ending on the boundary of the same face of $\Gamma^*$, but not contained in the boundary of that face, and such that exactly one of the edges is dual to one edge of $\gamma$, then $\theta(e_1)+\cdots+\theta(e_k)> 0$.
\end{enumerate}
\end{definition}

\begin{theorem}
\label{thm:dihedral angles AdS}
Let $\Gamma\in \Graph(\Sigma_{0,N},\gamma)$ and let $\theta: E(\Gamma)\rightarrow \R$ be a function. Then $\theta$ can be realized as the exterior dihedral angles at the edges of a hyperideal AdS polyhedron with 1-skeleton $\Gamma$ if and only if $\theta$ is $\gamma$-admissible.In that case, $\theta$ is realized on a unique hyperideal AdS polyhedron (up to isometries).
\end{theorem}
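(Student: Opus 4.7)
The plan is to use the continuity method of Andreev and Rivin, as extended by Bao--Bonahon \cite{bao-bonahon} to hyperideal polyhedra, and to carry it over to the AdS setting (cf. also \cite{DMS} for the ideal case). Let $\mathcal{P}_\Gamma\subset \mathcal{P}$ denote the set of marked hyperideal AdS polyhedra whose 1-skeleton is $\Gamma$, and let $\mathcal{A}_\Gamma\subset\R^{E(\Gamma)}$ be the set of $\gamma$-admissible weight functions. A parameter count using Euler's formula for the 3-connected planar graph $\Gamma$ gives $\dim\mathcal{P}_\Gamma = 3V - \sum_f(d_f-3) - 6 = |E(\Gamma)|$, where the first term is the projective freedom of the vertices in $\RP^3$, the middle term encodes planarity of each face of degree $d_f$, and $6=\dim\Isom_0(\AdS)$. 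Since $\mathcal{A}_\Gamma$ is defined by finitely many linear (strict and non-strict) inequalities in $\R^{E(\Gamma)}$, it is a convex, hence simply connected, set of the same dimension $|E(\Gamma)|$. The continuous angle map $\Phi:\mathcal{P}_\Gamma\to\R^{E(\Gamma)},\ P\mapsto\theta_P$ is the object of study.

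For \emph{necessity} ($\Phi(\mathcal{P}_\Gamma)\subset\mathcal{A}_\Gamma$), condition (i) follows directly from the sign convention for AdS exterior dihedral angles: across an equator edge the two adjacent faces sit in opposite space-like quadrants (one future-directed, one past-directed), while across a non-equator edge they sit in the same quadrant. For (ii)--(iv), I would associate to each vertex $v$ of $P$ the polygon cut out by $\partial P$ on a small neighborhood of $v$ inside the polar plane $v^\ast$; depending on the causal type of $v$, this polar plane carries a copy of $\HH^2$ or $\bdS^2$, and the exterior angles of the link polygon are precisely the $\theta(e_i)$ for edges $e_i$ incident to $v$. The existence of such a polygon, phrased as a Gauss--Bonnet style inequality, gives condition (ii) (with equality iff $v$ is ideal, matching the boundary case of the admissible set). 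Conditions (iii) and (iv), governing longer simple circuits and paths in $\Gamma^\ast$, come from iterating this link construction along ``generalized links'' through several vertices, just as in \cite{bao-bonahon,rousset1}; the clause ``exactly two'' (resp. ``exactly one'') equator-dual edges in (iii)--(iv) reflects precisely how many sign changes the link polygon must undergo along the simple circuit (resp. path).

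For \emph{local rigidity} I would show that $d\Phi$ is injective at every $P\in\mathcal{P}_\Gamma$; by the dimension match this makes $\Phi$ a local diffeomorphism. The key tool is the AdS Schl\"afli formula, together with an infinitesimal Pogorelov-type map sending first-order deformations of $P$ preserving dihedral angles to first-order flexes of the underlying combinatorial polyhedron regarded in Euclidean space; Cauchy's theorem on convex polyhedra then forces triviality. The \emph{properness} of $\Phi:\mathcal{P}_\Gamma\to\mathcal{A}_\Gamma$ is where the bulk of the work, and the main obstacle, lies: given a sequence $(P_n)$ in $\mathcal{P}_\Gamma$ with $\theta_{P_n}\to\theta_\infty\in\mathcal{A}_\Gamma$, one must rule out each possible degeneration of $P_n$ after normalization by $\Isom_0(\AdS)$. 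The degenerations to control are vertices colliding, an edge becoming light-like, a face collapsing, a vertex migrating inside $\AdS$, or the polyhedron becoming projectively degenerate. Each such limit configuration produces, via a limiting link argument, either a face of $\Gamma^\ast$, a simple circuit, or a simple path on which $\theta_\infty$ violates one of the inequalities (ii)--(iv); the delicate novelty compared to the hyperbolic case is the presence of negative weights on equator edges, which is why the ``exactly one/two equator-dual edges'' bookkeeping in Definition \ref{def:admissible angles} is exactly what is needed to exclude AdS-specific degenerations such as an equator edge becoming light-like.

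Combining continuity, the local diffeomorphism property, and properness, $\Phi:\mathcal{P}_\Gamma\to\mathcal{A}_\Gamma$ is a covering map onto an open and closed subset of $\mathcal{A}_\Gamma$. Since $\mathcal{A}_\Gamma$ is convex and hence connected and simply connected, any non-empty cover is a homeomorphism, which yields simultaneously existence and uniqueness (the two assertions of the theorem) once the image is shown to be non-empty. Non-emptiness I would establish by constructing at least one hyperideal AdS polyhedron with 1-skeleton $\Gamma$, for example by starting from the ideal AdS realizations obtained in \cite{DMS} (after possibly refining $\Gamma$ by inserting ``trivial'' edges corresponding to degenerate angles) and using the local surjectivity of $\Phi$ to push the ideal vertices to strictly hyperideal ones along an admissible path in $\mathcal{A}_\Gamma$. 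The hardest step overall is expected to be the properness argument, whose case-by-case analysis of degenerations consumes most of the technical content of the proof.
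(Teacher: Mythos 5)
Your overall strategy -- dimension count, necessity via Gauss--Bonnet/links, local rigidity via infinitesimal Pogorelov, properness via degeneration analysis, covering-space closure -- matches the paper's continuity method. But there is a genuine gap in the final step, and it stems from your decision to work on a single stratum $\cP_\Gamma\to\cA_\Gamma$ rather than the glued complexes.

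You claim that ``any non-empty cover [of the simply connected $\cA_\Gamma$] is a homeomorphism,'' but this is false without knowing that $\cP_\Gamma$ is \emph{connected}. A proper local homeomorphism to a simply connected space restricted to each component of the source is a homeomorphism onto the target, so a disconnected source gives a disjoint union of several copies -- existence holds, but uniqueness fails. You never argue $\cP_\Gamma$ is connected, and this is not obvious. The paper avoids this by working with the full complexes $\cP\to\cA$ (glued over all graphs $\Gamma$ along common subgraphs), proving via a nontrivial flip-connectivity lemma for admissible triangulations (Proposition~\ref{connectivity of graph of triangulation}/Claim~\ref{clm:connectedness of A}) that $\cA$ is connected, and then fixing the covering degree to $1$ by exhibiting a specific $\theta$ (any ideal angle assignment $\theta\in\cA'$) with a unique preimage, imported from the ideal result of~\cite{DMS}. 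Your localized argument has no such anchor: a given stratum $\cA_\Gamma$ need not contain an ideal point (the ideal locus is a codimension-$N$ face that may be empty), so you cannot borrow DMS uniqueness there. Relatedly, your non-emptiness argument ``push ideal realizations into the interior along a path in $\cA_\Gamma$'' presupposes the ideal realization with $1$-skeleton $\Gamma$ exists, which is again not automatic for every $\Gamma\in\Graph(\Sigma_{0,N},\gamma)$; the paper derives existence (Corollary~\ref{thm:combinatorics AdS}) as a \emph{consequence} of the full bijectivity theorem rather than as a lemma used to prove it.

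Two minor points: the infinitesimal rigidity step in the paper goes through the truncated/dual polyhedron and edge lengths, then the Pogorelov map and \emph{Alexandrov's} infinitesimal rigidity theorem (Theorem~\ref{thm:Alexandrov}), not Cauchy's rigidity theorem for isometric boundaries, and the degenerate face of the dual over an ideal vertex requires a separate directed-measure argument (Section~\ref{sec:rigidity_angle}); and the properness analysis in the paper is driven by the dynamics of pure AdS translations (Section~\ref{subsubsect:dynamics}), which you correctly flag as hard but do not describe.
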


The uniqueness stated here is of course up to global (orientation and time-orientation preserving) isometries of $\AdS$.

Let $P$ be a hyperideal AdS polyhedron with 1-skeleton $\Gamma\in \Graph(\Sigma_{0,N},\gamma)$ and equator $\gamma$ and let $\theta: E(\Gamma)\rightarrow \mathbb{R}$ be the function assigning to each edge $e\in E(\Gamma)$ the exterior dihedral angle $\theta(e)$ at $e$ of $P$. We will show in Section \ref{section:necessity} that $\theta$ is $\gamma$-admissible.  Moreover, the equality in (ii) holds if and only if the vertex of $P$, dual to the face of $\Gamma^*$ bounded by $e^*_1, \ldots, e^*_k$, is ideal.

Conditions (i)-(iv) in Definition \ref{def:admissible angles}  can be viewed as modified versions of Conditions (1)-(4) in the angle conditions for hyperideal polyhedra in $\mathbb{H}^3$. It is worth mentioning that for a general graph $\Gamma\in\Graph(\Sigma_{0,N},\gamma)$ Condition (iv) in Definition \ref{def:admissible angles} does not follow from Conditions (i)-(iii). It follows only in some special cases (for instance, every vertex of $\Gamma$ has degree 3).

Given a 3-connected planar graph $\Gamma$ which admits a Hamiltonian cycle, we fix an orientation of the Hamiltonian cycle. There is an embedding of $\Gamma$ into $\Sigma_{0,N}$ such that the vertices of $\Gamma$ correspond to the marked points of $\Sigma_{0,N}$, and the Hamiltonian cycle of $\Gamma$ corresponds to $\gamma$ with the same orientation. This embedding is unique up to isotopy among maps sending a fixed vertex of $\Gamma$ to a fixed marked point of $\Sigma_{0,N}$. Therefore, we can identify this graph $\Gamma$ with an element of $\Graph(\Sigma_{0,N},\gamma)$. By Theorem \ref{thm:dihedral angles AdS}, the existence of a hyperideal AdS polyhedron with 1-skeleton $\Gamma$ is equivalent to the existence of a $\gamma$-admissible function on $E(\Gamma)$. Moreover, Definition \ref{def:admissible angles} tells us the existence of a $\gamma$-admissible function on $E(\Gamma)$ is equivalent to the existence of the solution to a system of linear inequalities with finitely many variables (corresponding to the edges of $\Gamma$) given by Condition (i)-(iv). One can check that such a solution always exists by choosing the function on $E(\Gamma)$  such that it satisfies Condition (i) and has large enough angles at the non-equatorial edges (see Claim \ref{clm:topology of A_Gamma} for more details). We therefore have the following.

\begin{corollary}
\label{thm:combinatorics AdS}
For any 3-connected planar graph $\Gamma$ which admits a Hamiltonian cycle $\gamma$, there exists a hyperideal AdS polyhedron $P$ whose 1-skeleton is $\Gamma$ with equator $\gamma$.
\end{corollary}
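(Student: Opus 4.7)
The plan is to invoke Theorem~\ref{thm:dihedral angles AdS}, which reduces the problem to producing a single $\gamma$-admissible function $\theta\colon E(\Gamma)\to\R$. I will exhibit one explicitly by choosing $\theta(e)=-\epsilon$ for $e\in E(\gamma)$ and $\theta(e)=M$ for $e\notin E(\gamma)$, with $\epsilon>0$ small and $M>0$ large. Condition~(i) then holds trivially, and the task becomes verifying Conditions~(ii)--(iv) for $M$ large enough.

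The key observation, which is purely combinatorial, is that every sum $\theta(e_1)+\cdots+\theta(e_k)$ appearing in Conditions~(ii)--(iv) contains at least one term corresponding to a non-equatorial edge. For Condition~(ii): the edges $e_1^*,\dots,e_k^*$ bound a face of $\Gamma^*$ dual to a vertex $v\in V(\Gamma)$, and the corresponding edges $e_1,\dots,e_k$ are precisely the edges of $\Gamma$ incident to $v$; since $\Gamma$ is $3$-connected, $\deg(v)\geq 3$, and since $\gamma$ is a Hamiltonian cycle exactly two of these edges lie on $\gamma$, leaving at least one non-equatorial edge. For Condition~(iii): any simple circuit in $\Gamma^*$ has at least three edges (in a $3$-connected simple planar graph any two faces share at most one edge, so $\Gamma^*$ has no bigons), and only two of its edges are dual to edges of $\gamma$, hence at least one is not. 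For Condition~(iv): a simple path containing exactly one edge dual to a $\gamma$-edge and not contained in the boundary of a face of $\Gamma^*$ must have at least two edges, because a single edge $e^*$ dual to $e\in E(\gamma)$ with both endpoints on the boundary of a face of $\Gamma^*$ corresponding to a vertex $v$ not incident to $e$ would force the two faces of $\Gamma$ on either side of $e$ to both contain $v$, producing a $2$-separation of $\Gamma$, contradicting $3$-connectedness.

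Granted this observation, the verification is quantitative but immediate. Each of Conditions~(ii)--(iv) corresponds to finitely many inequalities (since $\Gamma$ is finite); for each such inequality, the sum equals $jM-i\epsilon$ where $j\geq 1$ is the number of non-equatorial edges in the subconfiguration and $i\leq N$ is the number of equatorial ones. Choosing $\epsilon=1$ and any $M>N$ makes every such sum strictly positive, so in particular the (non-strict) inequality in~(ii) and the strict inequalities in~(iii)--(iv) are all satisfied. Hence $\theta$ is $\gamma$-admissible, and Theorem~\ref{thm:dihedral angles AdS} supplies an AdS hyperideal polyhedron $P$ whose $1$-skeleton is $\Gamma$ with equator $\gamma$.

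The main obstacle, and the only non-trivial point, is the combinatorial claim that every sum in Conditions~(ii)--(iv) contains a non-equatorial edge; the rest is a feasibility argument that amounts to scaling. All three sub-claims rest on $3$-connectedness of $\Gamma$, which plays exactly the same role here as it does in the analogous feasibility step of Bao--Bonahon's theorem in $\bH^3$.
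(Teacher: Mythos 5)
Your proof is correct and follows the same strategy the paper uses (and sketches in the paragraph preceding the corollary and in Claim~\ref{clm:topology of A_Gamma}): reduce via Theorem~\ref{thm:dihedral angles AdS} to producing a $\gamma$-admissible weight, take small negative angles on $\gamma$ and large positive angles on the other edges, and use $3$-connectedness to see that every sum in Conditions~(ii)--(iv) has a non-equatorial term. The only difference is that you spell out the three combinatorial facts that the paper leaves implicit (degree $\geq 3$, no bigons in $\Gamma^*$, and the $2$-separation argument for a length-one path in~(iv)), which the paper compresses into ``one can check.''
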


\begin{theorem}
\label{thm:induced metrics}
Let $P$ be a hyperideal (possibly degenerate) polyhedron in $\AdS$ with $N$
$(N\geq 3)$ vertices. Then the induced metric on $\partial P$ is a complete hyperbolic metric (of infinite area if at least one vertex is strictly hyperideal) on $\Sigma_{0,N}$. Conversely, each complete hyperbolic metric on $\Sigma_{0,N}$, possibly with infinite area, is induced on a unique (up to isometries) marked hyperideal AdS polyhedron.
\end{theorem}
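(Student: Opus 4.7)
Necessity is a local computation. Each face $F$ of $P$ is the intersection of $\AdS$ with a convex polygon in a projective 2-plane; since space-like planes in $\AdS$ are isometric to $\bH^2$, the face $F$ is isometric to a hyperbolic polygon having an ideal vertex at each ideal vertex of $P$ and an ``ultra-ideal'' truncated vertex at each strictly hyperideal vertex. Gluing these faces along the identifications induced by $\partial P$ produces a metric on $\Sigma_{0,N}$ which is smooth and hyperbolic away from the $N$ marked points; a local model shows that a neighbourhood of each marked point is isometric to a hyperbolic cusp (ideal case) or to a hyperbolic funnel (strictly hyperideal case). Hence the induced metric is complete, with infinite area precisely when at least one vertex is strictly hyperideal.

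For the converse, I propose to apply the Alexandrov--Rivin continuity scheme to the map $\Phi : \cP_N \to \cT_{0,N}$ sending a marked hyperideal polyhedron to its boundary metric, where $\cT_{0,N}$ denotes the deformation space of complete hyperbolic metrics on $\Sigma_{0,N}$, stratified by whether each end is a cusp or a funnel. Parameterising $P$ by the $\RP^3$-positions of its vertices modulo $\Isom_0(\AdS)$ and subtracting one parameter for each ideal vertex, one sees that $\cP_N$ and $\cT_{0,N}$ are connected manifolds of matching dimension on each stratum and that $\Phi$ preserves the stratification. Continuity being manifest, the theorem reduces to proving local injectivity (infinitesimal rigidity) and properness of $\Phi$: these two properties make $\Phi$ a covering map, whose degree can be pinned down to $1$ by computing it on a single symmetric example (e.g.\ a regular ideal tetrahedron).

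Infinitesimal rigidity can be established by transporting to the AdS setting the Schl\"afli-type arguments of \cite{HR,rivin-comp,shu}: any first-order variation of $P$ that preserves the induced metric also preserves all edge lengths, and a Cauchy--Dehn-type bilinear estimate at each vertex, combined with the Lorentzian Schl\"afli formula $d(\mathrm{vol})=\tfrac12 \sum \ell\, d\theta$, forces the variation to be infinitesimally trivial. Alternatively, one can derive local injectivity by a duality argument from Theorem \ref{thm:dihedral angles AdS}, using that edge lengths and exterior dihedral angles are conjugate variables.

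The main obstacle is properness. Given $(P_n)\subset\cP_N$ with no convergent subsequence, one must show that $\Phi(P_n)$ leaves every compact subset of $\cT_{0,N}$. The degenerations to control are: two vertices colliding, three vertices becoming collinear (a face collapsing), an edge becoming light-like, a vertex escaping to infinity in an affine chart containing $P_n$, or a vertex crossing $\partial\AdS$ between strata. In each scenario one must exhibit a simple closed curve on $\Sigma_{0,N}$ whose length in $\Phi(P_n)$ tends to $0$ or to $\infty$, or a Fenchel--Nielsen twist parameter tending to infinity. The subtlety specific to the Lorentzian setting is the possibility that an edge becomes light-like in the limit; I would rule this out by a causal/convexity argument using that each $P_n$ is convex in $\RP^3$ with vertices in the complement of $\AdS$, so a light-like limit edge would force an adjacent face to degenerate, reducing the situation to a collapse scenario already handled. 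This causal case analysis is the technical heart of the proof.
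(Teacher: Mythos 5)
Your overall scheme --- a stratified Alexandrov--Rivin continuity argument (matching dimensions on strata, infinitesimal rigidity giving a local homeomorphism, properness, a covering argument, degree one) --- is exactly the skeleton of the paper's proof of Theorem~\ref{thm:homeo_metrics}, and the necessity half is essentially the same local cusp/funnel computation. But there is one genuine gap, and one substantive divergence in method.

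\emph{Properness --- the gap.} Your claim that a light-like limit edge ``would force an adjacent face to degenerate, reducing the situation to a collapse scenario already handled'' is false. An edge of the limit polyhedron can become tangent to $\partial\AdS$ at a point $q$ while both adjacent faces remain two-dimensional and space-like: a space-like plane may contain a line that is tangent to its boundary ellipse, so there is no reason for a face to collapse. Hence the light-like case does not reduce to the collision/collapse case, and as written your properness argument has a hole at the step you yourself single out as ``the technical heart.'' The paper handles tangency directly (Step~2 of the proof of Lemma~\ref{lem:proper_metrics}): any small closed neighbourhood of $q$ on $\partial\tilde{P}_\infty\cap\AdS$, with the induced metric, is non-compact yet isometric to neither a cusp nor a funnel, so the limiting metric cannot lie in $\cT_{0,N}$. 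That direct metric analysis near the tangency point is exactly what your argument is missing.

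\emph{Rigidity --- a different route, and a misdirected duality.} The paper does not use a Schl\"afli-type variational argument or a Cauchy--Dehn sign-counting lemma for Lemma~\ref{lem:local immersion_metrics}. It transports the first-order isometric deformation through the infinitesimal Pogorelov map $\Pi$ (Lemmas~\ref{lm:Killing HS-Min}--\ref{lm:Killing HS-Eur}) to a first-order isometric deformation of a convex \emph{Euclidean} polyhedron, and then invokes Alexandrov's theorem (Theorem~\ref{thm:Alexandrov}). More importantly, your proposed ``alternative via duality from Theorem~\ref{thm:dihedral angles AdS}'' runs the wrong way in this paper: the \emph{angle} rigidity (Lemma~\ref{lem:local immersion_angles}) is itself proved by passing to the dual polyhedron and reducing to edge-length rigidity, which again rests on Pogorelov plus Alexandrov. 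So there is no independent proof of angle rigidity from which to pull back metric rigidity; the logical dependency is the reverse of what you suggest. Your plan to pin the covering degree to one via a single symmetric example is a legitimate alternative to the paper's method, which instead uses the known homeomorphism for the ideal stratum from~\cite{DMS} together with the observation that a strictly hyperideal vertex forces the boundary metric to have infinite area.
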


Note that the existence and uniqueness here is for the hyperbolic metric on $\Sigma_{0,N}$, considered up to isotopies. The hyperideal AdS polyhedron realizing a given hyperbolic metric $h$ depends on the position of the ``equator'', and if two metrics $h$ and $h'$ on $\Sigma_{0,N}$ are isometric by an isometry not isotopic to the identity, and not preserving the equator, then the corresponding polyhedra $P$ and $P'$ might not be related by an isometry of $\AdS$. As it will be clear below, hyperideal polyhedra in $\AdS$ are uniquely determined by their induced metric together with the equator, which needs to be a simple closed curve visiting each vertex exactly once.

Theorems \ref{thm:dihedral angles AdS} and \ref{thm:induced metrics} extend to hyperideal polyhedra results obtained in \cite{DMS} for ideal polyhedra in $\AdS$.

\subsection{Outline of the proofs and organization}

Let $\Gamma\in\Graph(\Sigma_{0,N},\gamma)$ and let $\cP_{\Gamma}$ denote the subset of $\cP_N$ ($N\geq 4$) in which every polyhedron has its 1-skeleton identified with $\Gamma$.
Note that each polyhedron in $\cP_N$ realizes a graph in $\Graph(\Sigma_{0,N},\gamma)$, $\cP_N$ is the disjoint union of $\cP_{\Gamma}$ over all $\Gamma\in\Graph(\Sigma_{0,N},\gamma)$.
Let $\cA_{\Gamma}$ be the space of $\gamma$-admissible weight functions $\theta\in \R^{E(\Gamma)}$. 
We denote by $\cA_N$ the disjoint union of $\cA_{\Gamma}$ over all $\Gamma\in\Graph(\Sigma_{0,N},\gamma)$. (More details about $\cP_N$ and $\cA_N$ can be found in %Section 
subsection \ref{subsec:def}.)

Let $\Psi_{\Gamma}:\cP_{\Gamma}\rightarrow \cA_{\Gamma}$ be the map which assigns to each $P\in\cP_{\Gamma}$ the exterior dihedral angle-weight function $\theta\in\R^{E}$. Now we consider the map $\Psi: \cP_N\rightarrow \cA_N$, which is defined by $\Psi(P)=\Psi_{\Gamma}(P)$ if $P\in \cP_{\Gamma}$.

To prove Theorem \ref{thm:dihedral angles AdS}, it suffices to show the following statement.

\begin{theorem}\label{thm:homeo_angles}
The map $\Psi:\cP_N\rightarrow \cA_N$ is a homeomorphism.
\end{theorem}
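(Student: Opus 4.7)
The plan is to implement the standard continuity method: show that $\Psi$ is a continuous, proper, locally injective map between manifolds of the same dimension, and then conclude it is a homeomorphism via a connectedness/degree argument. The natural setting is the stratification $\cP=\bigsqcup_\Gamma \cP_\Gamma$ and $\cA=\bigsqcup_\Gamma \cA_\Gamma$, with $\Psi_\Gamma=\Psi|_{\cP_\Gamma}:\cP_\Gamma\to\cA_\Gamma$. Continuity of each $\Psi_\Gamma$ is immediate from the smooth dependence of dihedral angles on the vertices, and continuity of $\Psi$ across strata must then be checked by observing that a vertex of $P$ becoming ideal corresponds exactly to the saturation of an inequality (ii), which by the necessity discussion matches the stratification of $\cA$.

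The first substantive step is a dimension count. For $\Gamma\in\Graph(\Sigma_{0,N},\gamma)$ with $F$ faces and $E$ edges, the vertices of $P'$ contribute $3N$ parameters in $\RP^3$, coplanarity of the vertices on each non-triangular face imposes $\sum_f(|f|-3)=2E-3F$ conditions, and quotienting by $\Isom_0(\AdS)$ (dimension $6$) gives $\dim \cP_\Gamma=3N-6-2E+3F=E$ via Euler's formula, which matches $\dim\cA_\Gamma=E$. The space $\cA_\Gamma$ is moreover an open convex subset of $\R^E$ (cut out by the linear strict/non-strict inequalities (i)--(iv)), hence connected and simply connected when non-empty.

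Next I would prove that $\Psi$ is a local homeomorphism, i.e.\ infinitesimal rigidity. For each $P\in\cP_\Gamma$, the differential $d\Psi_\Gamma$ should be injective. The cleanest route is a Pogorelov-style map: an infinitesimal deformation of $P$ in $\AdS$ lifts to an infinitesimal deformation of an associated Euclidean or hyperbolic polyhedron in which the variation of the dihedral angles corresponds to the AdS variation. Vanishing of all angle variations then forces the deformation to be trivial by classical Cauchy-type rigidity of convex polyhedra, possibly together with the Schl\"afli formula in $\AdS$ to handle contributions from ideal and strictly hyperideal vertices separately. Combined with the matching dimensions, this upgrades $\Psi_\Gamma$ to a local homeomorphism onto $\cA_\Gamma$.

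The main obstacle is properness of $\Psi$: given a sequence $P_n\in\cP$ with $\Psi(P_n)\to \theta\in\cA_{\Gamma_\infty}$, one must extract a subsequence converging in $\cP$ to some $P$ with $\Psi(P)=\theta$. After normalizing by $\Isom_0(\AdS)$, the vertices of $P_n$ live in a compact subset of $\RP^3$, so a subsequence converges to a possibly degenerate configuration $P_\infty$. The task is to rule out each bad degeneration: (a) a vertex escaping from the hyperideal region to the boundary or interior of $\AdS$ in an incompatible way, (b) two or more vertices colliding, (c) $P_\infty$ becoming $2$-dimensional, and (d) an edge becoming light-like. For each such scenario, I would identify a combinatorial cycle or path in $\Gamma^*$ along which the sum of dihedral angles of $P_n$ tends to a value contradicting one of (i)--(iv); for instance, the collision of a subset of vertices produces a simple circuit separating them, and the limiting angle along that circuit must saturate or violate the appropriate linear inequality. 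This combinatorial analysis, modeled on and adapted from Bao--Bonahon \cite{bao-bonahon} and its AdS ideal analog \cite{DMS}, is the technical heart of the proof and where condition (iv) and the equator orientation play a genuine role.

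Once $\Psi$ is a proper local homeomorphism between manifolds of the same dimension, it is a covering map onto $\cA$. Since each $\cA_\Gamma$ is convex and the strata glue along faces where inequalities (ii) become equalities, a direct check that $\cA$ is path-connected (using the construction in Claim~\ref{clm:topology of A_Gamma} to reach any admissible $\theta$ from a fixed basepoint) together with simple connectedness of $\cA$ (being built from convex pieces glued along convex faces) forces the covering to be trivial. Alternatively one computes the degree of $\Psi_\Gamma$ on a tractable example (e.g.\ a tetrahedron-like base case, or an already realized polyhedron) to be $\pm 1$ on each component, which together with local injectivity yields a homeomorphism. This finishes Theorem~\ref{thm:homeo_angles} and hence Theorem~\ref{thm:dihedral angles AdS}.
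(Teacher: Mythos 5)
Your overall strategy --- continuity, dimension count, local homeomorphism via an infinitesimal Pogorelov map, properness via ruling out degenerations, hence a covering map whose degree must be shown to be one --- matches the architecture of the paper's proof, and the dimension count $3N-2E+3F-6=E$ via Euler's formula is exactly what appears in Claim~\ref{clm:dimension of P_Gamma}. The properness sketch (colliding vertices, escaping vertices, light-like edges, each tied back to conditions (i)--(iv)) is also aligned with Lemma~\ref{lem:properness of psi}. The meaningful divergence is in the two spots you leave least developed.

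First, the degree-one step. Your primary option, deducing that $\cA$ is simply connected ``being built from convex pieces glued along convex faces,'' is not a valid inference: a complex glued from contractible pieces along contractible faces need not be simply connected (a triangle of intervals is the basic counterexample), and the paper in fact only establishes that $\cA$ is \emph{connected} (Proposition~\ref{prop:topology of A}), via the flip-graph of admissible triangulations. Your alternative, evaluating the degree on a tractable example, is the right idea but remains a placeholder. What the paper actually does is exploit the already-proved homeomorphism $\Psi':\cP'\to\cA'$ for ideal polyhedra from \cite{DMS}, together with the observation that any strictly hyperideal vertex forces a strict inequality in condition (ii): so a $\theta\in\cA'$ (all vertex sums equal to zero) has exactly one preimage in $\cP'$ and none in $\cP\setminus\cP'$, whence $m=1$. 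That specific mechanism is the missing content here, and without it your proof of the final step is incomplete.

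Second, the local-immersion step: ``Pogorelov-style map plus Cauchy-type rigidity'' skirts a real difficulty. The Pogorelov map transports Killing fields and is naturally adapted to \emph{induced-metric} rigidity. To handle \emph{dihedral-angle} rigidity, the paper first passes to the truncated polyhedron $P_0$ and then to its dual $P_0^*$, converting angle-preservation into edge-length preservation, and then applies Lemma~\ref{lm:edge lengths} and Alexandrov's theorem via the Pogorelov map. The ideal vertices require extra care because the corresponding dual faces lie in light-like planes, where the induced HS metric is degenerate (Proposition~\ref{Prop:deformation at ideal vertex}, Claim~\ref{clm:edge lengths}). As written, your plan invokes Cauchy-type rigidity (which is about compact polyhedra with rigid faces) and the Schl\"afli formula without explaining how either addresses hyperideal or ideal vertices; this is a genuine gap rather than a notational shortcut.

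Finally, you do not treat the case $N=4$, where $\cA$ is disconnected (two components corresponding to the two admissible triangulations of $\Sigma_{0,4}$) and the connectedness/degree argument must be run componentwise.
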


To show this, we prove that $\Psi$ is a well-defined (i.e. $\Psi(P)\in\cA_{\Gamma}$ if $P\in\cP_{\Gamma}$) proper local homeomorphism, and then argue by using some topological facts concerning $\cP_N$ and $\cA_N$. More precisely, we will prove the following propositions and lemmas:

\begin{proposition}\label{prop:topology of A}
For $N\geq 4$, $\cA_N$ is a %smooth
topological manifold of real dimension $3N-6$. In particular, $\cA_N$ is connected for $N\geq 5$. 
\end{proposition}

\begin{proposition}\label{prop:dimension of P}
$\cP_N$ is a smooth manifoldof real dimension $3N-6$ for $N\geq 4$.
\end{proposition}

\begin{proposition}\label{prop: necessary_angle}
The map $\Psi$ taking each $P\in\cP_{\Gamma}$ to its dihedral angle-assignation $\theta$ has image in $\cA_{\Gamma}$, that is, $\theta$ is $\gamma$-admissible (see Definition \ref{def:admissible angles}).
\end{proposition}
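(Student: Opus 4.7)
The plan is to verify the four conditions of Definition \ref{def:admissible angles} in turn, following the template of Rivin \cite{rivin-comp} and Bao--Bonahon \cite{bao-bonahon} for hyperideal hyperbolic polyhedra, with adjustments dictated by the Lorentzian signature and by the negative sign attached to equatorial dihedral angles. Condition (i) should be immediate from the convexity of $P$ together with the sign convention for AdS dihedral angles recalled above: at a non-equatorial edge $e$ the two adjacent faces are both past-directed or both future-directed, hence lie in opposite space-like quadrants of the time-like plane orthogonal to $e$, and strict convexity forces $\theta(e)>0$; at an equatorial edge $e$ the two adjacent faces lie on opposite sides of $\gamma$ in time and therefore in the same space-like quadrant at $e$, so convexity combined with the sign convention gives $\theta(e)<0$.

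For Condition (ii) my plan is a polar-plane truncation argument at each vertex $v$ of $\Gamma$, adapted from the hyperbolic case. When $v$ is strictly hyperideal, the polar plane $v^{*}$ meets $\AdS$ in a totally geodesic copy of $\bH^{2}$ (the projective model of one of the two components of $v^{\perp}\cap\AdS$) which crosses every space-like edge through $v$ AdS-orthogonally, so truncating $P$ by $v^{*}$ produces a convex hyperbolic polygon $\Pi_{v}\subset\bH^{2}$ with edges in bijection with the edges $e_{1},\dots,e_{k}$ of $P$ incident to $v$. Because $\gamma$ is a Hamiltonian cycle, exactly two of these edges are equatorial, and a local computation using the sign convention gives the interior angle of $\Pi_{v}$ at the truncation edge associated with $e_{i}$ as $\pi-\theta(e_{i})$ when $e_{i}\notin\gamma$ and as $-\theta(e_{i})$ when $e_{i}\in\gamma$. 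The hyperbolic Gauss--Bonnet formula applied to $\Pi_{v}$ then yields
\[
(k-2)\pi-\sum_{i=1}^{k}\theta(e_{i})=(k-2)\pi-\area(\Pi_{v}),
\]
i.e. $\sum_{i}\theta(e_{i})=\area(\Pi_{v})>0$, with equality in the limiting case when $v$ is ideal, where $v^{*}$ is tangent to $\partial\AdS$ at $v$ and $\Pi_{v}$ degenerates to a polygon of area zero; this also characterizes the equality case of (ii).

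Conditions (iii) and (iv) I would deduce from (ii) by a Bao--Bonahon-style combinatorial argument. For (iii), the given simple circuit $e_{1}^{*},\dots,e_{k}^{*}$ of $\Gamma^{*}$ is dual to a simple closed curve $c$ on $\partial P$ meeting $\gamma$ in exactly two points, so it bounds on each side a topological disk in $\partial P$ containing at least one vertex of $\Gamma$ in its interior (otherwise $c$ would bound a single face of $\Gamma^{*}$, contradicting the hypothesis). Summing the (ii)-identity over the interior vertices of one such disk, controlling the interior-edge contributions via Condition (i), and using the fact that the disk contains at least one full face of $\Gamma^{*}$ in its interior, one obtains the strict inequality $\sum_{i}\theta(e_{i})>0$; the assumption that exactly two of the boundary edges are dual to $\gamma$ is precisely what is needed for the negative equatorial angles to assemble consistently in the telescoping sum. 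Condition (iv) should be handled in the same spirit, replacing the closed disk by a bigon bounded by the given path together with a sub-arc of the common face of $\Gamma^{*}$, the hypothesis of exactly one equatorial edge playing the analogous role. The main obstacle I anticipate is the careful bookkeeping of the signs coming from the equator in these combinatorial sums for (iii)--(iv); the polar-truncation identity for (ii) is the clean geometric heart of the argument, and (iii)--(iv) are more delicate topological refinements of it.
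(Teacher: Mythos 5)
Your plan for Condition (ii) contains a geometric error that undermines the derivation even though the final formula happens to be right. For a strictly hyperideal vertex $v$ the dual plane $v^{\perp}$ intersects $\AdS$ in a totally geodesic copy of $\bAdS^{2}$, \emph{not} $\bH^{2}$: the restriction of $\langle\cdot,\cdot\rangle_{2,2}$ to the hyperplane orthogonal to a space-like vector has signature $(1,2)$, so $v^{\perp}\cap\AdS$ is a Lorentzian (anti-de Sitter) plane. The truncation polygon $\Pi_v=v^{\perp}\cap P$ is therefore a compact convex Lorentzian polygon with space-like edges, for which there is no interior-angle identity of the form $\area=(k-2)\pi-\sum\alpha_i$, and your angle assignments $\pi-\theta(e_i)$ and $-\theta(e_i)$ do not describe anything geometric in this setting. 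The paper instead invokes the Gauss--Bonnet formula for Lorentzian polygons with space-like edges, which reads $\sum_i\theta_i=\area(p)$ directly (no $(k-2)\pi$ term), so $\sum\theta(e_i)>0$. For an ideal vertex the paper does not pass to a degenerate limit; it cuts with a small horo-torus centered at $v$, obtaining a convex polygon in the Minkowski plane with space-like edges whose exterior angles sum to $0$.

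The more serious problem is your plan for Conditions (iii) and (iv). Summing (ii) over the interior vertices of a disk and ``controlling interior-edge contributions via (i)'' does not give the strict inequality. Each interior edge is counted twice, and interior \emph{non-equatorial} edges have $\theta>0$ by (i), so they push the inequality in the wrong direction: you get $\sum_i\theta(e_i)+2\sum_{\text{int}}\theta(e)\ge 0$, which yields nothing useful without an upper bound on $\sum_{\text{int}}\theta(e)$ that Condition (i) cannot supply. In particular, if all interior vertices are ideal then (ii) holds with \emph{equality} at each of them, and a purely combinatorial summation can never produce the strict inequality required in (iii) — yet (iii) is indeed strict even for ideal polyhedra (this is precisely the DMS case). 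The paper instead proves (iii) geometrically via an extension polyhedron $Q$ (extending the faces adjacent to the circuit and forgetting the rest, as in Section~6 of \cite{DMS}), and proves (iv) either by doubling $P\setminus X_v$ across $v^{\perp}$ and reducing to (iii) when $v$ is strictly hyperideal, or by an algebraic manipulation of (ii) and (iii) when $v$ is ideal. So the ``geometric heart'' of the argument is in (iii)--(iv) as well, not just (ii), and the bookkeeping you anticipate as the main obstacle is not the issue: the combinatorial route simply does not close.
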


\begin{lemma}\label{lem:properness of psi}
The map  $\Psi:\cP_N\rightarrow \cA_N$ is proper. 
In other words, if $(P_n)_{n\in\N}$ is a sequence in $\cP_N$ with image $\theta_n:=\Psi(P_n)$ converging to an element $\theta_{\infty}\in\cA_N$, then $(P_n)_{n\in\N}$ converges to an element of $\cP_N$.
\end{lemma}

\begin{lemma}\label{lem:local immersion_angles}
Let $\Gamma\in\Graph(\Sigma_{0,N},\gamma)$ be a triangulation of $\Sigma_{0,N}$ and let $E$ be the edge set of $\Gamma$. Then for each $P\in\cP_N$ whose 1-skeleton is a subgraph of $\Gamma$, $\Psi:\cP_N\rightarrow\R^{E}$ is a local immersion near $P$.
\end{lemma}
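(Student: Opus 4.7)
The plan is to prove infinitesimal rigidity of the triangulated AdS hyperideal polyhedron $P$ by an infinitesimal Pogorelov-type reduction to Cauchy--Dehn rigidity of a convex Euclidean polyhedron. Since $\dim \cP = 3N-6$ by Proposition \ref{prop:dimension of P} and a triangulation of $\Sigma_{0,N}$ has exactly $3N-6$ edges (by Euler's formula on the sphere), showing that $d\Psi$ is injective at $P$ is equivalent to showing it is a linear isomorphism, which gives the local immersion (in fact, local diffeomorphism) statement.

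First I would identify the $1$-skeleton of $P$ with a subgraph $\Gamma_P\subseteq\Gamma$ and observe that the edges in $E(\Gamma)\setminus E(\Gamma_P)$ all lie inside space-like faces of $P$, where the two adjacent triangles are coplanar; at each such edge the exterior AdS dihedral angle vanishes at $P$. Under a deformation $P_t\in\cP$ of $P$, the first-order variation of $\Psi$ at these ``flat'' edges records the infinitesimal bending of the coplanar pair, so the condition $d\Psi(\dot P)=0$ says precisely that $\dot P$ is an infinitesimal isometric deformation of the space-like triangulated boundary of $P$ in $\AdS$ (including the constraint that every pair of triangles in a common face of $P$ remains coplanar to first order).

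Next I would convert this AdS rigidity problem into a Euclidean one via the infinitesimal Pogorelov map, using the projective model: $P=P'\cap\AdS$ sits as a convex polyhedron $P'$ in an affine chart of $\RP^3$, and the same affine chart carries a flat Euclidean metric. The Pogorelov map sends $\Isom_0(\AdS)$-Killing fields along $P'$ to Euclidean Killing fields, and more generally carries each infinitesimal AdS isometric deformation of a space-like polyhedral surface to an infinitesimal Euclidean isometric deformation of the same combinatorial polyhedron regarded as a Euclidean convex polyhedron in $\R^3$; crucially, the vanishing of the first-order variation of the AdS dihedral angle at a space-like edge is equivalent to the vanishing of the first-order variation of the Euclidean dihedral angle at that same edge. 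Hence $d\Psi(\dot P)=0$ produces an infinitesimal isometric deformation of $P'$ as a Euclidean convex polyhedron, preserving all Euclidean dihedral angles along the triangulation $\Gamma$ to first order. By the infinitesimal Cauchy (Dehn) rigidity theorem for convex Euclidean polyhedra with triangulated boundary, this Euclidean deformation is trivial; inverting the Pogorelov correspondence, $\dot P$ is induced by an element of $\Isom_0(\AdS)$ and therefore vanishes in $T_P\cP$.

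The main obstacle is setting up the Pogorelov correspondence rigorously when $P$ is strictly hyperideal, so that some vertices of $P'$ lie outside the quadric $\bQ^3_1$ and the AdS metric degenerates at those points. The argument must rely only on projective data (vertex positions, edge incidences, space-like faces carrying genuine AdS geometry) near such vertices, and one must verify that the Pogorelov map and its rigidity-transfer property extend across $\bQ^3_1$ in a suitably uniform way, as well as handle the mixed signs of AdS dihedral angles dictated by the sign convention set up before Definition \ref{def:admissible angles}. Once this verification is in place, combining with Cauchy's theorem concludes the argument, and matching dimensions upgrades the immersion to a local diffeomorphism onto its image in $\R^{E}$.
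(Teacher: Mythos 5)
There is a genuine gap, and it is located squarely in your reduction step, not in the extension-across-$\bQ^3_1$ issue that you flag as ``the main obstacle.'' You assert that $d\Psi(\dot P)=0$ ``says precisely that $\dot P$ is an infinitesimal isometric deformation of the space-like triangulated boundary of $P$.'' This conflates two different notions: preserving exterior dihedral angles at first order is not the same as preserving the induced metric at first order (think of scaling a Euclidean polyhedron, which fixes all dihedral angles while altering every edge length and hence is not isometric; in non-flat geometries the two conditions remain distinct). Once this identification fails, the rest of your argument collapses: the infinitesimal Pogorelov map is constructed (Lemmas~\ref{lm:Killing HS-Min} and~\ref{lm:Killing HS-Eur}) precisely so that it transports Killing fields and therefore \emph{first-order isometric} deformations between geometries, but nothing in that construction makes it transport \emph{first-order angle-preserving} deformations. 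The equivalence you invoke between ``vanishing first-order variation of the AdS dihedral angle'' and ``vanishing first-order variation of the Euclidean dihedral angle at the same projective edge'' is not a property of $\Pi$ and is not justified. Finally, the result you want to quote at the end --- Dehn's infinitesimal rigidity / Alexandrov's Theorem~\ref{thm:Alexandrov} --- is a statement about deformations that preserve the boundary \emph{metric}, not the dihedral angles; there is no off-the-shelf Euclidean rigidity theorem with respect to dihedral angles (indeed homotheties show the naive statement is false).

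The paper resolves this by a duality step that your proposal skips. One first replaces $P$ by its truncated polyhedron $P_0$ (cut along the dual planes $v^\perp$ of strictly hyperideal vertices, which introduces orthogonal, hence angle-zero, new edges), and then passes to the dual polyhedron $P_0^*$. By the HS duality (Section~\ref{sect:dual polyhedra}), the exterior dihedral angle at an edge $e$ of $P_0$ equals the signed length of the dual edge $e^*$ of $P_0^*$. This converts the angle-rigidity question for $P$ into the \emph{metric}-rigidity question for $P_0^*$, which is exactly what the infinitesimal Pogorelov map plus Alexandrov handles (Proposition~\ref{prop:infPogorelov}). Even then one must do genuine work to handle ideal vertices, since the dual face of an ideal vertex lies in a light-like plane where the HS metric degenerates; this is why the paper introduces the directed sectorial measure $m_{HS}$ (Definitions~\ref{def:directed angle}--\ref{def:directed measure}) and proves Claim~\ref{clm:edge lengths}. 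Your proposal does not reach this difficulty because it never arrives at the dual polyhedron; to make it work you would need to insert the truncation-plus-dualization step and then run the Pogorelov argument on $P_0^*$ rather than on $P'$.
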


Proposition \ref{prop:topology of A} and Proposition \ref{prop:dimension of P} are proved in Section \ref{sec:topology}. Proposition \ref{prop: necessary_angle} is proved in Section \ref{section:necessity}. Lemma \ref{lem:properness of psi} and Lemma \ref{lem:local immersion_angles} are shown in Section \ref{sec:properness} and Section \ref{sec:rigidity}, respectively. Combining these results we prove Theorem \ref{thm:homeo_angles} in Section \ref{sec:proof_angles}.

A (convex) \textit{hyperideal polygon} in $\bH^2$ is the intersection of $\mathbb{H}^2$ with a convex polygon in $\mathbb{RP}^2$ whose vertices are all outside of $\mathbb{H}^2$ and whose edges all intersect $\mathbb{H}^2$. Let $\widetilde{\polyg}_N$ ($N\geq 3$) be the space of all marked hyperideal polygons in $\mathbb{H}^2$ with $N$ vertices, and let $\polyg_N$ be the quotient space of $\widetilde{\polyg}_N$ modulo by the group of orientation preserving isometries of $\mathbb{H}^2$. Note that the hyperbolic plane $\mathbb{H}^2$ can be isometrically embedded in $\AdS$ as a space-like hyperplane. Therefore any marked hyperideal polygon $P\in\widetilde{\polyg}_N$ can be viewed as a marked degenerate hyperideal polyhedron lying on a space-like plane in $\AdS$, which is two-sided (future-directed and past-directed) and the union of whose edges form an oriented $N$-cycle graph, also called the equator of $P$. Note that each hyperideal AdS polyhedron with three vertices is degenerate, we define $\cP_3:=\emptyset$.
Let $\cP_N\cup\polyg_N$ ($N\geq 3$) denote the space of all marked, non-degenerate and degenerate hyperideal polyhedra in $\AdS$ with $N$ vertices, up to isometries in $\Isom_0(\AdS)$.

Recall that a \emph{cusp} is a surface isometric to the quotient of the region $\{z=x+iy:y>a\}$ of the upper-half space model of hyperbolic plane, for some $a > 0$, by the isometry group generated by $z\rightarrow z+1$. A \emph{funnel} is a surface isometric to the quotient of a hyperbolic half-plane by an isometry of hyperbolic type whose axis is the boundary of that half-plane. Let $\Sigma_{0,N}$ be the 2-sphere with $N$ marked points $p_1$, \ldots, $p_N$ removed (where $N\geq 3$).  We say that a hyperbolic metric on $\Sigma_{0,N}$ is \emph{complete} if for each $1\leq i\leq N$, there is a (small enough) region of $\Sigma_{0,N}$ near $p_i$ (with the induced metric) isometric to either a cusp or a funnel. %Note that
We keep in mind that a complete hyperbolic metric on $\Sigma_{0,N}$ might have infinite area.

Note that any space-like plane in $\mathbb{A}\rm{d}\mathbb{S}^3$ is isometric to the hyperbolic plane $\mathbb{H}^2$ and each face of a hyperideal polyhedron in $\AdS$ is isometric to a hyperideal polygon in the hyperbolic plane. 
Therefore, the path metric induced on (the intersection with $\AdS$ of) the boundary surface of $P\in\cP_N\cup\polyg_N$ is a complete hyperbolic metric on $\Sigma_{0,N}$ with a cusp (resp. funnel) around $p_i$ if the corresponding vertex $v_i$ of $P$ is ideal (resp. strictly hyperideal). In particular, if $P$ is a degenerate hyperideal polyhedron (i.e. a hyperideal polygon), then the induced metric on the boundary surface of $P$ is obtained by doubling $P$, here the boundary surface is identified with the two copies of $P$ glued along the equator. This determines a point in the space $\cT_{0,N}$ of complete hyperbolic metrics on $\Sigma_{0,N}$, possibly with infinite area, considered up to isotopy fixing each marked point.

Now we consider the map $\Phi:\cP_N\cup\polyg_N\rightarrow \cT_{0,N}$, which assigns to each hyperideal polyhderon $P$ in $\cP_N\cup\polyg_N$ the induced metric on the boundary surface of $P$. It follows from the aforementioned fact that $\Phi$ takes image in $\mathcal{T}_{0,N}$.  To verify Theorem \ref{thm:induced metrics},  it suffices to show the following theorem:

\begin{theorem}\label{thm:homeo_metrics}
The map $\Phi: \cP_N\cup{\polyg_N}\rightarrow \mathcal{T}_{0,N}$ is a homeomorphism.
\end{theorem}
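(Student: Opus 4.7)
The plan is to prove Theorem \ref{thm:homeo_metrics} following the same general strategy as Theorem \ref{thm:homeo_angles}: establish that $\Phi:\bar{\cP}\to\cT_{0,N}$ is a continuous, proper local homeomorphism between topological manifolds of the same dimension, and then conclude by a connectedness/covering argument. Both $\bar{\cP}$ and $\cT_{0,N}$ are naturally stratified by the signature $(\epsilon_1,\ldots,\epsilon_N)\in\{0,+\}^N$, and $\Phi$ preserves this stratification. A direct parameter count shows that on each stratum with exactly $k$ indices equal to $+$, both sides are manifolds of real dimension $2N-6+k$: on the polyhedral side, each ideal vertex contributes $2$ parameters (a point on the boundary quadric $\bQ^3_1$) and each strictly hyperideal vertex contributes $3$, after quotienting by the $6$-dimensional group $\Isom_0(\AdS)$; on the Teichm\"uller side, each cusp is rigid while each funnel contributes one extra boundary-length parameter. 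Continuity of $\Phi$ is immediate from the construction.

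The core technical step is local injectivity: I would show that $d\Phi$ is injective at each $P\in\bar{\cP}$. The strategy is to combine Theorem \ref{thm:homeo_angles} with the Schl\"afli formula in $\AdS$. Concretely, after triangulating each face of $P$ by auxiliary diagonals (so that $P$ becomes, combinatorially, a triangulation as in Lemma \ref{lem:local immersion_angles}), a first-order deformation preserving the induced metric must preserve every edge length, hence by the Schl\"afli-type identity it imposes linear relations on the variations of the dihedral angles. These relations, together with the local immersion property of the dihedral angle map provided by Lemma \ref{lem:local immersion_angles}, should force the deformation to be trivial. For degenerate $P$ (a hyperideal polygon lying in a totally geodesic plane), I would exploit the reflection symmetry across that plane: $\Phi$ factors through the doubling map, and rigidity reduces to rigidity of hyperideal polygons in $\mathbb{H}^2$, which is classical.

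Properness is then handled analogously to Lemma \ref{lem:properness of psi}: if a sequence $(P_n)$ leaves every compact subset of $\bar{\cP}$, then up to a subsequence either a vertex escapes to infinity in $\RP^3$, two vertices collide, an edge collapses, or a face degenerates. In each case one identifies a simple closed curve on $\Sigma_{0,N}$ whose geodesic length under $\Phi(P_n)$ tends to $0$ or to $\infty$, or a funnel whose boundary length diverges, which ensures $\Phi(P_n)$ leaves every compact set of $\cT_{0,N}$. Assembling continuity, properness and local homeomorphism on manifolds of equal dimension, $\Phi$ is a covering map; connectedness of $\cT_{0,N}$ (each $\cT_{0,N}^{(\epsilon_1,\ldots,\epsilon_N)}$ being a cell, glued compatibly along common substrata) together with injectivity verified on one explicit symmetric example then yields the desired homeomorphism.

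The main obstacle will be the infinitesimal rigidity step. The Cauchy--Alexandrov-style arguments used for convex polyhedra in constant-curvature Riemannian spaces rely heavily on spherical or hyperbolic trigonometry at the vertices, whereas in $\AdS$ the tangent spaces have mixed signature and the exterior dihedral angles change sign across the equator. Careful sign accounting (as already required in the definition of the exterior dihedral angle in $\AdS$) is needed throughout, and the analysis at degenerate polyhedra, where the equator becomes singular and the two sides of the polygon collapse onto a single totally geodesic plane, is a separate non-trivial case that must be reconciled with the stratification of $\bar{\cP}$.
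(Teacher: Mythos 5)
Your overall strategy matches the paper's: continuity, properness, local injectivity of $d\Phi$, matching dimensions, covering argument plus connectedness of $\cT_{0,N}$, and a final injectivity check. The dimension count by signature (ideal $\leadsto$ 2 parameters on $\bQ^3_1$, strictly hyperideal $\leadsto$ 3, minus 6 for $\Isom_0(\AdS)$) is correct and consistent with Proposition \ref{prop:dimension of bP}. However, your proposed route to the key step, local injectivity, has a genuine gap. You want to derive metric rigidity (Lemma \ref{lem:local immersion_metrics}) from angle rigidity (Lemma \ref{lem:local immersion_angles}) via a Schl\"afli identity: a metric-preserving deformation fixes edge lengths, Schl\"afli then ``imposes linear relations on the variations of the dihedral angles,'' and angle rigidity gives triviality. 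But the Schl\"afli formula yields a \emph{single} scalar identity of the shape $dV = \tfrac12 \sum_e \ell_e\, d\theta_e$; fixing all $\ell_e$ does not force the individual $d\theta_e$ to vanish, which is exactly what Lemma \ref{lem:local immersion_angles} would need as input. Edge lengths and dihedral angles are dual, not equivalent, boundary data: preserving one set of quantities does not preserve the other, and there is no way to close the loop as you describe.

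The paper sidesteps this entirely: Lemma \ref{lem:local immersion_metrics} is proved with no reference to dihedral angles, by Proposition \ref{prop:infPogorelov} -- the infinitesimal Pogorelov map $\Pi$ transports a first-order isometric deformation of $P$ (for the induced HS metric) to a first-order isometric deformation of the same polyhedron in $\E^3$, where Alexandrov's infinitesimal rigidity (Theorem \ref{thm:Alexandrov}) applies directly and uniformly, including in the degenerate case (so no separate doubling argument is needed for polygons). Lemma \ref{lem:local immersion_angles} is its sibling, proved by applying the same Pogorelov/Alexandrov machinery to the \emph{dual} truncated polyhedron $P^*_0$, not a consequence you can bootstrap from. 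For the final step establishing $m=1$, the paper does not pick an ad hoc symmetric example: it uses that the restriction of $\Phi$ to ideal polyhedra is already a homeomorphism onto $\cT'_{0,N}$ by \cite[Theorem 1.5]{DMS}, together with the observation that any strictly hyperideal vertex produces a funnel and hence infinite area, so a finite-area metric has a unique preimage. Your properness sketch is in the right spirit but vaguer than the paper's three-step degeneration analysis in Lemma \ref{lem:proper_metrics}, which explicitly rules out non-convex limits, tangent edges, and vertex collisions by examining the induced metric near the degeneration.
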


Using a similar idea as for Theorem \ref{thm:homeo_angles}, we need to show the following topological fact concerning $\cP_N\cup\polyg_N$, the local parameterization and properness statements with respect to the induced metrics.

$\cT_{0,N}$ is a contractible manifold with corners of real dimension $3N-6$ (see e.g. \cite{mbh,fock-goncharov-handbook,bonahon-liu}), which can also be obtained from the enhanced Teichm\"uller space of hyperbolic surfaces with $N$ boundary components of sign $0$ (i.e. cusps), sign $+$ or $-$ by identifying the signs ($+$ and $-$) of the boundary components. 
Note that $\cP_N\cup\polyg_N$ is the disjoint union of $\cP^{(\epsilon_1, \ldots, \epsilon_N)}$ over all $(\epsilon_1, \ldots, \epsilon_N)\in \{0, 1\}^N$, where $\mathcal{P}^{(\epsilon_1, \ldots, \epsilon_N)}$ is the subspace of $\cP_N\cup\polyg_N$ in which the polyhedra have ``vertex signature'' $(\epsilon_1, \ldots, \epsilon_N)$, meaning that the vertex $v_i$ is ideal if $\epsilon_i=0$ and strictly hyperideal if $\epsilon_i=1$ (see subsection \ref{subsec:metric} for more details). We will prove the following statements.

\begin{proposition}\label{prop:dimension of bP}
For each $(\epsilon_1, \ldots, \epsilon_N)\in \{0, 1\}^N$, the space $\cP^{(\epsilon_1, \ldots, \epsilon_N)}$ is a smooth manifold of real 
dimension $2N-6+\epsilon_1+\ldots+\epsilon_N$.
\end{proposition}

\begin{lemma}\label{lem:proper_metrics}
The map $\Phi: \cP_N\cup\polyg_N\rightarrow \mathcal{T}_{0,N}$ is proper.
\end{lemma}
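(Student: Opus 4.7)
The plan is to argue by contradiction and compactness, paralleling Lemma \ref{lem:properness of psi}. Suppose $(P_n)_{n\in\N}\subset\bar{\cP}$ is divergent while $h_n := \Phi(P_n)$ converges to some $h_\infty \in \cT_{0,N}$; I will show that, after extracting a subsequence, $P_n$ in fact converges in $\bar{\cP}$, contradicting the hypothesis of divergence.

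First I would normalize each $P_n$ by an element of $\Isom_0(\AdS)$ bringing a marked geometric datum (say, a marked vertex of the equator together with an adjacent future face) into a fixed standard position, which uses up the isometry freedom modulo a compact residual subgroup. Since there are only finitely many combinatorial types of $1$-skeletons, after extraction I may assume all $P_n$ share a common 1-skeleton $\Gamma\in\Graph(\Sigma_{0,N},\gamma)$ (or a common polygonal type in $\polyg$). The vertices $v_i^n$ all lie in the compact subset $\RP^3\setminus\interieur{\AdS}$ of the compact manifold $\RP^3$, so up to a further extraction $v_i^n\to v_i^\infty\in\RP^3\setminus\interieur{\AdS}$. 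If the resulting limit data define a (possibly degenerate) element of $\bar{\cP}$ — which includes the cases where some strictly hyperideal $v_i^n$ becomes ideal (funnel-to-cusp transition, internal to $\bar{\cP}$) and where $P_n$ collapses onto a spacelike plane (realized inside the $\polyg$ component of $\bar{\cP}$) — then $P_n\to P_\infty$, contradicting divergence. Otherwise one of the following forbidden degenerations must occur: (D1) two vertices coincide, $v_i^\infty=v_j^\infty$ for some $i\neq j$; (D2) an edge $[v_i^\infty,v_j^\infty]$ fails to meet $\AdS$, becoming tangent to or disjoint from $\bQ^3_1$; (D3) some face degenerates, either by having three projectively collinear vertices or by having a supporting plane turning light-like or time-like.

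The heart of the proof is to show that each of (D1)--(D3) forces $h_n$ to leave every compact subset of $\cT_{0,N}$, contradicting $h_n\to h_\infty$. In case (D1), the two coalescing vertices are connected by edges in $\Gamma$ whose adjacent faces contain both endpoints, and a short closed curve on $\partial P_n$ separating $\{p_i,p_j\}$ from the remaining punctures has $h_n$-length tending to $0$, giving a Teichm\"uller pinching incompatible with convergence in $\cT_{0,N}$. In case (D2), each of the two faces meeting along the degenerating edge is isometric to a hyperideal polygon in $\mathbb{H}^2$, and the edge becoming light-like in the projective model corresponds to an intrinsic length on that face (an edge length between vertices, or a truncation width at a hyperideal vertex) tending to $+\infty$; this length appears on $\partial P_n$ and forces $h_n$ to leave every compact set in $\cT_{0,N}$. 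Case (D3) is treated analogously via the collapse of the intrinsic metric on the degenerating face.

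The main obstacle I expect is (D2): carefully quantifying how the projective approach of an edge to the quadric $\bQ^3_1$ translates into the divergence of an intrinsic length on the adjacent faces. This requires precise estimates on the geometry of hyperideal polygons in $\mathbb{H}^2$ as they approach the boundary of their moduli space, together with the rigidity of the gluing of face metrics along the edges of $\Gamma$. Once these face-by-face estimates are in place, the behavior of cusps and funnels under degeneration lets me translate every forbidden projective degeneration of $P_n$ into a Teichm\"uller-theoretic degeneration of $h_n$, completing the proof.
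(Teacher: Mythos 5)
Your overall strategy is the same as the paper's: normalize by isometries, extract a convergent subsequence in $\RP^3$, and show that each way the limit can fail to be in $\bar{\cP}$ forces $h_n$ to escape every compact of $\cT_{0,N}$. The paper phrases this directly (Steps 1--3 show the limit is convex, hyperideal, and of the correct signature) while you argue by contraposition, but the logical content is identical, and your cases (D1) and (D3) line up with the paper's Steps 3 and 1 respectively.

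Two of your steps need correction, though. First, the claim that fixing a marked vertex together with an adjacent face ``uses up the isometry freedom modulo a compact residual subgroup'' is false: the stabilizer in $\Isom_0(\AdS)$ of a space-like plane $\pi$ and a (hyperideal or ideal) point $v\in\pi$ still contains a non-compact one-parameter subgroup of translations/parabolics of $\Isom(\pi)\cong\PSL_2(\R)$, so this normalization does not by itself pin down representatives. The paper instead normalizes by sending three points of $\partial P_n\cap\partial\AdS$ on a fixed face to a standard ideal triangle (Lemma~\ref{lm:representatives}, Remark~\ref{rk:representatives}), which fixes all six parameters.

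Second, and more seriously, your geometric intuition in case (D2) is off in the generic situation. If the edge $e_n$ tends to a line tangent to $\bQ^3_1$ at an interior point while the two vertices of $e_n$ stay bounded away from $\partial\AdS$, then the relevant intrinsic quantity does \emph{not} diverge: a direct cross-ratio computation in the Klein model shows the length of the truncated edge (between the two dual polar geodesics) tends to $0$, and correspondingly a separating closed curve near the nascent tangent point pinches. The length-divergence picture you describe only occurs in the subcase where a vertex escapes to the plane at infinity of the affine chart so its funnel boundary becomes arbitrarily long. The paper's characterization --- that the limit metric near the tangent point has an end that is neither a cusp nor a funnel (Step~2 of the paper's Lemma~\ref{lem:proper_metrics}) --- is the correct unified statement, and it is precisely the point you flag as the ``main obstacle'' without resolving. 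You should also argue, or at least observe, that your list (D1)--(D3) is exhaustive: the case of vertices escaping to the affine chart's plane at infinity, which underlies the ``infinite prism'' limit handled in the paper's Step~1, is not explicitly addressed and needs either to be reduced to (D1) or treated separately.
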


\begin{lemma}\label{lem:local immersion_metrics}
For each $(\epsilon_1, \ldots, \epsilon_N)\in \{0, 1\}^N$, the restriction to $\cP^{(\epsilon_1, \ldots, \epsilon_N)}$ of the map $\Phi$ is a local immersion.
\end{lemma}

Proposition \ref{prop:dimension of bP} is proved in Section \ref{sec:topology}. 
Lemma \ref{lem:proper_metrics} and Lemma \ref{lem:local immersion_metrics} are shown in Section \ref{sec:properness} and Section \ref{sec:rigidity}, respectively. Combining these results we prove Theorem \ref{thm:homeo_metrics} in Section \ref{sec:proof_metric}.

%%% Local Variables:
%%% mode: pdflatex
%%% TeX-master: "foliation"
%%% End:

    \section{Background materials}\label{sect:AdS}

This section contains definitions and notations needed elsewhere in the paper, as well as some background results.

\subsection{The 3-dimensional anti-de Sitter space $\AdS$ and its dual space ${\AdS}^*$.}
\label{subsec:AdS and dual AdS}

Let $\R^{2,2}$ denote the real 4-dimensional vector space $\R^4$ equipped with a symmetric bilinear form $\langle \cdot, \cdot\rangle_{2,2}$ of signature $(2,2)$, where $\langle x, y\rangle_{2,2}:=x_1y_1+x_2y_2-x_3y_3-x_4y_4$. We denote
$$AdS^3:=\{x\in\R^{2,2} ~|~ \langle x,x\rangle_{2,2}=-1\}~, $$ and
denote
$$AdS^{3*}:=\{x\in\R^{2,2}~|~ \langle x,x\rangle_{2,2}=1\}~.$$
The restriction of the bilinear form $\langle \cdot, \cdot\rangle_{2,2}$ to the tangent space at each point of of $AdS^3$ (resp. $AdS^{3*}$) induces a pseudo-Riemannian metric of signature $(2,1)$ (resp. $(1,2)$). In particular, $AdS^3$ is a 3-dimensional Lorentzian symmetric space of constant curvature $-1$ diffeomorphic to $\bH^2\times S^1$, called the \textit{quadric model} of the 3-dimensional \textit{anti-de Sitter} (AdS) space. We denote by $\AdS$ (resp. ${\AdS}^*$) the projection of $AdS^3$ (resp. $AdS^{3*}$) to $\RP^3$ (with the metric induced from the projection), which is the \textit{Klein model} of the 3-dimensional (resp. \textit{dual}) AdS space (we always use this model throughout the paper). Similarly we can define $\bAdS^2$ (also ${\bAdS}^{2*}$) by considering the bilinear form $\langle\cdot,\cdot\rangle_{1,2}$ in $\R^{1,2}$. We refer to \cite[Section 2]{bonsante-seppi:anti} for more details about the models of $n$-dimensional AdS space ($n\geq 2$).

\subsection{The boundary of $\AdS$}\label{subsec:boundary}
The boundary of ${\AdS}$ in $\RP^3$ is exactly the projective quadric:
$$ \partial{\AdS} = \{[x]\in \RP^{3}~|~ \langle x,x\rangle _{2,2}=0\} ~. $$
It is homeomorphic to $S^1\times S^1$ and admits two foliations whose leaves are projective lines (called the left and right leaves respectively) and such that each left leaf and each right leaf intersect at exactly one point.

\subsection{The geodesics and totally geodesic planes in $\AdS$}

The geodesics in $\AdS$ are obtained by intersecting $\AdS$ with the projective lines in $\RP^3$: the spacelike geodesics correspond to the projective lines intersecting the boundary $\partial\AdS$ in two points, while lightlike geodesics are tangent to $\partial\AdS$, and timelike geodesics are disjoint from $\partial\AdS$ (see Figure \ref{fig:geodesics and planes}).

The totally geodesic planes in $\AdS$ are obtained by intersecting $\AdS$ with the projective planes in $\RP^3$ (as shown in Figure \ref{fig:geodesics and planes}). Among them, a spacelike plane is a compression disk (whose closure in $\RP^3$ intersects $\partial \AdS$ along an ellipse) with induced metric hyperbolic, while a lightlike plane is the intersection with $\AdS$ of a plane tangent to $\partial\AdS$ (whose closure in $\RP^3$ intersects $\partial\AdS$ along two lightlike geodesics) with induced metric degenerate, and a timelike plane is topologically a M\"obius band and it is isometric to $\bAdS^2$ (see \cite[Section 4]{mbh} and \cite[Section 2.4]{bonsante-seppi:anti}).

\begin{figure}
\begin{subfigure}[b]{0.46\textwidth}
\centering
\begin{tikzpicture}[scale=0.8]
\draw  (-0.5,2.55) ellipse (2.5 and 0.5);
\draw[densely dashed] (-2.95,-2.5) .. controls (-2.95,-1.85) and (2,-1.9) .. (2,-2.55);
\draw (-2.95,-2.5) .. controls (-2.95,-3.15) and (2,-3.2) .. (2,-2.55);
\draw (-3.5,3) .. controls (-0.5,0.05) and (-0.5,0.05) .. (-3.5,-3);
\draw (2.5,3) .. controls (-0.5,0.05) and (-0.5,0.05) .. (2.5,-3);
\draw[thick][densely dashed](-1.25,0.25) .. controls (0.25,0.1) and (0.25,0.1) .. (0.25,0.1);
\draw[thick] (-0.5,3.2) .. controls (-0.5,2.25) and (-0.5,2.25) .. (-0.5,2.25);
\draw[thick](-0.95,2.2) .. controls (-1.25,0.1) and (-1.25,0.1) .. (-1.25,0.1);
\draw[thick][densely dashed](-1.25,0.1) .. controls (-1.55,-2.15) and (-1.55,-2.15) .. (-1.55,-2.15);
\draw[densely dashed] [thick](-0.5,2.25) .. controls (-0.5,-3.15) and (-0.5,-3.15) .. (-0.5,-3.15);
\draw[thick] (-0.5,-3.15) .. controls (-0.5,-3.15) and (-0.5,-3.15) .. (-0.5,-3.15);
\draw[densely dashed] [thick](-1.5,3.05) .. controls (-1.25,0.1) and (-1.25,0.1) .. (-1.25,0.1);
\draw [thick](-1.05,-3) .. controls (-1.25,0.1) and (-1.25,0.1) .. (-1.25,0.1);
\node at (-2.3,-0.65) {lightlike};
\node at (-0.35,-3.35) {timelike};
\node at (1.15,0.1) {spacelike};
\end{tikzpicture}
\end{subfigure}
\begin{subfigure}[b]{0.46\textwidth}
\centering
\begin{tikzpicture}[scale=0.8]
\draw  (-0.5,2.55) ellipse (2.5 and 0.5);
\draw[densely dashed] (-2.95,-2.5) .. controls (-2.95,-1.85) and (2,-1.9) .. (2,-2.55);
\draw (-2.95,-2.5) .. controls (-2.95,-3.15) and (2,-3.2) .. (2,-2.55);
\draw (-3.5,3) .. controls (-0.5,0.05) and (-0.5,0.05) .. (-3.5,-3);
\draw (2.5,3) .. controls (-0.5,0.05) and (-0.5,0.05) .. (2.5,-3);
\draw[thick][densely dashed](-1.25,0.25) .. controls (0.25,0.1) and (0.25,0.1) .. (0.25,0.1);
\draw[thick] (-0.5,3.2) .. controls (-0.5,2.25) and (-0.5,2.25) .. (-0.5,2.25);
\draw[thick](-0.95,2.2) .. controls (-1.25,0.1) and (-1.25,0.1) .. (-1.25,0.1);
\draw[thick][densely dashed](-1.25,0.1) .. controls (-1.55,-2.15) and (-1.55,-2.15) .. (-1.55,-2.15);
\draw[densely dashed] [thick](-0.5,2.25) .. controls (-0.5,-3.15) and (-0.5,-3.15) .. (-0.5,-3.15);
\draw[thick] (-0.5,-3.15) .. controls (-0.5,-3.15) and (-0.5,-3.15) .. (-0.5,-3.15);
\draw [fill=gray, fill opacity=0.3, densely dashed] (-1.25,0.25) .. controls (-1.25,0.6) and (0.3,0.45) .. (0.25,0.1);
\draw[fill=gray, fill opacity=0.3, densely dashed]  (-1.25,0.25) .. controls (-1.3,-0.1) and (0.25,-0.25) .. (0.25,0.1);
\draw [fill=gray, fill opacity=0.3, densely dashed] (-0.2,2.1) .. controls (-0.85,3.1) and (-0.85,3.1) .. (-0.85,3.1) .. controls (-0.5,0.05) and (-0.5,0.05) .. (-0.85,-2.1) .. controls (-0.1,-3.1) and (-0.1,-3.1) .. (-0.1,-3.1) .. controls (-0.5,0.05) and (-0.5,0.05) .. (-0.2,2.1);
\draw[densely dashed] [thick](-1.5,3.05) .. controls (-1.25,0.1) and (-1.25,0.1) .. (-1.25,0.1);
\draw [thick](-1.05,-3) .. controls (-1.25,0.1) and (-1.25,0.1) .. (-1.25,0.1);
\draw [fill=gray, fill opacity=0.3,densely dashed] (-1.5,3.05) .. controls (-0.95,2.2) and (-0.95,2.2) .. (-0.95,2.2) .. controls (-1.25,0.1) and (-1.25,0.1) .. (-1.25,0.1) .. controls (-1.5,3.05) and (-1.5,3.05) .. (-1.5,3.05);
\draw [fill=gray, fill opacity=0.3,densely dashed](-1.05,-3) .. controls (-1.25,0.1) and (-1.25,0.1) .. (-1.25,0.1) .. controls (-1.55,-2.15) and (-1.55,-2.15) .. (-1.55,-2.15) .. controls (-1.05,-3) and (-1.05,-3) .. (-1.05,-3);
\node at (-2.3,-0.65) {lightlike};
\node at (0.15,-3.35) {timelike};
\node at (1.15,0.1) {spacelike};
\draw[thick] (-0.1764,2.1176) .. controls (-0.5292,0.0884) and (-0.4704,0.088) .. (-0.1176,-3.1176);
\draw [thick](-1.2352,0.2648) .. controls (-1.294,-0.1176) and (0.2648,-0.2648) .. (0.2648,0.1176);
\draw[-latex][densely dashed](-0.1768,2.0888)--(-0.7176,2.9064);
\draw[-latex][densely dashed](-0.8236,-2.086)--(-0.306,-2.856);
\end{tikzpicture}
\end{subfigure}
\caption{\small{Examples of spacelike, timelike and lightlike geodesics (in the left picture) and totally geodesic planes (in the right picture) in $\AdS$ (in an affine chart).}}
\label{fig:geodesics and planes}
\end{figure}

\subsection{The isometry group of $\AdS$}
The isometry group of $AdS^3$ is the indefinite orthogonal group $\rO(2,2)$ of reversible linear transformations on $\R^{2,2}$ preserving the bilinear form $\langle \cdot, \cdot \rangle_{2,2}$. In particular, the group $\Isom_0(\AdS)$ of orientation and time-orientation preserving isometries of $\AdS$ is the identity component $\PO_0(2,2)$ of $\PO(2,2)$. It is interesting to notice that $\AdS$ has a Lie group model, in which $\AdS$ is identified with $\PSL_2(\R)$, and $\Isom_0(\AdS)$ is identified with $\PSL_2(\R)\times\PSL_2(\R)$, see e.g. \cite{mess, bonsante-seppi:anti}.

\subsection{Hyperideal polyhedra in $\AdS$}

We give some key properties of the edges and faces of hyperideal AdS polyhedra, which will be used in subsequent sections.
\begin{lemma}\label{lm:face_spacelike}
 Let $P$ be a hyperideal AdS polyhedron. Then all the edges and faces of $P$ are spacelike.
\end{lemma}
\begin{proof}
By definition, all the edges of $P$ intersect $\partial \AdS$ in two points and are thus spacelike. It remains to show that each face of $P$ is spacelike. We argue by contradiction. Assume that $P$ has a non-spacelike (i.e. timelike or lightlike) face, say $f$. Then $f$ is contained in the plane (in an affine chart), say $F$, whose intersection with $\AdS$ is non-spacelike. Moreover, the set $F\setminus(\AdS\cup\partial\AdS)$ has two connected components (say $C_1$, $C_2$) whenever $f$ is timelike or lightlike (as shown in Figure \ref{fig:non-spacelike}). Combining the fact that all vertices of $f$ lie outside of $\AdS$ and $f$ (as a face) has at least three vertices, at least two adjacent vertices, say $v_1$, $v_2$, of $f$ lie on the closure of the same component $C_i$. By the assumption that $F$ is timelike or lightlike, the closures of $C_1$ and $C_2$ are both convex. 
As a consequence, the edge of $f$ connecting $v_1$ and $v_2$ must lie in the closure of $C_i$ and thus lie outside of $\AdS$ (see Figure \ref{fig:non-spacelike} for instance), which leads to contradiction. The lemma follows.
\end{proof}

  \begin{figure}
   \begin{subfigure}[b]{0.46\textwidth}
    \centering
\begin{tikzpicture}
\draw [white][fill=gray,opacity=0.3](2.5,6.5) .. controls (4.5,4.5) and (4.5,4.5) .. (2.5,2.5) .. controls (6.5,2.5) and (6.5,2.5) .. (6.5,2.5) .. controls (4.5,4.5) and (4.5,4.5) .. (6.5,6.5) .. controls (2.5,6.5) and (2.5,6.5) .. (2.5,6.5);
%\draw [dotted](2.5,2.5) .. controls (6.5,6.5) and (6.5,6.5) .. (6.5,6.5);
%\draw [dotted](2.5,6.5) .. controls (6.5,2.5) and (6.5,2.5) .. (6.5,2.5);
\draw [thick](3.85,5)   .. controls (3.85,4) and (3.85,4) .. (3.85,4) .. controls (6,4.5) and (6,4.5) .. (6,4.5) .. controls (3.85,5) and (3.85,5) .. (3.85,5);
\node at (3.6,5.05) {$v_1$};
\node at (3.55,4.05) {$v_2$};
\draw (2.5,6.5) .. controls (4.5,4.5) and (4.5,4.5) .. (2.5,2.5);
\draw (6.5,6.5) .. controls (4.5,4.5) and (4.5,4.5) .. (6.5,2.5);
\node at (2.65,3.55) {$C_1$};
\node at (6.45,3.5) {$C_2$};
\node at (6.4,4.5) {$f$};
\end{tikzpicture}
\end{subfigure}
    \begin{subfigure}[b]{0.46\textwidth}
     \centering
\begin{tikzpicture}
\draw[white][fill=gray,opacity=0.3] (2.5,6.5) .. controls (4.5,4.5) and (4.5,4.5) .. (4.5,4.5) .. controls (6.5,6.5) and (6.5,6.5) .. (6.5,6.5) .. controls (2.5,6.5) and (2.5,6.5) .. (2.5,6.5);
\draw [white][fill=gray,opacity=0.3](4.5,4.5) .. controls (2.5,2.5) and (2.5,2.5) .. (2.5,2.5) .. controls (6.5,2.5) and (6.5,2.5) .. (6.5,2.5) .. controls (4.5,4.5) and (4.5,4.5) .. (4.5,4.5);
\draw (2.5,2.5) .. controls (6.5,6.5) and (6.5,6.5) .. (6.5,6.5);
\draw (2.5,6.5) .. controls (6.5,2.5) and (6.5,2.5) .. (6.5,2.5);
\draw [thick](4,5) .. controls (2.5,4.5) and (2.5,4.5) .. (2.5,4.5) .. controls (4,4) and (4,4) .. (4,4) .. controls (6,4.5) and (6,4.5) .. (6,4.5) .. controls (4,5) and (4,5) .. (4,5);
\node at (4.05,5.25) {$v_1$};
\node at (2.5,4.85) {$v_2$};
\node at (2.7,3.5) {$C_1$};
\node at (6.3,3.5) {$C_2$};
\node at (6.5,4.5) {$f$};
\end{tikzpicture}
    \end{subfigure}
    \caption{\small{Examples of non-spacelike face $f$. In the left (resp. right) picture, $f$ is a timelike (resp. lightlike) face, lying in the plane $F$ in an affine chart. The shaded region represents $F\cap\AdS$, while the white regions $C_1$ and $C_2$ represent the two connected components of $F\setminus(\AdS\cup\partial\AdS)$}.}
    \label{fig:non-spacelike}
    \end{figure}

Combining Lemma \ref{lm:face_spacelike} and the convexity of hyperideal AdS polyhedra, we have the following result.

\begin{corollary}\label{cor:geodesic}
Let $P$ be a hyperideal AdS polyhedron and let $v$, $v'$ be two distinct vertices of $P$. Then the geodesic segment connecting $v$ and $v'$ intersects $\AdS$.
\end{corollary}

\subsection{The 2-dimensional plane $\HSS$ and 3-dimensional space $\HS$}

Faces of hyperideal polyhedra in $\AdS$ typically have a part in $\AdS$, which is space-like and therefore locally modelled on the hyperbolic plane, but also a part outside $\AdS$. This ``exterior'' part is time-like in ${\AdS}^*$.
So, when describing the induced geometric structure on the boundary of a hyperideal polyhedron in $\AdS$, it will be useful below to consider the induced geometric structure as an HS-structure, in the sense that it is locally modelled on the 2-dimensional plane $\HSS$.

The plane $\HSS$ can be described as $\RP^2$, with the geometric structure induced by the Minkowski bilinear form $\langle\cdot, \cdot\rangle_{2,1}$. 
One open disk in $\HSS$, corresponding to the time-like lines in $\R^{2,1}$, is isometric to the hyperbolic plane $\HH^2$, while the complement of the closure of this disk, corresponding to  the space-like lines in $\R^{2,1}$, is isometric to the de Sitter plane. In addition, there is a well-defined notion of ``distance'' (usually defined as a complex number) between a point of the hyperbolic part and a point of the de Sitter part of $\HSS$ (see also \cite[Section 2]{shu}).

Let $\HS$ denote the space $\RP^3$ equipped with the geometric structure induced by the bilinear form $\langle \cdot, \cdot\rangle_{2,2}$. 

By construction its geometry is invariant under $\PO_0(2,2)$. Here too, $\HS$ has a part  corresponding to the time-like lines in $\R^{2,2}$, which is isometric to $\AdS$, while the complement of the closure of this part is isometric to ${\AdS}^*$.

Let $\sigma: {\AdS\cup{\AdS}^*}\rightarrow \R^{2,2}$ be a local section of the canonical projection $\pi:\R^{2,2}\rightarrow \RP^3$ with $\sigma({\AdS})\subset AdS^3$ and $\sigma({\AdS}^*)\subset {\AdS}^*$. For each point $[x]\in \AdS\cup{\AdS}^*$, we define the \textit{HS scalar product} on $T_{[x]}{\HS}$ as
$$\langle v,w\rangle_{HS}:=\langle \sigma_{*}(v), \sigma_{*}(w)\rangle_{2,2}~,$$
for all $v, w\in T_{[x]}\HS$. The restriction of $\langle \cdot,\cdot\rangle_{HS}$ to $T\AdS$ %(resp. $T{\AdS}^*$)
exactly induces the AdS %(resp. dual AdS)
metric. Note that for any $[x]\in\partial\AdS$, there is no way to define a natural metric on $T_{[x]}\HS$ (see e.g. \cite[Section 2.2]{bonsante-seppi:anti}).

\subsection{Duality in $\HS$}\label{sect:dual polyhedra}
The bilinear form $\langle \cdot,\cdot \rangle_{2,2}$ induces a duality in $\HS$. Let $W$ denote a totally geodesic subspace of $\HS$. We define the \textit{dual} of $W$ as
\begin{center}
$W^{\perp}:=\{\,[x]\in\HS: \langle x, y\rangle_{2,2}=0$ for all $[y]\in W\,\}~.$
\end{center}

Let $P$ be a hyperideal AdS polyhedron. The \textit{dual polyhedron} $P^*$ of $P$ is defined as the polyhedron $P^*$ in $\HS$ such that
\begin{itemize}
\item a $k$-face $f^*$ of $P^*$ is contained in the dual of a $(2-k)$-face $f$ of $P$ for $k=0,1,2$;
\item a $k$-face $f^*$ of $P^*$ is contained in the boundary of a $(k+1)$-face $(f')^*$ if and only if the boundary of the corresponding $(2-k)$-face $f$ of $P$ contains the corresponding $(2-k-1)$-face $f'$ for $k=0,1$.
\end{itemize}

 We call the face $f^*$ of $P^*$ the \emph{dual} of the face $f$ of $P$. It is clear that the dual polyhedron $P^*$ is uniquely determined by $P$, and furthermore, $P^*$ is a convex if and only if $P$ is convex.

    \section{Necessity}\label{section:necessity}

In this section, we show that the conditions in Theorem \ref{thm:dihedral angles AdS}
are necessary by proving Proposition \ref{prop: necessary_angle}.

\subsection{Angle conditions}
Let $P$ be a hyperideal AdS polyhedron with 1-skeleton $\Gamma\in \Graph(\Sigma_{0,N},\gamma)$. Let $\theta: E(\Gamma)\rightarrow \mathbb{R}$ be a function assigning to each edge $e\in E(\Gamma)$ the (exterior) dihedral angle $\theta(e)$ at $e$ of $P$. We prove in the following that $\theta$ is a $\gamma$-admissible function. 

\begin{proof}[\textbf{Proof of Proposition \ref{prop: necessary_angle}}]
We show that $\theta$ satisfies Conditions (i)-(iv) in Definition \ref{def:admissible angles}.

\begin{condition1}
\end{condition1}
Condition (i) follows from the sign convention for dihedral angles.

\begin{condition2}
\end{condition2}
For Condition (ii), let $v$ be the vertex of $\Gamma$ dual to that face of $\Gamma^*$.

If $v$ is ideal, we use a similar argument as in \cite[Section 6]{DMS}: the intersection of $P$ with a small ``horo-torus'' centered at $v$ is a convex polygon (with space-like edges) in the Minkowski plane. The exterior angle at each vertex of this polygon is equal to the exterior dihedral angle at the corresponding edge (containing that vertex) of $P$.
Let's recall the Gauss-Bonnet formula for Lorentzian polygons $p$ with all edges spacelike (see \cite[Section 6]{DJJ}, note that the dihedral angle defined here is the opposite of the imaginary part of that in \cite{DJJ}):
\begin{equation}\label{formula:Gauss-Bonnet}
\iint_{p} K ds+ \int_{\partial p} k_g dl + \sum_{i}\theta_i=0~,
\end{equation}
where $K$, $k_g$, $ds$, $dl$ and $\theta_i$ denote the Gaussian curvature, the geodesic curvature,  the area form, the arc-length element and the exterior angle at the $i$-th vertex of $p$, respectively. It follows that the sum of the exterior angles at the vertices of a Minkowski polygon with space-like edges is zero, therefore, $\theta(e_1)+\cdots+\theta(e_k)= 0$.

If $v$ is strictly hyperideal, consider the dual plane $v^{\perp}$ (whose intersection with $\AdS$ is isometric to $\bAdS^2$). A key remark is that if $L$ is a line going through $v$ and through $\AdS$, then $v^\perp$ separates the two intersections of $L$ with $\partial\AdS$ --- this can be seen for instance by applying an isometry to send $v$ to infinity (as shown in Figure \ref{fig:dual planes}).
As a consequence, if $v$ is a strictly hyperideal vertex of a hyperideal polyhedron $P$, then $v^{\perp}$ separates $v$ from all other vertices of $P$ --- otherwise, there would be a segment connecting $v$ to another vertex of $P$ without entering $\AdS$, which contradicts Corollary \ref{cor:geodesic}.

  \begin{figure}
   \begin{subfigure}[b]{0.46\textwidth}
    \centering
\begin{tikzpicture}[scale=0.8]
\draw  (-0.5,2.55) ellipse (2.5 and 0.5);
\draw[densely dashed] (-2.95,-2.5) .. controls (-2.95,-1.85) and (2,-1.9) .. (2,-2.55);
\draw (-2.95,-2.5) .. controls (-2.95,-3.15) and (2,-3.2) .. (2,-2.55);
\draw (-3.5,3) .. controls (-0.5,0.05) and (-0.5,0.05) .. (-3.5,-3);
\draw (2.5,3) .. controls (-0.5,0.05) and (-0.5,0.05) .. (2.5,-3);
%\draw [dashed] (-0.5,0.0625) ellipse (0.76 and 0.3);
\draw (-1.25,0.0625) .. controls (-1.1875,-0.3125) and (0.1875,-0.375) .. (0.25,0.0625);
\draw [densely dashed] (-1.25,0.0625) .. controls (-1.25,0.4375) and (0.1875,0.5) .. (0.25,0.0625);
\draw (-2.4375,0) .. controls (-1.25,0.25) and (-1.25,0.25) .. (-1.25,0.25);
\draw (-2.4375,0) .. controls (1.125,-0.4375) and (1.125,-0.4375) .. (1.125,-0.4375);
\draw [densely dashed](-0.875,0.3125) .. controls (0.375,0.5625) and (0.375,0.5625) .. (0.375,0.5625);
\draw  (-0.875,0.3125) .. controls (-0.5,-0.25) and (-0.5,-0.25) .. (-0.5,-0.25);
\node at (-2.75,0) {$v$};
\draw[white,fill=gray, fill opacity=0.3](-1.125,3.0625) .. controls (-0.3125,2.0625) and (-0.3125,2.0625) .. (-0.3125,2.0625) .. controls (-0.6875,0.875) and (-0.5,-1.9375) .. (-0.25,-3) .. controls (-1.25,-2.0625) and (-1.25,-2.0625) .. (-1.25,-2.0625) .. controls (-0.75,-0.6875) and (-0.8125,1.625) .. (-1.125,3.0625);
\draw (-0.3125,2.0625) .. controls (-0.3125,2.0625) and (-0.3125,2.0625) .. (-0.3125,2.0625) .. controls (-0.6875,0.875) and (-0.5,-1.9375) .. (-0.25,-3);
\draw [densely dashed](-1.125,3.0625) .. controls (-0.8125,1.625) and (-0.75,-0.6875) .. (-1.25,-2.0625);
\draw (-0.25,-3) .. controls (-0.25,-3) and (-0.25,-3) .. (-0.25,-3);
\node at (0.4375,2.625) {$v^{\perp}\cap\mathbb{A}\rm{d}\mathbb{S}^3$};
\draw[thick]  (-2.4375,0) .. controls (1.125,0.1875) and (1.125,0.1875) .. (1.125,0.1875);
\draw(0.375,0.5625) .. controls (1.3125,0.75) and (1.3125,0.75) .. (1.3125,0.75);
\node at (1.375,0.1875) {$L$};
\draw [densely dashed](-1.25,0.25) .. controls (-0.875,0.3125) and (-0.875,0.3125) .. (-0.875,0.3125);
\end{tikzpicture}
\end{subfigure}
\begin{subfigure}[b]{0.46\textwidth}
     \centering
\begin{tikzpicture}[scale=0.8]
\draw  (-0.5,2.55) ellipse (2.5 and 0.5);
\draw[densely dashed] (-2.95,-2.5) .. controls (-2.95,-1.85) and (2,-1.9) .. (2,-2.55);
\draw (-2.95,-2.5) .. controls (-2.95,-3.15) and (2,-3.2) .. (2,-2.55);
\draw (-3.5,3) .. controls (-0.5,0.05) and (-0.5,0.05) .. (-3.5,-3);
\draw (2.5,3) .. controls (-0.5,0.05) and (-0.5,0.05) .. (2.5,-3);
%\draw [dashed] (-0.5,0.0625) ellipse (0.76 and 0.3);
\draw (-1.25,0.0625) .. controls (-1.1875,-0.3125) and (0.1875,-0.375) .. (0.25,0.0625);
\draw [dashed](-1.25,0.0625) .. controls (-1.25,0.4375) and (0.1875,0.5) .. (0.25,0.0625);
\draw (-0.6875,0.375) .. controls (-0.6875,0.375) and (-0.6875,0.375) .. (-0.6875,0.375);
\draw (-2.5,-0.25) .. controls (1.5,-0.25) and (1.5,-0.25) .. (1.5,-0.25);
\draw [densely dashed](-0.6875,0.375) .. controls (1.5,0.375) and (1.5,0.375) .. (1.5,0.375);
\draw (-0.6875,0.375) .. controls (-0.3125,-0.25) and (-0.3125,-0.25) .. (-0.3125,-0.25);
%\node at (-3.3125,0.0625) {$v$};
\draw[white,fill=gray, fill opacity=0.3](-0.8125,3.0625) .. controls (-0.0625,2.0625) and (-0.0625,2.0625) .. (-0.0625,2.0625) .. controls (-0.375,0.8125) and (-0.4375,-1.4375) .. (-0.0625,-3) .. controls (-1,-2) and (-1,-2) .. (-1,-2) .. controls (-0.625,-0.5) and (-0.5625,1.875) .. (-0.8125,3.0625);
\draw (-0.0625,2.0625) .. controls (-0.0625,2.0625) and (-0.0625,2.0625) .. (-0.0625,2.0625) .. controls (-0.375,0.8125) and (-0.4375,-1.4375) .. (-0.0625,-3);
\draw [densely dashed](-0.8125,3.0625) .. controls (-0.5625,1.875) and (-0.625,-0.5) .. (-1,-2);
\draw [densely dashed](-0.0625,-3) .. controls (-0.0625,-3) and (-0.0625,-3) .. (-0.0625,-3);
\node at (0.75,2.5625) {$v^{\perp}\cap\mathbb{A}\rm{d}\mathbb{S}^3$};
%\draw [densely dasehd](-0.6875,0.375) .. controls (-2.5,0.375) and (-2.5,0.375) .. (-2.5,0.375);
\draw [densely dashed](-0.6875,0.375) .. controls (-1.3125,0.375) and (-1.3125,0.375) .. (-1.3125,0.375);
\draw (-1.3125,0.375) .. controls (-2.5,0.375) and (-2.5,0.375) .. (-2.5,0.375);
\draw (0.3125,0.375) .. controls (1.5,0.375) and (1.5,0.375) .. (1.5,0.375);
\draw [thick](-2.5,0.0625) .. controls (1.5,0.0625) and (1.5,0.0625) .. (1.5,0.0625);
\node at (1.75,0.0625) {$L$};
\end{tikzpicture}
    \end{subfigure}
    \caption{\small{Examples of the dual plane $v^{\perp}$ of a strictly hyperideal vertex $v$ of $P$. The vertex $v$ contained in the affine chart $\{x_4=1\}$ (in the left picture) is sent to a point at infinity in $\{x_4=0\}$ (in the right picture) by an isometry. The bold line $L$ is a line going through $v$ and $\AdS$.}}
    \label{fig:dual planes}
    \end{figure}

The dual plane $v^{\perp}$ is orthogonal to all the faces and edges of $P$ adjacent to $v$ and the intersection of $v^{\perp}$ with $P$ is a convex compact polygon $p$ in $v^{\perp}\cap\AdS\cong\bAdS^2$ with space-like edges. Thus the exterior angle at each vertex of this polygon is equal to the exterior dihedral angle at the corresponding edge (containing that vertex) of $P$.
Using the formula \eqref{formula:Gauss-Bonnet} again, we have that $\theta(e_1)+\cdots+\theta(e_k)= \area(p)>0$.

\begin{condition3}
\end{condition3}
For Condition (iii), let $e^*_1, e^*_2, \ldots, e^*_k, e^*_{k+1}=e^*_1$ be a simple circuit which does not bound a face of $\Gamma^*$,  and such that exactly two of the edges are dual to edges of $\gamma$. Denote the common endpoint of $e^*_i$ and $e^*_{i+1}$ by $f^*_i$ for $i=1, \ldots, k$, which is the vertex of $\Gamma^*$ dual to the face $f_i$ of $P$ (equivalently, the boundary of $f_i$ contains the edges $e_i$ and $e_{i+1}$). To show that $\theta(e_1)+\cdots+\theta(e_k)>0$, it suffices to consider the extension polyhedron $Q$ (see e.g. \cite[Section 6]{DMS}), which is a polyhedron obtained by extending the faces $f_1, f_2, \ldots, f_k$ and forgetting about the other faces of $P$. Note that the analysis (see \cite[Section 6]{DMS}) of the sum of the dihedral angles at the edges (containing $e_1, e_2, \ldots, e_k$) of the corresponding extension polyhedron in the case of ideal polyhedra in  $\mathbb{A}\rm{d}\mathbb{S}^3$ completely applies to the extension polyhedron $Q$ here, assuming that the circuit considered has exactly two edges dual to edges of $\gamma$. This implies that $\theta$ satisfies Condition (iii).

\begin{condition4}
\end{condition4}
The condition (iv) is an analogue of Condition (4) for the dihedral angles of hyperideal polyhedra in hyperbolic 3-space $\mathbb{H}^3$ \cite[Theorem 1]{bao-bonahon}. Let $v$ be the vertex of $\Gamma$ dual to that face, say $f$, of $\Gamma^*$ on which the simple path $e^*_1, \ldots, e^*_k$ starts and ends.

We claim that the unique edge in the path $e^*_1, \ldots, e^*_k$ which is dual to one edge of $\gamma$, say $e^*_j$, is not contained in the boundary of the face $f$. To prove this, notice that since all vertices of $\Gamma$ are also vertices of $\gamma$, the complement (say $\Gamma^*_1$) in $\Gamma^*$ of the union of the edges dual to the edges of $\gamma$ is the disjoint union of two trees. So if $e^*_j$ were contained in $\partial f$, it would follow that $e^*_1,\ldots, e^*_{j-1}$ would also be contained in $\partial f$, and so would $e^*_{j+1},\ldots, e^*_k$ (otherwise, $\Gamma^*_1$ contains at least one cycle, see e.g. Figure \ref{fig:necessary}).
This would contradict the hypothesis that the simple path $e^*_1, \ldots, e^*_k$ is not contained in the boundary of $f$.

\begin{figure}
\usetikzlibrary{decorations.pathreplacing}
\begin{tikzpicture}
\draw (-1,2) ellipse (2 and 2);
%\draw[red] (-1,2) ellipse (2 and 0.5);
\draw[red](-3,2) .. controls (-3,1.3) and (1,1.3) .. (1,2);
\draw[red][densely dashed](-3,2) .. controls (-3,2.7) and (1,2.7) .. (1,2);
%\draw [fill=black](-1,1.5) ellipse (0.06 and 0.06);
\draw(-1,1.5) .. controls (-1.38,2.2) and (-1.38,2.2) .. (-2,3);
\draw(-1,1.5) .. controls (-0.7,2.2) and (-0.7,2.2) .. (0,3);
\draw(-1,1.5) .. controls (-0.6,0.9) and (-0.6,0.9) .. (-0.3,0.4);
\draw(-1,1.5) .. controls (-1.3,1) and (-1.3,1) .. (-1.6,0.4);
\draw(-1,1.7) ellipse (0.7 and 0.7);
\draw [fill=black](-1.66,1.9) ellipse (0.04 and 0.04);
\draw [fill=black](-0.3,1.8) ellipse (0.04 and 0.04);
\draw [fill=black](-1,1) ellipse (0.04 and 0.04);
\draw [fill=black](-1,2.4) ellipse (0.04 and 0.04);
\draw [fill=black](-1.58,1.3) ellipse (0.04 and 0.04);
\draw [fill=black](-0.45,1.26) ellipse (0.04 and 0.04);
\draw[thick,blue](-1.68,1.9) .. controls (-2.2,2.3) and (-2.7,2.7) .. (-2.6,3.2);
\draw[densely dashed][thick,blue](-2.6,3.2) .. controls (-2.5,3.68) and (0.56,3.68) .. (0.6,3.2);
\draw[thick,blue](0.6,3.2) .. controls (1,2.5) and (0,2) .. (-0.3,1.8);
\draw[thick,blue](-0.45,1.26) .. controls (0,1.26) and (0.9,1.25) .. (0.75,1);
\draw[densely dashed][thick,blue](-2.74,1) .. controls (-2.2,0.4) and (0.4,0.4) .. (0.75,1);
\draw[thick,blue](-2.74,1) .. controls (-3,1.3) and (-2,1.3) .. (-1.6,1.3);
\draw[thick,blue](-0.3,1.8) .. controls (-0.3,1.5) and (-0.3,1.5) .. (-0.45,1.26);
\node at (-2,1.1) {$e^*_1$};
\node at (0,1) {$e^*_{j-1}$};
\node at (-0.08,1.6) {$e^*_{j}$};
\node at (0.45,1.94) {$e^*_{j+1}$};
\node at (-2, 1.85) {$e^*_{k}$};
\node at (-0.76,1.6) {$v$};
\node at (1.28,2) {$\gamma$};
\end{tikzpicture}
\caption{\small{A counter-example of the simple path (shown in the bold lines) $e^*_1,e^*_2,\dots,e^*_k$ in $\Gamma^*$ with the unique edge $e^*_j$ (dual to one edge of $\gamma$) contained in the boundary of the face dual to $v$, where $\Gamma$ contains the equator $\gamma$ and is partially drawn near the vertex $v$.}}
\label{fig:necessary}
\end{figure}

Therefore, it suffices to consider the case that the simple path $e^*_1, \ldots, e^*_k$ meets the boundary of $f$ only at their two endpoints (which coincide with two vertices of $\Gamma^*$), since the dihedral angles at the edges of $P$ which are dual to those in the intersection (if non-empty) of the simple path $e^*_1, \ldots, e^*_k$ with the boundary of $f$ are positive.

If $v$ is strictly hyperideal, we denote by $X_v$ the half-space containing $v$ bounded by the dual plane $v^{\perp}$. Let $P'$ be the polyhedron obtained by doubling $P\setminus X_v$ via the reflection across  $v^{\perp}$. Note that $v^{\perp}$ separates $v$ from the other vertices of $P$ and is orthogonal to the edges and faces of $P$ that it meets, $P'$ is a hyperideal polyhedron. Let $\Gamma'$ be the 1-skeleton of $P'$ and let $\Gamma'^*$ be the dual graph of $\Gamma'$. It is clear that $\Gamma'^*$ is obtained by gluing two copies of $\Gamma^*$ along the the boundary of $f$. By the assumption, the path $e^*_1, \ldots, e^*_k$ and its images $e'^*_1, \ldots, e'^*_k$ in $\Gamma'^*$ under the reflection form a simple circuit which does not bound a face of $\Gamma'^*$, and exactly two of the edges of the circuit are dual to edges of $\gamma$. Note that $\theta(e_i)=\theta(e'_i)$ for $i=1,\ldots,k$. Combined with the above result that $\theta$ satisfies Condition (iii), we have
\begin{equation*}
\theta(e_1)+\cdots+\theta(e_k)+\theta(e'_1)+\cdots+\theta(e'_k)
=2\big(\theta(e_1)+\cdots+\theta(e_k)\big)>0~,
\end{equation*}
 which implies $\theta$ satisfies Condition (iv).

If $v$ is ideal, let $\delta$ be the simple circuit in $\Gamma^*$ which bounds the face $f$ of $\Gamma^*$. The endpoints of the simple path   $e^*_1, \ldots, e^*_k$ decompose $\delta$ into two parts, say $\delta_1$, $\delta_2$. Since the path $e^*_1, \ldots, e^*_k$ contains exactly one edge dual to an edge of $\gamma$, then the endpoints of this path coincide with two vertices on $\delta$ of $\Gamma^*$ which are dual to two faces of $\Gamma$ on the %positive and negative
left and right side of $\gamma$, respectively. Therefore, $\delta_1$ (resp. $\delta_2$) contains exactly one edge dual to an edge of $\gamma$. Without loss of generality, we assume that
\begin{equation}\label{eq:assumption}
\sum_{e^*\subset\delta_1}\theta(e)\leq\sum_{(e')^*\subset\delta_2}\theta(e')~.
\end{equation}
Since $\theta$ satisfies Condition (ii), then
\begin{equation}\label{eq:ideal}
\sum_{e^*\subset\delta_1}\theta(e)+\sum_{(e')^*\subset\delta_2}\theta(e')=0~.
\end{equation}
Combining the assumption \eqref{eq:assumption} and the equality \eqref{eq:ideal}, we have
\begin{equation}\label{ineq1}
\sum_{e^*\subset\delta_1}\theta(e)\leq 0~.
\end{equation}

The union of the simple path $e^*_1, \ldots, e^*_k$ and $\delta_1$ forms a simple circuit in $\Gamma^*$, say $\xi$, which contains exactly two edges dual to edges of $\gamma$. If $\xi$ does not bound a face of $\Gamma^*$, since $\theta$ satisfies Condition (iii), then
\begin{equation}\label{ineq2}
\theta(e_1)+\cdots+\theta(e_k)+\sum_{e^*\subset\delta_1}\theta(e)>0~.
\end{equation}
By the inequalities \eqref{ineq1} and \eqref{ineq2}, we have $\theta(e_1)+\cdots+\theta(e_k)>0$.

If $\xi$ bounds a face $f'$ of $\Gamma^*$, then $f'\not=f$ (since the path $e^*_1, \ldots, e^*_k$ is not contained in the boundary of $f$). Therefore $\delta_1$ is contained in $\partial f\cap\partial f'$ and thus is an edge of $\Gamma^*$, say $\delta_1:=e_0^*$. Note that the dihedral angle along each edge of $P$ is non-zero (see also Condition (i)). Combined with the inequality \eqref{ineq1}, we have
\begin{equation}\label{ineq3}
\sum_{e^*\subset\delta_1}\theta(e)=\theta(e_0)< 0~.
\end{equation}
By Condition (ii), we have $\theta(e_0)+\theta(e_1)+\cdots+\theta(e_k)=0$. Combined with the inequality \eqref{ineq3}, we have $\theta(e_1)+\cdots+\theta(e_k)>0$.
Therefore, Condition (iv) follows. (Note that this argument is specific to the case where $v$ is ideal, and does not apply as it is to the case where $v$ is strictly hyperideal, since it uses in an essential way the fact that the sum of the values of $\theta$ on the edges of $\Gamma^*$ bounding the face dual to $v$ is equal to $0$.)

\end{proof}

    \section{Properness}\label{sec:properness}

In this section, we prove Lemma \ref{lem:properness of psi} and Lemma \ref{lem:proper_metrics}, that is, the parameterization map $\Psi$ (resp. $\Phi$) of the space $\cP_N$ (resp. $\cP_N\cup\polyg_N$) of marked non-degenerate (resp. degenerate and non-degenerate) hyperideal AdS polyhedra up to elements of $\Isom_0(\AdS)$  in terms of dihedral angles (resp. the induced metric on the boundary) is proper.

\subsection{Properness of $\Psi$}
\label{sec:properness_angles}

In this part, we aim to show that the map  $\Psi:\cP_N\rightarrow \cA_N$ (which takes each $P\in\cP_N$ to the exterior dihedral angle-assignment on its edges) is proper (see Lemma \ref{lem:properness of psi}).

\subsubsection{Definitions and notations}\label{subsec:def}

We first clarify the topology of the space $\cP_N\cup\polyg_N$.

Recall that each equator of a marked polyhedron or polygon in $\tilde{\cP}_N\cup\widetilde{\polyg}_N$ is identified with an oriented $N$-cycle graph. Let $V$ be the set of vertices of this graph. Assume that we fix once and for all an affine chart $x_4=1$. For each $P\in\tilde{\cP}_N\cup\widetilde{\polyg}_N$, the convexity of $P$ implies that $P$ is the convex hull in $\RP^3$ of its $N$ vertices. It is natural to endow the space $\tilde{\cP}_N\cup\widetilde{\polyg}_N$ with the topology induced by the embedding $\phi: \tilde{\cP}_N\cup\widetilde{\polyg}_N\rightarrow (\RP^3)^V$ which associates to each $P\in\tilde{\cP}_N\cup\widetilde{\polyg}_N$ its vertex set in $\RP^3$.

The space $\cP_N\cup\polyg_N=(\tilde{\cP}_N\cup\widetilde{\polyg}_N)/\Isom_0(\AdS)$ therefore inherits the quotient topology from $\tilde{\cP}_N\cup\widetilde{\polyg}_N$. It is not hard to check that $\cP_N\cup\polyg_N$ is Hausdorff (see e.g. \cite[Lemma 8]{bao-bonahon}).  More precisely, whenever two sequences $(\tilde{P}_n)_{n\in \N}$ in $\tilde{\cP}_N\cup\widetilde{\polyg}_N$ and $(g_n)_{n\in \N}$ in $\rm{Isom}_0(\AdS)$ are such that $(\tilde{P}_n)$ converges to some $\tilde{P}_{\infty}\in\tilde{\cP}_N\cup\widetilde{\polyg}_N$ and $(g_n\tilde{P}_n)$ converges to some $\tilde{P}'_{\infty}\in\tilde{\cP}_N\cup\widetilde{\polyg}_N$, there is a $g\in\Isom_0(\AdS)$ such that $\tilde{P}'_{\infty}=g(\tilde{P}_{\infty})$ and $g_n\to g$. (Notice that those relatively simple statements hold only under the assumption that the number of vertices remains constant in the limit. When this is not the case, more interesting phenomena can occur.)

The spaces $\cP_N$ and $\cP_{\Gamma}$ (with $\Gamma\in\Graph(\Sigma_{0,N},\gamma)$), endowed with the topology induced from that of $\cP_N\cup\polyg_N$, are naturally Hausdorff. 
Recall that $\cP_N$ is a disjoint union of $\cP_{\Gamma}$ over all $\Gamma\in\Graph(\Sigma_{0,N},\gamma)$. We now define a way of gluing for those $\cP_{\Gamma}$s. If $\Gamma'\in\Graph(\Sigma_{0,N},\gamma)$ is a %subgraph
proper subgraph of $\Gamma$ (a relation that we will abusively denote by $\Gamma'\subset \Gamma$) and if $P'\in \cP_{\Gamma'}$ (assume that $\cP_{\Gamma'}$ is non-empty), then $P'$ can be deformed to a polyhedron $P\in \cP_\Gamma$ (this follows from the Steiner theorem because, since $\Gamma'$ is 3-connected, $\Gamma$ must also be $3$-connected). As a consequence, if $\cP_{\Gamma'}$ is non-empty, it can be identified with a subset of $\partial\cP_\Gamma$. Now if $\Gamma_1, \Gamma_2,\Gamma'\in \Graph(\Sigma_{0,N},\gamma)$ are such that $\Gamma'$ is a proper subgraph of both $\Gamma_1$ and $\Gamma_2$, then any $P'\in \cP_{\Gamma'}$ can be identified with a boundary point of both $\cP_{\Gamma_1}$ and $\cP_{\Gamma_2}$. Therefore, if $\Gamma_1\cap\Gamma_2$ (the graph having as edges the edges common to both $\Gamma_1$ and $\Gamma_2$) is 3-connected, there is a natural \emph{gluing} of $\cP_{\Gamma_1}$ to $\cP_{\Gamma_2}$ along parts of their boundaries, which can be identified with the closure of $\cP_{\Gamma_1\cap \Gamma_2}$ in the relative topology of $\cP_N$.

For each $\Gamma\in\Graph(\Sigma_{0,N},\gamma)$, the space $\cA_{\Gamma}$ has a natural subspace topology from $\R^{E(\Gamma)}$ (which is equipped with the weak$^*$ topology). Whenever $\Gamma'\in \Graph(\Sigma_{0,N},\gamma)$ satisfies $\Gamma'\subset\Gamma$, there is a natural way to identify $\cA_{\Gamma'}$ as a subset of $\partial\cA_{\Gamma}$ whose functions have value 0 at the edges of $E(\Gamma)\setminus E(\Gamma')$. As for spaces of polyhedra, if $\Gamma_1\cap \Gamma_2$ is 3-connected, there is a natural \emph{gluing} along parts of $\partial \cA_{\Gamma_1}$ and $\partial\cA_{\Gamma_2}$ corresponding to $\overline{\cA_{\Gamma_1\cap \Gamma_2}}\cap\cA_N$, where the closure $\overline{\cA_{\Gamma_1\cap \Gamma_2}}$ is taken with respect to the topology of $\R^{E(\Gamma_1\cap\Gamma_2)}$.
This defines a topology on $\cA_{N}$.

Let $(P_n)_{n\in \N}$ be a sequence of marked non-degenerate polyhedra (up to isometry) in $\cP_N$. Note that the 1-skeleton of each $P_n$ is identified to a graph $\Gamma_n$ in $\Graph(\Sigma_{0,N},\gamma)$ and $\Graph(\Sigma_{0,N},\gamma)$ is finite, after passing to a subsequence, we can assume that all the $P_n$ have the same combinatorics say $\Gamma\in\Graph(\Sigma_{0,N},\gamma)$.  Let $\tilde{P}_n$ be a representative of $P_n$. Since all the $\tilde{P}_n$ lie in $\RP^3$ and $\RP^3$ is compact, after passing to a subsequence, each vertex of the $\tilde{P}_n$ converges to a point in $\RP^3$. Therefore, $(\tilde{P}_n)_{n\in \N}$ (we keep the notation for the convergent subsequence) converges to the convex hull in $\RP ^3$ of the limit points, denoted by $\tilde{P}_{\infty}$, called the \emph{limit} of $(\tilde{P}_n)_{n\in \N}$. In particular, we call the limit of a vertex (resp. an edge, a face) of the $\tilde{P}_n$ a \emph{limit vertex} (resp. \emph{limit edge}, \emph{limit face}).

It is easy to see that the limit $\tilde{P}_{\infty}$ is locally convex in $\RP^3$ (i.e. $\tilde{P}_{\infty}$ is convex in the chosen affine chart). However, $\tilde{P}_{\infty}$ is possibly not contained in the affine chart. It may not be convex, or not be hyperideal (for instance, with some edges tangent to $\partial\AdS$), or possibly have fewer vertices than the $\tilde{P}_n$. Moreover, vertices of the $\tilde{P_n}$ may converge to a point of $\tilde{P}_{\infty}$ which is not a vertex.

\subsubsection{Some basic lemmas}
The following gives some basic properties about the limit of a convergent sequence of polyhedra in $\tilde{\cP}_N$, which will be used in the proof of Lemma \ref{lem:properness of psi} and Lemma \ref{lem:proper_metrics}.

\begin{lemma}\label{lm:limit property}
Let $(\tilde{P}_n)_{n\in \N}$ be a sequence of marked non-degenerate hyperideal AdS polyhedra in $\tilde{\cP}_N$ which converge to the limit $\tilde{P}_{\infty}$. Assume that $\tilde{P}_{\infty}$ is contained in an affine chart of $\RP^3$.
Then the following statements hold:
\begin{enumerate}
\item  The vertices of the $\tilde{P}_n$ converging to a limit point say $w$ of $\tilde{P}_{\infty}$ are consecutive on the equator of the $\tilde{P}_n$.
\item If at least two vertices of the $\tilde{P}_n$ converge to a limit point say $w$ of $\tilde{P}_{\infty}$, then $w$ lies on $\partial\AdS$.

\item Let $(f_n)_{n\in\N}$ be a sequence of faces of $\tilde{P}_n$ converging to a limit face $f_{\infty}$ of $\tilde{P}_{\infty}$. Assume that the dihedral angle at one edge on $\partial f_n$ converges to a finite real number, then $f_{\infty}$ is not lightlike.

\item Let $(e_n)_{n\in \N}$ be a sequence of edges of $\tilde{P}_n$ with the limit $e_{\infty}$ tangent to $\partial\AdS$. Assume that the dihedral angles at $e_n$ converges to a finite real number, then $e_{\infty}$ is not lightlike.
\item Assume that the limit $\tilde{P}_{\infty}$ has at least three non-linear vertices and all its faces are spacelike. If $\tilde{P}_{\infty}$ has an edge $e_{\infty}$ tangent to $\partial\AdS$, then $e_{\infty}$ lies on the equator of $\tilde{P}_{\infty}$.
\end{enumerate}
\end{lemma}

\begin{proof}
    We assume that all the polyhedra $\tilde{P}_n$ are contained in the affine chart $\{x_4=1\}$.

    Statement (1) clearly holds if there is only one vertex of the $\tilde{P}_n$ converging to $w$. For the case that at least two vertices of the $\tilde{P}_n$ converge to $w$, suppose by contradiction that the vertices are not consecutive on the equator of $\tilde{P}_n$.
    Then there are two non-adjacent vertices say $v^1_n$, $v^2_n$ on the equator of $\tilde{P}_n$ converging to $w$ outside of $\AdS$. It is clear that $w$ lies outside of $\AdS$. We claim that $w$ is a point at infinity of the affine chart $\{x_4=1\}$. Otherwise, if $w$ lies on $\partial\AdS$, then one of the two paths contained in the equator of $\tilde{P}_n$ separated by $v^1_n$ and $v^2_n$ would degenerate to $w$, which implies that the other vertices in this path also converge to $w$, a contradiction. If $w$ lies outside of the closure of $\AdS$ and is contained in $\{x_4=1\}$, then the geodesic segment connecting $v^1_n$, $v^2_n$ would lie outside of the closure of $\AdS$ for $n$ large enough, which contradicts Corollary \ref{cor:geodesic}. Note that $w$ is the common limit of the endpoints of the geodesic segment connecting $v^1_n$ and $v^2_n$ (which intersects $\AdS$ by Corollary \ref{cor:geodesic}), see Figure \ref{fig:consecutive} for instance. 
    This implies that $\tilde{P}_{\infty}$ contains a projective line, a contradiction.
\begin{figure}
\usetikzlibrary{decorations.pathreplacing}
\begin{tikzpicture}[scale=0.83]
\usetikzlibrary{decorations.pathreplacing}
%\draw  (-1,2) ellipse (2 and 2);
\draw (-7.5,4) .. controls (-1,4) and (-1,4) .. (5.4,4);
\draw (-7.7,0) .. controls (-1,0) and (-1,0) .. (5.6,0);
\draw[dashed] (-7.6,2) .. controls (-1,2) and (-1,2) .. (5.6,2);
\draw (-1,4) .. controls (-2.8,2) and (-2.8,2) .. (-2.8,2) .. controls (-1,0) and (-1,0) .. (-1,0) .. controls (0.8,2) and (0.8,2) .. (0.8,2) .. controls (-1,4) and (-1,4) .. (-1,4);
\draw (-1,4) .. controls (-4,2) and (-4,2) .. (-4,2) .. controls (-1,0) and (-1,0) .. (-1,0) .. controls (2,2) and (2,2) .. (2,2) .. controls (-1,4) and (-1,4) .. (-1,4);
\draw (-1,4) .. controls (-7,2) and (-7,2) .. (-7,2) .. controls (-1,0) and (-1,0) .. (-1,0) .. controls (5,2) and (5,2) .. (5,2) .. controls (-1,4) and (-1,4) .. (-1,4);
\node at (-7,1.65) {$v^1_n$};
\node at (5,1.65) {$v^2_n$};
\node at (-4,1.65) {$v^1_2$};
\node at (-3,1.65) {$v^1_1$};
\node at (0.9,1.65) {$v^2_1$};
\node at (2,1.65) {$v^2_2$};
\draw [dotted][thick] (-5,1.75) -- (-5.4,1.75);
\draw [dotted][thick] (3,1.75) -- (3.5,1.75);
\draw[white][fill=gray,opacity=0.3](-1,4) .. controls (-1,0) and (-1,0) .. (-1,0) .. controls (-0.7,2) and (-0.7,2) .. (-0.7,2) .. controls (-1,4) and (-1,4) .. (-1,4);
\draw(-1,4) .. controls (-1,0) and (-1,0) .. (-1,0);
\node at (0,2.5) {$\tilde{P}_1$};
\node at (0.98,2.4) {$\tilde{P}_2$};
\node at (2.79,2.4) {$\tilde{P}_n$};
\draw [dotted][thick] (1.8,2.4) -- (2.2,2.4);
\node at (5.2,3.5) {$\tilde{P}_{\infty}$};
\end{tikzpicture}
\caption{\small{A sequence of tetrahedra $\tilde{P}_n$ with the limit of two non-adjacent vertices $v^1_n$ and $v^2_n$ converging to a point $w$ at infinity in the chosen affine chart, whose limit $\tilde{P}_{\infty}$ is a prism with three parallel bi-infinite edges (sharing a single vertex $w$ at infinity).}}
\label{fig:consecutive}
\end{figure}

   For Statement (2), we first note that by Statement (1), the vertices of $\tilde{P}_n$ converging to $w$ are consecutive on the equator. Suppose by contradiction that    $w$ lies outside of the closure of $\AdS$. Then the edge connecting two adjacent vertices (converging to $w$) on the equator of the $\tilde{P}_n$ does not intersect $\AdS$ for $n$ sufficiently large, a contradiction.

   For Statement (3), suppose by contradiction that $f_{\infty}$ is lightlike. Then the dihedral angles of $\tilde{P}_n$ at the edges on $\partial f_n$ tend to infinity (recall that the amount \eqref{def:angle amount} of dihedral angle is defined in terms of hyperbolic rotation).

   For Statement (4), suppose by contradiction that $e_{\infty}$ is lightlike. Then the faces of $\tilde{P}_{\infty}$ adjacent to $e_{\infty}$ have to be lightlike, and thus there would be a face say $f_n$ of $\tilde{P}_n$ adjacent to $e_n$ whose limit is lightlike. By Statement (3) and the assumption on the dihedral angles at $e_n$, the limit of $f_n$ is not lightlike, which leads to a contradiction.

   For Statement (5), $\tilde{P}_{\infty}$ is convex by assumption and there are exactly two spacelike faces adjacent to $e_{\infty}$ (note that the boundary surface is viewed as two-sided if $\tilde{P}_{\infty}$ is degenerate), which are either future-directed or past-directed. We claim that the two faces adjacent to $e_{\infty}$ are respectively future-directed and past-directed. 
   Indeed, since the two faces are spacelike and their common edge $e_{\infty}$ is tangent to $\partial\AdS$, they intersect $\AdS$ and lie in the same spacelike quadrant (locally divided by the light-cone of $e_{\infty}$).
   As a consequence, $e_{\infty}$ lies on the equator of $\tilde{P}_{\infty}$.
\end{proof}

We will see from the following lemma that after choosing appropriate representatives say $\tilde{P}_n$ of $P_n$, the limit of $(\tilde{P}_n)_{n\in\N}$ has at least three non-colinear vertices.

\begin{lemma}\label{lm:representatives}
  Let $(P_n)_{n\in \N}$ be a sequence of marked non-degenerate polyhedra (considered up to isometries) in $\cP_{\Gamma}$ with representatives $\tilde{P_n}$ converging to a limit in $\RP^3$ which is contained in an affine chart.
  Then there exist a sequence $(g_n)_{n\in \N}$ of elements of $\Isom_0(\AdS)$ such that $(g_n\tilde{P}_n)_{n\in \N}$ converges to a limit in $\RP^3$ with at least three non-colinear vertices.
\end{lemma}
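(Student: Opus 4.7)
The plan is to renormalize each $\tilde P_n$ by a suitable isometry so that three prescribed non-colinear vertices end up at bounded, non-colinear positions in the limit. The key fact is that $\Isom_0(\AdS)\cong\PSL_2(\R)\times\PSL_2(\R)$ has real dimension $6$, which will be enough to normalize a triple of non-colinear points; since each $\tilde P_n$ is non-degenerate, three non-colinear vertices always exist, and the content of the lemma is that the correct choice of $g_n$ prevents them from colliding or aligning in the limit.

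Since the $1$-skeleton of every $\tilde P_n$ is the fixed graph $\Gamma$, I would fix once and for all a combinatorial face $F$ of $\Gamma$ together with three of its vertices $v_1,v_2,v_3$; such a face and triple exist because $\Gamma$ is $3$-connected, so every face is bounded by a cycle of length at least $3$. The geometric realization of $F$ in $\tilde P_n$ is a space-like hyperideal polygon $F_n$ contained in a space-like plane $\Pi_n\subset\AdS$, and the corresponding vertices $a_n,b_n,c_n$ of $F_n$ are non-colinear in $\RP^3$ (as three vertices of a genuine convex polygon in $\Pi_n$). After passing to a subsequence I would further assume that the type (ideal or strictly hyperideal) of each of $a_n,b_n,c_n$ is the same for every $n$.

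The isometry $g_n$ would be built in two stages. First, since $\Isom_0(\AdS)$ acts transitively on the space of oriented space-like planes of $\AdS$ (a $3$-dimensional homogeneous space), I would pick $g_n^{(1)}$ sending $\Pi_n$ to a fixed reference space-like plane $\Pi^*\cong\HH^2$. Second, the stabilizer of $\Pi^*$ in $\Isom_0(\AdS)$ acts on $\Pi^*$ as $\Isom(\HH^2)$, a $3$-dimensional Lie group, and I would use this subgroup to choose $g_n^{(2)}$ placing the triple $(g_n^{(1)}a_n, g_n^{(1)}b_n, g_n^{(1)}c_n)$ inside a fixed compact set $K$ of non-colinear triples of the prescribed type inside the projective closure of $\HH^2$ in $\Pi^*$. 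With $g_n=g_n^{(2)}\circ g_n^{(1)}$, the triple then stays in $K$ for every $n$, and by compactness of $(\RP^3)^V$ a further subsequence makes $g_n\tilde P_n$ converge to a limit whose first three vertices lie in $K$ and are therefore non-colinear.

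The main obstacle is the construction of $K$, i.e.\ showing that $\Isom(\HH^2)$ is ``sufficiently transitive'' on non-colinear triples of the prescribed type in the projective closure of $\HH^2$ in $\RP^2$. For three ideal vertices this reduces to the classical sharp $3$-transitivity of $\PSL_2(\R)$ on $S^1$. For triples containing strictly hyperideal vertices, the moduli space under $\Isom(\HH^2)$ has real dimension $3$, with non-compact directions corresponding to vertices coalescing or to three vertices tending to align; in those cases I would pre-compose with a diverging hyperbolic one-parameter subgroup chosen to blow up the collapsing configuration at precisely the right scale. Carrying out this Mumford-type rescaling argument to confirm that the renormalized triple indeed stays in $K$ is what I expect to be the real technical work.
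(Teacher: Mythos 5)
You correctly identify the first normalization step — bringing a chosen face to a fixed space-like plane, matching the paper's Step~1 — and the purely ideal case is fine because $\PSL_2(\R)$ is sharply $3$-transitive on $\partial\bH^2$. The gap lies in the strictly hyperideal case, which you rightly flag as ``the real technical work'' but do not carry out, and which in fact cannot be carried out as stated. The $\Isom(\bH^2)$-orbit of a triple of hyperideal points carries nontrivial invariants (for instance the pairwise distances among the three polar geodesics), and these invariants are preserved by every isometry. If the triple $(a_n,b_n,c_n)$ degenerates — polars coalescing, a vertex drifting toward $\partial\bH^2$, the polars tending to concurrency — then no $g_n\in\Isom_0(\AdS)$ can bring the triple into a fixed compact set $K$, because $g_n$ cannot change the degenerating invariants. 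Your proposed ``Mumford-type rescaling'' is a gesture, not a fix: applying an isometry, whatever its translation length, leaves the modulus of the triple untouched, so the collapsing configuration remains collapsed.

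The paper avoids this issue by not normalizing vertices at all. It normalizes three points of $\partial\Delta_n\cap\partial\AdS$, where $\Delta_n$ is a triangle inside the face that has already been brought to the reference space-like plane. Because every edge of a hyperideal polygon crosses $\partial\bH^2$, this intersection always consists of at least three \emph{distinct} points of $\partial\bH^2$, no matter how degenerate the vertex configuration itself becomes. Sharp $3$-transitivity of $\PSL_2(\R)$ on the boundary circle then applies unconditionally, and after renormalization the limit of $(g_n\tilde P_n)$ contains the fixed ideal triangle with boundary vertices $(0,0)$, $(1,1)$, $(\infty,\infty)$, which forces the limit to be $2$-dimensional and hence to have three non-colinear vertices. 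To repair your argument you would at minimum need a separate lemma producing, for each convergent sequence, a triple of vertices whose $\Isom(\bH^2)$-invariants stay in a compact set; the paper's trick makes that unnecessary.
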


\begin{proof}
	The proof uses the following steps. The idea is to normalize three points in the intersection of the $\tilde{P}_n$ with $\partial\AdS$, by applying a sequence of isometries.
	
	\textbf{Step 1.} By extracting a subsequence, we can assume that for each vertex $v$ of $\Gamma$, the corresponding vertex $v_n$ of $\tilde{P_n}$ is either ideal for all $n$, or is strictly hyperideal for all $n$. We arbitrarily fix a face say $f$ of $\Gamma$ and consider the corresponding face $f_n$ of the $\tilde{P}_n$ (which are all spacelike by Lemma \ref{lm:face_spacelike}).
We fix a totally geodesic spacelike plane $H_0:=\{[x_1,x_2,0,1]\in\RP^3:x^2_1+x^2_2-1<0\}$ in $\AdS$. For each face $f_n$ of $\tilde{P}_n$, there is an $h_n\in\Isom_0(\AdS)=\PO_0(2,2)$ such that $h_n(f_n)$ lies on the same projective plane in $\RP^3$ as $H_0$.

\textbf{Step 2.} It is clear that $h_n(\tilde{P}_n)$ has the same combinatorics $\Gamma$ as $\tilde{P}_n$ for all $n$.
Moreover, the boundary of the face $h_n(f_n)$ of $h_n(\tilde{P}_n)$ intersects $\partial\AdS$ in at least three points (denoted by $\mathcal{I}_n$), which lie on the boundary of $H_0$. 
    We arbitrarily choose three distinct points from $\mathcal{I}_1$ (and the corresponding three points from $\mathcal{I}_n$ for all $n>1$), denoted by $v^1_n$, $v^2_n$, $v^3_n$.
   Since the stabilizer of $H_0$ contained in $\Isom_0(\AdS)$, say ${\Stab}_0(H_0)$, consists of the matrices of the following form (written in block matrices):
     $$ \left[\left(
   \begin{array}[h]{cccc}
     A  & \bigzero & B \\
     \bigzero & 1 & 0 \\
     C & 0 & D \\
   \end{array}
 \right)\right]~~ $$
with $$ \left[\left(\begin{array}[h]{cccc}
     A  & B   \\
     C  & D \\
   \end{array}
   \right)\right]~\in\PO_0(2,1)=\Isom_0(\bH^2)~,$$
   which acts transitively on triples of points in $\partial H_0$ (identified with $\partial\bH^2$), there exists a unique $u_n\in{\Stab}_0(H_0)$ which takes $v^1_n$, $v^2_n$, $v^3_n$ in $\partial H_0$ to $\xi_1:=[1,0,0,1]$, $\xi_2:=[0,1,0,1]$, $\xi_3:=[-1,0,0,1]$ in $\partial H_0$, respectively.

   We conclude that after applying $g_n=u_n\circ h_n\in\Isom_0(\AdS)$ to each $\tilde{P}_n$, the limit contains an ideal triangle on $P_0$ with vertices $\xi_1$, $\xi_2$, $\xi_3$.
   Combined with the assumption that $\tilde{P}_{\infty}$ is contained in an affine chart, therefore, $\tilde{P}_{\infty}$ has at least three non-colinear vertices.
\end{proof}

\begin{remark}\label{rk:representatives}
	Using Lemma \ref{lm:representatives}, henceforth, for any sequence $(P_n)_{n\in \N}$ in $\cP_{\Gamma}$ satisfying the assumptions in this lemma, we always choose representatives $\tilde{P}_n$ of $P_n$ such that the sequence $(\tilde{P}_n)_{n\in \N}$ have a limit with at least three non-colinear vertices.
\end{remark}

\subsubsection{The dynamics of pure AdS translations}\label{subsubsect:dynamics}

 The argument of Lemma \ref{lem:properness of psi} is based on the dynamics of a particular class of AdS isometries acting on $\RP^3$.

 More precisely, we consider the action on $\RP^3$ of a pure translation along a complete space-like geodesic in $\AdS$.
 Such pure translations are transformations of $\RP^3$ induced by matrices of the form
 $$ \left(
   \begin{array}[h]{cccc}
     \cosh(t)  & 0 & 0 & \sinh(t) \\
     0 & 1 & 0 & 0 \\
     0 & 0 & 1 & 0 \\
     \sinh(t) & 0 & 0 & \cosh(t)
   \end{array}
 \right)~, $$
 for some %$t\in \R_{>0}$
 $t>0$. This transformation (denoted by $u_t$) is a pure translation along the line $\ell$ with endpoints the points at infinity $v_+=[1,0,0,1]$ and $v_-=[-1,0,0,1]$.
 Let $P_{-}$, $P_{+}$ denote the planes tangent to $\partial\AdS$ at $v_{-}$, $v_{+}$.
 Note that $u_t$ preserves $v_{-}$, $v_{+}$ and $\partial\AdS$. It is quite clear that:
 \begin{itemize}
 \item $v_{-}$ corresponds to an eigenvector of $u_t$ with eigenvalue $e^{-t}$ less than 1;
 \item $v_{+}$ corresponds to an eigenvector of $u_t$ with eigenvalue $e^t$ greater than 1;
 \item the circle at infinity (in the affine chart $x_4=1$ of $\AdS$) of the tangent plane $P_{-}$ or $P_{+}$ corresponds to a 2-dimensional eigenspace of $u_t$ with eigenvalue 1.
 \end{itemize}

 By direct computation, the action of $u_t$ on the planes $P_{\pm}$ in the affine chart $x_4=1$ is given by $(\pm 1,x,y)\mapsto (\pm 1,e^{\mp t}x, e^{\mp t}y)$.
 Similarly, we can compute the action of $u_t$ on the planes containing $\ell$. To conclude, the action of $u_t$ in the chosen affine chart has the following description:
 \begin{itemize}
 \item $u_t$ preserves the tangent planes $P_{-}$ and $P_{+}$ respectively, and acts as an expanding (resp. contracting) homothety on $P_{-}$ (resp. $P_{+}$) with fixed point $v_{-}$ (resp. $v_{+}$);
 \item on the region between $P_{-}$ and $P_{+}$, $u_t$ acts with repulsive fixed point at $v_{-}$ and attractive fixed point at $v_{+}$ (this can be viewed as a ``north-south type" dynamics);
 \item on the complement in $\R^3$ of the region between $P_{-}$ and $P_{+}$, the dynamics of $u_t$ is similar, with each trajectory away from $P_{-}$ and towards $P_{+}$.
 \end{itemize}

\subsubsection{The proof}

 We are ready to prove the properness.
 Let $(P_n)_{n\in \N}$ be a sequence of hyperideal polyhedra in $\cP_N$ with image $\theta_n:=\Psi(P_n)$ converging to an element say $\theta_{\infty}$ of $\cA_N$. Note that $\cP_N$ is a disjoint union of $\cP_{\Gamma}$ over all $\Gamma\in\Graph(\Sigma_{0,N},\gamma)$ and $\Graph(\Sigma_{0,N},\gamma)$ is finite. Up to extracting a subsequence, we assume that $(P_n)_{n\in \N}$ is a sequence in $\cP_{\Gamma}$ for a fixed graph $\Gamma\in\Graph(\Sigma_{0,N},\gamma)$. Note also that $\cA_N$ is a disjoint union of $\cA_{\Gamma}$ over all $\Gamma\in\Graph(\Sigma_{0,N},\gamma)$. Then $\theta_{\infty}\in\cA_{\Gamma'}$ for some graph $\Gamma'\in\Graph(\Sigma_{0,N},\gamma)$. Since $\theta_n\in\cA_{\Gamma}$ for all $n$, by the gluing structure in $\cA_{N}$ (see subsection \ref{subsec:def}), it follows that $\Gamma'$ is a subgraph of $\Gamma$ (namely $\Gamma'$ is either equal to $\Gamma$ or is a proper subgraph of $\Gamma$), where $E(\Gamma')$ is the support of $\theta_{\infty}$ (viewed as a limit function of the sequence $\theta_{n}: E(\Gamma)\rightarrow\R$). We aim to show that $(P_n)_{n\in \N}$ converges to an element of $\cP_{\Gamma'}$.

\begin{proof}[\textbf{Proof of Lemma \ref{lem:properness of psi}}]
	
	Let $(P_n)_{n\in \N}$ be a sequence of marked polyhedra in $\cP_{\Gamma}$ with representatives $\tilde{P}_n$ converging to a limit say $\tilde{P}_{\infty}$ in $\RP^3$, and with the angle-assignments  $\theta_n:=\Psi(P_n)$ converging to a function $\theta_{\infty}\in\cA_{\Gamma'}$ for $\Gamma'\in\Graph(\Sigma_{0,N},\gamma)$ which is a subgraph of $\Gamma$.
We need to show that the equivalence class of $\tilde{P}_{\infty}$ lies in $\cP_{\Gamma'}$ (namely, $\tilde{P}_\infty$ is non-degenerate convex hyperideal with the 1-skeleton $\Gamma'$). We prove the lemma using the following steps.

\begin{Step} \label{step:no.0}
$\tilde{P}_{\infty}$ is convex in $\RP^3$.
\end{Step}

As the limit of the sequence $(\tilde{P}_n)$ of convex polyhedra, $\tilde{P}_{\infty}$ is locally convex. It suffices to show that $\tilde{P}_{\infty}$ is contained in an affine chart. Otherwise, $\tilde{P}_{\infty}$ contains a projective line and is therefore a bi-infinite prism or strip in any affine chart, with the endpoints at infinity on both sides of all the (parallel) edges corresponding to the same point (say $q_{\infty}$) in $\RP^3$.

We first claim that there are exactly two vertices of $\Gamma$, say $v^1$, $v^2$, such that the corresponding vertices $v^1_n$, $v^2_n$ of $\tilde{P}_n$ converge to $q_{\infty}$ along the two opposite sides of the geodesic segment connecting $v^1_n$ and $v^2_n$ respectively (see Figure \ref{fig:consecutive} for instance). Indeed, if two vertices of $\tilde{P}_n$ converge to $q_{\infty}$ on the same side of parallel edges of $\tilde{P}_{\infty}$, then the geodesic segment connecting them would lie outside of the closure of $\AdS$ for $n$ sufficiently large, which contradicts Corollary \ref{cor:geodesic}. We now split the discussion into the following two cases:
\begin{figure}
   \begin{subfigure}[b]{0.46\textwidth}
   %\usetikzlibrary{decorations.pathreplacing}
    \centering
\begin{tikzpicture}[scale=0.8]
\draw [thick] (-0.5,1) node (v1) {} ellipse (2 and 2);
\draw  [black][fill=black,opacity=1](1.5,0.85) ellipse (0.04 and 0.04);
\draw  [black][fill=black,opacity=1](-1.3,2.83) ellipse (0.04 and 0.04);
\draw  [black][fill=black,opacity=1](-1.3,-0.83) ellipse (0.04 and 0.04);
\draw [blue](-1.3,-0.83) .. controls (-1.3,2.83) and (-1.3,2.83) .. (-1.3,2.83);
\draw  [black][fill=black,opacity=1](0.3,2.83) ellipse (0.04 and 0.04);
\draw  [black][fill=black,opacity=1](0.3,-0.83) ellipse (0.04 and 0.04);
\draw [blue](0.3,-0.83) .. controls (0.3,2.83) and (0.3,2.83) .. (0.3,2.83);
\draw  [black][fill=black,opacity=1](-2.5,0.85) ellipse (0.04 and 0.04);
\draw[decorate,decoration=brace] (-1.35,3.1) -- (0.38,3.1);
\draw[decorate,decoration={brace,mirror}] (-1.38,-1.15) -- (0.42,-1.15);
\node at (-0.5,3.5) {$\mathcal{V}_1$};
\node at (-0.45,-1.5) {$\mathcal{V}_2$};
\draw (-1.15,2.9);
\node at (-2.85,0.9) {$v^1$};
\node at (1.8,0.85) {$v^2$};
\draw[densely dashed](-2.5,0.85) .. controls (1.5,0.85) and (1.5,0.85) .. (1.5,0.85);
\end{tikzpicture}
\end{subfigure}
    \begin{subfigure}[b]{0.46\textwidth}
     \centering
\begin{tikzpicture}[scale=0.8]
\draw [thick] (-0.5,1) node (v1) {} ellipse (2 and 2);
\draw  [black][fill=black,opacity=1](1.5,0.85) ellipse (0.04 and 0.04);
\draw  [black][fill=black,opacity=1](-1.3,2.83) ellipse (0.04 and 0.04);
\draw  [black][fill=black,opacity=1](-1.3,-0.83) ellipse (0.04 and 0.04);
\draw [blue](-1.3,-0.83) .. controls (-1.3,2.83) and (-1.3,2.83) .. (-1.3,2.83);
\draw  [black][fill=black,opacity=1](0.3,2.83) ellipse (0.04 and 0.04);
\draw  [black][fill=black,opacity=1](0.3,-0.83) ellipse (0.04 and 0.04);
\draw [blue](0.3,-0.83) .. controls (0.3,2.83) and (0.3,2.83) .. (0.3,2.83);
\draw  [black][fill=black,opacity=1](-2.5,0.85) ellipse (0.04 and 0.04);
\draw[decorate,decoration=brace] (-2.45,3.2) -- (1.6,3.2);
\node at (-0.5,3.6) {$\mathcal{V}_1$};
\draw (-1.15,2.9);
\node at (-1.3,-1.2) {$v^1$};
\node at (0.35,-1.2) {$v^2$};
\node at (-0.5,-0.75) {$e$};
\draw[densely dashed](-2.5,0.85) .. controls (1.5,0.85) and (1.5,0.85) .. (1.5,0.85);
\end{tikzpicture}
    \end{subfigure}
    \caption{\small{Examples of $\Gamma$ for cases (1) and (2) in Step \ref{step:no.0}} (where $N=6$), shown in the left and right pictures respectively. The (bold) circle represents the equator $\gamma$ and the dashed edges are on the bottom (delimited by the equator $\gamma$) of $\Sigma_{0,N}$.}
    \label{fig:step1_convex}
    \end{figure}
\begin{enumerate}
\item If $v^1$ and $v^2$ are non-adjacent along the equator $\gamma$ of $\Gamma$ (see e.g. Figure \ref{fig:step1_convex}), they separate the vertices of $\gamma$ into two groups, collected in $\cV_1$ and $\cV_2$ respectively. We denote by $E_{12}$ the set of the edges of $\Gamma$ connecting one point in $\cV_1$ and the other point in $\cV_2$. For simplicity, we identify the vertices and edges of $\tilde{P}_n$ with corresponding ones in $\Gamma$. Note that any vertex in $\cV_1$ is non-adjacent to any vertex in $\cV_2$ along the equator, the edges of $\tilde{P}_n$ in $E_{12}$ are non-equatorial and lie either on the top or the bottom of $\tilde{P}_n$, moreover, they are non-adjacent to $v^1_n$ or $v^2_n$. As a result, the limit of each edge of $\tilde{P}_n$ in $E_{12}$ would lie in the interior of a face of $\tilde{P}_{\infty}$ and intersects the two parallel edges  of that face at their endpoints. This implies that the dihedral angles at each edge of $\tilde{P}_n$ in $E_{12}$ tends to zero as $n\rightarrow \infty$. Recall that $E(\Gamma')$ is the support of the limit function $\theta_{\infty}$, $\Gamma'$ is thus contained in the subgraph (say $\Gamma_1$) of $\Gamma$ obtained by removing the edges in $E_{12}$. Since $\Gamma_1$ is disconnected after removing $v^1$, $v^2$ and their incident edges, $\Gamma'$ is not 3-connected, which contradicts our hypothesis that $\Gamma'\in\Graph(\Sigma_{0,N},\gamma)$.

  \item If $v^1$ and $v^2$ are adjacent along the equator $\gamma$ of $\Gamma$ (see e.g. Figure \ref{fig:step1_convex}), let $\cV_1:=V(\Gamma)\setminus\{v^1,v^2\}$ and let $e$ denote the equatorial edge connecting $v^1$ and $v^2$. Note that the corresponding vertices $v^1_n$, $v^2_n$ of $\tilde{P}_n$ tend to $q_{\infty}$ on opposite sides of the edge $e_n$ connecting them. By the convexity of $\tilde{P}_n$, the limit of the vertices of $\tilde{P}_n$ in $\cV_1$ would lie on the same line parallel to the limit of $e_n$.
      Therefore, $\tilde{P}_{\infty}$ is a bi-infinite strip and the dihedral angle at the equatorial edge $e_n$ of $\tilde{P}_n$ would tend to zero as $n\rightarrow \infty$. This would imply that $\gamma$ is not contained in $\Gamma'$ and thus $\Gamma'\not\in\Graph(\Sigma_{0,N},\gamma)$, a contradiction.
\end{enumerate}

Combining the above two cases, we complete Step \ref{step:no.0}.

	\begin{Step} \label{step:no.1}
We claim that $\tilde{P}_{\infty}$ has $N$ vertices.
	\end{Step}
	
	Note that all the $\tilde{P}_n$ are convex hyperideal with each vertex passed through by a Hamiltonian cycle which separates the boundary $\partial\tilde{P}_n$ into future and past faces, the limit in $\tilde{P}_{\infty}$ of any vertex of the $\tilde{P}_n$ cannot lie in the interior of any face of $\tilde{P}_{\infty}$ (here $\tilde{P}_{\infty}$ has at least three non-colinear vertices by Step \ref{step:no.0}, Lemma \ref{lm:representatives} and Remark \ref{rk:representatives}). Suppose by contradiction that $\tilde{P}_{\infty}$ has fewer vertices. Then it falls into one of the following two cases:

\begin{enumerate}[(a)]
\item There are at least two vertices of the $\tilde{P}_n$ converging to a point say $v_{\infty}$ of $\tilde{P}_{\infty}$.
\item Case (a) does not occur, but there is a vertex of the $\tilde{P}_n$ whose limit lies in the interior of an edge of $\tilde{P}_{\infty}$.
\end{enumerate}

We first discuss Case (a). By Statement (1) of Lemma \ref{lm:limit property},  the vertices of $\tilde{P}_n$ converging to $v_{\infty}$ are consecutive on the equator of $\tilde{P}_n$.
By Statement (2) of Lemma \ref{lm:limit property}, $v_{\infty}$ lies on $\partial\AdS$. Moreover, $v_{\infty}$ is either a vertex of $\tilde{P}_{\infty}$ or a point lying in the interior of an edge of $\tilde{P}_{\infty}$. Note that an edge of $\tilde{P}_{\infty}$ either passes through $\AdS$ or is tangent to $\partial\AdS$, we split the discussion into the following several claims according to the possible cases of the limit edges adjacent to $v_{\infty}$.

The trick is applying the dynamics of AdS pure translations (along a space-like geodesic in $\AdS$) to ``zoom in'' the domain near $v_{\infty}$ of $\tilde{P}_{\infty}$ so that we can reveal the asymptotically degenerate behavior of the dihedral angles at some edges (for instance, the edges adjacent to those vertices converging to $v_{\infty}$) of $\tilde{P}_n$ from the degenerate geometric phenomenon near $v_{\infty}$ of $\tilde{P}_{\infty}$. Indeed, up to isometries, we can assume that $v_{\infty}$ lies at the endpoint $v_{-}=[-1,0,0,1]$ at infinity of the complete spacelike geodesic $\ell$ (which is contained in the coordinate axis $Ox_1$ of the affine chart $x_4=1$ of $\AdS$) mentioned in subsection \ref{subsubsect:dynamics}. We denote by $w_{\infty}$ the other endpoint at infinity of $\ell$, which coincides with $v_{+}=[1,0,0,1]$. Let $P_v$ (resp. $P_w$) be the tangent plane of $\partial\AdS$ at $v_{\infty}$ (resp. $w_{\infty}$). We consider a pure AdS translation say $u_n$ in $\Isom_0(\AdS)$ along $\ell$ with translation length $L_n>0$, attractive fixed point $w_{\infty}$ and repulsive fixed point $v_{\infty}$.  The dynamics of the $u_n$ acting on $\RP^3$ (see subsection \ref{subsubsect:dynamics}) is a technical tool throughout the following proof.
	
	\begin{Claim}\label{clm:no tangent}
		If all the limit edges adjacent to $v_{\infty}$ pass through $\AdS$, then $\theta_{\infty}$ does not satisfy Condition (iii).
	\end{Claim}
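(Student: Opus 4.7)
The plan is to leverage the Minkowski-polygon identity at an ideal vertex (already used for the necessity of Condition (ii) in Proposition~\ref{prop: necessary_angle}) applied to a well-chosen renormalization of $\tilde{P}_n$ near $v_\infty$, and to derive from it an \emph{equality} that directly violates the strict inequality in Condition (iii).

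First, I would set up the combinatorics of the cluster. Let $V_\infty\subset V$ be the subset of vertex-labels $v$ such that the corresponding vertex $v_n$ of $\tilde{P}_n$ converges to $v_\infty$; by the Case (a) hypothesis $|V_\infty|\geq 2$, and by Lemma~\ref{lm:limit property}(1) one has $v_\infty\in\partial\AdS$. Let $C$ denote the set of dual edges $e^*\in E(\Gamma^*)$ whose primal edge $e\in E(\Gamma)$ has exactly one endpoint in $V_\infty$. Using 3-connectedness of $\Gamma$ and planarity, $C$ is a disjoint union of simple cycles in $\Gamma^*$; since both $V_\infty$ and $V\setminus V_\infty$ are nonempty, at least one of these cycles, call it $C_0$, does not bound a face of $\Gamma^*$. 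Because $\gamma$ is a Hamiltonian cycle visiting each vertex of $\Gamma$ exactly once, the restriction of $\gamma$ to $V_\infty$ decomposes into arcs, and each arc contributes exactly two boundary equator edges to $C$. By restricting $V_\infty$ to the vertex-set of one such $\gamma$-arc (which is still a cluster of $\geq 2$ vertices converging to $v_\infty$, provided we pick a $\gamma$-arc containing at least two elements of $V_\infty$), one obtains $C_0$ having exactly two edges dual to edges of $\gamma$.

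Second, I would renormalize using the pure translations $u_n$ of \S\ref{subsubsect:dynamics}. Fix a space-like geodesic $\ell$ with ideal endpoints $v_\infty$ (repulsive) and some chosen $w_\infty\in\partial\AdS$ (attractive), and choose the translation lengths $L_n\to+\infty$ so that two distinguished cluster vertices of $u_n(\tilde{P}_n)$ stay at bounded nonzero distance. By the expanding north--south dynamics, after extracting a subsequence, the cluster vertices $u_n(v_n)$, $v\in V_\infty$, converge to pairwise distinct points, while all non-cluster vertices are pushed to the single attracting point $w_\infty$. The crucial point is that the Claim's hypothesis --- every limit edge of $\tilde{P}_\infty$ at $v_\infty$ passes through $\AdS$ --- ensures that the rescaled limit $\tilde{Q}_\infty:=\lim u_n(\tilde{P}_n)$ is a genuine (non-degenerate) convex AdS hyperideal polyhedron, with an ideal vertex at $w_\infty$ whose incident edges are in natural bijection with the edges of $C_0$ (edges collapsing onto $w_\infty$ correspond precisely to the boundary edges of the cluster).

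Third, I would apply the Minkowski polygon identity used in the proof of Proposition~\ref{prop: necessary_angle} (Condition (ii), ideal case) to the ideal vertex $w_\infty$ of $\tilde{Q}_\infty$: the intersection of $\tilde{Q}_\infty$ with a small horo-torus at $w_\infty$ is a convex space-like polygon in the Minkowski plane, so its exterior angles sum to zero. Since $u_n\in\Isom_0(\AdS)$ preserves exterior dihedral angles and since these angles pass to the limit, this identity reads
\begin{equation*}
\sum_{e^*\in C_0}\theta_\infty(e)=0,
\end{equation*}
contradicting Condition (iii), which for the cycle $C_0$ (simple, not bounding a face of $\Gamma^*$, with exactly two equator edges) demands the strict inequality $\sum_{e^*\in C_0}\theta_\infty(e)>0$. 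Hence $\theta_\infty$ is not $\gamma$-admissible.

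The main obstacle will be the second step: verifying carefully that the renormalized limit $\tilde{Q}_\infty$ is a genuine convex non-degenerate AdS hyperideal polyhedron whose combinatorics at $w_\infty$ realize the cycle $C_0$. This requires the assumption that all limit edges at $v_\infty$ pass through $\AdS$ (so no edge degenerates to a tangent of $\partial\AdS$ under rescaling), and a careful bookkeeping of which edges of $\tilde{P}_n$ survive under $u_n$ and which collapse to edges emanating from $w_\infty$. The combinatorial step of extracting a $\gamma$-arc sub-cluster so that $C_0$ has exactly two equator edges is a secondary technicality, handled by induction on the number of $\gamma$-arcs in $V_\infty$.
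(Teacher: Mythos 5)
Your strategy (rescale by pure AdS translations that push the non-cluster vertices off to a second ideal point $w_\infty$, then use a Gauss--Bonnet/Minkowski-polygon identity to force a cut-sum of angles to vanish, contradicting Condition (iii)) is the same blueprint as the paper's. However, there are genuine gaps in how you try to carry it out, and they differ from how the paper sidesteps the same difficulties.

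The central problem is your renormalization step. You choose $L_n$ so that two distinguished cluster vertices stay at bounded nonzero distance and then assert that all cluster vertices $u_n(v_n)$, $v\in V_\infty$, converge to pairwise distinct points. This is false in general: the collapse of $V_\infty$ onto $v_\infty$ can occur at several different scales. With three cluster vertices collapsing at rates $\sim 1/n$, $\sim 1/n$ and $\sim 1/n^2$, normalizing the first pair apart leaves the third vertex still collapsing onto $v_\infty$; normalizing the third one apart pushes the first two off to $w_\infty$. Consequently your rescaled $\tilde Q_\infty$ need not be a non-degenerate convex hyperideal polyhedron, which you yourself flag as ``the main obstacle'', but this is exactly the point where your approach breaks. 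The paper deliberately does \emph{not} produce a non-degenerate rescaled limit: it chooses $L_n$ large enough that \emph{all} cluster vertices still converge to $v_\infty$ under $u_n$ (so the limit is a space-like line), and instead of a genuine ideal vertex it uses the shrinking closed geodesic $\alpha_n$ on $\partial u_n(\tilde P_n)$ as a degenerating collar and passes to a limiting angle identity directly. This bypasses both the multi-scale issue and the need to verify non-degeneracy of a rescaled limit.

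Your combinatorial reduction has a second gap. You replace $V_\infty$ by the vertex set of a single $\gamma$-arc in order to make the dual-cycle $C_0$ have exactly two equator edges. But the ideal vertex $w_\infty$ in the rescaled limit absorbs \emph{all} non-cluster vertices, so the link of $w_\infty$ corresponds to the full cut of $V_\infty$, not to the cut $C_0$ of a sub-arc; the other cluster vertices not in the chosen arc stay near $v_\infty$, not $w_\infty$. Your Minkowski polygon identity therefore does not produce $\sum_{e^*\in C_0}\theta_\infty(e)=0$. (Also, ``at least one component of $C$ does not bound a face'' fails if $V_\infty$ consists of two non-adjacent singletons.) The paper instead argues that local convexity of $\tilde P_\infty$ forces the cut $c$ to be a \emph{single} simple circuit with \emph{exactly two} equatorial dual edges: no arc-selection or induction is needed, because the geometry of the limit already pins down the combinatorics of $c$. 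If you want to repair your version, you should abandon the sub-arc reduction, use local convexity to identify the full cut as a simple circuit with two equator edges, and replace the ``two vertices stay at bounded distance'' normalization by the paper's ``all of $V_\infty$ keeps collapsing'' normalization, then extract the angle identity from the shrinking neck rather than from a hypothetical non-degenerate limit polyhedron.
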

		
	  By carefully choosing the translation length $L_n$ of $u_n$, we can control the approaching speed towards $w_{\infty}$ of the images under $u_n$ of the points disjoint from $P_{v}$ in the affine chart. In particular, we choose $L_n$ such that under the action of $u_n$, those vertices of the $\tilde{P}_n$ converging to $v_{\infty}$ keep converging to $v_{\infty}$, while all the other vertices (note that the number of the other vertices is at least two, since $\tilde{P}_{\infty}$ has at least two more vertices apart from $v_{\infty}$ by Lemma \ref{lm:representatives}) of $\tilde{P}_n$ go towards $w_{\infty}$ in the limit. This is realizable since the number of those vertices of the $\tilde{P}_n$ converging to $v_{\infty}$ is finite and all the other vertices of $\tilde{P}_n$ are disjoint from $P_v$, we can choose $L_n$ with $L_n\rightarrow +\infty$ and such that the convergence speed of those vertices of the $\tilde{P}_n$ converging to $v_{\infty}$ is faster than their expanding speed (towards $w_{\infty}$) under $u_n$ as $n$ tends to infinity.
	
          Therefore, the limit of $u_n(\tilde{P}_n)$ is the space-like line $\ell$ in $\AdS$ with endpoints $v_\infty$, $w_{\infty}$. This divides the vertices of the $\Gamma$ into two groups: one group (say $\cV_v$) consisting of the vertices whose correspondence in $u_n(\tilde{P}_n)$ converge to $v_{\infty}$ and the other group for $w_{\infty}$ (say $\cV_w$). By the hypothesis of case (a) and the above discussion, both $\cV_v$ and $\cV_w$ have at least two vertices, and moreover, their vertices are respectively consecutive on the equator $\gamma$ (by Statement (1) of Lemma \ref{lm:limit property}).
As a subgraph of $\Gamma$ and a graph in $\Graph(\Sigma_{0,N},\gamma)$, $\Gamma'$ is 3-connected and contains the same equator $\gamma$ as $\Gamma$. Let $c$ (resp. $c'$) be the simple circuit in the dual graph $\Gamma^*$ (resp. $\Gamma'^*$) that separates the vertices in $\cV_v$ and $\cV_w$. We conclude that $c$ (resp. $c'$) does not bound any face of $\Gamma^*$ (resp. $\Gamma'^*$) and exactly two of the edges in $c$ (resp. $c'$) are dual to edges of $\gamma$ (as shown in Figure \ref{fig:claim1}). Let $E_{vw}$ (resp. $E'_{vw}$) denote the set of the edges of $\Gamma$ (resp. $\Gamma'$) connecting one point in $\cV_v$ and the other in $\cV_w$, which are exactly the edges dual to those in $c$ (resp. $c'$).

\begin{figure}
\usetikzlibrary{decorations.pathreplacing}
\begin{tikzpicture}[scale=1]
\draw  [thick](-0.5,0.5) ellipse (1.5 and 1.5);
\draw  [black][fill=black,opacity=1](0.4,1.7) ellipse (0.03 and 0.03);
\draw  [black][fill=black,opacity=1](-1.4,1.7) ellipse (0.03 and 0.03);
%\draw  [black][fill=black,opacity=1](-1.9,1.05) ellipse (0.03 and 0.03);
\draw  [black][fill=black,opacity=1](-1.4,-0.7) ellipse (0.03 and 0.03);
%\draw  [black][fill=black,opacity=1](0.9,1.05) ellipse (0.03 and 0.03);
\draw  [black][fill=black,opacity=1](0.4,-0.7) ellipse (0.03 and 0.03);
\draw [thick][dotted] (-1.5,0.25) -- (-1.5,0.7);
\draw [thick][dotted] (0.6,0.25) -- (0.6,0.75);
\draw[decorate,decoration=brace] (-2.15,-0.75) -- (-2.2,1.75);
\draw[decorate,decoration={brace,mirror}] (1.15,-0.75) -- (1.1,1.75);
\node at (-3,0.5) {$\mathcal{V}_v$};
\node at (2,0.5) {$\mathcal{V}_w$};
\draw [blue](-0.5,2) .. controls (-0.35,2) and (-0.4,-1) .. (-0.5,-1);
\node at (-0.25,0.35) {\color{blue}$c'$};
\draw [dashed][blue](-0.5,2) .. controls (-0.6,2) and (-0.6,-1) .. (-0.5,-1);
\end{tikzpicture}
\caption{\small{An example of the simple circuit %$c$
$c'$ in $\Gamma'^*$ in Claim \ref{clm:no tangent}. The vertices in $\cV_v$ and $\cV_w$ are consecutive on the equator $\gamma$ (shown in the bold lines) and exactly two edges of the simple circuit %$c$
$c'$ (separating the vertices in $\cV_v$ and $\cV_w$) are dual to edges of $\gamma$.}}
\label{fig:claim1}
\end{figure}

Let $H$ be the timelike plane in $\AdS$ which is orthogonal to the spacelike geodesic $\ell$ (with endpoints $v_{\infty}$, $w_{\infty}$ at infinity) at the middle point $[0,0,0,1]$. Note that $H$ is isometric to $\bAdS^2$.  Set $q_n:=u_n(\tilde{P}_n)\cap H$. For $n$ large enough, $q_n$ is a convex polygon in $H\cong\bAdS^2$ with all the edges spacelike. For simplicity, we identify the vertices and edges of $u_n(\tilde{P}_n)$ with corresponding ones in $\Gamma$. The vertex set of $q_n$ consists of the intersection points of $H$ with the edges of $u_n(\tilde{P}_n)$ in $E_{vw}$ (see e.g. Figure \ref{fig:proper1}). Since the edges of $u_n(\tilde{P}_n)$ in $E_{vw}$ tend to $\ell$ (which is orthogonal to $H$) in the limit, the dihedral angle of the $u_n(\tilde{P}_n)$ at each edge in $E_{vw}$ tends to equal the exterior angle of $q_n$ at the vertex lying on that edge. Applying the Gauss-Bonnet formula \eqref{formula:Gauss-Bonnet} to $q_n$, the sum of the exterior angles at the vertices of $q_n$ is equal to the area of $q_n$, which tends to zero as $n\rightarrow\infty$ (observe that $q_n$ converges to the point $[0,0,0,1]$). Combined with the assumption that the dihedral angle of the $\tilde{P}_n$ at each edge of $\Gamma\setminus\Gamma'$ (if non-empty) tends to zero, the sum of the dihedral angles of the $\tilde{P}_n$ (note that $u_n$ preserves the marking and dihedral angles) at the edges dual to those in $c'$ tends to zero. This implies that the limit function $\theta_{\infty}\in\cA_{\Gamma'}$ fails to satisfy Condition (iii), a contradiction.

\begin{figure}
 \centering
   \begin{subfigure}[b]{0.46\textwidth}
     \begin{tikzpicture}[scale=0.76]
\draw[thick] (-5.5,3.95) .. controls (-6.15,3.5) and (-6.15,3.5) .. (-6.15,3.5);
\draw[thick](-6.15,3.5) .. controls (-5.5,3.05) and (-5.5,3.05) .. (-5.5,3.05);
\draw[thick] (-5.5,3.95) .. controls (1.5,3.95) and (1.5,3.95) .. (1.5,3.95);
\draw[thick] (-5.5,3.05) .. controls (1.5,3.05) and (1.5,3.05) .. (1.5,3.05);
\draw[thick] (1.5,3.95) .. controls (2.15,3.5) and (2.15,3.5) .. (2.15,3.5);
\draw[thick] (2.15,3.5) .. controls (1.5,3.05) and (1.5,3.05) .. (1.5,3.05);
\draw (-5.5,3.95) .. controls (-5.5,3.05) and (-5.5,3.05) .. (-5.5,3.05);
\draw (1.5,3.95) .. controls (1.5,3.05) and (1.5,3.05) .. (1.5,3.05);
\draw[densely dashed] (-6.15,3.5) .. controls (2.15,3.5) and (2.15,3.5) .. (2.15,3.5);
\node at (-5.5,4.35) {$v^1_n$};
\node at (-6.45,3.5) {$v^2_n$};
\node at (-5.5,2.7) {$v^3_n$};
\node at (1.5,4.35) {$w^1_n$};
\node at (2.5,3.5) {$w^2_n$};
\node at (1.5,2.7) {$w^3_n$};
\draw [line width=0.2mm,blue](-2.05,3.96) .. controls (-2.05,3.05) and (-2.05,3.05) .. (-2.05,3.05);
\draw[line width=0.2mm,blue] [densely dashed](-2.05,3.96) .. controls (-2.2,3.5) and (-2.2,3.5) .. (-2.2,3.5);
\draw[line width=0.2mm,blue][densely dashed] (-2.05,3.05) .. controls (-2.2,3.5) and (-2.2,3.5) .. (-2.2,3.5);
\node at (-2.05,4.3) {$q_n$};
     \end{tikzpicture}
   \end{subfigure}
    \begin{subfigure}[b]{0.47\textwidth}
    \begin{tikzpicture}[scale=1.05]
\draw (-0.9,2.45) .. controls (-0.7,2.1) and (-0.7,1.45) .. (-0.9,1);
\draw[thick] (-0.9,2.45) .. controls (-0.7,2.1) and (-1.1,1.7) .. (-1.45,1.7);
\draw [thick](-1.45,1.7) .. controls (-1.1,1.7) and (-0.7,1.45) .. (-0.9,1);
\draw [thick](-0.9,2.45) .. controls (0.5,1.55) and (3.5,1.55) .. (4.9,2.45);
\draw [thick](-0.9,1) .. controls (0.5,1.75) and (3.5,1.75) .. (4.9,1);
\draw (4.9,2.45) .. controls (4.65,2.05) and (4.65,1.5) .. (4.9,1);
\draw [thick](4.9,2.45) .. controls (4.65,2.05) and (5.2,1.75) .. (5.5,1.75);
\draw [thick] (4.9,1) .. controls (4.65,1.5) and (5.2,1.75) .. (5.5,1.75);
\draw[densely dashed] (-1.45,1.7) .. controls (2,1.65) and (2,1.65) .. (5.5,1.75);
\node at (-1.2,2.5) {$v^1_n$};
\node at (-1.7,1.7) {$v^2_n$};
\node at (-1.15,1.05) {$v^3_n$};
\node at (5.2,2.55) {$w^1_n$};
\node at (5.8,1.8) {$w^2_n$};
\node at (5.2,1.05) {$w^3_n$};
\node at (2,1.95) {$q_n$};
\draw[line width=0.2mm,blue] (2.01,1.78) .. controls (2,1.65) and (2,1.65) .. (2.03,1.56);
\draw[line width=0.2mm,blue][densely dashed] (2.01,1.78) .. controls (1.95,1.7) and (1.95,1.7) .. (1.9,1.67);
\draw[line width=0.2mm,blue][densely dashed] (1.9,1.67) .. controls (1.96,1.64) and (1.96,1.64) .. (2.03,1.56);
\end{tikzpicture}
   \end{subfigure}
   \caption{\small{An example of $u_n(\tilde{P}_n)$ in Claim \ref{clm:no tangent}, with vertices $v^1_n,v^2_n,v^3_n$ (resp. $w^1_n$, $w^2_n$, $w^3_n$) ideal and converging to $v_{\infty}$ (resp. $w_{\infty}$). The bold lines represent the equator. The left figure is drawn in the projective model, while the right figure is a visualization of the induced (path) metric on $\partial u_n(\tilde{P}_n)$. The convex polygon $q_n$ is contained in the timelike plane $H$ with all the edges spacelike.}}
   \label{fig:proper1}
\end{figure}

    \begin{Claim}\label{clm:one tangent}
If exactly one limit edge adjacent to $v_{\infty}$ is tangent to $\partial\AdS$ at $v_{\infty}$, then $\theta_{\infty}$ does not satisfy Condition (iv). 
    \end{Claim}

    Let $e_{\infty}$ denote the (only) limit edge tangent to $\partial\AdS$ at $v_{\infty}$ and let $v'_{\infty}$ denote the other endpoint of $e_{\infty}$.  By Statement (4) of Lemma \ref{lm:limit property} and the assumption that $\theta_{\infty}\in \cA_{\Gamma'}$ (which takes zero value on the edges of $\Gamma\setminus\Gamma'$ (if non-empty) and takes non-zero value in $(-\infty,+\infty)$ on the edges of $\Gamma'$), $e_{\infty}$ is not lightlike.
     Therefore, $v'_{\infty}$ lies outside of the closure of $\AdS$.

    By Statement (2) of Lemma \ref{lm:limit property}, exactly one vertex of the $\tilde{P}_n$ converges to $v'_{\infty}$. Since $v'_{\infty}\not=v_{\infty}$ lies on the tangent plane $P_v$ of $v_{\infty}$, the dynamics of $u_n$ (see subsection \ref{subsubsect:dynamics}) shows that $u_n(v'_{\infty})$ will tend to a point in $\RP^3$, say $v''_{\infty}$, at infinity in the affine chart along the direction from $v_{\infty}$ to $v'_{\infty}$ as the translation length $L_n\rightarrow+\infty$.  Moreover, a diagonal extraction argument shows that the vertex of the $\tilde{P}_n$ that converges to $v'_{\infty}$ will tend to $v''_{\infty}$ under $u_n$ whenever $L_n\rightarrow+\infty$. In particular, we choose $u_n$ with $L_n\rightarrow +\infty$ such that %after applying the isometries
    under the action of $u_n$, those vertices of the $\tilde{P}_n$ converging to $v_{\infty}$ keep converging to $v_{\infty}$, the vertex of the $\tilde{P}_n$ converging to $v'_{\infty}$ tends to the point $v''_{\infty}$ at infinity, while all the other vertices (note that %there is at least one
    the number of the other vertices is at least one, since $\tilde{P}_{\infty}$ has at least one more vertex apart from $v_{\infty}$ and $v'_{\infty}$ by Lemma \ref{lm:representatives}) of the $\tilde{P}_n$ go towards $w_{\infty}$. Combined with the discussion in Claim \ref{clm:no tangent}, this is realizable, observing that all the vertices of the $\tilde{P}_n$ apart from those converging to $v_{\infty}$ and $v'_{\infty}$ are disjoint from $P_v$. Therefore, the limit of $u_n(\tilde{P}_n)$ is a half-infinite strip with one edge connecting $v_{\infty}$ and $w_{\infty}$, and two edges tangent to $\partial\AdS$ at $v_{\infty}$ and $w_{\infty}$ on one side and sharing a common vertex $v''_{\infty}$ at infinity on the other side (as shown in Figure \ref{fig:half-infinite}). Up to reordering indices, let $v_0$ be the vertex of $\Gamma$ whose correspondence in the $\tilde{P}_n$ (resp. $u_n(\tilde{P}_n)$) converges to $v'_{\infty}$ (resp. $v''_{\infty}$).

  \begin{figure}
   \begin{subfigure}[b]{0.46\textwidth}
    \centering
\begin{tikzpicture}[scale=0.9]
\draw  (-0.5,1) ellipse (2 and 2);
\draw[white][fill=gray,opacity=0.3](3.8,3) .. controls (-0.5,3) and (-0.5,3) .. (-0.5,3) .. controls (-0.5,-1) and (-0.5,-1) .. (-0.5,-1) .. controls (3.8,-1) and (3.8,-1) .. (3.8,-1) .. controls (3.8,3) and (3.8,3) .. (3.8,3);
\draw [thick](3.75,3) .. controls (-0.5,3) and (-0.5,3) .. (-0.5,3) .. controls (-0.5,-1) and (-0.5,-1) .. (-0.5,-1) .. controls (3.75,-1) and (3.75,-1) .. (3.75,-1);
\node at (-0.5,3.35) {$v_{\infty}$};
\node at (-0.5,-1.35) {$w_{\infty}$};
\node at (-0.8,1) {$\ell$};
\end{tikzpicture}
\caption{\small{the half-infinite strip in Claim \ref{clm:one tangent}.}}
\label{fig:half-infinite}
\end{subfigure}
    \begin{subfigure}[b]{0.46\textwidth}
     \centering
    \begin{tikzpicture}[scale=0.9]
\draw  (-10.45,1) ellipse (2 and 2);
\draw[white][fill=gray,opacity=0.3](-7,3) .. controls (-14,3) and (-14,3) .. (-14,3) .. controls (-14,-1) and (-14,-1) .. (-14,-1) .. controls (-7,-1) and (-7,-1) .. (-7,-1) .. controls (-7,3) and (-7,3) .. (-7,3);
\node at (-10.45,3.3) {$v_{\infty}$};
\node at (-10.5,-1.3) {$w_{\infty}$};
\draw [thick](-14,3) .. controls (-7,3) and (-7,3) .. (-7,3);
\draw [thick](-14,-1) .. controls (-7,-1) and (-7,-1) .. (-7,-1);
\draw (-10.5,3) .. controls (-10.5,-1) and (-10.5,-1) .. (-10.5,-1);
\node at (-10.8,1) {$\ell$};
\end{tikzpicture}
\caption{\small{the bi-infinite strip in in Claim \ref{clm:two tangent}.}}
\label{fig:bi-infinite}
    \end{subfigure}
    \caption{\small{The limit of $u_n(\tilde{P}_n)$, shown in the shaded region. The bold lines represent the limit of the equator of $u_n(\tilde{P}_n)$.}}
    \label{fig:strip}
    \end{figure}

    Let $\mathcal{V}_v$ (resp. $\mathcal{V}_w$) denote the set consisting of the vertices of $\Gamma$ whose correspondence in the $u_n(\tilde{P}_n)$ converge to $v_{\infty}$ (resp. $w_{\infty}$).
    Note that by the hypothesis of case (a) and Statement %(2)
    (1) of Lemma \ref{lm:limit property},  $\cV_v$ consists of at least two vertices and they are consecutive on the equator $\gamma$. Combined with the above discussion, $\cV_w$ consists of at least one vertex and they are also consecutive on the equator $\gamma$ (note that $v_0$ is adjacent along $\gamma$ to a vertex in $\cV_v$ and $V(\Gamma)=\cV_v\cup\cV_w\cup\{v_0\}$, see Figure \ref{fig:simple path}).

    Note that $\Gamma'$ is 3-connected and contains the same equator $\gamma$ as $\Gamma$. We denote by $E_{vw}$ (resp. $E'_{vw}$) the set of the edges of $\Gamma$ (resp. $\Gamma'$) connecting one point in $\mathcal{V}_v$ and the other point in $\mathcal{V}_w$. In particular, exactly two edges (say $e_1$, $e_2$) in $E'_{vw}$ are adjacent to the two faces (say $f_1$, $f_2$ respectively) containing the vertex $v_0$. Let $c'$ be the simple path in the dual graph $\Gamma'^*$ which contains the dual edges of those in $E'_{vw}$ and has endpoints dual to $f_1$ and $f_2$ respectively. It follows that $c'$ starts and ends in the boundary of the same face $v^*_0$ (dual to $v_0$) in $\Gamma'^*$ but is not contained in the boundary of $v^*_0$, and exactly one of the edges in $c'$ is dual to an edge of $\gamma$. Moreover, the dual of the edges in $c'$ are exactly those in $E'_{vw}$.

     Recall that $H$ is the timelike plane in $\AdS$ which is orthogonal to the spacelike geodesic $\ell$ (with endpoints $v_{\infty}$, $w_{\infty}$ at infinity) at the middle point $[0,0,0,1]$. Let $v^0_n$ denote the vertex of $u_n(\tilde{P}_n)$ corresponding to $v_0\in V(\Gamma)$ and let $Y_n$ be the half-space in the affine chart $\{x_4=1\}$ delimited by the dual plane ${(v^0_n)}^{\perp}$ and disjoint from $v^0_n$. Set $q_n:=u_n(\tilde{P}_n)\cap Y_n\cap H$. For $n$ large enough, $q_n$ is a convex polygon in $H\cong\bAdS^2$ with all the edges spacelike except for one timelike edge contained in ${(v^0_n)}^{\perp}$. 
    The vertex set of $q_n$ consists of the intersection points of $H$ with the edges of $u_n(\tilde{P}_n)$ in $E_{vw}$ and the endpoints, say $a_n$, $b_n$, of its timelike edge (see e.g. Figure \ref{fig:proper2}). Since the edges of the $u_n(\tilde{P}_n)$ in $E_{vw}$ tend to $\ell$ (which is orthogonal to $H$) in the limit, the dihedral angle of the $u_n(\tilde{P}_n)$ at each edge in $E_{vw}$ tends to equal the exterior angle of $q_n$ at the vertex lying on that edge. Note that the limit of the dual plane $(v^0_n)^{\perp}$ tends to be the timelike plane containing $\ell$ and orthogonal to both $H$ and the limit (as indicated in Figure \ref{fig:half-infinite}) of $u_n(\tilde{P}_n)$, the timelike edge of $q_n$ tends to be orthogonal to its two adjacent spacelike edges and the exterior angle of $q_n$ at $a_n$ (resp. $b_n$) tends to zero as $n\rightarrow\infty$. Observe that $q_n$ converges to the point $[0,0,0,1]$ as $n\rightarrow \infty$ and the Gauss-Bonnet formula \eqref{formula:Gauss-Bonnet} also holds for the polygon $q_n$ by appropriately defining the exterior angle between spacelike and timelike edges (which equals the opposite of the imaginary part of that in \cite{{DJJ}}),
    the sum of the exterior angles at the vertices of $q_n$ is equal to the area of $q_n$, which tends to zero as $n\rightarrow\infty$.  Combined with the assumption that the dihedral angle of the $\tilde{P}_n$ at each edge of $\Gamma\setminus\Gamma'$ (if non-empty) tends to zero, the sum of the dihedral angles of the $\tilde{P}_n$ (note that $u_n$ preserves the marking and dihedral angles) at the edges dual to those in $c'$ tends to zero. This implies that the limit function $\theta_{\infty}\in\cA_{\Gamma'}$ fails to satisfy Condition (iv), which leads to contradiction.

    \begin{figure}
    \begin{subfigure}[b]{0.46\textwidth}
     \centering
\begin{tikzpicture}[scale=0.98]
\draw [thick] (-0.5,1) node (v1) {} ellipse (2 and 2);
\draw  [black][fill=black,opacity=1](1.5,1) ellipse (0.04 and 0.04);
\draw  [black][fill=black,opacity=1](-1.449,2.75) ellipse (0.04 and 0.04);
\draw  [black][fill=black,opacity=1](-1.346,-0.802) ellipse (0.04 and 0.04);
\draw  [black][fill=black,opacity=1](0.348,-0.801) ellipse (0.04 and 0.04);
\draw  [black][fill=black,opacity=1](0.348,2.801) ellipse (0.04 and 0.04);
\draw [white][fill=black,opacity=0.2] (0.348,2.801) .. controls (0.95,2.55) and (1.5,1.75) .. (1.5,1) .. controls (1.447,-0.105) and (0.647,-0.751) .. (0.348,-0.801) .. controls (0.348,2.801) and (0.348,2.801) .. (0.348,2.801);
\draw [white][fill=black,opacity=0.2](-1.449,2.75) .. controls (-2.252,2.35) and (-2.6,1.5) .. (-2.5,0.9) .. controls (-2.5,0.3) and (-2,-0.5) .. (-1.346,-0.802) .. controls (1.5,1) and (1.5,1) .. (1.5,1) .. controls (-1.449,2.75) and (-1.449,2.75) .. (-1.449,2.75);
\draw [dotted][thick] (-0.699,2.7) -- (-0.149,2.7);
\draw [dotted] [thick](-0.7,-0.75) -- (-0.15,-0.75);
\draw[decorate,decoration=brace] (-1.55,3.1) -- (0.449,3.1);
\draw[decorate,decoration={brace,mirror}] (-1.499,-1.15) -- (0.349,-1.15);
\node at (-0.5,3.5) {$\mathcal{V}_v$};
\node at (-0.5,-1.5) {$\mathcal{V}_w$};
\draw (0.348,2.801) .. controls (0.348,-0.801) and (0.348,-0.801) .. (0.348,-0.801);
\draw (-1.449,2.75);
\draw[densely dashed] (-1.449,2.75) .. controls (1.5,1) and (1.5,1) .. (1.5,1);
\draw[densely dashed] (-1.346,-0.802) .. controls (1.5,1) and (1.5,1) .. (1.5,1);
\node at (0.598,2.305) {$e_1$};
\node at (-2.7,1.35) {$e_2$};
\node at (0.899,-0.001) {$f_1$};
\node at (-1.55,0) {$f_2$};
\draw [blue](0.598,0.8) .. controls (0,0.749) and (-2.5,0.699) .. (-2.5,0.9) .. controls (-2.5,1.101) and (-2.5,1.101) .. (-2.5,0.9);
\node at (-1.057,0.498) {\color{blue}$c'$};
\draw  [blue][fill=red,opacity=1](0.598,0.8) ellipse (0.03 and 0.03);
\draw  [blue][fill=red,opacity=1](-0.697,1.152) ellipse (0.03 and 0.03);
\node at (-0.697,1.451) {{\color{blue}$f_2^*$}};
\node at (0.649,1.1) {{\color{blue}$f_1^*$}};
\node at (1.8,0.95) {$v_0$};
\draw [densely dashed][blue](-2.5,0.9) .. controls (-2.5,1.101) and (-1.4,1.152) .. (-0.697,1.152);
\end{tikzpicture}
 \label{fig:case1}
    \end{subfigure}
   \begin{subfigure}[b]{0.46\textwidth}
    \centering
\begin{tikzpicture}
\draw [white][fill=black,opacity=0.2] (0.55,2.7) .. controls (1.15,2.4) and (1.55,1.8) .. (1.5,0.8) .. controls (1.5,0.1) and (0.75,-1.05) .. (-0.65,-1) .. controls (0.55,2.7) and (0.55,2.7) .. (0.55,2.7);
\draw [white][fill=black,opacity=0.2](-1.75,2.55) .. controls (-2.25,2.2) and (-2.55,1.5) .. (-2.5,0.9) .. controls (-2.45,0.2) and (-1.9,-0.9) .. (-0.65,-1) .. controls (0.75,-1.05) and (1.5,0.1) .. (1.5,0.95) .. controls (-1.75,2.55) and (-1.75,2.55) .. (-1.75,2.55);
\draw [thick] (-0.5,1) node (v1) {} ellipse (2 and 2);
\draw  [black][fill=black,opacity=1](1.5,0.95) ellipse (0.04 and 0.04);
\draw  [black][fill=black,opacity=1](-1.75,2.55) ellipse (0.04 and 0.04);
\draw  [black][fill=black,opacity=1](-0.65,-1) ellipse (0.04 and 0.04);
%\draw  [black][fill=black,opacity=1](-0.1,-1.7) ellipse (0.04 and 0.04);
\draw  [black][fill=black,opacity=1](0.55,2.7) ellipse (0.04 and 0.04);
\draw [dotted][thick] (-0.75,2.65) -- (-0.2,2.65);
%\draw [dotted] [thick](-0.7,-0.75) -- (-0.15,-0.75);
\draw[decorate,decoration=brace] (-1.85,3.1) -- (0.7,3.1);
\draw[decorate,decoration={brace,mirror}] (-0.8,-1.25) -- (-0.4,-1.25);
\node at (-0.5,3.5) {$\mathcal{V}_v$};
\node at (-0.55,-1.6) {$\mathcal{V}_w$};
\draw (0.55,2.7) .. controls (-0.65,-1) and (-0.65,-1) .. (-0.65,-1);
\draw (-1.75,2.55);
\draw[densely dashed] (-1.75,2.55) .. controls (1.5,0.95) and (1.5,0.95) .. (1.5,0.95);
\draw[densely dashed]   ;
\node at (0.65,2.25) {$e_1$};
\node at (-2.8,1) {$e_2$};
\node at (0.75,0.05) {$f_1$};
\node at (-1.5,0.05) {$f_2$};
\draw [blue](0.45,0.7) .. controls (0,0.7) and (-2.5,0.6) .. (-2.5,0.9) .. controls (-2.5,0.9) and (-2.5,0.9) .. (-2.5,0.9);
\node at (-0.8,0.45) {\color{blue}$c'$};
\draw  [blue][fill=red,opacity=1](0.45,0.7) ellipse (0.03 and 0.03);
\draw  [blue][fill=red,opacity=1](-1.15,1.15) ellipse (0.03 and 0.03);
\node at (-1.25,1.5) {{\color{blue}$f_2^*$}};
\node at (0.45,1.05) {{\color{blue}$f_1^*$}};
\node at (1.85,0.95) {$v_0$};
\draw [densely dashed][blue](-2.5,0.9) .. controls (-2.5,1.15) and (-1.4,1.15) .. (-1.15,1.15);
\end{tikzpicture}
  \label{fig:case2}
\end{subfigure}
    \caption{\small{Two examples of the simple path $c'$ %and $c'_v$
    in $\Gamma'^*$ of Claim \ref{clm:one tangent}. Exactly two edges $e_1$, $e_2$ in $E'_{vw}$ are adjacent to the two faces $f_1$, $f_2$ (shown in the shaded regions) containing the vertex $v_0$ of $\Gamma'$. The vertices in $\cV_v$ and $\cV_w$ are respectively consecutive on the equator $\gamma$ (shown in the bold %lines
    cycle). Exactly one edge of the simple path $c'$ (which contains the dual edges of those in $E'_{vw}$ and has endpoints dual to $f_1$ and $f_2$) is dual to an edge of $\gamma$. $\cV_w$ has at least two vertices (resp. exactly one vertex) in the left (resp. right) picture. 
    Here the dashed edges are on the bottom (delimited by the equator $\gamma$) of $\Sigma_{0,N}$.}}
    \label{fig:simple path}
    \end{figure}

    \begin{figure}
 \centering
   \begin{subfigure}[b]{0.46\textwidth}
\begin{tikzpicture}[scale=1.15]
\draw (-0.1305,0) .. controls (-0.1305,-3.5) and (-0.1305,-3.5) .. (-0.1305,-3.5);
\draw [thick](-0.1305,0) .. controls (-0.5,-0.35) and (-0.5,-0.35)
.. (-0.5,-0.35);
\draw [thick](-0.5,-0.35) .. controls (-0.5,-3.15) and
(-0.5,-3.15) .. (-0.5,-3.15);
\draw [thick](-0.5,-3.15) .. controls (-0.1305,-3.5) and (-0.1305,-3.5)
.. (-0.1305,-3.5);
\draw[thick] (-0.1305,0) .. controls (4.3,-2.2175) and (4.3,-2.2175) ..
(4.3,-2.2175);
\draw[thick] (-0.1305,-3.5) .. controls (4.3,-2.2175) and (4.3,-2.2175) ..
(4.3,-2.2175);
\draw[densely dashed] (-0.5,-0.35) .. controls (4.3,-2.2175) and
(4.3,-2.2175) .. (4.3,-2.2175);
\draw[densely dashed] (-0.5,-3.15) .. controls (4.3,-2.2175) and
(4.3,-2.2175) .. (4.3,-2.2175);

\node at (-0.7565,-0.3) {$v^2_n$};
\node at (-0.7565,-3.137) {$w^2_n$};
\node at (-0.387,-3.7) {$w^1_n$};
\node at (4.55,-2.261) {$v^0_n$};
\draw [blue][thick](1.455,-1.739) .. controls (1.479,-1.761) and (1.479,-1.761)
.. (-0.1305,-1.587);
\draw [line width=0.2mm][densely dashed][blue](-0.522,-1.935) .. controls
(-0.5,-1.935) and (-0.5,-1.935) .. (-0.5,-1.935) .. controls
(1.281,-1.9565) and (1.281,-1.9565) .. (1.281,-1.9565);
\node at (0.1515,-1.761) {\color{blue}$q_n$};
\draw [white, fill=gray,opacity=0.3](1.4565,-0.7825) .. controls (1.4565,-3.0435) and (1.4565,-3.0435) .. (1.4565,-3.0435) .. controls (1.261,-2.8045) and (1.261,-2.8045) .. (1.261,-2.8045) .. controls (1.3045,-1.0435) and (1.3045,-1.0435) .. (1.3045,-1.0435) .. controls (1.4565,-0.7825) and (1.4565,-0.7825) .. (1.4565,-0.7825);
%\draw [blue](0,-1.587) .. controls (-0.5,-1.913) and (-0.5,-1.913)
.. (-0.5,-1.913);
\draw[line width=0.2mm] (1.4565,-0.7825) .. controls (1.4565,-3.0435) and (1.4565,-3.0435) .. (1.4565,-3.0435);
\draw [line width=0.2mm,densely dashed](1.4565,-0.8045) .. controls (1.2825,-1.0435) and (1.2825,-1.0435) .. (1.2825,-1.0435) .. controls (1.2825,-2.8045) and (1.2825,-2.8045) .. (1.2825,-2.8045) .. controls (1.2825,-2.8045) and (1.4565,-3.0435) .. (1.4565,-3.0435);
\node at (2.4785,-0.5) {\small{$(v^0_n)^{\perp}\cap u_n(\tilde{P}_n)$}};
\draw[line width=0.2mm,blue] (-0.5,-1.913) .. controls (-0.1305,-1.587) and (-0.1305,-1.587) .. (-0.1305,-1.587);
\draw[line width=0.2mm,red] (1.4565,-1.761) .. controls (1.2825,-1.9565) and (1.2825,-1.9565) .. (1.2825,-1.9565);
\node at (-0.348,0.1955) {$v^1_n$};
%\node at (1,-1.326) {$q^v_n$};
%\node at (1.0435,-2.5) {$q^w_n$};
\node at (1.9914,-1.603) {$a_n$};
\node at (2.0331,-2.3671) {$b_n$};
\draw (1.7821,-1.5638) .. controls (1.6524,-1.5978) and (1.618,-1.6249) .. (1.497,-1.7441);
\draw (1.3229,-1.951) .. controls (1.5804,-2.0079) and (1.7193,-2.0638) .. (1.871,-2.1893);
\end{tikzpicture}
   \end{subfigure}
   \begin{subfigure}[b]{0.35\textwidth}
 \begin{tikzpicture}[scale=1.06]
\draw (0,0) .. controls (-0.05,-3.75) and (-0.05,-3.75) .. (-0.05,-3.75);
\draw [thick](0,0) .. controls (-0.5,-0.25) and (-0.5,-0.25) .. (-0.5,-0.25);
\draw [thick](-0.5,-0.25) .. controls (-0.05,-3.75) and (-0.05,-3.75) ..
(-0.05,-3.75);
\draw[thick] (0,0) .. controls (4.5,-2) and (4.5,-2) .. (4.5,-2);
\draw[thick] (-0.05,-3.75) .. controls (4.5,-2) and (4.5,-2) .. (4.5,-2);
\draw[densely dashed] (-0.5,-0.25) .. controls (4.5,-2) and (4.5,-2) .. (4.5,-2);
\node at (-0.15,0.3) {$v^1_n$};
\node at (-0.8,-0.2) {$v^2_n$};
\node at (-0.35,-3.85) {$w^1_n$};
\node at (4.8,-2.05) {$v^0_n$};
%\draw[red] (-0.02,-2.51) .. controls (-0.18,-2.61) and (-0.15,-2.6) ..
(-0.18,-2.6);
%\node at (-0.6028,-1.3028) {$e^2_n$};
%\node at (0.2472,-1) {$e^1_n$};
%\node at (-0.45,-2.7) {$\beta_n$};
\draw [white, fill=gray,opacity=0.3](1.3112,-0.8584) .. controls (1.3112,-0.8776) and (1.3112,-0.8776) .. (1.3112,-0.8776) .. controls (1.5,-3.1224) and (1.5,-3.1224) .. (1.5,-3.1224) .. controls (1.5752,-0.6696) and (1.5752,-0.6696) .. (1.5752,-0.6696) .. controls (1.3112,-0.8776) and (1.3112,-0.8776) .. (1.3112,-0.8776);
\draw [line width=0.2mm, blue](1.5576,-1.8304) .. controls (0.0192,-1.6888) and (0.0192,-1.6888) .. (0.0192,-1.6888) .. controls (-0.292,-2) and (-0.292,-2) .. (-0.292,-2);
\draw [densely dashed, blue, line width=0.2mm](-0.3112,-2) .. controls (1.4248,-2) and (1.4248,-2) .. (1.4248,-2);
\draw [line width=0.2mm](1.5472,-0.6888) .. controls (1.5472,-0.6696) and (1.5472,-0.6872) .. (1.5472,-0.6872) .. controls (1.5192,-3.1312) and (1.5192,-3.1312) .. (1.5192,-3.1312);
\draw [line width=0.2mm,densely dashed](1.3112,-0.8776) .. controls (1.5192,-3.1416) and (1.5192,-3.1416) .. (1.5192,-3.1416);
\draw [line width=0.2mm, red](1.5472,-1.8112) .. controls (1.4248,-1.9808) and (1.4248,-1.9808) .. (1.4248,-1.9808);
\node at (0.2168,-1.8776) {\color{blue}$q_n$};
%\node at (1.1416,-1.3304) {$q^v_n$};
%\node at (1.1888,-2.6224) {$q^w_n$};
\node at (2.5384,-0.4056) {\small{$(v^0_n)^{\perp}\cap u_n(\tilde{P}_n)$}};
\draw [line width=0.2mm,densely dashed](1.3024,-0.8496) .. controls (1.5664,-0.68) and (1.5664,-0.68) .. (1.5664,-0.68);
\node at (2.0349,-1.3855) {$a_n$};
\node at (2.0766,-2.3671) {$b_n$};
\draw (1.8256,-1.4768) .. controls (1.7394,-1.5543) and (1.6615,-1.6684) .. (1.584,-1.7876);
\draw (1.4534,-1.9945) .. controls (1.6674,-2.0079) and (1.8063,-2.0638) .. (1.9145,-2.1893);
\end{tikzpicture}
\end{subfigure}
   \caption{\small{Two examples of $u_n(\tilde{P}_n)$ in Claim \ref{clm:one tangent} with the strictly hyperideal vertex $v^0_n$ converging to $v''_{\infty}$. The bold lines represent the equator. $\cV_w$ has at least two vertices (resp. exactly one vertex) in the left (resp. right) picture. The convex polygon $q_n$ is contained in the timelike plane $H$ and has exactly one timelike edge (which is contained in $(v^0_n)^{\perp}$).
    }}
   \label{fig:proper2}
\end{figure}
	\begin{Claim}\label{clm:two tangent}
		 If at least two limit edges adjacent to $v_{\infty}$ are tangent to $\partial\AdS$ at $v_{\infty}$, then $\theta_{\infty}$ does not satisfy Condition (i).
	\end{Claim}

   Let $v^l_{\infty}$, $v^r_{\infty}$ be the endpoints (other than $v_{\infty}$) of two limit edges of $\tilde{P}_{\infty}$ tangent to $\partial\AdS$ at $v_{\infty}$. By Statement (4) of Lemma \ref{lm:limit property} and the assumption that $\theta_{\infty}\in \cA_{\Gamma'}$, these two edges are not lightlike. One can check that $v^l_{\infty}$, $v^r_{\infty}$ lie on different sides of $v_{\infty}$ and share the same (non-lightlike) line with $v_{\infty}$. Otherwise, if they lie on the same side, then the edge connecting them lies outside of the closure of $\AdS$. If $v^l_{\infty}$, $v^r_{\infty}$ and $v_{\infty}$ were not colinear, then the face of $\tilde{P}_{\infty}$ containing the two tangent edges would lie in a lightlike plane. 
   There would be a face say $f$ of $\Gamma$ such that the limit of its correspondence $f_n$ in $\tilde{P}_{n}$ is lightlike. This is not possible, combined with Statement (3) of Lemma \ref{lm:limit property} and the assumption on $\theta_{\infty}$.
   Therefore, there are exactly two limit edges adjacent to $v_{\infty}$ and they are contained in an edge of $\tilde{P}_{\infty}$ tangent to $\partial\AdS$ at $v_{\infty}$ with endpoints $v^l_{\infty}$, $v^r_{\infty}$ lying outside of the closure of $\AdS$.

   Similar to the argument in Claim \ref{clm:one tangent}, using the dynamics of $u_n$ with the translation length $L_n\rightarrow+\infty$, the (only) vertex of the $\tilde{P}_n$ converging to $v^l_{\infty}$ (resp. $v^r_{\infty}$) converges to the same point say $z_{\infty}$ in $\RP^3$ at infinity in the affine chart along the direction from $v_{\infty}$ to $v^l_{\infty}$ (resp. $v^r_{\infty}$), noting that $v^l_{\infty}$ and $v^r_{\infty}$ lie on the same line tangent to $\partial\AdS$ at $v_{\infty}$. In particular, we choose $u_n$ with $L_n\rightarrow+\infty$ such that under the action of $u_n$, the vertices of the $\tilde{P}_n$ converging to $v_{\infty}$ keep converging to $v_{\infty}$, the vertices of the $\tilde{P}_n$ converging to $v^l_{\infty}$ or $v^r_{\infty}$ tend to $z_{\infty}$, while all the other vertices (note that the number of the other vertices is at least one, since $\tilde{P}_{\infty}$ has at least one more vertex in addition to $v^l_{\infty}$ and $v^r_{\infty}$ by Lemma \ref{lm:representatives}) of the $\tilde{P}_n$ converge to $w_{\infty}$. As discussed before, this is realizable, observing that all the vertices of the $\tilde{P}_n$ apart from those converging to $v_{\infty}$, $v^l_{\infty}$ and $v^r_{\infty}$ are disjoint from $P_v$. Therefore, the limit of $u_n(\tilde{P}_n)$ is a bi-infinite strip with two infinite edges tangent to $\partial\AdS$ at $v_{\infty}$ and $w_{\infty}$ and sharing a common vertex $z_{\infty}$ at infinity (as indicated in Figure \ref{fig:bi-infinite}). Let $v^l$ (resp. $v^r$) denote the vertex of $\Gamma$ corresponding to the (only) vertex of $\tilde{P}_n$ that converges to $v^l_{\infty}$ (resp. $v^r_{\infty}$). Let $v^l_n$ (resp. $v^r_n$) denote the vertex of $u_n(\tilde{P}_n)$ whose preimage in $\tilde{P}_n$ converges to $v^l_{\infty}$ (resp. $v^r_{\infty}$).

   We %keep the notation $\mathcal{V}_v$ and $\mathcal{V}_w$ as in Claim \ref{clm:one tangent} and consider the edges of $\Gamma$ with one endpoint in $\mathcal{V}_v$ and the other in $\mathcal{V}_w$, collected by the set $E_{vw}$.
   denote by $\cV_v$ (resp. $\cV_w$) the set of the vertices of $\Gamma$ whose correspondence in $u_n(\tilde{P}_n)$ converge to $v_{\infty}$ (resp. $w_{\infty}$). Then we have $V(\Gamma)=V(\Gamma')=\cV_v\cup\cV_w\cup\{v^l,v^r\}$. From the hypothesis of case (a) and the above discussion, $\cV_v$ has at least two vertices and $\cV_w$ has at least one vertex. Moreover, by Statement (1) of Lemma \ref{lm:limit property} and the definition of $v^l, v^r$, the vertices in $\cV_v$ and $\cV_w$ are respectively consecutive on the equator $\gamma$ and separated by $v^l$ and $v^r$ (as shown in Figure \ref{fig:claim3}). Let $E_{vw}$ (resp. $E'_{vw}$) denote the set of the edges of $\Gamma$ (resp. $\Gamma'$) with one endpoint in $\mathcal{V}_v$ and the other in $\mathcal{V}_w$. We claim that $E'_{vw}$ (and thus $E_{vw}$) is non-empty (otherwise, $\Gamma'$ is disconnected after removing the vertex $v^l, v^r$ and their adjacent edges). Moreover, the edges in $E_{vw}$ (and thus $E'_{vw}$) are non-equatorial.

   Recall that $H$ denotes the timelike plane in $\AdS$ orthogonal to the spacelike geodesic $\ell$ (with endpoints $v_{\infty}$, $w_{\infty}$ at infinity) at the middle point $[0,0,0,1]$. Let $Z_n$ denote the subset in the chosen affine chart bounded by the two dual planes $(v^l_n)^{\perp}$ and $(v^r_n)^{\perp}$. Set $q_n:=u_n(\tilde{P}_n)\cap Z_n\cap H$. For $n$ large enough, $q_n$ is a convex polygon in $H$ with all the edges spacelike except for two timelike edges contained in $(v^l_n)^{\perp}$ and $(v^r_n)^{\perp}$ respectively.

   The vertex set of $q_n$ consists of the intersection points of $H$ with the edges of $u_n(\tilde{P}_n)$ in $E_{vw}$, two pairs of endpoints of its two disjoint timelike edges and possibly one transversely intersection point (say $s_n$) of $H$ with the edge (if non-empty) connecting $v^l_n$ and $v^r_n$ (see e.g. Figure \ref{fig:proper3}).  Since the edges of the $u_n(\tilde{P}_n)$ in $E_{vw}$ tend to $\ell$ (which is orthogonal to $H$) as $n\rightarrow \infty$, the dihedral angle of the $u_n(\tilde{P}_n)$ at each edge in $E_{vw}$ tends to equal the exterior angle of $q_n$ at the vertex lying on that edge. Note that the dual planes $(v^l_n)^{\perp}$ and $(v^r_n)^{\perp}$ tend to be the same timelike plane which contains $\ell$ and is orthogonal to the intersection of $H$ with the limit (as indicated in Figure \ref{fig:bi-infinite}) of $u_n(\tilde{P}_n)$, each timelike edge of $q_n$ tends to be orthogonal to its two adjacent spacelike edges and therefore, the exterior angle of $q_n$ at each endpoint of its timelike edges tends to zero as $n\rightarrow\infty$. Applying the Gauss-Bonnet formula \eqref{formula:Gauss-Bonnet} again, the sum of the exterior angles at the vertices of $q_n$ is equal to the area of $q_n$, which tends to zero as $n\rightarrow\infty$ (note that $q_n$ converges to the point $[0,0,0,1]$ as $n\rightarrow \infty$). Note that the geodesic line passing through $v^l_n$ and $v^r_n$ tends to be parallel to $H$ as $n\rightarrow\infty$, the exterior angle of $q_n$ at $s_n$ (if exists) tends to zero. Combined with the assumption that the dihedral angle of the $\tilde{P}_n$ at each edge of $\Gamma\setminus\Gamma'$ (if non-empty) tends to zero, the sum of the dihedral angles of the $\tilde{P}_n$ (note that $u_n$ preserves the marking and dihedral angles) at the edges in $E'_{vw}$ tends to zero. Note also that the dihedral angle of $\tilde{P}_n$ at each edge in $E'_{vw}$ is positive (recall that the edges in $E'_{vw}$ are all non-equatorial), the dihedral angle of $\tilde{P}_n$ at each edge in $E'_{vw}$ tends to zero. This implies that the limit function $\theta_{\infty}\in\cA_{\Gamma'}$ fails to satisfy Condition (i), a contradiction.

          \begin{figure}
   \begin{subfigure}[b]{0.46\textwidth}
    \centering
\begin{tikzpicture}
\draw [thick] (-0.5,1) node (v1) {} ellipse (2 and 2);
\draw  [black][fill=black,opacity=1](1.5,0.9) ellipse (0.04 and 0.04);
\draw  [black][fill=black,opacity=1](-1.7,2.6) ellipse (0.04 and 0.04);
\draw  [black][fill=black,opacity=1](-1.7,-0.6) ellipse (0.04 and 0.04);
\draw  [black][fill=black,opacity=1](-1.7,-0.6) ellipse (0.04 and 0.04);
\draw  [black][fill=black,opacity=1](0.7,2.6) ellipse (0.04 and 0.04);
\draw  [black][fill=black,opacity=1](-2.5,0.85) ellipse (0.04 and 0.04);
\draw  [black][fill=black,opacity=1](-0.5,3) ellipse (0.04 and 0.04);
\draw  [black][fill=black,opacity=1](0.7,-0.6) ellipse (0.04 and 0.04);
\draw  [black][fill=black,opacity=1](-0.45,-1) ellipse (0.04 and 0.04);
\draw [dotted][thick] (-0.8,2.75) -- (-0.2,2.75);
\draw [dotted] [thick](-0.75,-0.8) -- (-0.15,-0.8);
\draw[decorate,decoration=brace] (-1.7,3.1) -- (0.75,3.1);
\draw[decorate,decoration={brace,mirror}] (-1.75,-1.15) -- (0.75,-1.15);
\node at (-0.5,3.5) {$\mathcal{V}_v$};
\node at (-0.45,-1.5) {$\mathcal{V}_w$};
\draw (-1.15,2.9);
\node at (-2.85,0.9) {$v^l$};
\node at (1.8,0.85) {$v^r$};
\draw [blue](-0.5,3) .. controls (-0.45,-1) and (-0.45,-1) .. (-0.45,-1);
%\draw[blue][densely dashed](0.7,2.6) .. controls (0.7,-0.6) and (0.7,-0.6) .. (0.7,-0.6) .. controls (-1.7,2.6) and (-1.7,2.6) .. (-1.7,2.6) .. controls (-1.7,-0.6) and (-1.7,-0.6) .. (-1.7,-0.6);
\draw [blue][densely dashed](-1.7,-0.6) .. controls (-1.7,2.6) and (-1.7,2.6) .. (-1.7,2.6);
\draw [blue][densely dashed](0.7,-0.6) .. controls (0.7,2.6) and (0.7,2.6) .. (0.7,2.6);
\end{tikzpicture}
\end{subfigure}
    \begin{subfigure}[b]{0.46\textwidth}
     \centering
\begin{tikzpicture}
%\draw [white][fill=black,opacity=0.2] (0.45,2.75) .. controls (0.95,2.55) and (1.5,1.75) .. (1.5,1) .. controls (1.6,0.15) and (0.8,-0.7) .. (0.45,-0.75) .. controls (0.45,2.75) and (0.45,2.75) .. (0.45,2.75);
%\draw [white][fill=black,opacity=0.2](-1.15,2.9) .. controls (-1.55,-0.7) and (-1.55,-0.7) .. (-1.55,-0.7) .. controls (-1.55,-0.7) and (-1.55,-0.7) .. (-1.55,-0.7) .. controls (1.5,1) and (1.5,1) .. (1.5,1) .. controls (1.5,1) and (1.5,1) .. (-1.15,2.9);
\draw [thick] (-0.5,1) node (v1) {} ellipse (2 and 2);
\draw  [black][fill=black,opacity=1](1.5,0.9) ellipse (0.04 and 0.04);
\draw  [black][fill=black,opacity=1](-1.7,2.6) ellipse (0.04 and 0.04);
\draw  [black][fill=black,opacity=1](-0.55,-1) ellipse (0.04 and 0.04);
\draw  [black][fill=black,opacity=1](-0.55,-1) ellipse (0.04 and 0.04);
\draw  [black][fill=black,opacity=1](0.7,2.6) ellipse (0.04 and 0.04);
\draw  [black][fill=black,opacity=1](-2.5,0.85) ellipse (0.04 and 0.04);
\draw [dotted][thick] (-0.75,2.75) -- (-0.2,2.75);
%\draw [dotted] [thick](-0.75,-0.8) -- (-0.2,-0.8);
\draw[decorate,decoration=brace] (-1.7,3.1) -- (0.7,3.1);
\draw[decorate,decoration={brace,mirror}] (-0.75,-1.15) -- (-0.35,-1.15);
\node at (-0.5,3.5) {$\mathcal{V}_v$};
\node at (-0.45,-1.5) {$\mathcal{V}_w$};
%\draw (0.45,2.75) .. controls (0.45,-0.75) and (0.45,-0.75) .. (0.45,-0.75);
\draw (-1.15,2.9);
\node at (-2.85,0.9) {$v^l$};
\node at (1.8,0.85) {$v^r$};
\draw [blue](-1.7,2.6) .. controls (-0.55,-1) and (-0.55,-1) .. (-0.55,-1);
\draw [blue](0.7,2.6) .. controls (-0.55,-1) and (-0.55,-1) .. (-0.55,-1);
\draw [densely dashed](0.7,2.6) .. controls (-2.5,0.85) and (-2.5,0.85) .. (-2.5,0.85) .. controls (1.5,0.9) and (1.5,0.9) .. (1.5,0.9) .. controls (1.5,0.9) and (1.5,0.9) .. (1.5,0.9);
\end{tikzpicture}
    \end{subfigure}
    \caption{\small{Two examples of $E'_{vw}$ in $\Gamma'$ in Claim \ref{clm:two tangent}. %On
    In the left picture, $E'_{vw}$ has both top and bottom edges, while %on
    in the right picture, $E'_{vw}$ has only top edges. The vertices in $\cV_v$ and $\cV_w$ are respectively consecutive on the equator $\gamma$ (shown in the bold lines), separated by the vertices $v^l, v^r$. Here the dashed edges are on the bottom (delimited by the equator $\gamma$) of $\Sigma_{0,N}$.}}
    \label{fig:claim3}
    \end{figure}

\begin{figure}
 \centering
   \begin{subfigure}[b]{0.45\textwidth}
\begin{tikzpicture}[scale=1.04]
\draw (-1,4.55) .. controls (-4.4467,3.3599) and (-4.4467,3.3599) .. (-4.4467,3.3599);
\draw (-4.4467,3.3599) .. controls (-1,2.4) and (-1,2.4) .. (-1,2.4);
\draw (-1,4.55) .. controls (2.5401,3.3132) and (2.5401,3.3132) .. (2.5401,3.3132);
\draw (2.5401,3.3132) .. controls (-1,2.4) and (-1,2.4) .. (-1,2.4);
\draw[thick][densely dashed] (-1,4.55) .. controls (-1.35,4.1) and (-1.35,4.1) .. (-1.35,4.1);
\draw [thick][densely dashed](-1,4.55) .. controls (-0.65,4.1) and (-0.65,4.1) .. (-0.65,4.1);
\draw [thick](-1,2.4) .. controls (-1.5,2.2) and (-1.5,2.2) .. (-1.5,2.2);
\draw[thick] (-1,2.4) .. controls (-0.5,2.2) and (-0.5,2.2) .. (-0.5,2.2);
\draw [thick](-4.4467,3.3599) .. controls (-1.5,2.2) and (-1.5,2.2) .. (-1.5,2.2);
\draw[thick] (2.5401,3.3132) .. controls (-0.5,2.2) and (-0.5,2.2) .. (-0.5,2.2);
\draw[thick][densely dashed] (-1.35,4.1) .. controls (-4.4467,3.3599) and (-4.4467,3.3599) .. (-4.4467,3.3599);
\draw[thick][densely dashed] (-0.65,4.1) .. controls (2.5401,3.3132) and (2.5401,3.3132) .. (2.5401,3.3132);
\draw[densely dashed] (-1.35,4.1) .. controls (-0.65,4.1) and (-0.65,4.1) .. (-0.65,4.1);
\draw (-1.5,2.2) .. controls (-0.5,2.2) and (-0.5,2.2) .. (-0.5,2.2);
\draw [densely dashed](-1.35,4.1) .. controls (-1.5,2.2) and (-1.5,2.2) .. (-1.5,2.2);
\draw[densely dashed] (-0.65,4.1) .. controls (-0.5,2.2) and (-0.5,2.2) .. (-0.5,2.2);
\draw (-1,4.55) .. controls (-1,2.4) and (-1,2.4) .. (-1,2.4);
\node at (-0.9519,4.8) {$v^1_n$};
\node at (-1.6033,3.8) {$v^2_n$};
\node at (-0.4,3.7533) {$v^3_n$};
\node at (-4.4467,3.0599) {$v^l_n$};
\node at (2.5382,3.0132) {$v^r_n$};
\node at (-1,2.7) {$w^1_n$};
\node at (-1.7519,1.9519) {$w^2_n$};
\node at (-0.3481,1.9538) {$w^3_n$};
\draw [line width=0.2mm,blue](-2.2407,3.4561) .. controls (-1,3.5481) and (-1,3.5481) .. (-1,3.5481) .. controls (0.4784,3.4094) and (0.4784,3.4094) .. (0.4784,3.4094);
\draw [line width=0.2mm,densely dashed][blue](-2.4331,3.2637) .. controls (-1.4066,3.092) and (-1.4066,3.092) .. (-1.4066,3.092) .. controls (-0.5934,3.092) and (-0.5934,3.092) .. (-0.5934,3.092) .. controls (0.6227,3.217) and (0.6227,3.217) .. (0.6227,3.217);
\draw [white, fill=gray,opacity=0.3](-2.4228,3.8456) .. controls (-2.2595,4.1051) and (-2.2595,4.1051) .. (-2.2595,4.1051) .. controls (-2.3076,2.8177) and (-2.3076,2.8177) .. (-2.3076,2.8177) .. controls (-2.4709,2.6253) and (-2.4709,2.6253) .. (-2.4709,2.6253) .. controls (-2.4228,3.8456) and (-2.4228,3.8456) .. (-2.4228,3.8456);
\draw [white, fill=gray,opacity=0.3](0.4329,4.076) .. controls (0.5962,3.8165) and (0.5962,3.8165) .. (0.5962,3.8165) .. controls (0.6443,2.5962) and (0.6443,2.5962) .. (0.6443,2.5962) .. controls (0.481,2.8367) and (0.481,2.8367) .. (0.481,2.8367) .. controls (0.4329,4.076) and (0.4329,4.076) .. (0.4329,4.076);
\node at (-0.7939,3.3132) {\color{blue}$q_n$};
\draw [line width=0.2mm](-2.2595,4.0962) .. controls (-2.3076,2.8076) and (-2.3076,2.8076) .. (-2.3076,2.8076);
\draw[line width=0.2mm,densely dashed] (-2.2595,4.0962) .. controls (-2.2595,4.0962) and (-2.4228,3.8367) .. (-2.4228,3.8367) .. controls (-2.4709,2.6152) and (-2.4709,2.6152) .. (-2.4709,2.6152) .. controls (-2.4709,2.6152) and (-2.4709,2.5962) .. (-2.4709,2.5962) .. controls (-2.4519,2.6152) and (-2.4519,2.6152) .. (-2.4519,2.6152);
\draw [line width=0.2mm] (-2.4709,2.6253) .. controls (-2.3076,2.8076) and (-2.3076,2.8076) .. (-2.3076,2.8076);
\draw [line width=0.2mm](0.4519,4.0481) .. controls (0.5,2.8076) and (0.5,2.8076) .. (0.5,2.8076) .. controls (0.6633,2.5962) and (0.6633,2.5962) .. (0.6633,2.5962);
\draw [line width=0.2mm,densely dashed](0.4519,4.0481) .. controls (0.6152,3.7886) and (0.6152,3.7886) .. (0.6152,3.7886) .. controls (0.6443,2.5962) and (0.6443,2.5962) .. (0.6443,2.5962);
\draw [line width=0.2mm,red](-2.2595,3.4519) .. controls (-2.4519,3.2405) and (-2.4519,3.2405) .. (-2.4519,3.2405);
\draw [line width=0.2mm,red](0.5,3.4038) .. controls (0.6152,3.2405) and (0.6152,3.2405) .. (0.6152,3.2405);
\node at (-2.7886,4.5481) {\small{$(v^l_n)^{\perp}\cap u_n(\tilde{P}_n)$}};
\node at (1.0291,4.5481) {\small{$(v^r_n)^{\perp}\cap u_n(\tilde{P}_n)$}};
\end{tikzpicture}
\label{fig:claim3.1}
   \end{subfigure}
    \begin{subfigure}[b]{0.45\textwidth}
\begin{tikzpicture}[scale=1.05]
\draw [thick] (-1.5,1.5) .. controls (-4.7,2.05) and (-4.7,2.05) .. (-4.7,2.05) .. controls (-2,3.5) and (-2,3.5) .. (-2,3.5) .. controls (-1,3.5) and (-1,3.5) .. (-1,3.5) .. controls (1.75,2.55) and (1.75,2.55) .. (1.75,2.55) .. controls (-1.5,1.5) and (-1.5,1.5) .. (-1.5,1.5);
\draw (-1.5,1.5) .. controls (-2,3.5) and (-2,3.5) .. (-2,3.5);
\draw (-1.5,1.5) .. controls (-1,3.5) and (-1,3.5) .. (-1,3.5);
\draw [white, fill=gray,opacity=0.3](-0.3,3.25) .. controls (-0.4,1.85) and (-0.4,1.85) .. (-0.4,1.85) .. controls (-0.15,2.4) and (-0.15,2.4) .. (-0.15,2.4) .. controls (-0.3,3.25) and (-0.3,3.25) .. (-0.3,3.25);
\draw [white, fill=gray,opacity=0.3](-2.7,3.15) .. controls (-2.85,2.8) and (-2.85,2.8) .. (-2.85,2.8) .. controls (-2.8,2.2) and (-2.8,2.2) .. (-2.8,2.2) .. controls (-2.5,1.65) and (-2.5,1.65) .. (-2.5,1.65) .. controls (-2.7,3.15) and (-2.7,3.15) .. (-2.7,3.15);
\draw [densely dashed](-4.7,2.05) .. controls (1.75,2.55) and (1.75,2.55) .. (1.75,2.55);
\draw[densely dashed] (-4.7,2.05) .. controls (-1,3.5) and (-1,3.5) .. (-1,3.5);
\draw [line width=0.2mm](-2.7,3.15) .. controls (-2.5,1.65) and (-2.5,1.65) .. (-2.5,1.65);
\draw [densely dashed, line width=0.2mm](-2.7,3.15) .. controls (-2.85,2.8) and (-2.85,2.8) .. (-2.85,2.8) .. controls (-2.8,2.2) and (-2.8,2.2) .. (-2.8,2.2) .. controls (-2.5,1.65) and (-2.5,1.65) .. (-2.5,1.65);
\draw [line width=0.2mm](-0.3,3.25) .. controls (-0.4,1.85) and (-0.4,1.85) .. (-0.4,1.85);
\draw [densely dashed,line width=0.2mm](-0.3,3.25) .. controls (-0.15,2.4) and (-0.15,2.4) .. (-0.15,2.4) .. controls (-0.4,1.85) and (-0.4,1.85) .. (-0.4,1.85);
\draw [line width=0.2mm,blue](-2.65,2.7) .. controls (-1.85,2.95) and (-1.85,2.95) .. (-1.85,2.95) .. controls (-1.15,2.95) and (-1.15,2.95) .. (-1.15,2.95) .. controls (-0.35,2.85) and (-0.35,2.85) .. (-0.35,2.85);
\draw [line width=0.2mm,red](-2.65,2.7) .. controls (-2.8,2.4) and (-2.8,2.4) .. (-2.8,2.4);
\draw [line width=0.2mm,red](-0.35,2.85) .. controls (-0.25,2.2) and (-0.25,2.2) .. (-0.25,2.2);
\node at (-4.95,2.05) {$v^l_n$};
\node at (2.05,2.5) {$v^r_n$};
\node at (-2,3.8) {$v^1_n$};
\node at (-1,3.8) {$v^2_n$};
\node at (-1.5,1.2) {$w^1_n$};
\node at (-1.5,2.6) {\color{blue}$q_n$};
\node at (-3.55,3.5) {\small{$(v^l_n)^{\perp}\cap u_n(\tilde{P}_n)$}};
\node at (0.7,3.55) {\small{$(v^r_n)^{\perp}\cap u_n(\tilde{P}_n)$}};
\draw [line width=0.2mm,densely dashed, blue](-2.8,2.4) .. controls (-2.15,2.25) and (-2.15,2.25) .. (-2.15,2.25) .. controls (-0.25,2.2) and (-0.25,2.2) .. (-0.25,2.2);
\node at (-2.0476,2.0476) {$s_n$};
\end{tikzpicture}
\label{fig:claim3.2}
   \end{subfigure}
   \caption{\small{Two examples of $u_n(\tilde{P}_n)$ in Claim \ref{clm:two tangent} with the strictly hyperideal vertices $v^l_n$ and $v^r_n$ converging to $z_{\infty}$. The bold lines represent the equator. In the left picture, $u_n(\tilde{P}_n)$ has no edge connecting $v^l_n$ and $v^r_n$, while in the right picture, $u_n(\tilde{P}_n)$ has an edge connecting $v^l_n$ and $v^r_n$. The convex polygon $q_n$ is contained in the timelike plane $H$ and has exactly two timelike edges (which are contained in $(v^l_n)^{\perp}$ and $(v^r_n)^{\perp}$ respectively).   }}
   \label{fig:proper3}
\end{figure}

   Now we discuss Case (b). Assume that a sequence $(v_n)_{n\in\N}$ of vertices of $\tilde{P}_n$ converge to a point $v_{\infty}$ lying in the interior of an edge (say $e_{\infty}$) of the $\tilde{P}_{\infty}$. It is not hard to see that $e_{\infty}$ is tangent to $\partial\AdS$ at $v_{\infty}$. Moreover, there are two edges of $\tilde{P}_n$ adjacent to $v_n$ converging to the half-edges contained in $e_{\infty}$ lying on the left and right-hand sides of $v_{\infty}$. The main idea for Claim \ref{clm:two tangent} is still valid in this case. Indeed, it suffices to find a non-empty edge set $E'_{vw}$ of $\Gamma'$. Namely, we need to show that the corresponding sets $\mathcal{V}_{v}$ and $\mathcal{V}_{w}$ are non-empty. This is true by Lemma \ref{lm:representatives} and an analogous analysis. Hence, $\theta_{\infty}\in\cA_{\Gamma'}$ does not satisfy Condition (i) for some non-equatorial edge of $\Gamma'$, a contradiction.

	\begin{Step}\label{step:no.2}
	$\tilde{P}_{\infty}$ is non-degenerate and hyperideal.
    \end{Step}

    By assumption, $\theta_{\infty}\in\cA_{\Gamma'}$. In particular, it satisfies Condition (i) and $\Gamma'$ is 3-connected, which implies that $\tilde{P}_{\infty}$ is non-degenerate.
     It remains to show that $\tilde{P}_{\infty}$ is hyperideal. By Lemma \ref{lm:limit property}, each vertex of $\tilde{P}_{\infty}$ lies either outside of the closure of $\AdS$ or on $\partial\AdS$. It suffices to show that each edge of $\tilde{P}_{\infty}$ intersects $\AdS$. Suppose that $\tilde{P}_{\infty}$ has a limit vertex say $v_{\infty}$ at which at least one adjacent edge is tangent to $\partial\AdS$. Similar to the Claim \ref{clm:one tangent} and Claim \ref{clm:two tangent} in Step \ref{step:no.1}, we split the discussion into one of the following two cases:
    \begin{itemize}
    \item Exactly one limit edge adjacent to $v_{\infty}$ is tangent to $\partial\AdS$ at $v_{\infty}$. Note that the main idea for Claim \ref{clm:one tangent} still works (indeed, in this case the vertex set $\mathcal{V}_{v}$ consists of only one vertex, while $\mathcal{V}_{w}$ consists of at least two vertices).
      As a consequence, $\theta_{\infty}\in\cA_{\Gamma'}$ fails to satisfy Condition (iv).
    \item More than one limit edge adjacent to $v_{\infty}$ are tangent to $\partial\AdS$ at $v_{\infty}$. Similarly as the Case (b) above, using an analogous argument of Claim \ref{clm:two tangent}, it follows that         $\theta_{\infty}\in\cA_{\Gamma'}$ does not satisfy Condition (i) for some non-equatorial edge of $\Gamma'$.
    \end{itemize}

    Combining the above two cases, we conclude that no edge of $\tilde{P}_{\infty}$ is tangent to $\partial\AdS$. As a consequence, $\tilde{P}_{\infty}$ is hyperideal. The Step \ref{step:no.2} is done.

	\begin{Step}\label{step:no.3}
		 $\tilde{P}_{\infty}$ has 1-skeleton $\Gamma'$.
	\end{Step}
     Combining Step      \ref{step:no.0} and Step \ref{step:no.2}, no limit edge in $\tilde{P}_{\infty}$ is reduced to a single point, no limit face in $\tilde{P}_{\infty}$ is collapsed to a point or an edge, and no limit face in $\tilde{P}_{\infty}$ is tangent to $\partial\AdS$. Moreover, the assumption ensures that the limit of the dihedral angle of $\tilde{P}_n$ at any edge of $\Gamma'$ is non-zero and preserves the sign (either positive or negative). Therefore, two vertices $v$, $w$ of      $\Gamma'$ are connected by an edge if and only if the corresponding vertices $v_{\infty}$, $w_{\infty}$ of $\tilde{P}_{\infty}$ are connected by a limit edge. Furthermore, an edge of      $\Gamma'$ is contained in the equator $\gamma$ (resp. top, bottom) of $\Sigma_{0,N}$ if and only if the corresponding limit edge in $\tilde{P}_{\infty}$ lies on the equator (resp. the future-directed faces, past-directed faces) of $\tilde{P}_{\infty}$. The Step \ref{step:no.3} is done.

Combining Steps \ref{step:no.0} to \ref{step:no.3}, Lemma \ref{lem:properness of psi} follows.
\end{proof}

\begin{remark}
The ``dynamical'' argument also works in the hyperbolic hyperideal setting (see \cite[Proposition 21]{bao-bonahon}). Using the dynamics of a pure hyperbolic translation (along a complete geodesic in the projective model of $\bH^3$) acting on $\RP^3$, we provide a new perspective to detect the contradictions against the assumptions (i.e. Conditions (1),(3),(4) in Section \ref{ssc:main}) in the argument of the properness of the parameterization map of non-degenerate hyperbolic hyperideal polyhedra (up to isometries) in terms of dihedral angles. Besides, the method also works for the AdS ideal setting (see \cite[Lemma 1.14]{DMS}, in which  the contradiction is against Condition (iii), corresponding exactly to Claim \ref{clm:no tangent} in the above argument.
\end{remark}

\subsection{Properness of $\Phi$} \label{sec:properness_metric}
In this subsection, we show that the map $\Phi: \cP_N\cup\polyg_N\rightarrow \mathcal{T}_{0,N}$ which assigns to each $P\in \cP_N\cup\polyg_N$ the induced metric on $\partial P$ is proper.

\subsubsection{Definitions and notations}\label{subsec:metric}
We first introduce some relevant definitions and notations.

Let $\Sigma_{0,N}$ be the 2-sphere with $N$ marked points $p_1, \cdots, p_N$ removed (where $N\geq 3$). For each complete hyperbolic metric $h$ on $\Sigma_{0,N}$, we assign to $h$ a \emph{signature} $(\epsilon_1, \ldots, \epsilon_N)\in\{0,1\}^N$ with $\epsilon_i=0$ (resp. $\epsilon_i=1$) if $h$ is isometric to a cusp (resp. funnel) at $p_i$. We denote by $\mathcal{T}_{0,N}^{(\epsilon_1, \ldots, \epsilon_N)}$ the subspace of $\cT_{0,N}$ in which the hyperbolic metrics have signature $(\epsilon_1, \ldots, \epsilon_N)$. It is a basic fact that $\cT^{(\epsilon_1, \ldots, \epsilon_N)}_{0,N}$ is a contractible manifold of real dimension $2N-6+\epsilon_1+\ldots+\epsilon_N$.
Note that $\mathcal{T}_{0,N}$ is the disjoint union of the spaces $\mathcal{T}_{0,N}^{(\epsilon_1,\ldots,\epsilon_N)}$ over all $(\epsilon_1, \ldots, \epsilon_N)\in\{0,1\}^N$.

Recall that each marked hyperideal polyhedron in $\cP_N\cup\polyg_N$ has $N$ ($N\geq 3$) vertices, say $v_1$, \ldots, $v_N$, corresponding to the punctures $p_1$, \ldots, $p_N$ of $\Sigma_{0,N}$, respectively. For each hyperideal polyhedron $P\in\cP_N\cup\polyg_N$, we assign to $P$ a \emph{vertex signature} $(\epsilon_1, \ldots, \epsilon_N)\in\{0,1\}^N$ with $\epsilon_i=0$ (resp. $\epsilon_i=1$) if $P$ is ideal (resp. strictly hyperideal) at the vertex $v_i$. We denote by $\mathcal{P}^{(\epsilon_1, \ldots, \epsilon_N)}$ the subspace of $\cP_N\cup\polyg_N$ in which the polyhedra have vertex signature $(\epsilon_1, \ldots, \epsilon_N)$. 
Note that $\cP_N\cup\polyg_N$ is the disjoint union of $\cP^{(\epsilon_1, \ldots, \epsilon_N)}$ over all $(\epsilon_1, \ldots, \epsilon_N)\in \{0, 1\}^N$.

In order to give a way of gluing for the subspaces (indexed by elements of $\{0,1\}^N$) of $\cT_{0,N}$ and $\cP_N\cup\polyg_N$, we define a partial order $\preceq$ for $\{0,1\}^N$ in the following way: we say $(\epsilon'_1, \epsilon'_2, \ldots, \epsilon'_N)\preceq (\epsilon_1, \epsilon_2, \ldots, \epsilon_N)$ if $\epsilon'_i\leq \epsilon_i$ for all $1\leq i\leq n$, and $(\epsilon'_1, \epsilon'_2, \ldots, \epsilon'_N)\prec(\epsilon_1, \epsilon_2, \ldots, \epsilon_N)$ if in addition $\epsilon'_j<\epsilon_j$ for at least one $j\in\{1,2,\ldots,n\}$. For any two $(\epsilon'_1, \epsilon'_2, \ldots, \epsilon'_N)\not=(\epsilon_1, \epsilon_2, \ldots, \epsilon_N)\in\{0,1\}^N$, if $(\epsilon'_1, \epsilon'_2, \ldots, \epsilon'_N)\prec(\epsilon_1, \epsilon_2, \ldots, \epsilon_N)$, there is a natural way to identify $\cT^{(\epsilon'_1, \epsilon'_2, \ldots, \epsilon'_N)}_{0,N}$ (resp. $\cP^{(\epsilon'_1, \epsilon'_2, \ldots, \epsilon'_N)}$) to a subset of the boundary of $\cT^{(\epsilon_1, \epsilon_2, \ldots, \epsilon_N)}_{0,N}$ (resp. $\cP^{(\epsilon_1, \epsilon_2, \ldots, \epsilon_N)}$) via the degeneration of the corresponding funnels to cusps (resp. strictly hyperideal vertices to ideal vertices). If $(\epsilon'_1, \epsilon'_2, \ldots, \epsilon'_N)$ and $(\epsilon_1, \epsilon_2, \ldots, \epsilon_N)$ are incomparable, we have $(\min\{\epsilon_1,\epsilon'_1\},\ldots,\min\{\epsilon_N,\epsilon'_N\})\prec (\epsilon'_1, \epsilon'_2, \ldots, \epsilon'_N)$ and $(\min\{\epsilon_1,\epsilon'_1\},\ldots,\min\{\epsilon_N,\epsilon'_N\})\prec (\epsilon_1, \epsilon_2, \ldots, \epsilon_N)$. Then there is a natural way to \emph{glue} $\cT^{(\epsilon_1,\ldots,\epsilon_N)}_{0,N}$ (resp. $\cP^{(\epsilon_1,\ldots,\epsilon_N)}$) and $\cT^{(\epsilon'_1,\ldots,\epsilon'_N)}_{0,N}$ (resp. $\cP^{(\epsilon'_1,\ldots,\epsilon'_N)}$) along parts of their boundaries, which can be identified with the closure of $\cT_{0,N}^{(\min\{\epsilon_1,\epsilon'_1\},\ldots,\min\{\epsilon_N,\epsilon'_N\})}$ (resp. $\cP^{(\min\{\epsilon_1,\epsilon'_1\},\ldots,\min\{\epsilon_N,\epsilon'_N\})}$)
in $\cT_{0,N}$ (resp. $\cP_N\cup\polyg_N$).

\subsubsection{The proof}
We are now ready to give the proof of the properness of $\Phi$. Let $(P_n)_{n\in \N}$ be a sequence of hyperideal polyhedra in $\cP_N\cup\polyg_N$ with image $h_n:=\Phi(P_n)$ converging to an element say $h_{\infty}$ of $\cT_{0,N}$. Note that $\cP_N\cup\polyg_N$ is a disjoint union of $\cP^{(\epsilon_1,\ldots, \epsilon_N)}$ over all $(\epsilon_1,\ldots, \epsilon_N)\in\{0,1\}^N$ and $\{0,1\}^N$ is finite. Up to extracting a subsequence, we assume that $(P_n)_{n\in \N}$ is a sequence in $\cP^{(\epsilon_1,\ldots, \epsilon_N)}$ for a fixed signature $(\epsilon_1,\ldots, \epsilon_N)\in\{0,1\}^N$. Note also that $\cT_{0,N}$ is a disjoint union of $\cT_{0,N}^{(\epsilon_1,\ldots, \epsilon_N)}$ over all $(\epsilon_1,\ldots, \epsilon_N)\in\{0,1\}^N$. Then $h_{\infty}\in\cT_{0,N}^{(\epsilon'_1,\ldots, \epsilon'_N)}$ for some $(\epsilon'_1,\ldots, \epsilon'_N)\in\{0,1\}^N$. Since $h_n\in\cT_{0,N}^{(\epsilon_1,\ldots, \epsilon_N)}$ for all $n$, by the gluing structure in $\cT_{0,N}$ (see subsection \ref{subsec:metric}), it follows that $(\epsilon'_1,\ldots, \epsilon'_N)\preceq(\epsilon_1,\ldots, \epsilon_N)$, namely $\epsilon'_i\leq \epsilon_i$ for all $1\leq i\leq N$. We aim to show that $(P_n)_{n\in \N}$ converges to an element of $\cP^{(\epsilon'_1,\ldots, \epsilon'_N)}$.

\begin{proof}[\textbf{Proof of Lemma \ref{lem:proper_metrics}}]
Let $(P_n)$ be a sequence of marked polyhedra in $\cP^{(\epsilon_1,...,\epsilon_N)}$ with representatives $\tilde{P}_n$ converging to a limit say $\tilde{P}_{\infty}$ in $\RP^3$, and with the induced metric $h_n:=\Phi(\tilde{P}_n)$ on $\partial\tilde{P}_n\cap\AdS$   converging to a metric $h_{\infty}\in\cT_{0,N}^{(\epsilon'_1,...,\epsilon'_N)}$ with $(\epsilon'_1,...,\epsilon'_N)\preceq(\epsilon_1,\ldots, \epsilon_N)$. 
We need to show that the equivalence class of $\tilde{P}_{\infty}$ lies in $\cP^{(\epsilon'_1,...,\epsilon'_N)}$ (namely, $\tilde{P}_\infty$ is convex hyperideal with vertex signature $(\epsilon'_1,...,\epsilon'_N)$). We prove the lemma using the following steps.

\begin{Step_metrics}
$\tilde{P}_{\infty}$ is convex in $\RP^3$.
\end{Step_metrics}
As discussed before, $\tilde{P}_{\infty}$ is a locally convex (possibly degenerate) polyhedron in $\RP^3$. Suppose by contradiction that $\tilde{P}_{\infty}$ is not convex, then it is projective equivalent to a bi-infinite prism or strip in an affine chart $\R^3\subset\RP^3$, with edges intersect $\AdS$ or tangent to $\partial\AdS$. Note that the limit of the induced metric $h_n$ on $\partial\tilde{P}_n\cap\AdS$ is hyperbolic. The boundary surface $\partial \tilde{P}_{\infty}\cap\AdS$ with the induced metric is isometric to either a complete hyperbolic annulus, $\bH^2$ or to two copies of $\bH^2$ (see Figure \ref{fig:convex} for instance). 
This implies that $h_{\infty}$ is not an element of $\mathcal{T}_{0,N}$ (where $N\geq 3$), which leads to contradiction.

      \begin{figure}
   \begin{subfigure}[b]{0.32\textwidth}
    \centering
\begin{tikzpicture}[scale=0.93]
\draw [white][fill=gray,opacity=0.3](-2.45,-0.95) .. controls (1.4,-0.95) and (1.4,-0.95) .. (1.4,-0.95) .. controls (1.55,-1.1) and (1.5,-1.9) .. (1.4,-2.15) .. controls (-1.05,-2.15) and (-1.05,-2.15) .. (-2.4,-2.15) .. controls (-2.55,-1.8) and (-2.5,-1.05) .. (-2.45,-0.95);
\draw  (-0.5,-1.5) ellipse (2 and 2);
\draw [thick] (-2.95,-0.95) .. controls (1.95,-0.95) and (1.95,-0.95) .. (1.95,-0.95);
\draw [thick] (-2.95,-2.15) .. controls (1.95,-2.15) and (1.95,-2.15) .. (1.9,-2.15);
\end{tikzpicture}
\end{subfigure}
    \begin{subfigure}[b]{0.32\textwidth}
     \centering
\begin{tikzpicture}[scale=0.92]
\draw [white][fill=gray,opacity=0.3](-2.5,-1.45) .. controls (-1.3,-1.45) and (0.8,-1.45) .. (1.45,-1.45) .. controls (1.7,-1.45) and (1.35,-3.5) .. (-0.5,-3.5) .. controls (-1.8,-3.5) and (-1.6,-3.5) .. (-0.5,-3.5) .. controls (-2.1,-3.5) and (-2.55,-1.95) .. (-2.5,-1.45);
\draw  (-0.5,-1.5) ellipse (2 and 2);
\draw [thick] (-2.95,-1.45) .. controls (1.95,-1.45) and (1.95,-1.45) .. (1.95,-1.45);
\draw [thick] (-2.95,-3.5) .. controls (2.1,-3.5) and (2.1,-3.5) .. (2.1,-3.5);
\end{tikzpicture}
    \end{subfigure}
        \begin{subfigure}[b]{0.32\textwidth}
        \centering
\begin{tikzpicture}[scale=0.92]
\draw [white][fill=gray,opacity=0.3](-0.5,0.5) .. controls (-1.3,0.5) and (0.8,0.5) .. (-0.5,0.5) .. controls (2.25,0.3) and (2.1,-3.35) .. (-0.5,-3.5) .. controls (-1.8,-3.5) and (-1.8,-3.5) .. (-0.5,-3.5) .. controls (-3.3,-3.25) and (-3.1,0.4) .. (-0.5,0.5);
\draw  (-0.5,-1.5) ellipse (2 and 2);
\draw [thick] (-2.95,0.5) .. controls (1.95,0.5) and (1.95,0.5) .. (1.95,0.5);
\draw [thick] (-2.95,-3.5) .. controls (2.1,-3.5) and (2.1,-3.5) .. (2.1,-3.5);
\end{tikzpicture}
     \end{subfigure}
    \caption{\small{The candidates for $\tilde{P}_{\infty}$ in the case that it is a bi-infinite strip, viewed as a degenerate polyhedron whose boundary surfaces are identified with the two copies of $\tilde{P}_{\infty}$ glued along the intersection of $\AdS$ with the limit (shown in the bold lines) of the equators of $\tilde{P}_n$. The round disk represents $\bH^2$ and the shaded region is $\partial\tilde{P}_{\infty}\cap\AdS$.}}
    \label{fig:convex}
    \end{figure}

\begin{Step_metrics}
$\tilde{P}_{\infty}$ is hyperideal.
\end{Step_metrics}

It is clear that any vertex of $\tilde{P}_{\infty}$ is disjoint from $\AdS$. Since $h_{\infty}$ is hyperbolic (non-degenerate), each face of $\tilde{P}_{\infty}$ is spacelike. We claim that each edge of $\tilde{P}_{\infty}$ intersects $\partial\AdS$. Suppose by contradiction that there is at least one edge (say $e_{\infty}$) of $\tilde{P}_{\infty}$ which does not intersect $\AdS$, then this edge must be tangent to $\partial \AdS$ at a point, say $q$ (note that $q$ is either an endpoint or an interior point of $e_{\infty}$). 
Combined with Statement (5) of Lemma \ref{lm:limit property}, $e_{\infty}$ lies on the equator of $\tilde{P}_{\infty}$. Note that the edges on the equator of $\tilde{P}_{\infty}$ are not all tangent to $\partial\AdS$ (otherwise, the induced metric on $\partial\tilde{P}_{\infty}\cap\AdS$ is isometric to two copies of $\bH^2$, which leads to contradiction again). 
Let $\alpha_{\infty}$ be the simple closed geodesic on $\partial\tilde{P}_{\infty}\cap\AdS$ separating the vertices of the consecutive edges (including $e_{\infty}$) tangent to $\partial\AdS$  from other vertices on the equator. Then the region on $\partial\tilde{P}_{\infty}\cap\AdS$ bounded by $\alpha_{\infty}$ on the same side as $e_{\infty}$ is isometric to a funnel (see Figure \ref{fig:counter-examples} for instance). This implies that the total number of the cusps and funnels given by $h_{\infty}$ is less than $N$. This contradicts the assumption that $h_{\infty}\in\cT_{0,N}$.

 \begin{figure}
 \centering
   %\begin{minipage}[t]{0.35\linewidth}
   \begin{subfigure}[b]{0.34\textwidth}
    \begin{tikzpicture}
\draw [blue][fill=gray,opacity=0.2](-1.85,-1) .. controls (-1.85,-1) and (-2.5,-0.4) .. (-2.5,0.5) .. controls (-1.1,1.4) and (-1.2,1.3) .. (-1.1,1.4) .. controls (-1.2,1.3) and (-0.45,-1) .. (-0.35,-1) .. controls (-1.85,-1) and (-1.85,-1) .. (-1.85,-1);
\draw[blue][fill=gray,opacity=0.2] (-2.5,0.5) .. controls (-1.1,1.4) and (-1.1,1.4) .. (-1.1,1.4) .. controls (-1,1.45) and (-0.25,-1) .. (-0.35,-1) .. controls (-1.85,-1) and (-1.85,-1) .. (-1.85,-1) .. controls (-1.85,-1) and (-2.5,-0.4) .. (-2.5,0.5);
\draw  (-0.5,0.5) ellipse (2 and 2);
\draw [thick](-2.5,0.5) .. controls (-2.5,-1) and (-2.5,-1) .. (-2.5,-1) .. controls (1.5,-1) and (1.5,-1) .. (1.5,-1) .. controls (1,2.7) and (1,2.7) .. (1,2.7) .. controls (-2.5,0.5) and (-2.5,0.5) .. (-2.5,0.5);
%\draw (-1,1.45) .. controls (-0.9,1.5) and (-0.4,-1) .. (-0.5,-1) .. controls (-0.6,-1) and (-1.1,1.35) .. (-1,1.45);
\draw [blue](-1.1,1.4) .. controls (-1.2,1.3) and (-0.45,-1) .. (-0.35,-1);
\draw [blue][densely dashed](-1.1,1.4) .. controls (-1,1.45) and (-0.25,-1) .. (-0.35,-1);
\node at (-0.15,0.25) {$\alpha_{\infty}$};
\node at (-2.75,0.5) {$q$};
\node at (-2.8,-0.45) {$e_{\infty}$};
\end{tikzpicture}
   %\end{minipage}
   \end{subfigure}
   %\begin{minipage}[t]{0.3\linewidth}
    %\centering
    \begin{subfigure}[b]{0.32\textwidth}
\begin{tikzpicture}
\draw[densely dashed][blue] [fill=gray,opacity=0.2] (-2,1.8) .. controls (-1.05,2) and (-1.05,2) .. (-0.95,2.05) .. controls (-0.85,2.05) and (-0.45,-0.9) .. (-0.55,-0.9) .. controls (-2,-0.85) and (-2,-0.85) .. (-2,-0.85) .. controls (-2.3,-0.4) and (-2.5,-0.15) .. (-2.5,0.5) .. controls (-2.5,1) and (-2.3,1.5) .. (-2,1.8);
\draw [blue][fill=gray,opacity=0.2](-2,1.8) .. controls (-0.95,2.05) and (-0.95,2.05) .. (-0.95,2.05) .. controls (-1.05,2) and (-0.65,-0.9) .. (-0.55,-0.9) .. controls (-2,-0.85) and (-2,-0.85) .. (-2,-0.85) .. controls (-2.3,-0.4) and (-2.5,-0.15) .. (-2.5,0.5) .. controls (-2.5,1) and (-2.3,1.5) .. (-2,1.8);
\draw  (-0.5,0.5) ellipse (2 and 2);
\draw [thick](-2.5,1.7) .. controls (-2.5,-0.8) and (-2.5,-0.8) .. (-2.5,-0.8) .. controls (1.5,-1) and (1.5,-1) .. (1.5,-1) .. controls (1.15,2.5) and (1.15,2.5) .. (1.15,2.5) .. controls (-2.5,1.7) and (-2.5,1.7) .. (-2.5,1.7);
%\draw (-1,1.45) .. controls (-0.9,1.5) and (-0.4,-1) .. (-0.5,-1) .. controls (-0.6,-1) and (-1.1,1.35) .. (-1,1.45);
\draw [blue](-0.95,2.05) .. controls (-1.05,2) and (-0.65,-0.9) .. (-0.55,-0.9);
\draw [blue][densely dashed](-0.95,2.05) .. controls (-0.85,2.05) and (-0.45,-0.9) .. (-0.55,-0.9);
\node at (-0.2,0.65) {$\alpha_{\infty}$};
\node at (-2.75,0.5) {$q$};
\node at (-2.8,-0.35) {$e_{\infty}$};
\end{tikzpicture}
   %\end{minipage}
   \end{subfigure}
   %\begin{minipage}[t]{0.33\linewidth}
    %\centering
    \begin{subfigure}[b]{0.31\textwidth}
\begin{tikzpicture}
\draw[densely dashed][fill=gray,opacity=0.2] (1.35,0.65) .. controls (1.3,0.7) and (1.3,0.7) .. (1.35,0.65) .. controls (1.4,0.55) and (-0.4,-0.95) .. (-0.5,-0.9) .. controls (-2,-0.85) and (-2,-0.85) .. (-2,-0.85) .. controls (-2.3,-0.4) and (-2.5,-0.15) .. (-2.5,0.6) .. controls (-2.2,3.35) and (1.6,2.85) .. (1.35,0.65);
\draw [fill=gray,opacity=0.2](1.35,0.65) .. controls (1.35,0.65) and (1.35,0.65) .. (1.35,0.65) .. controls (1.3,0.7) and (-0.6,-0.85) .. (-0.5,-0.9) .. controls (-2,-0.85) and (-2,-0.85) .. (-2,-0.85) .. controls (-2.3,-0.4) and (-2.5,-0.15) .. (-2.5,0.6) .. controls (-2.2,3.35) and (1.6,2.85) .. (1.35,0.65);
\draw  (-0.5,0.5) ellipse (2 and 2);
\draw [thick](-2.5,2.5) .. controls (-2.5,-0.8) and (-2.5,-0.8) .. (-2.5,-0.8) .. controls (1.5,-1) and (1.5,-1) .. (1.5,-1) .. controls (1.2,2.5) and (1.2,2.5) .. (1.2,2.5) .. controls (-2.5,2.5) and (-2.5,2.5) .. (-2.5,2.5);
%\draw (-1,1.45) .. controls (-0.9,1.5) and (-0.4,-1) .. (-0.5,-1) .. controls (-0.6,-1) and (-1.1,1.35) .. (-1,1.45);
\draw [blue](1.35,0.65) .. controls (1.3,0.7) and (-0.6,-0.85) .. (-0.5,-0.9);
\draw [blue][densely dashed](1.35,0.65) .. controls (1.4,0.55) and (-0.4,-0.95) .. (-0.5,-0.9);
\node at (0.05,0.2) {$\alpha_{\infty}$};
\node at (-2.75,0.55) {$q$};
\node at (-2.8,1.35) {$e_{\infty}$};
\end{tikzpicture}
   %\end{minipage}
     \end{subfigure}
   \caption{\small{Three counter-examples of $\tilde{P}_{\infty}$ with an edge $e_{\infty}$ tangent to $\partial\AdS$ at $q$ for $N=4$. Here $\tilde{P}_{\infty}$ is a degenerate polyhedron whose boundary surface is identified with the two copies of $\tilde{P}_{\infty}$ glued along the intersection of $\AdS$ with the equator (shown in the bold lines). The round disk represents $\bH^2$ and the shaded region is the funnel corresponding to the consecutive edges (including $e_{\infty}$) tangent to $\partial\AdS$.}}
   \label{fig:counter-examples}
\end{figure}

\begin{Step_metrics}
$\tilde{P}_{\infty}$ has $N$ vertices and its vertex signature is $(\epsilon'_1, ..., \epsilon'_N)$.
\end{Step_metrics}

It is easy to check that the signature assigned to a hyperideal AdS polyhedron $P$ at a vertex $v_i$ is the same as the signature assigned to the induced metric on $\partial P\cap\AdS$ (which is a complete hyperbolic metric on $\Sigma_{0,N}$) at the puncture $p_i$ corresponding to $v_i$ for $1\leq i\leq N$, where $N$ is equal to the number of vertices of $P$. Combined with Step 1 and Step 2, $\tilde{P}_{\infty}$ has $N$ vertices and the vertex signature is $(\epsilon'_1, ..., \epsilon'_N)$, equal to the signature of the induced metric $h_{\infty}$.

Combining the above three steps, the lemma follows.
\end{proof}

    \section{Topology}\label{sec:topology}

In this section, we prove Proposition \ref{prop:topology of A}, and Proposition \ref{prop:dimension of P} and Proposition \ref{prop:dimension of bP}, concerning the topology of $\cA_N$, and the dimensions of the spaces $\cP_N$ and $\cP^{(\epsilon_1,\dots,\epsilon_N)}$.

\subsection{The topology of $\cA_N$}\label{sec:topology of A}

Recall that $\cA_N$ is the disjoint union of $\cA_{\Gamma}$ over all $\Gamma\in\Graph(\Sigma_{0,N},\gamma)$, where $\cA_{\Gamma}$ is the set of $\gamma$-admissible functions on $E(\Gamma)$. 
To understand the topology of $\cA_N$, we first consider the topology of the space $\cA_{\Gamma}$ for each $\Gamma\in\Graph(\Sigma_{0,N},\gamma)$.

\begin{claim}\label{clm:topology of A_Gamma}
  For each $\Gamma\in\Graph(\Sigma_{0,N},\gamma)$, $\cA_{\Gamma}$ is   a contractible smooth manifold   of (real) dimension $|E(\Gamma)|$. As a consequence, the maximum (real) dimension $3N-6$ is attained exactly   when $\Gamma$ is the 1-skeleton of a triangulation.
\end{claim}

\begin{proof}
  Note that $\cA_{\Gamma}$ can be described as the space of the solutions of a system of linear equations and inequalities with variables (corresponding to the edges of $\Gamma$) determined by Conditions (i)-(iv) in Definition \ref{def:admissible angles}. By the definition of Conditions (i)-(iv), the solution space is given by the intersection of finitely many linear half-spaces in $\R^{|E(\Gamma)|}$ (whose boundary contains the origin) corresponding to those conditions.

  Moreover, the compatibility among Conditions (i)--(iv) ensures that this intersection is non-empty and indeed has dimension $|E(\Gamma)|$: to find a solution of Conditions (i)--(iv), one can first choose the value of the angles on the equator, and then choose any set of large enough angles on the non-equatorial edges.

  Observe that if $\theta$ is such a solution, then for any $t>0$, $t\theta$ is also a solution. Therefore, $\cA_{\Gamma}$ is a $|E(\Gamma)|$-dimensional convex cone (without containing the cone point zero) in $\R^{|E(\Gamma)|}$. The lemma follows.
\end{proof}

We say that a triangulation of $\Sigma_{0,N}$ (with vertices located at the marked points of $\Sigma_{0,N}$) is \emph{admissible} if it is 3-connected and its edge set contains all the edges of $\gamma$. Note that a triangulation of $\Sigma_{0,N}$ (with vertices located at the marked points of $\Sigma_{0,N}$) is naturally embedded in $\Sigma_{0,N}$ and is thus a planar graph, so an admissible triangulation of $\Sigma_{0,N}$ belongs to $\Graph(\Sigma_{0,N},\gamma)$.
Let $\cG_N$ denote the set of admissible triangulations of $\Sigma_{0,N}$. 
Using an elementary move (the so-called flip) among the triangulations in $\cG_N$ (see e.g. \cite{HN}, \cite[Section 7]{penner:decorated1}), we define a graph of admissible triangulations of $\Sigma_{0,N}$ as follows.

\begin{definition}
 Let $G_T(\Sigma_{0,N})$ denote the graph of admissible triangulations on $\Sigma_{0,N}$, with vertex set $\cG_N$ and an edge connecting two vertices, say $\Gamma_1$ and $\Gamma_2$, if they differ by a single flip: $\Gamma_2$ is obtained from $\Gamma_1$ by replacing one (non-equatorial) edge say $e$ of $\Gamma_1$ by the other diagonal in the quadrilateral which is the union of the two triangles adjacent to $e$.
\end{definition}

We first discuss the connectivity of the graph $G_T(\Sigma_{0,N})$, which is closely related to the connectivity of $\cA_N$. To see this, we need some preliminary lemmas.

For convenience, we divide the non-equatorial edges of a graph $\Gamma\in\Graph(\Sigma_{0,N},\gamma)$ into two classes: \emph{top edges} and \emph{bottom edges}, in which ``top'' (resp. ``bottom'') means that the edges lie on the left (resp. right) side of the equator (considered as an oriented curve) of $\Sigma_{0,N}$. We denote by $\Gamma^t$ (resp. $\Gamma^b$) the restriction of $\Gamma\in\Graph(\Sigma_{0,N},\gamma)$ to the union of the top (resp. bottom) and the equator $\gamma$ of $\Sigma_{0,N}$, called the \emph{top subgraph} (resp. \emph{bottom subgraph}) of $\Gamma$. Note that by our convention a top (or bottom) edge of a graph $\Gamma\in\Graph(\Sigma_{0,N},\gamma)$ is always non-equatorial, while the top and bottom subgraphs $\Gamma^t$, $\Gamma^b$ of a graph $\Gamma\in \Graph(\Sigma_{0,N},\gamma)$ always contain the equator $\gamma$ of $\Gamma$.

In particular, the top (resp. bottom) subgraph of an admissible triangulation $\Gamma\in\cG_N$ is a triangulation of the closure of the top (resp. bottom) of $\Sigma_{0,N}$. Conversely, given a triangulation $T_1$ of the closure of the top of $\Sigma_{0,N}$ and a triangulation $T_2$ of the closure of the bottom of $\Sigma_{0,N}$, the union $T_1\cup T_2$ is naturally a triangulation of $\Sigma_{0,N}$. However, $T_1\cup T_2$ is not necessarily admissible.

The following lemma gives a criterion for determining whether a given triangulation of $\Sigma_{0,N}$ with vertices located at the marked points and the edge set containing all the edges of $\gamma$ is admissible.

\begin{lemma}\label{lem:criterion for admissible}
Let $\Gamma$ be a triangulation of $\Sigma_{0,N}$ ($N\geq 4$) with vertices located at the marked points and the edge set containing all the edges of $\gamma$. Then $\Gamma$ is admissible if and only if there is no pair of vertices (on the equator) that are connected by both a top edge and a bottom edge of $\Gamma$.
\end{lemma}

\begin{proof}
  By assumption and definition, $\Gamma$ is admissible if and only if $\Gamma$ is 3-connected. 
  We first show the necessity. Suppose by contradiction that there are two vertices say $v_1$, $v_2$ that are connected by a top edge and a bottom edge of $\Gamma$. After removing $v_1$, $v_2$ and their adjacent edges, the resulting graph is divided into two components. Then $\Gamma$ is not 3-connected and is thus not admissible. The necessity follows.

 Now we show the sufficiency. Assume that there is no pair of vertices (on the equator) that are connected by both a top edge and a bottom edge of $\Gamma$.
 It suffices to show that $\Gamma$ is 3-connected.
 Let $\Gamma_{v_1v_2}$ denote the graph obtained from $\Gamma$ by removing the vertices $v_1$, $v_2$ of $\Gamma$ and their adjacent edges. We divide the discussion into the following two cases:

 \textbf{Case 1.} If $v_1$, $v_2$ are adjacent along the equator $\gamma$. We claim that $\Gamma_{v_1v_2}$ is connected. Indeed,  noting that $N\geq 4$ and the edge set of $\Gamma$ contains all the edges of $\gamma$,  the restriction (say $\gamma_0$) of $\Gamma_{v_1v_2}$ to the equator  remains non-empty (which has at least two vertices) and connected, so that the top and bottom subgraphs of $\Gamma_{v_1v_2}$ can be connected through $\gamma_0$. The claim follows.

 \textbf{Case 2.} If $v_1$, $v_2$ are not adjacent along the equator $\gamma$, then, again by the hypothesis that $N\geq 4$, the restriction of $\Gamma_{v_1v_2}$ to the equator has two components, say $\gamma_1$ and $\gamma_2$. We claim that there must be a top edge or a bottom edge of $\Gamma_{v_1v_2}$ connecting $\gamma_1$ and $\gamma_2$. This will imply directly that $\Gamma_{v_1v_2}$ is connected. We just need to prove the claim.

 Suppose by contradiction that there is neither top edge nor bottom edge of $\Gamma_{v_1v_2}$ connecting $\gamma_1$ and $\gamma_2$. We are going to show that there must be a top edge and a bottom edge of $\Gamma$ connecting $v_1$ to $v_2$.

 We first show that there is a top edge of $\Gamma$ connecting $v_1$ to $v_2$. Let $v_{i1}$. $v_{i2}$ denote the two vertices on $\gamma_1$, $\gamma_2$ adjacent to $v_i$ for $i=1,2$. Note that $v_{11}$ and $v_{21}$ (resp. $v_{12}$ and $v_{22}$) might coincide. By the assumption above, there is no top edge of $\Gamma_{v_1v_2}$ connecting $\gamma_1$ and $\gamma_2$. Therefore, there is at least one top edge of $\Gamma$ connecting $v_1$ to some vertex of $\gamma_1\sqcup\gamma_2\sqcup \{v_2\}$. 
 Otherwise, since $\Gamma$ is a triangulation, then there is a top edge of $\Gamma$ connecting $v_{11}$ to $v_{12}$ (thus $\gamma_1$ to $\gamma_2$). It is clear that this top edge also belongs to $\Gamma_{v_1v_2}$, which contradicts the assumption.

 If there is no top edge of $\Gamma$ connecting $v_1$ to $\gamma_1\sqcup\gamma_2$, then $v_1$ is connected to $v_2$ by a top edge of $\Gamma$ (see Figure \ref{fig:case 1 for criterion} for instance). Otherwise, there is at least one top edge of $\Gamma$ connecting $v_1$ to $\gamma_1\cup\gamma_2$. Without %lost
 loss of generality, we assume that there is at least one top edge of $\Gamma$ connecting $v_1$ to $\gamma_1$. Let $w_1$ denote the vertex on $\gamma_1$ which is connected to $v_1$ by a top edge of $\Gamma$ and is closest to $v_2$ along $\gamma_1$ away from $v_1$.

 If $w_1=v_{21}$, note that by assumption $w_1\in\gamma_1$ cannot be connected to any vertex of $\gamma_2$ and also that $\Gamma$ is a triangulation, then $v_1$ is connected to $v_2$ by a top edge of $\Gamma$ (as shown in Figure \ref{fig:case 2 for criterion}).

 Otherwise $w_1\not=v_{21}$. Note that by assumption there is no top edge connecting $\gamma_1$ to $\gamma_2$ and there is no top edge connecting $v_1$ to any vertex (distinct from $w_1$) in $\gamma_1$ between $w_1$ and $v_2$ away from $v_1$. Since $\Gamma$ is a triangulation, then there must be a top edge connecting $v_1$ to $v_2$ (as indicated in Figure \ref{fig:case 3 for criterion}).

 \begin{figure}
 \centering
   \begin{subfigure}[b]{0.34\textwidth}
     \begin{tikzpicture}[scale=0.55]
        \draw (-1.5,2) ellipse (3.5 and 3.5);
\node at (-1.5,5.9) {$v_1$};
\node at (-3.05,5.6) {$v_{11}$};
\node at (0.05,5.6) {$v_{12}$};
\draw [black][fill=black,opacity=1]  (-1.5,5.5) ellipse (0.05 and 0.05);
\draw [black][fill=black,opacity=1]  (-2.95,5.2) ellipse (0.05 and 0.05);
\draw [black][fill=black,opacity=1]  (-0.05,5.2) ellipse (0.05 and 0.05);
\draw [black][fill=black,opacity=1]  (-1.35,-1.5) ellipse (0.05 and 0.05);
\draw [black][fill=black,opacity=1]  (-2.9,-1.2) ellipse (0.05 and 0.05);
\draw [black][fill=black,opacity=1]  (0.2,-1.05) ellipse (0.05 and 0.05);
\node at (-3,-1.6) {$v_{21}$};
\node at (-1.3,-1.9) {$v_2$};
\node at (0.5,-1.4) {$v_{22}$};
\draw (-1.35,-1.5) .. controls (-2.95,5.2) and (-2.95,5.2) .. (-2.95,5.2);
\draw (-0.05,5.2) .. controls (-1.35,-1.5) and (-1.35,-1.5) .. (-1.35,-1.5);
\draw[red] (-1.5,5.5) .. controls (-1.35,-1.5) and (-1.35,-1.5) .. (-1.35,-1.5);
\node  at (-4.4,1.5) {${\gamma_1}$};
\node at (1.4,1.5) {$\gamma_2 $};
\draw[thick] (-2.95,5.2) .. controls (-5.8,3.8) and (-5.6,0) .. (-2.9,-1.2);
\draw[thick] (-0.05,5.2) .. controls (2.85,3.7) and (2.45,0.2) .. (0.2,-1.05);
     \end{tikzpicture}
      \caption{\footnotesize{No top edge of $\Gamma$ connects $v_1$ to $\gamma_1\sqcup\gamma_2$.}}
       \label{fig:case 1 for criterion}
   %\end{minipage}
   \end{subfigure}
   %\begin{minipage}[t]{0.3\linewidth}
    %\centering
    \begin{subfigure}[b]{0.32\textwidth}
    \begin{tikzpicture}[scale=0.55]
     \draw (-1.5,2) ellipse (3.5 and 3.5);
\node at (-1.5,5.9) {$v_1$};
\node at (-3,5.6) {$v_{11}$};
\node at (0.05,5.65) {$v_{12}$};
\draw [black][fill=black,opacity=1]  (-1.5,5.5) ellipse (0.05 and 0.05);
\draw [black][fill=black,opacity=1]  (-2.95,5.2) ellipse (0.05 and 0.05);
\draw [black][fill=black,opacity=1]  (-0.05,5.2) ellipse (0.05 and 0.05);
\draw [black][fill=black,opacity=1]  (-1.35,-1.5) ellipse (0.05 and 0.05);
\draw [black][fill=black,opacity=1]  (-2.9,-1.2) ellipse (0.05 and 0.05);
\draw [black][fill=black,opacity=1]  (0.2,-1.05) ellipse (0.05 and 0.05);
\node at (-3,-1.6) {$v_{21}$};
\node at (-1.3,-1.9) {$v_2$};
\node at (0.45,-1.45) {$v_{22}$};
\draw (-1.5,5.5) .. controls (-2.9,-1.2) and (-2.9,-1.2) .. (-2.9,-1.2);
%\draw (-1.5,5.5) .. controls (0.2,-1.05) and (0.2,-1.05) .. (0.2,-1.05);
\draw[red] (-1.5,5.5) .. controls (-1.35,-1.5) and (-1.35,-1.5) .. (-1.35,-1.5);
\node at (-4.4,1.5) {$\gamma_1 $};
\node at (1.4,1.5) {$\gamma_2 $};
\draw[thick] (-2.95,5.2) .. controls (-5.8,3.8) and (-5.6,0) .. (-2.9,-1.2);
\draw[thick] (-0.05,5.2) .. controls (2.85,3.7) and (2.45,0.2) .. (0.2,-1.05);
        \end{tikzpicture}
         \caption{\footnotesize{$w_1=v_{21}$.}}
           \label{fig:case 2 for criterion}
   %\end{minipage}
   \end{subfigure}
   %\begin{minipage}[t]{0.33\linewidth}
    %\centering
    \begin{subfigure}[b]{0.31\textwidth}
     \begin{tikzpicture}[scale=0.55]
     \draw (-1.5,2) ellipse (3.5 and 3.5);
\node at (-1.5,5.95) {$v_1$};
\node at (-3.05,5.65) {$v_{11}$};
\node at (0,5.6) {$v_{12}$};
\draw [black][fill=black,opacity=1]  (-1.5,5.5) ellipse (0.05 and 0.05);
\draw [black][fill=black,opacity=1]  (-2.95,5.2) ellipse (0.05 and 0.05);
\draw [black][fill=black,opacity=1]  (-0.05,5.2) ellipse (0.05 and 0.05);
\draw [black][fill=black,opacity=1]  (-1.35,-1.5) ellipse (0.05 and 0.05);
\draw [black][fill=black,opacity=1]  (-2.9,-1.2) ellipse (0.05 and 0.05);
\draw [black][fill=black,opacity=1]  (0.2,-1.05) ellipse (0.05 and 0.05);
\node at (-3,-1.6) {$v_{21}$};
\node at (-1.3,-1.9) {$v_2$};
\node at (0.4,-1.5) {$v_{22}$};
\draw[red] (-1.5,5.5) .. controls (-1.35,-1.5) and (-1.35,-1.5) .. (-1.35,-1.5);
\draw [black][fill=black,opacity=1]  (-5,2) ellipse (0.05 and 0.05);
%\draw [black][fill=black,opacity=1]  (2,2) ellipse (0.05 and 0.05);
\draw (-1.5,5.5) .. controls (-5,2) and (-5,2) .. (-5,2);
%\draw (-1.5,5.5) .. controls (2,2) and (2,2) .. (2,2);
\draw (-5,2) .. controls (-1.35,-1.5) and (-1.35,-1.5) .. (-1.35,-1.5);
%\draw (2,2) .. controls (-1.35,-1.5) and (-1.35,-1.5) .. (-1.35,-1.5);
\node at (-5.5,2) {$w_1$};
%\node at (2.5,2) {$w_2$};
\node at (-5.1,0.05) {$\gamma_1 $};
\node at (2.05,0) {$\gamma_2 $};
\draw[thick] (-2.95,5.2) .. controls (-5.8,3.8) and (-5.6,0) .. (-2.9,-1.2);
\draw[thick] (-0.05,5.2) .. controls (2.85,3.7) and (2.45,0.2) .. (0.2,-1.05);
     \end{tikzpicture}
     \caption{\footnotesize{$w_1\not=v_{21}$}}
       \label{fig:case 3 for criterion}
   %\end{minipage}
     \end{subfigure}
   \caption{\small{The examples of the top subgraph of $\Gamma$ in Case 2 of Lemma \ref{lem:criterion for admissible} (partially drawn, with the other vertices and edges hidden), where the circle represents the equator $\gamma$ of $\Sigma_{0,N}$, the bold lines represent the two components $\gamma_1$, $\gamma_2$ of the restriction of $\Gamma_{v_1v_2}$ to the equator.}}
   \label{fig:criterion}
\end{figure}

This concludes that there is a top edge of $\Gamma$ connecting $v_1$ to $v_2$. Applying an analogous argument, we can show that there is also a bottom edge of $\Gamma$ connecting $v_1$ to $v_2$. This contradicts our assumption at the beginning of Case 2. Therefore the sufficiency follows.
\end{proof}

With this criterion, we have the following results, which are crucial to prove the connectedness of $G_T(\Sigma_{0,N})$.

\begin{observation}\label{obs: admissible and flips}
Let $\Gamma\in\cG_N$ be an admissible triangulation of $\Sigma_{0,N}$ and $T$ be a triangulation of the closure of the top of $\Sigma_{0,N}$ with $N\geq 4$. Assume that $T$ and the top subgraph $\Gamma^t$ of $\Gamma$ differ by one flip. Then there is at most one pair of vertices (on the equator) connected by both an edge of $T$ and a bottom edge of $\Gamma$.
\end{observation}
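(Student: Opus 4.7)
The plan is to combine the admissibility criterion of Lemma \ref{lem:criterion for admissible} with the very local nature of the flip operation. Since $T$ is obtained from $\Gamma^t$ by a single flip, I would write $T = (\Gamma^t \setminus \{e\}) \cup \{e'\}$, where $e$ is the flipped-out edge of $\Gamma^t$ and $e'$ is the other diagonal of the quadrilateral formed by the two triangles of $\Gamma^t$ adjacent to $e$. Thus the edges of $T$ split into two groups: those already present in $\Gamma^t$ (equatorial edges or top edges of $\Gamma$), and the single new edge $e'$.

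Next I would fix any pair $(u,v)$ of equatorial vertices connected by both an edge $a$ of $T$ and a bottom edge $b$ of $\Gamma$, and argue by cases on which type of edge $a$ is. If $a$ lies in $\Gamma^t \cap T$, then $a$ is either equatorial or a top edge of $\Gamma$. An equatorial $a$ would force $u,v$ to be adjacent on the equator, and since the underlying graph $\Gamma$ is simple no additional (bottom) edge could connect them, contradicting the existence of $b$. If instead $a$ is a top edge of $\Gamma$, then $(u,v)$ is connected in $\Gamma$ by both a top edge and a bottom edge, contradicting admissibility of $\Gamma$ through Lemma \ref{lem:criterion for admissible}. Therefore the only remaining possibility is $a = e'$, and the endpoints of $e'$ form the unique candidate pair.

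Since any pair satisfying the hypotheses must be the pair of endpoints of $e'$, at most one such pair exists, which is the desired conclusion. The bulk of the work is purely bookkeeping on which edges of $T$ belong to $\Gamma^t$; the only place where real content enters is the appeal to Lemma \ref{lem:criterion for admissible} to rule out a shared top/bottom pair inside $\Gamma$ itself. I do not anticipate a serious obstacle; the main thing to be careful about is ensuring that the simple-graph assumption (no double edges between adjacent equator vertices) is in force, and that the flipped-in edge $e'$ is indeed non-equatorial (otherwise the resulting $T$ would fail to be a triangulation in the simple-graph sense), which is automatic since a flip in an embedded triangulation preserves simplicity.
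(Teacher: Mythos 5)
Your proposal is correct and follows essentially the same approach as the paper: both combine Lemma \ref{lem:criterion for admissible} (no top/bottom pair inside $\Gamma$ shares two endpoints) with the locality of a single flip (only one new edge is introduced) to reduce the candidate pair to the endpoints of the flipped-in edge $e'$. The paper states this more compactly; your version simply spells out the short case analysis on whether the edge $a$ of $T$ lies in $\Gamma^t \cap T$ or equals $e'$.
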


\begin{proof}
 Note that since $\Gamma$ is admissible, by Lemma \ref{lem:criterion for admissible}, no top edge and bottom edge of $\Gamma$ share two common endpoints. Note also that $T$ is obtained from $\Gamma^t$ by a single flip, thus is distinct from $\Gamma^t$ only at the edge say $e'$ which is obtained from $\Gamma^t$ by a flip. Therefore, the endpoints of the edge $e'$ of $T$ are the only possible pair of vertices on the equator that are also shared with a bottom edge of $\Gamma$. The desired result follows.
\end{proof}

\begin{observation}\label{obs:admissible and union}
Let $\Gamma \in\cG_N$ be an admissible triangulation of $\Sigma_{0,N}$ and $T$ be a triangulation of the closure of the top of $\Sigma_{0,N}$ with $N\geq 5$. Assume that $T$ and $\Gamma^t$ differ by one flip, and that there is a pair of vertices (on the equator) connected by both a bottom edge (say $e$) of $\Gamma$ and an edge (say $e'$) of $T$. Denote by $(\Gamma^b)_e$ the triangulation obtained from $\Gamma^b$ by a flip at $e$. Then the triangulations $T\cup(\Gamma^b)_e$ and $\Gamma^t\cup(\Gamma^b)_e$ are both admissible.
\end{observation}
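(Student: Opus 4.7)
The plan is to apply Lemma \ref{lem:criterion for admissible}, which reduces the admissibility of $T\cup(\Gamma^b)_e$ and of $\Gamma^t\cup(\Gamma^b)_e$ to the statement that no pair of vertices is connected simultaneously by a top edge and a bottom edge of the triangulation in question. I will use the following notation: the top flip replaces an edge $e_0$ of $\Gamma^t$ by $e'$, and since $e_0$ is a diagonal of the quadrilateral formed by its two adjacent top triangles I may write $e_0=ab$, $e'=cd$, with those top triangles being $abc$ and $abd$. By hypothesis $e$ has the same endpoints as $e'$, so $e=cd$, and the bottom flip replaces $e$ by $\hat{e}=uv$, where $u,v$ are the third vertices of the two bottom triangles $cdu,cdv\in\Gamma^b$ adjacent to $e$.

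The key geometric step is the claim that the only edge of $\Gamma^t$ whose endpoint set equals $\{u,v\}$ is $e_0=ab$ itself. To establish this I will track cyclic positions on the equator. The chord $ab$ separates the equator into two arcs, and the triangles $abc,abd$ force $c$ and $d$ to lie on opposite arcs; similarly the chord $cd$ in the bottom disk forces $u$ and $v$ to lie on opposite arcs separated by $c,d$, say $u$ on the arc from $c$ to $d$ through $b$ and $v$ on the arc through $a$. In the top disk, the union $abc\cup abd$ together with the chords $ac,cb,bd,da$ bounds a quadrilateral subregion, and the complement decomposes into four outer pockets along those chords. The position of $u$ then forces $u\in\{b\}$ or $u$ to lie on the arc of the $cb$-pocket or the $bd$-pocket, and symmetrically for $v$; a short case check over the resulting nine combinations shows that any chord $uv$ other than $ab$ would have to cross one of $ac,cb,bd,da$, which is impossible in a triangulation.

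Granting this, the admissibility of $T\cup(\Gamma^b)_e$ is established by comparing the top edges of $T=(\Gamma^t\setminus\{e_0\})\cup\{e'\}$ to the bottom edges of $(\Gamma^b)_e=(\Gamma^b\setminus\{e\})\cup\{\hat{e}\}$ in four pairings: pairs within $\Gamma$ are handled by admissibility of $\Gamma$; $e'=cd$ against $\Gamma^b\setminus\{e\}$ is excluded because $\Gamma^b$ is a simplicial triangulation so no other bottom edge joins $c$ to $d$; $\hat{e}=uv$ against $\Gamma^t\setminus\{e_0\}$ is excluded by the geometric claim; and $(e',\hat{e})$ is excluded because $c,d,u,v$ are the four distinct vertices of the bottom flip quadrilateral. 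For $\Gamma^t\cup(\Gamma^b)_e$ the only new concern is that $e_0=ab\in\Gamma^t$ would share endpoints with $\hat{e}=uv$ if $\{u,v\}=\{a,b\}$. Here the hypothesis $N\geq 5$ enters: if $u=b$ and $v=a$, then the bottom triangles $cdb$ and $cda$ force $bc,bd,ac,ad$ to be bottom edges of $\Gamma$; but they are simultaneously top edges from $abc,abd$, so admissibility of $\Gamma$ forces each to be equatorial, making $c$ and $d$ the two equator-neighbours of both $a$ and $b$, and hence the equator reduces to the $4$-cycle $a,c,b,d$, contradicting $N\geq 5$.

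I expect the geometric step to be the main obstacle: although the conclusion is intuitive, it requires a careful enumeration of where $u$ and $v$ can lie on the arcs of the equator relative to the quadrilateral $acbd$ in order to rule out every chord $uv\neq ab$ in $\Gamma^t$. The rest of the argument is essentially a bookkeeping exercise combining admissibility of $\Gamma$ with the vertex-count argument coming from $N\geq 5$.
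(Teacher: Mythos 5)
Your proof is correct and follows essentially the same route as the paper: both reduce the claim, via the admissibility criterion, to showing that the new bottom edge $\hat e=uv$ cannot share its endpoint set with any edge of $\Gamma^t$ or of $T$, establish this by observing that the top chord $uv$ must meet the interior of the top flip quadrilateral with vertices $a,c,b,d$ unless $\{u,v\}=\{a,b\}$, and then exclude $\{u,v\}=\{a,b\}$ using admissibility of $\Gamma$ together with $N\ge 5$. The difference is purely presentational: you spell out the crossing argument through an explicit nine-case enumeration of arc positions and treat $\{u,v\}=\{a,b\}$ afterward, whereas the paper first bounds the number of common vertices of the two flip quadrilaterals by three and then states the crossing fact without enumeration.
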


\begin{proof}
 By our assumption and the argument of Observation \ref{obs: admissible and flips}, the endpoints (say $v_1$, $v_2$) of the edges $e$ and $e'$ are the only pair of vertices (on the equator) connected by both a bottom edge of $\Gamma$ and an edge of $T$. Moreover, the edge $e'$ of $T$ is exactly the edge obtained from $\Gamma^t$ by a flip. Let $(e')^f$ denote the edge of $\Gamma^t$ corresponding to $e'$ by a flip and $e^f$ denote the edge of $(\Gamma^b)_e$ corresponding to $e$ by a flip. It is clear that $(e')^f$ and $e^f$ are both non-equatorial.

 Note that since $(e')^f\subset \Gamma^t$ and $e'\subset T$ (resp. $e\subset\Gamma^b$ and $e^f\subset(\Gamma^b)_e$) differ by a flip, they share the same quadrilateral say $q^t$ (resp. $q^b$). It is clear that $q^t\subset\Gamma^t\cap T$ and $q^b\subset\Gamma^b\cap(\Gamma^b)_e$. We claim that the quadrilaterals $q^t$ and $q^b$ have at most one more common vertex other than $v_1$, $v_2$ (see Figure \ref{fig:admissible graph 1} for instance). %\footnote{QY: modified the Figure 2, to better explain this claim.}
 Otherwise, at least one non-equatorial edge of $q^t\subset\Gamma$ shares common two endpoints with a non-equatorial edge of $q^b\subset\Gamma$, since $N\geq 5$. This will imply that $\Gamma^t\cup\Gamma^b=\Gamma$ is not admissible by Lemma \ref{lem:criterion for admissible}, which leads to contradiction.

 As a consequence, the edge on the top of $\Sigma_{0,N}$ with the same endpoints as $e^f$ must cross the quadrilateral $q^t\subset\Gamma^t\cap T$. This implies that $e^f$ shares no common two endpoints with any edges of $\Gamma^t$ and $T$. Moreover, by Lemma \ref{lem:criterion for admissible} and $\Gamma\in\cG_N$, no non-equatorial edge of $(\Gamma^b)_e\setminus\{e^f\}=\Gamma^b\setminus\{e\}$ shares common two endpoints with any edge of $\Gamma^t$ and $T$. Using Lemma \ref{lem:criterion for admissible} again, it follows that both $\Gamma^t\cup(\Gamma^b)_e$ and $T\cup(\Gamma^b)_e$ are admissible (see Figure \ref{fig:admissible graph 2} and Figure \ref{fig:admissible graph 3} for instance). This concludes the observation.

  \begin{figure}
 \centering
   %\begin{minipage}[t]{0.35\linewidth}
   \begin{subfigure}[b]{0.31\textwidth}
     \begin{tikzpicture}[scale=0.56]
 \draw (-1.5,2) ellipse (3.5 and 3.5);
\draw [black][fill=black,opacity=1]  (-1.55,5.5) ellipse (0.05 and 0.05);
\draw [black][fill=black,opacity=1]  (-4.99,1.96) ellipse (0.05 and 0.05);
\draw [black][fill=black,opacity=1]  (2,1.9) ellipse (0.05 and 0.05);
\draw [black][fill=black,opacity=1]  (-1.5,-1.5) ellipse (0.05 and 0.05);
\node at (-1.4,-1.9) {$v_2$};
\draw[blue] (-5,1.95) .. controls (2,1.9) and (2,1.9) .. (2,1.9);
\draw[thick] (-1.55,5.5) .. controls (2,1.9) and (2,1.9) .. (2,1.9);
\draw[thick](-1.5,-1.5) .. controls (2,1.9) and (2,1.9) .. (2,1.9);
\node at (-1.5,5.95) {$v_1$};
\draw[dashed,thick] (1,-0.4) .. controls (-1.5,-1.5) and (-1.5,-1.5) .. (-1.5,-1.5);
\draw[dashed,thick] (-1.55,5.5) .. controls (1,-0.4) and (1,-0.4) .. (1,-0.4);
\draw[dashed][red] (-1.55,5.5) .. controls (-1.5,-1.5) and (-1.5,-1.5) .. (-1.5,-1.5);
\node at (-4.05,2.2) {$(e')^f$};
\node at (-1.8,0.4) {$e$};
\draw[thick] (-1.55,5.5) .. controls (-6.05,5.3) and (-6.2,-1.25) .. (-1.5,-1.5);
\draw [black][fill=black,opacity=1]  (1.045,-0.4) ellipse (0.05 and 0.05);
     \end{tikzpicture}
      \caption{\footnotesize{$\Gamma=\Gamma^t\cup\Gamma^b$}}
       \label{fig:admissible graph 1}
   %\end{minipage}
   \end{subfigure}
   %\begin{minipage}[t]{0.3\linewidth}
    %\centering
    \begin{subfigure}[b]{0.31\textwidth}
    \begin{tikzpicture}[scale=0.56]

\draw (-1.5,2) ellipse (3.5 and 3.5);
\draw [black][fill=black,opacity=1]  (-1.55,5.5) ellipse (0.05 and 0.05);
\draw [black][fill=black,opacity=1]  (-4.99,1.96) ellipse (0.05 and 0.05);
\draw [black][fill=black,opacity=1]  (2,1.9) ellipse (0.05 and 0.05);
\draw [black][fill=black,opacity=1]  (-1.5,-1.5) ellipse (0.05 and 0.05);
\node at (-1.4,-1.9) {$v_2$};
\draw[blue] (-5,1.95) .. controls (2,1.9) and (2,1.9) .. (2,1.9);
\draw[thick] (-1.55,5.5) .. controls (2,1.9) and (2,1.9) .. (2,1.9);
\draw[thick](-1.5,-1.5) .. controls (2,1.9) and (2,1.9) .. (2,1.9);
\node at (-1.5,5.95) {$v_1$};
\draw[dashed,thick] (1,-0.4) .. controls (-1.5,-1.5) and (-1.5,-1.5) .. (-1.5,-1.5);
\draw[dashed,thick] (-1.55,5.5) .. controls (1,-0.4) and (1,-0.4) .. (1,-0.4);
\node at (-4.05,2.2) {$(e')^f$};
\node at (-2.8,0.75) {$e^f$};
\draw[thick] (-1.55,5.5) .. controls (-6.05,5.3) and (-6.2,-1.25) .. (-1.5,-1.5);
\draw[red][dashed] (-5,1.95) .. controls (1,-0.4) and (1,-0.4) .. (1,-0.4);
\draw [black][fill=black,opacity=1]  (1.045,-0.4) ellipse (0.05 and 0.05);
\end{tikzpicture}
\caption{\footnotesize{$\Gamma^t\cap(\Gamma^b)_e$}}
       \label{fig:admissible graph 2}
   %\end{minipage}
   \end{subfigure}
   %\begin{minipage}[t]{0.33\linewidth}
    %\centering
    \begin{subfigure}[b]{0.32\textwidth}
     \begin{tikzpicture}[scale=0.56]
 \draw (-1.5,2) ellipse (3.5 and 3.5);
\draw [black][fill=black,opacity=1]  (-1.55,5.5) ellipse (0.05 and 0.05);
\draw [black][fill=black,opacity=1]  (-4.98,1.95) ellipse (0.05 and 0.05);
\draw [black][fill=black,opacity=1]  (2,1.9) ellipse (0.05 and 0.05);
\draw [black][fill=black,opacity=1]  (-1.5,-1.5) ellipse (0.05 and 0.05);
\node at (-1.4,-1.9) {$v_2$};
\draw[thick] (-1.55,5.5) .. controls (2,1.9) and (2,1.9) .. (2,1.9);
\draw[thick](-1.5,-1.5) .. controls (2,1.9) and (2,1.9) .. (2,1.9);
\node at (-1.5,5.95) {$v_1$};
\draw[dashed,thick] (1,-0.4) .. controls (-1.5,-1.5) and (-1.5,-1.5) .. (-1.5,-1.5);
\draw[dashed,thick] (-1.55,5.5) .. controls (1,-0.4) and (1,-0.4) .. (1,-0.4);
\node at (-1.8,3.3) {$e'$};
\node at (-2.8,0.75) {$e^f$};
\draw[thick] (-1.55,5.5) .. controls (-6.05,5.3) and (-6.2,-1.25) .. (-1.5,-1.5);
\draw[red][dashed] (-4.98,1.95) .. controls (1,-0.4) and (1,-0.4) .. (1,-0.4);
\draw [black][fill=black,opacity=1]  (1.045,-0.4) ellipse (0.05 and 0.05);
\draw[blue] (-1.55,5.5) .. controls (-1.5,-1.5) and (-1.5,-1.5) .. (-1.5,-1.5);
     \end{tikzpicture}
     \caption{\footnotesize{$T\cup(\Gamma^b)_e$}}
       \label{fig:admissible graph 3}
   %\end{minipage}
     \end{subfigure}
   \caption{\small{The examples of three relevant triangulations of $\Sigma_{0,N}$ in the proof Observation \ref{obs:admissible and union} (partially drawn, with the other vertices and edges hidden), where the circle represents the equator $\gamma$ of $\Sigma_{0,N}$ and the left semicircle has exactly one vertex between $v_1$ and $v_2$, the solid (resp. dashed) lines represent the top (resp. bottom) edges, the region bounded by the left semicircle and bold solid (resp. dashed) lines represents the quadrilateral $q^t$ (resp. $q^b$).}}
   \label{fig:admissible}
\end{figure}
\end{proof}

The following is a known result (see \cite[Theorem 4.2]{HN}) about the (vertex)-connectivity of the graph of triangulations of a convex polygon with $N$ vertices, which will be used to show the connectedness of $G_T(\Sigma_{0,N})$.

\begin{lemma}\label{lem:connectivity of triangulations of polygons}
Let $p_N$ be a convex polygon with $N$ vertices ($N\geq 4$). Then the graph $G_T(p_N)$ of triangulations of $p_N$ is (N-3)-connected. In particular, $G_T(p_N)$ is connected.
\end{lemma}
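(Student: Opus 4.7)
The plan is to prove this by induction on $N$. The base case $N = 4$ is immediate since $G_T(p_4)$ consists of two triangulations differing by a single flip, hence is $1$-connected. For the inductive step, I would pick a distinguished vertex $v$ of the polygon $p_N$ and stratify the vertices of $G_T(p_N)$ according to $\deg_T(v)$, the number of polygon edges and diagonals of $T$ incident to $v$, which takes values in $\{2, 3, \ldots, N-1\}$. A triangulation with $\deg_T(v) = k$ is determined by a choice of $k - 2$ diagonals emanating from $v$, which split $p_N$ into $k - 1$ smaller polygons, together with a triangulation of each. The induction hypothesis applied to these sub-polygons shows that each stratum carries a highly connected subgraph structure, and strata with consecutive values of $\deg_T(v)$ are connected to one another via flips at diagonals incident to $v$.

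The task then reduces to showing that for any pair $T, T'$ of triangulations, there exist $N - 3$ internally vertex-disjoint paths between them in $G_T(p_N)$, which by Menger's theorem gives the desired $(N-3)$-connectivity. The strategy is to exploit the unique fan triangulation $T_v$ from $v$: for each of the $N - 3$ diagonals of $T_v$, I would construct a canonical path from $T$ to $T_v$ whose \emph{first flip occurs on a distinct diagonal of $T_v$}, and similarly for $T'$, so that the $N-3$ resulting concatenated paths are automatically vertex-disjoint in a neighborhood of $T_v$. The induction hypothesis on the sub-polygons cut off by each diagonal of $T_v$ supplies enough flexibility to keep the paths disjoint away from $T_v$ as well.

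The main obstacle I expect is the careful management of the disjointness conditions during the inductive step, especially ensuring that paths constructed from different sub-polygons do not accidentally collide at a common intermediate triangulation. A conceptually cleaner alternative is to identify $G_T(p_N)$ with the $1$-skeleton of the \emph{associahedron} $K_{N-1}$, an $(N-3)$-dimensional simple convex polytope whose vertices are in bijection with triangulations of $p_N$ and whose edges correspond to flips. The result then follows at once from Balinski's theorem, which asserts that the $1$-skeleton of any $d$-dimensional convex polytope is $d$-vertex-connected. This polytopal shortcut trades the delicate inductive routing argument for the (nontrivial but well-known) realization of the associahedron as a convex polytope, and gives the sharpest form of the statement since $G_T(p_N)$ is $(N-3)$-regular and so cannot be more than $(N-3)$-connected.
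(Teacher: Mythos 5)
Your proposal is correct, but it is worth noting that the paper does not actually prove this lemma: it simply quotes it as \cite[Theorem~4.2]{HN} (Hurtado--Noy) and uses it as a black box. Your first, inductive sketch is essentially the strategy used in that reference --- build $N-3$ internally disjoint paths between two given triangulations and appeal to Menger's theorem, organizing the routing by the degree of a distinguished vertex and a fan triangulation --- but, as you yourself flag, the bookkeeping needed to keep the paths vertex-disjoint is the real substance of the argument and you leave it open. Your second route, via the realization of the associahedron as a convex polytope together with Balinski's theorem, is complete and correct, and is conceptually cleaner: the flip graph of a convex $N$-gon is the $1$-skeleton of a simple $(N-3)$-dimensional convex polytope, and Balinski's theorem then yields $(N-3)$-vertex-connectivity at once. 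Your sharpness observation is also right: every triangulation of $p_N$ has exactly $N-3$ interior diagonals, each uniquely flippable, so the flip graph is $(N-3)$-regular; since it has at least $N-1$ vertices for $N\geq 4$, removing the $N-3$ neighbors of any vertex disconnects it, so the bound is best possible. The one caveat is that the polytopal-associahedron shortcut leans on a nontrivial external fact (a convex realization of the associahedron), so it is no more self-contained than citing Hurtado--Noy outright; both routes ultimately import a known theorem, and the paper's choice to just cite the literature is appropriate given the auxiliary role this lemma plays.
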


We are now ready to show the connectedness of the graph $G_T(\Sigma_{0,N})$.

\begin{proposition}\label{connectivity of graph of triangulation}
For $N\geq 5$, the graph $G_T(\Sigma_{0,N})$ is connected.
\end{proposition}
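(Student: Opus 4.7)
The plan is to prove connectedness by reducing to the classical connectedness of the flip graph of a polygon (Lemma \ref{lem:connectivity of triangulations of polygons}), applied separately to the top and bottom disks. Given two admissible triangulations $\Gamma,\Gamma'\in\cG_N$, I will first produce an admissible flip path from $\Gamma$ to some $\Gamma''\in\cG_N$ with the same top subgraph as $\Gamma'$, i.e.\ $(\Gamma'')^t=(\Gamma')^t$; then apply the symmetric argument with the roles of top and bottom exchanged to flip $(\Gamma'')^b$ into $(\Gamma')^b$ while fixing the common top subgraph, producing an admissible flip path from $\Gamma''$ to $\Gamma'$. Concatenation gives the desired path in $G_T(\Sigma_{0,N})$.

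To realize the first stage, I would use Lemma \ref{lem:connectivity of triangulations of polygons} applied to the top disk (a polygon with $N$ vertices, so $N\geq 5\geq 4$) to obtain a sequence of top triangulations $\Gamma^t=T_0,T_1,\ldots,T_m=(\Gamma')^t$ such that $T_{i+1}$ is obtained from $T_i$ by a single flip. I then try to lift this to an admissible flip path starting at $\Gamma$, carrying the bottom subgraph along and modifying it only as forced by admissibility. Inductively, suppose I have reached an admissible triangulation $\Gamma_i=T_i\cup\Gamma_i^b\in\cG_N$. I want to reach an admissible triangulation with top $T_{i+1}$. If $T_{i+1}\cup\Gamma_i^b$ is already admissible, this is a single top flip and I set $\Gamma_{i+1}:=T_{i+1}\cup\Gamma_i^b$. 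Otherwise, by Lemma \ref{lem:criterion for admissible}, some pair of vertices on the equator is connected by both an edge of $T_{i+1}$ and a bottom edge of $\Gamma_i^b$; by Observation \ref{obs: admissible and flips}, such a pair is unique and the shared edge must be the flipped edge $e'$ of $T_{i+1}$. Let $e$ be the corresponding bottom edge of $\Gamma_i^b$. Then Observation \ref{obs:admissible and union} guarantees that both $T_i\cup(\Gamma_i^b)_e$ and $T_{i+1}\cup(\Gamma_i^b)_e$ are admissible, so the two successive flips
\[
\Gamma_i=T_i\cup\Gamma_i^b\;\longrightarrow\;T_i\cup(\Gamma_i^b)_e\;\longrightarrow\;T_{i+1}\cup(\Gamma_i^b)_e=:\Gamma_{i+1}
\]
form an edge path in $G_T(\Sigma_{0,N})$, achieving the prescribed change of top at the mild cost of replacing $\Gamma_i^b$ by $(\Gamma_i^b)_e$.

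Iterating gives an admissible flip path from $\Gamma$ to some $\Gamma''=(\Gamma')^t\cup(\Gamma'')^b\in\cG_N$. Applying the same construction with top and bottom swapped connects $\Gamma''$ to $\Gamma'$ inside $G_T(\Sigma_{0,N})$. The hypothesis $N\geq 5$ is used exactly in the application of Observation \ref{obs:admissible and union} (which needs $N\geq 5$ to ensure that the top and bottom quadrilaterals $q^t$ and $q^b$ are not forced to share a non-equatorial edge). The main technical point, and the step where I expect the delicate bookkeeping, is verifying that the two-step procedure above always stays within admissible triangulations when a direct flip would leave $\cG_N$; once Observations \ref{obs: admissible and flips} and \ref{obs:admissible and union} are in hand, this is reduced to their direct application, and no further case analysis is required.
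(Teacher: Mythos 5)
Your overall plan --- first align the top subgraphs via lifted top flips, then align the bottom subgraphs --- is the same high‑level idea as the paper, and your Stage~1 reproduces the paper's Step~1 correctly: the inductive lifting via Observation~\ref{obs: admissible and flips} and Observation~\ref{obs:admissible and union} is exactly the published argument for changing the top subgraph to an arbitrary prescribed triangulation of the top disk while remaining inside $\cG_N$.

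The gap is in your Stage~2. You assert that ``the symmetric argument with the roles of top and bottom exchanged'' flips $(\Gamma'')^b$ into $(\Gamma')^b$ \emph{while fixing the common top subgraph}. That is not what the symmetric argument does. When you lift a bottom flip path $B_0,\dots,B_m$ starting from $\Gamma''=(\Gamma')^t\cup B_0$, the obstruction case of Lemma~\ref{lem:criterion for admissible} occurs whenever the newly created bottom edge of $B_{i+1}$ has the same two endpoints as an existing top edge of the \emph{current} top subgraph --- which can certainly be an edge of $(\Gamma')^t$. In that case Observation~\ref{obs:admissible and union} (with roles of top and bottom swapped) forces you to flip that top edge, so the top subgraph changes. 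After the $m$ steps you reach some $T'\cup(\Gamma')^b\in\cG_N$, but there is no mechanism in your argument that guarantees $T'=(\Gamma')^t$. You therefore land at a triangulation that has the desired bottom but possibly the wrong top, so it need not equal $\Gamma'$, and iterating the two stages gives no obvious termination.

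This is exactly the difficulty the paper circumvents with its Step~2: it first routes \emph{both} $\Gamma_1$ and $\Gamma_2$ to admissible triangulations whose top subgraph is a single chosen fan $T_0$ (pivoted at a vertex $v_0$). With a fan top, admissibility forces the bottom edge $v_lv_r$ to be present; removing $v_0$ and using Lemma~\ref{lem:connectivity of triangulations of polygons} on the remaining $(N-1)$-gon gives a bottom flip path in which $v_0$ is never touched, hence no intermediate bottom edge can share endpoints with a fan edge $v_0v_j$, and the top subgraph is genuinely fixed throughout. Your argument is missing some analogue of this ``canonical top'' device (or a proof that a bottom flip path avoiding the forbidden diagonals of $(\Gamma')^t$ always exists), so as written it does not establish connectedness of $G_T(\Sigma_{0,N})$.
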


\begin{proof}
 We prove this result using the following three steps:

 \textbf{Step 1.} We show that for any admissible triangulation $\Gamma\in\cG_N$ and any triangulation $T$ of the closure of the top of $\Sigma_{0,N}$, there is an admissible triangulation $\Gamma'\in\cG_N$ whose top subgraph coincides with $T$ and is connected to $\Gamma$ by a finite number of flips.

 Indeed, by Lemma \ref{lem:connectivity of triangulations of polygons}, the top subgraph $\Gamma^t$ of $\Gamma$ can be connected to $T$ by a finite number of flips, say $\Gamma^t:=T_0, T_1, ..., T_n:=T$. Denote $\Gamma_0:=\Gamma$. We first show that $\Gamma_0$ can be connected to an admissible triangulation, say $\Gamma_1$, whose top subgraph is $T_1$.

 If there is no pair of vertices (on the equator of $\Sigma_{0,N}$) connected by both a bottom edge of $\Gamma$ and an edge of $T_1$, then, by Lemma  \ref{lem:criterion for admissible}, the union $T_1\cup \Gamma^b$ is an admissible triangulation. We let $\Gamma_1=T_1\cup\Gamma^b$. Then $\Gamma_0=\Gamma$ is connected to $\Gamma_1$ by an edge of $G_T(\Sigma_{0,N})$.

 Otherwise, by Observation \ref{obs: admissible and flips}, there is exactly one edge say $e$ of $\Gamma^b$ and one edge $e'$ (which is the new one obtained from $\Gamma^t$ by a flip) of $T_1$ connecting the same vertices. Let $(\Gamma^b)_e$ be the triangulation obtained from $\Gamma^b$ by a flip at $e$.  By Observation \ref{obs:admissible and union}, the triangulations $T_1\cup(\Gamma^b)_e$ and $\Gamma^t\cup(\Gamma^b)_e$ are both admissible. We let $\Gamma_1=T_1\cup(\Gamma^b)_e$. Then $\Gamma_0=\Gamma$ is connected to $\Gamma_1$ by a path of $G_T(\Sigma_{0,N})$ through $\Gamma^t\cup(\Gamma^b)_e$.

 Using the same argument, we can show that there is always an admissible triangulation $\Gamma_{i+1}$ (whose top subgraph is $T_{i+1}$) connected to an admissible triangulation $\Gamma_{i}$ (whose top subgraph is $T_{i}$) by a path of $G_T(\Sigma_{0,N})$ for $i=1,...,n-1$. This concludes Step 1.

 \textbf{Step 2.} Let $T_0$ be a special triangulation (called \emph{fan}) of the closure of the top of $\Sigma_{0,N}$, whose non-equatorial edge set is obtained by connecting one vertex (say $v_0$) to all the others non-adjacent to $v_0$ along the equator. We show that for any two admissible triangulations $\Gamma_1, \Gamma_2\in\cG_N$ with $\Gamma_1^t =T_0 =\Gamma_2^t$, there is a path in $G_T(\Sigma_{0,N})$ connecting $\Gamma_1$ to $\Gamma_2$.

 Indeed, let $v_l, v_r$ denote the two vertices adjacent to $v_0$ on the equator of $\Sigma_{0,N}$. By Lemma \ref{lem:criterion for admissible} and the assumption that $\Gamma_1,\Gamma_2$ are both admissible triangulations of $\Sigma_{0,N}$, the bottom subgraphs $\Gamma_1^b$ and $\Gamma_2^b$ of $\Gamma_1$ and $\Gamma_2$ both contain an edge connecting $v_l$ and $v_r$ (see Figure \ref{fig:fan} for example).

 \begin{figure}
      \begin{tikzpicture}[scale=0.55]
 \draw (-1.5,2) ellipse (3.5 and 3.5);
\draw [black][fill=black,opacity=1]  (-1.55,5.5) ellipse (0.05 and 0.05);
\draw [black][fill=black,opacity=1]  (-4.05,-0.4) ellipse (0.05 and 0.05);
\draw [black][fill=black,opacity=1]  (1.95,2.65) ellipse (0.05 and 0.05);
\draw [black][fill=black,opacity=1]  (-1.5,-1.5) ellipse (0.05 and 0.05);
\draw [black][fill=black,opacity=1]  (-3.85,4.6) ellipse (0.05 and 0.05);
\draw [black][fill=black,opacity=1]  (0.9,4.55) ellipse (0.05 and 0.05);
\draw [black][fill=black,opacity=1]  (-4.92,2.75) ellipse (0.05 and 0.05);
\draw [black][fill=black,opacity=1]  (1.1,-0.35) ellipse (0.05 and 0.05);
\draw (-1.55,5.5) .. controls (-1.5,-1.5) and (-1.5,-1.5) .. (-1.5,-1.5);
\draw (-1.55,5.5) .. controls (-4.9,2.75) and (-4.9,2.75) .. (-4.9,2.75);
\draw (-1.55,5.5) .. controls (1.95,2.65) and (1.95,2.65) .. (1.95,2.65);
\node at (-1.5,5.95) {$v_0$};
\draw (-1.55,5.5) .. controls (-4.05,-0.4) and (-4.05,-0.4) .. (-4.05,-0.4);
\draw (-1.55,5.5) .. controls (1.1,-0.35) and (1.1,-0.35) .. (1.1,-0.35);
\draw[dashed] (-3.85,4.6) .. controls (0.9,4.55) and (0.9,4.55) .. (0.9,4.55);
\node at (-4,5) {$v_l$};
\node at (1,5) {$v_r$};
\end{tikzpicture}
 \caption{\small{An example of $\Gamma_1$ (or $\Gamma_2$) in Step 2 of the proof of Proposition \ref{connectivity of graph of triangulation} (with only one bottom edge drawn), whose top subgraph (shown in solid lines) is a fan and bottom graph (shown in dashed lines) contains an edge connecting $v_l$ and $v_r$.}}
       \label{fig:fan}
       \end{figure}

 Let $(\Gamma_1^b)_{v_0}$ (resp. $(\Gamma_2^b)_{v_0}$) denote the graph obtained from $\Gamma_1^b$ (resp. $\Gamma_2^b$) by removing the vertex $v_0$ and its adjacent edges. Then $(\Gamma_1^b)_{v_0}$ and  $(\Gamma_2^b)_{v_0}$ can be identified with two triangulations of a convex polygon with $N-1$ vertices. Note that $N-1\geq 4$. By Lemma \ref{lem:connectivity of triangulations of polygons}, $(\Gamma_1^b)_{v_0}$ and $(\Gamma_2^b)_{v_0}$ differ by a finite number of flips, say $(\Gamma_1^b)_{v_0}:=G_0, G_1, ..., G_m:=(\Gamma_2^b)_{v_0}$. It is clear that the union of $G_i$ and the two equatorial edges adjacent to $v_0$ becomes again a triangulation, say $G'_i$, of the closure the bottom of $\Sigma_{0,N}$ for $i=0,...,m$. Moreover, $G'_i$ and $G'_{i+1}$ differ by a flip for $i=0,...,m-1$. We consider the triangulations $T_0\cup G'_i$ of $\Sigma_{0,N}$ for $i=1,..., m$. Note that the endpoints of each bottom edge of $G'_i$ are distinct from $v_0$ for $i=0,...,m$. Using Lemma \ref{lem:criterion for admissible} again, $T_0\cup G'_i$ is admissible for $i=1,...,m-1$. This gives a path in $G_T(\Sigma_{0,N})$ connecting $\Gamma_1=T_0\cup G'_0$ and $\Gamma_2=T_0\cup G'_m$. Step 2 is complete.

 \textbf{Step 3.} Given any two admissible triangulations $\Gamma_1,\Gamma_2\in\cG_N$, we first connect $\Gamma_1$ (resp. $\Gamma_2$) to an admissible triangulation $\Gamma'_1$ (resp. $\Gamma'_2$) whose top subgraph is a given fan $T_0$ (i.e. $(\Gamma'_1)^t=(\Gamma'_2)^t=T_0$) by a finite number of flips. By Step 1, this is doable. Then we connect $\Gamma'_1$ and $\Gamma'_2$ by a finite number of flips. By Step 2, this is also doable. Therefore, $\Gamma_1$ can be connected to $\Gamma_2$ by a finite number of flips. This concludes the proof.
\end{proof}

Using the connectedness of $G_T(\Sigma_{0,N})$, we prove the following:

\begin{claim}\label{clm:connectedness of A}
For $N\geq 5$, the space $\cA_N$ is connected.
\end{claim}

\begin{proof}
Note that the space $\cA_{\Gamma}$ is non-empty and contractible for all $\Gamma\in\Graph(\Sigma_{0,N},\gamma)$ with $N\geq 4$ (see Claim \ref{clm:topology of A_Gamma}). Assume that $N\geq 5$. We prove the claim by the following three steps:

\textbf{Step 1.} We show that for any two graphs $\Gamma_1, \Gamma_2\in\cG_N$ which differ by a flip, the space $\cA_{\Gamma_1}$ is connected to $\cA_{\Gamma_2}$ in $\cA_N$. 
Note that $\Gamma_1$ and $\Gamma_2$ are both admissible triangulations of $\Sigma_{0,N}$, the graph $\Gamma_1\cap\Gamma_2$ contains the equator $\gamma$. We claim that $\Gamma_1\cap\Gamma_2\in\Graph(\Sigma_{0,N},\gamma)$. It suffices to show that $\Gamma_1\cap\Gamma_2$ is still 3-connected. To see this, we denote by $q$ the quadrilateral shared by both $\Gamma_1$ and $\Gamma_2$ in which an edge $e_1$ of $\Gamma_1$ (as one diagonal of $q$) is flipped to an edge $e_2$ of $\Gamma_2$ (as the other diagonal of $q$). Let $\Gamma_{v_1v_2}$  denote the graph obtained from $\Gamma$ by removing the vertices $v_1$, $v_2$ and their adjacent edges. We just need to show that $(\Gamma_1\cap\Gamma_2)_{v_1v_2}$ is connected for any two vertices $v_1\not=v_2$ on the equator $\gamma$. We divide the discussion into the following two cases:

\begin{enumerate}
 \item If at least one of the vertices $v_1$, $v_2$ is a vertex of $q$, without loss of generality, we assume that $v_1$ is a vertex of $q$. Then $v_1$ is an endpoint of either the edge $e_1$ of $\Gamma_1$ or the edge $e_2$ of $\Gamma_2$. Without loss of generality again, we assume that $v_1$ is an endpoint of $e_1$. In this case, we have that $(\Gamma_1\cap\Gamma_2)_{v_1v_2}=(\Gamma_1)_{v_1v_2}$ (noting that $\Gamma_1\cap\Gamma_2$ is obtained from $\Gamma_1$ by removing the edge $e_1$). Combined with the assumption that $\Gamma_1$ is 3-connected, $(\Gamma_1)_{v_1v_2}=(\Gamma_1\cap\Gamma_2)_{v_1v_2}$ is therefore connected.

\item If neither $v_1$ nor $v_2$ is a vertex of $q$ (note that this case occurs only when $N\geq 6$), $(\Gamma_1\cap\Gamma_2)_{v_1v_2}=(\Gamma_1)_{v_1v_2}\setminus \{e_1\}$. Note that $e_1$ is a diagonal of $q$, one can connect the endpoints of $e_1$ by two consecutive edges of $q$ on the same side of $e_1$. There are two choices for these two edges and both are contained in $(\Gamma_1)_{v_1v_2}\setminus \{e_1\}$. Combined with the assumption that $\Gamma_1$ is 3-connected,  $(\Gamma_1)_{v_1v_2}$ is connected and $(\Gamma_1)_{v_1v_2}\setminus \{e_1\}=(\Gamma_1\cap\Gamma_2)_{v_1v_2}$ is also connected.
\end{enumerate}

This concludes that $\Gamma_1\cap\Gamma_2\in\Graph(\Sigma_{0,N},\gamma)$.
Step 1 follows by the topology of $\cA_N$ (see subsection \ref{subsec:def}), which says that the spaces $\cA_{\Gamma_1}$ and $\cA_{\Gamma_2}$ are glued together along parts of their boundaries, which is identified with the closure of $\cA_{\Gamma_1\cap\Gamma_2}$ in $\cA_N$, as long as $\Gamma_1$, $\Gamma_2$ and $\Gamma_1\cap\Gamma_2$ belong to $\Graph(\Sigma_{0,N},\gamma)$.

\textbf{Step 2.} We show that for any two graphs in $\cG_N$, one can be obtained from the other by a finite number of flips. This follows directly from Proposition \ref{connectivity of graph of triangulation} that any two graphs $\Gamma_1,\Gamma_2\in\cG_N$ can be connected by a path in $G_T(\Sigma_{0,N})$, of which every two adjacent graphs differ by a flip.

\textbf{Step 3.} It remains to show that for any graph $\Gamma\in \Graph(\Sigma_{0,N},\gamma)\setminus \cG_N$, $\cA_\Gamma$ is connected to $\cA_{\Gamma_0}$ for some triangulation $\Gamma_0\in\cG_N$ of which $\Gamma$ is a subgraph (note that such a triangulation $\Gamma_0$ always exists by triangulating the non-triangular faces of $\Gamma$ in a way such that no top edge and bottom edge share common two endpoints). Similar to Step 1, the topology of $\cA_N$ states that $\cA_{\Gamma}$ is identified to a subset of $\partial\cA_{\Gamma_{0}}$. This concludes the proof of Step 3.

Combining the above three steps and Claim \ref{clm:topology of A_Gamma}, this claim follows.
\end{proof}

\begin{proof}[\textbf{Proof of Proposition \ref{prop:topology of A}}]
Let $\theta\in\cA_N$. Then $\theta\in\cA_{\Gamma}$ for some $\Gamma\in\Graph(\Sigma_{0,N},\gamma)$. Note that $\Gamma$ is a subgraph of some admissible triangulation $\Gamma_0\in\cG_N$. In the case that $\Gamma=\Gamma_0$, $\cA_{\Gamma}$ is equal to $\cA_{\Gamma_0}$. In the case that $\Gamma$ is a proper subgraph of $\Gamma_0$, $\cA_{\Gamma}$ is a subset of $\partial\cA_{\Gamma_0}$ (by the topology of $\cA_N$, see subsection \ref{subsec:def}). By Claim \ref{clm:topology of A_Gamma}, $\cA_{\Gamma_0}$ is a (real) smooth manifold which achieves the highest dimension $3N-6$ among all the subspaces of $\cA_N$ in the form of $\cA_{\Gamma'}$ with $\Gamma'\in\Graph(\Sigma_{0,N},\gamma)$. Therefore, $\theta$ has a neighbourhood in $\cA_N$ of (real) dimension $3N-6$. This concludes that $\cA_N$ has real dimension $3N-6$. Combined with Claim \ref{clm:connectedness of A}, Proposition \ref{prop:topology of A} follows.
%\footnote{QY: added some explanation, as asked by the referee (no. 61), is it ok?}
\end{proof}

\subsection{The dimensions of $\cP_N$ and $\cP^{(\epsilon_1,\ldots,\epsilon)}$}
\label{sec:dimensions of P}
Recall that $\cP_N$ is the disjoint union of $\cP_{\Gamma}$ over all $\Gamma\in\Graph(\Sigma_{0,N},\gamma)$, where $\cP_{\Gamma}$ is the the space of marked non-degenerate hyperideal AdS polyhedra with $N$ vertices and 1-skeleton $\Gamma$ (up to isometries in $\Isom_0(\AdS)$). Recall that $\cP^{(\epsilon_1,\ldots,\epsilon_N)}$ is the space of marked non-degenerate and degenerate hyperideal AdS polyhedra with $N$ vertices and vertex signature $(\epsilon_1,\ldots,\epsilon_N)$, up to isometries in $\Isom_0(\AdS)$. 
We first prove the following.

\begin{claim}\label{clm:dimension of P_Gamma}
For each $\Gamma\in\Graph(\Sigma_{0,N},\gamma)$, the space $\cP_{\Gamma}$ is a smooth manifoldof (real) dimension $|E(\Gamma)|$. In particular, for each $\Gamma\in\cG_N$, the space $\cP_{\Gamma}$ realizes the highest (real) dimension $3N-6$.
\end{claim}

\begin{proof}
Let $P\in\cP_{\Gamma}$. Set $E:=E(\Gamma)$ and $V=V(\Gamma)$. As before, we fix a representative (denoted by $\tilde{P}$) for $P$ and fix an affine chart $\R^3$.  Let $\tilde{\cP}_{\Gamma}$ denote the subset of $\tilde{\cP}_N$ whose polyhedra have 1-skeleton $\Gamma$.  We consider the map $\phi: \tilde{\cP}_{\Gamma}\rightarrow(\R^3)^V$ which assigns to an element in $\tilde{\cP}_{\Gamma}$ its vertex set in $\R^3$.

We first consider the dimension of $\tilde{\cP}_{\Gamma}$. A neighbourhood $U_{\tilde{P}}$ of $\tilde{P}$ in $\tilde{\cP}_{\Gamma}$ is then identified with the intersection of $\phi(\tilde{\cP}_{\Gamma})$ with a neighbourhood, say $U_{\phi(\tilde{P})}$, in $(\R^3)^V$ of $\phi(\tilde{P})$.

If $\Gamma\in\cG_N$, we claim that $\dim\tilde{\cP}_{\Gamma}=3N=|E|+6$. Indeed, by the convexity of $\tilde{P}$, any two faces of $\tilde{P}$ does not lie on the same plane. Note that each face of $\tilde{P}$ has three vertices and they uniquely determine a plane. For each vertex $v_i\in\R^3$ of $\tilde{P}$, we can take a sufficiently small neighbourhood $U_i$ of $v_i$ in $\R^3$ such that for any vertex $v_i^t\in U_i\setminus \AdS$, the polyhedron with vertices $v^t_1$, $v^t_2$, \dots, $v^t_N$ is hyperideal and has the same combinatorics $\Gamma$ as $\tilde{P}$. Note that the number of the edges of any triangulation in $\cG_N$ is $3N-6$. The claim follows.

If $\Gamma\not\in\cG_N$, we can add some new edges (collected by the set $E'$) to $\Gamma$ (without adding new vertices) so that the new 1-skeleton (say $\Gamma'$) obtained by taking the union of the edges in $E\cup E'$ belongs to $\cG_N$. Assume $\tilde{P}$ has $m$ non-triangular faces. Each non-triangular face $\tilde{f}_i$ of $\tilde{P}$ has $k_i\geq 4$ vertices. Note that $\tilde{f}_i$ inherits a triangulation from $\Gamma'$ and is decomposed into $k_i-2$ triangles, denoted by $\Delta^i_1$, $\Delta^i_2$, \dots, $\Delta^i_{k_i-2}$. Let $A^i_1$, $B^i_1$, $C^i_1$ denote the three vertices of $\Delta^i_1$. Let $D^i_2$, \dots, $D^i_{k_i-2}$ denote the remaining vertices of $\tilde{f}_i$ such that $D^i_j\in\Delta^i_j$. By assumption, for each $2\leq j\leq k_i-2$, $\Delta^i_1$ and $\Delta^i_j$ lie on the same plane, which means that the vectors $\overrightarrow{A^i_1B^i_1}$, $\overrightarrow{A^i_1C^i_1}$ and $\overrightarrow{A^i_1D^i_j}$ are linearly dependent for $j=2,\dots, k_i-2$. For each $1\leq i\leq m$, this is equivalent to the following condition holds for $j=2,\dots, k_i-2$ :
\begin{equation}\label{eq:planarity}
 \left|
   \begin{array}[h]{cccc}
      b^i_{1,1}- a^i_{1,1} &  b^i_{1,2}- a^i_{1,2} & b^i_{1,3}- a^i_{1,3} \\
       c^i_{1,1}- a^i_{1,1} &  c^i_{1,2}- a^i_{1,2} & c^i_{1,3}- a^i_{1,3}  \\
     d^i_{j,1}- a^i_{1,1} &  d^i_{j,2}- a^i_{1,2} & d^i_{j,3}- a^i_{1,3}  \\
   \end{array}
 \right|=0~,
 \end{equation}
with $A^i_1=(a^i_{1,1},a^i_{1,2},a^i_{1,3})$,  $B^i_1=(b^i_{1,1},b^i_{1,2},b^i_{1,3})$, $C^i_1=(c^i_{1,1},c^i_{1,2},c^i_{1,3})$,
$D^i_j=(d^i_{j,1},d^i_{j,2},d^i_{j,3})$. For each $1\leq i\leq m$, the planarity condition for the face $\tilde{f}_i$ of $\tilde{P}$ gives a system of $k_i-3$ equations with $3k_i$ variables. Moreover, in the $j$-th equation, the variables are exactly the coordinates in $\R^3$ of the vertices $A^i_1$, $B^i_1$ and $C^i_1$ and $D^i_j$. As a consequence, the planarity condition for non-triangular face $\tilde{f}_i$ over $i=1,\dots, m$ gives a system of equations with the total number
$$\sum\limits^m\limits_{i=1}(k_i-3)=|E'|~.$$
For each vertex $v_i\in\R^3$ of $\tilde{P}$, we can take a sufficiently small neighbourhood $U_i$ of $v_i$ in $\R^3$ such that for any vertex $v_i^t\in U_i\setminus \AdS$ satisfying the above planarity conditions, the polyhedron with vertices $v^t_1$, $v^t_2$, \dots, $v^t_N$ is hyperideal and has the same combinatorics $\Gamma$ as $\tilde{P}$.  Note that these $|E'|$ conditions are independent, this deformation of $\tilde{P}$ has in total $3N-|E'|=3N-(3N-6-|E|)=|E|+6$ degrees of freedom. Those equations are linearly independent -- this is a purely projective statement, equivalent to the fact that the realization space of 3-dimensional polytope has the expected dimension, and is attributed to Legendre \cite[Note VIII]{legendre} and Steinitz \cite[\S 69]{steinitz-rademacher}.

As a consequence, for each $\tilde{P}\in\tilde{\cP}_{\Gamma}$, a small neighbourhood in $\tilde{\cP}_{\Gamma}$ of $\tilde{P}$ is identified with the set of the points in a small neighbourhood $U_{\phi(\tilde{P})}$ in $(\R^3)^V$ of $\phi(\tilde{P})$ whose coordinates satisfy the above planarity conditions: a system of $3N-6-|E|$ equations (note that it is empty if $\Gamma\in\cG_N$), which is exactly the intersection of $U_{\phi(\tilde{P})}$ with an $(|E|+6)$-dimensional subspace in $(\R^3)^V$. Note that the planarity conditions are the same for the polyhedra with 1-skeleton $\Gamma$, the transition functions are smooth, inherits from the smooth structure of $(\R^3)^{V}$ (here we define the maps from subsets of $(\R^3)^{V}$ to be smooth if they locally admit extensions to smooth functions defined on open domains). As a result, $\tilde{\cP}_{\Gamma}$ is a smooth manifold of dimension $|E|+6$.

Recall that $\cP_{\Gamma}=\tilde{\cP}_{\Gamma}/\Isom_0(\AdS)$. The polyhedra are identified up to elements of $\Isom_0(\AdS)=\rm{PSL}_2(\R)\times\rm{PSL}_2(\R)$ (which has dimension 6). 
Combined with the above result that $\tilde{\cP}_{\Gamma}$ is a smooth manifold of dimension $|E|+6$ and the fact that Lie group action of $\Isom_0(\AdS)$ on $\tilde{\cP}_{\Gamma}$ is smooth, free and proper, it follows from the quotient manifold theorem that $\cP_{\Gamma}$ has dimension $|E|+6-6=|E|$ and moreover, it has a unique smooth structure such that the natural projection $\pi:\tilde{\cP}_{\Gamma}\rightarrow\tilde{\cP}_{\Gamma}/\Isom_0(\AdS)=\cP_{\Gamma}$ is a submersion.
\end{proof}

\begin{proof}[\textbf{Proof of Proposition \ref{prop:dimension of P}}] 
  Let $P\in\cP_N$. Then $P\in\cP_{\Gamma}$ for some $\Gamma\in\Graph(\Sigma_{0,N},\gamma)$. Note that $\Gamma$ is a subgraph of some admissible triangulation $\Gamma_0\in\cG_N$. In the case that $\Gamma=\Gamma_0$, $\cP_{\Gamma}$ is equal to $\cP_{\Gamma_0}$. In the case that $\Gamma$ is a proper subgraph of $\Gamma_0$, $\cP_{\Gamma}$ is a subset of $\partial\cP_{\Gamma_0}$ (by the topology of $\cP_N$, see subsection \ref{subsec:def}). By Claim \ref{clm:dimension of P_Gamma}, $\cP_{\Gamma_0}$ is a (real)   smooth manifold  which achieves the    highest dimension $3N-6$ among all the subspaces of $\cP_N$ in the form of $\cP_{\Gamma'}$ for some $\Gamma'\in\Graph(\Sigma_{0,N},\gamma)$. Therefore, $P$ has a neighbourhood in $\cP_N$ of (real) dimension $3N-6$, in which every polyhedron $P'$ is a smooth deformation of $P$ (determined by a tangent vector of $\phi(\tilde{\cP}_N)\subset(\R^3)^{V}$ at $\phi(\tilde{P})$ for a representative $\tilde{P}$ of $P$) with its 1-skeleton $\Gamma'\supseteq\Gamma$ and some planarity conditions (as discussed in the proof of Claim \ref{clm:dimension of P_Gamma}) removed on the faces of $\Gamma$ which contain edges of $\Gamma'\setminus\Gamma$ (if non-empty). This concludes Proposition \ref{prop:dimension of P} that $\cP_N$ is a smooth manifold of real dimension $3N-6$ for $N\geq 4$.
\end{proof}

\begin{proof}[\textbf{Proof of Proposition \ref{prop:dimension of bP}}]
Let $\tilde{\cP}^{(\epsilon_1,\ldots,\epsilon_N)}$ denote the subspace of $\tilde{\cP}_N\cup\widetilde{\polyg}_N$ whose polyhedra have vertex signature $(\epsilon_1,\ldots,\epsilon_N)$. We fix an affine chart $\R^3$ and identify each marked vertex $v_i$ (with signature $\epsilon_i$) of a polyhedron $\tilde{P}\in\tilde{\cP}^{(\epsilon_1,\ldots,\epsilon_N)}$ with its coordinate in $\R^3$.

We claim that $\dim \tilde{\cP}^{(\epsilon_1,\ldots,\epsilon_N)}=2N+\epsilon_1+\ldots+\epsilon_N$. Indeed, 
for each $\tilde{P}\in\tilde{\cP}^{(\epsilon_1,\ldots,\epsilon_N)}$ and for each vertex $v_i\in \R^3$ of $\tilde{P}$, we can take a sufficiently small neighbourhood $U_i$ of $v_i$ in $\R^3$ such that for any vertex $v_i^t\in U_i\setminus\overline{\AdS}$ (resp. $v_i^t\in U_i\cap\partial\AdS$) if $\epsilon_i=1$ (resp. $\epsilon_i=0$), the convex hull of $v_1^t, \ldots, v_N^t$ is a hyperideal AdS polyhedron $\tilde{P}_t$ with vertex signature $(\epsilon_1,\ldots,\epsilon_N)$ and is still an element of $\tilde{\cP}^{(\epsilon_1,\ldots,\epsilon_N)}$. Therefore, each ideal vertex of $\tilde{P}$ has two degrees of freedom and each strictly hyperideal vertex of $\tilde{P}$ has three degrees of freedom (which are independent of each other) during the deformation in $\tilde{\cP}^{(\epsilon_1,\ldots,\epsilon_N)}$. The claim follows. Moreover, $\tilde{\cP}^{(\epsilon_1,\ldots,\epsilon_N)}$ is a smooth manifold, as discussed for $\tilde{\cP}_{\Gamma}$ in the proof of Claim \ref{clm:dimension of P_Gamma}.

Since $\cP^{(\epsilon_1,\ldots,\epsilon_N)}=\tilde{\cP}^{(\epsilon_1,\ldots,\epsilon_N)}/\Isom_0(\AdS)$, $\cP^{(\epsilon_1,\ldots,\epsilon_N)}$ is a smooth manifold of real dimension $2N-6+\epsilon_1+\ldots+\epsilon_N$ (see the corresponding proof for $\cP_{\Gamma}$ in Claim \ref{clm:dimension of P_Gamma}). We conclude Proposition \ref{prop:dimension of bP}.
\end{proof}

    \section{Rigidity}\label{sec:rigidity}

In this section we prove Lemma \ref{lem:local immersion_angles} and Lemma \ref{lem:local immersion_metrics}. 
We then proceed to give the proofs of Theorem \ref{thm:homeo_angles} and Theorem \ref{thm:homeo_metrics}.

\subsection{The infinitesimal Pogorelov map}

We first recall the definition of the infinitesimal Pogorelov map and its key properties (see \cite[Definition 5.6, Proposition 5.7]{shu} and \cite[Section 3.3]{fillastre3} in particular for the related proofs, and also \cite{izmestiev:projective,iie,cpt} for relevant references). As an adaption of the infinitesimal version of a remarkable map introduced by Pogorelov \cite{Po} to solve rigidity questions in spaces of constant curvature, this turns out to be an important tool that translates infinitesimal rigidity questions for polyhedra (or submanifolds) in constant curvature pseudo-Riemannian space-forms to those in flat spaces (see e.g. \cite{DMS,weakly}).

Choose an affine chart $x_4=1$ and denote by $H_{\infty}$ the projective plane in $\HS$ which contains the totally geodesic space-like hyperplane in $\AdS$ at infinity (with respect to the chosen affine chart). Then the dual of $H_{\infty}$, defined as $x_0:=H_{\infty}^{\perp}=\{[y]\in\RP^3: \langle y,x\rangle_{2,2}=0$ for all $x\in H_{\infty}\}$ is contained in $\AdS$ and it is exactly the origin in this affine chart $\R^3$. Let $\R^{2,1}$ denote the 3-dimensional Minkowski space, which is the vector space $\R^3$ endowed with the metric induced from the bilinear form $\langle x, y\rangle_{2,1}=x_1y_1+x_2y_2-x_3y_3$. Recall that an affine chart $\R^3$ can be equipped with the Euclidean metric and the Minkowski metric.

Let $C(x_0)$ denote the union of all light-like geodesic rays in $\AdS$ starting from $x_0$. We call it the \emph{light cone} at $x_0$. Let $U=\HS\setminus H_{\infty}$. 
Then $U$ is the intersection of $\HS$ with the aforementioned affine chart $\R^3$ of $\RP^3$. Let $\iota: U\rightarrow \R^{2,1}$ be an inclusion of $U$ into $\R^{2,1}$, which sends $x_0$ to the origin $0$ of $\R^{2,1}$.
It is clear that $\iota$ is an isometry at the tangent space to $x_0$ and it sends $C(x_0)$ to the light cone at $0$ of $\mathbb{R}^{2,1}$.
For any $x\in U\setminus C(x_0)$ and any vector $v\in T_{x}U$, write $v=v_r+v_{\perp}$, where $v_r$ is tangent to the radial geodesic passing through $x_0$ and $x$, and $v_{\perp}$ is orthogonal to this radial geodesic. Let $\Upsilon: T(U\setminus C(x_0))\rightarrow T\R^{2,1}$ be the bundle map over the inclusion $\iota$, which is defined in the following way:
$$ \Upsilon(v)=d\iota(v) $$
for all $v\in T_{x_0}U$, and
\begin{equation}\label{eq:def_infPogorelov}
\Upsilon(v)=\sqrt{\frac{\langle\hat{x},\hat{x}\rangle_{HS}}{\langle d\iota (\hat{x}),d\iota(\hat{x})\rangle_{2,1}}}d\iota (v_r)+ d\iota (v_{\perp})~,
\end{equation}
for all $v\in T_{x}U$ with $x\in U\setminus C(x_0)$, where $\hat{x}$ is a non-zero radial vector,
 and  $\langle \cdot, \cdot\rangle_{HS}$ is the HS scalar product.
 It is worth mentioning that $\iota$ sends a radial geodesic of $U$ (passing through $x_0$) to a radial geodesic in $\R^{2,1}$
(passing through the origin $0$) of the same type (space-like, time-like and light-like), with the length measure along the (non-lightlike) geodesic changed.
Moreover, each radial geodesic passing through $x\in U\setminus C(x_0)$ is either spacelike or timelike.
Therefore, the quantity under the square-root in \eqref{eq:def_infPogorelov}  is always positive.

By an adaption of the proof in \cite[Lemma 11]{fillastre3}, we obtain the following property of the bundle map $\Upsilon$.

\begin{lemma}\label{lm:Killing HS-Min}
Let $Z$ be a vector field on $U\setminus C(x_0)$. Then $Z$ is a Killing vector field (for the HS metric) if and only if $\Upsilon(Z)$ is a Killing vector field for the Minkowski metric on $\R^{2,1}$.
\end{lemma}

Indeed, it follows from this lemma that the bundle map $\Upsilon$, which so far is defined over $U\setminus C(x_0)$, has a continuous extension for all of $U$. We call this extended bundle map, still denoted by $\Upsilon$, an \emph{infinitesimal Pogorelov map}.

Now we translate the infinitesimal rigidity questions in the Minkowski 3-space $\R^{2,1}$ to those in the Euclidean 3-space $\E^3$, by considering the bundle map over the identity:
\begin{equation*}
\Xi: T\R^{2,1}\rightarrow T\E^3~,
\end{equation*}
which simply changes the sign of the last coordinate of a given tangent vector. It sends a Killing vector field on $\R^{2,1}$ to a Killing vector field on $\E^3$. Denote $\Pi=\Xi\circ\Upsilon: TU\rightarrow T\E^3$, which is also called an infinitesimal Pogorelov map. Then $\Pi$ is a bundle map over the inclusion $\tau: U\hookrightarrow \E^3$ and it has the following property (see e.g. \cite[Proposition 5.7]{shu}):

\begin{lemma}\label{lm:Killing HS-Eur}
Let $Z$ be a vector field on $U$. Then $Z$ is a Killing vector field (for the HS metric) if and only if $\Pi(Z)$ is a Killing vector field for the Euclidean metric on $\E^{3}$.
\end{lemma}

\subsection{The signed length} \label{subsec:signed length}
We now introduce a geometric quantity which helps to study the infinitesimal rigidity questions. Let $c:[0,1]\rightarrow\HS$ be a geodesic segment contained in an affine chart with endpoints disjoint from $\partial\AdS$. We define the \textit{signed length} of $c$, denoted by $l_{HS}(c)$, in the following way.

If $c$ is contained in $\AdS$ or ${\AdS}^*$, we define
\begin{equation*}
l_{HS}(c):={\sgn}\big(\langle \dot{c}(0),\dot{c}(0)\rangle_{HS}\big)\int_0^1\sqrt{|\langle \dot{c}(t),\dot{c}(t)\rangle_{HS}|}~dt~.
\end{equation*}

If $c$ satisfies that $c(0)\in\AdS$ and $c(1)\in{\AdS}^*$ (note that in this case $c\cap\AdS$ is spacelike), we define
\begin{equation}\label{eq:signed length 1}
l_{HS}(c):=\varepsilon_c \cdot l_{HS}(\,[c(0),p_c]\,)~,
\end{equation}
where $p_c$ is the intersection of the complete geodesic containing $c$ with the dual plane $c(1)^{\perp}$, $[c(0),p_c]$ is the geodesic segment connecting $c(0)$ and $p_c$, and $\varepsilon_c$ is defined to be $1$ (resp. $-1$) if $c(1)^{\perp}\cap c\not=\emptyset$ (resp. $c(1)^{\perp}\cap c=\emptyset$). 

If $c$ passes through $\AdS$ with both endpoints contained in ${\AdS}^*$, we define
\begin{equation}\label{eq:signed length 2}
l_{HS}(c):=l_{HS}([c(0),c(s)])+l_{HS}([c(s),c(1)])~,
\end{equation}
with $c(s)\in\AdS$. By the definition \eqref{eq:signed length 1} and the fact that the dual planes $c(0)^{\perp}$ and $c(1)^{\perp}$ are disjoint from each other in the chosen affine chart (with $c(0)^{\perp}$ lying between $c(0)$ and $c(1)^{\perp}$), we have $l_{HS}(c)=%i\pi+
l_{HS}([c(t_0),c(t_1)])$, where $c(t_i)$ is the intersection of $c$ with the dual plane $c(i)^{\perp}$. This implies that the above definition \eqref{eq:signed length 2} is independent of the choice of the partition of $c$ as long as $c(s)\in\AdS$. For an edge $e$ of a hyperideal AdS polyhedron $P$, the signed length of the dual edge $e^*$ of $P^*$ is equal to the dihedral angle at $e$ of $P$. (Note that the above signed lengths can be defined in a unified formulation, in terms of the Hilbert distance with respect to the boundary $\partial\AdS$ \cite[Section 2]{shu}. We define it in the above way to adapt to the convention of dihedral angles here.)

\subsection{Rigidity with respect to induced metrics} \label{sec:rigidity_metric}

This part is dedicated to proving Lemma \ref{lem:local immersion_metrics}, which states that the restriction to $\cP^{(\epsilon_1, \ldots, \epsilon_N)}$ of the map $\Phi: \cP_N\cup\polyg_N\rightarrow \mathcal{T}_{0,N}$ is a local immersion for each $(\epsilon_1,\dots,\epsilon_N)\in\{0,1\}^N$. (Note that $\Phi$ is differentiable on $\cP^{(\epsilon_1, \ldots, \epsilon_N)}$, since the induced metric on the intersection of $\AdS$ with the boundary of a polyhedron $P\in\cP^{(\epsilon_1, \ldots, \epsilon_N)}$ depends smoothly on the positions of its vertices in $\R^3$ when its deformation polyhedron $P_t$ remains in $\cP^{(\epsilon_1, \ldots, \epsilon_N)}$.)

Fix an affine chart of $\RP^3$ and let $P$ be a polyhedron in $\HS$ contained in that affine chart (identified with $\R^3$). 
Since $P$ is the convex hull in $\R^3$ of its vertices, $P$ is uniquely determined by its vertices in $\R^3$. Let $V$ denote the vertex set of $P$ and let $\Theta(P)$ denote the coordinate in $\R^3$ of its vertex set. An \emph{infinitesimal deformation} of $P$, denoted by $\dot{P}$, is the assignment to $P$ a tangent vector in $T(\R^3)^V$ at $\Theta(P)$.
We say that $\dot{P}$ is a \emph{first-order isometric deformation} of a polyhedron $P$ if it does not change the HS structure induced on $\partial P$ at first order or, equivalently, if there is a triangulation of the polyhedral surface $\partial P$ which is given by a triangulation of each face (without adding new vertices) of $P$ and a Killing vector field (for the HS metric) on each face of this triangulation, such that two Killing vector fields on two adjacent triangles coincide on the common vertices and edges,  and moreover the restriction of the Killing vector field to a vertex is exactly the restriction of $\dot{P}$ to the corresponding vertex.

 Assume that an infinitesimal deformation $\dot{P}$ of $P$ is given by a smooth one-parameter family $(P_t)_{t\in[0,\epsilon)}$ of polyhedra in $\HS$, $\dot{P}$ is trivial if $P_t=g_t(P)$ for a smooth one-parameter family $(g_t)_{t\in[0,\epsilon)}$ of isometries in $\Isom_0(\AdS)$ with $g_0=\rm{id}$. The family $(g_t)$ determines a global Killing field of $\HS$, whose restriction to the vertex set of $P$ is exactly $\dot{P}$. Equivalently, we say $\dot{P}$ is \emph{trivial} if it is the restriction to the vertex set of $P$ of a global Killing vector field of $\HS$.

To show Lemma \ref{lem:local immersion_metrics}, it suffices to consider the infinitesimal rigidity question for a (possibly degenerate) hyperideal AdS polyhedron $P$ with respect to the induced metric, which asks whether any first-order isometric deformation of $P$ is trivial.

The bundle map $\Pi$ is a key tool to solve the infinitesimal rigidity problem in our case, together with the following classical theorem of Alexandrov \cite{alex}, which provides a strong version of the infinitesimal rigidity of convex Euclidean polyhedra (see e.g. \cite{DMS,weakly}).

\begin{theorem}\label{thm:Alexandrov}
Let $P$ be a convex polyhedron in $\E^3$ and let $V$ be an infinitesimal deformation of $P$ which does not change the induced metric on $\partial P$ at first order (possibly changing the combinatorics), then $V$ is the restriction to the vertices of $P$ of a global Euclidean Killing vector field.
\end{theorem}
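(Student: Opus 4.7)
The approach is the classical Alexandrov--Cauchy infinitesimal rigidity argument, which reduces the problem to a combinatorial sign-counting lemma on the 1-skeleton of $P$. The plan is to first remove the combinatorial freedom by triangulation, then extract a ``rotation cocycle'' on edges, and finally invoke Cauchy's sign lemma.

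First, I would eliminate the combinatorial freedom. Since the induced metric on each face of $P$ is preserved at first order, the length of every diagonal drawn inside a face is also preserved at first order. Triangulate each face of $P$ by non-crossing diagonals (adding no new vertices) to obtain a simplicial convex polyhedron $P'$ with the same vertex set as $P$. The deformation $V$ now preserves every edge length of $P'$ at first order; proving that $V$ is the restriction of a global Killing field on $P'$ is equivalent to the required statement on $P$.

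Second, extract an edge cocycle. For each triangular face $f$ of $P'$, preservation of the three edge lengths at first order forces $V|_f$ to coincide with the restriction to the vertices of $f$ of some global Euclidean Killing vector field $K_f$ of $\E^3$ (such an extension exists, though it is not unique). For two triangles $f, f'$ sharing an edge $e$, the difference $K_f - K_{f'}$ is a Killing field of $\E^3$ that vanishes at both endpoints of $e$; hence it is an infinitesimal rotation about the line through $e$ with some angular velocity $\omega_e \in \R$. Walking around a vertex $v$ through the cycle of incident triangles, the composition of these infinitesimal rotations must return to the identity, since $V(v)$ is single-valued. This is an equilibrium equation at each vertex, exhibiting $(\omega_e)_{e \in E(P')}$ as an infinitesimal self-stress on the 1-skeleton of $P'$.

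Third, I invoke the infinitesimal Cauchy lemma: a strictly convex polyhedron in $\E^3$ admits no nonzero self-stress. The classical proof partitions the edges by the sign of $\omega_e$; around every vertex carrying a nontrivial stress, the convexity of $P$ forces at least four sign changes in the cyclic sequence of incident edges; a global count of sign changes along vertices versus faces then contradicts Euler's formula on $S^2$. Once all $\omega_e$ vanish, all the $K_f$ coincide with a single global Killing field $K$ of $\E^3$, and $V$ equals the restriction of $K$ to every vertex of $P$, as required.

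The main obstacle, and the subtle point requiring Alexandrov's refinement of Cauchy, is the sign-counting step in the triangulated setting: edges of $P'$ that are diagonals of a face of $P$ have dihedral angle equal to $\pi$, so the convexity is only \emph{weak} along them, and the sign-change argument must be carried out over \emph{all} edges of $P'$, degenerate ones included. Handling the unsigned case $\omega_e = 0$ properly, and verifying that convexity of $P$ still forces enough sign changes around each vertex of $P'$, is where the real combinatorial work lies and is precisely the content of Alexandrov's strengthening of Cauchy's lemma that the statement cites.
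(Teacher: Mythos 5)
This theorem is not proved in the paper: it is stated and attributed to Alexandrov, with a citation to his book (there called \cite{alex}), and used as a black box in the proof of Proposition~\ref{prop:infPogorelov}. Your sketch is, in outline, exactly the argument from the cited source: triangulate the faces without adding vertices (using that diagonal lengths are preserved because the face metrics are preserved), pass to the rotation cocycle $\omega_e$ given by the difference of face-wise Killing extensions across each edge, note the equilibrium/closure condition at each vertex, and finish with the Cauchy--Dehn sign-counting argument against Euler's formula. You also correctly identify the genuine subtlety: after triangulating, the polyhedron is only weakly convex along the added diagonals, so the vertex sign-count (``zero everywhere or at least four sign changes'') must be established for links that are convex but not strictly convex spherical polygons, which is precisely Alexandrov's refinement of Cauchy's lemma. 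Two small caveats worth flagging. First, the closure condition you derive reads $\sum_i \omega_{e_i}\hat e_i = 0$ with $\hat e_i$ unit vectors; to call $(\omega_e)$ a self-stress in the usual rigidity-theory sense one should rescale by edge lengths ($\omega_e' = \omega_e/|e|$), which changes nothing about signs but is worth saying. Second, your ``third step'' is phrased as the strictly convex Cauchy lemma and then corrected in the closing paragraph; as written the argument silently relies on the strengthened version throughout, and that strengthening, not the reduction to a stress, is where the actual work lies --- you acknowledge this, so the proposal is honest about being a sketch. Given that the paper itself defers this theorem to the literature, your treatment is at the same level of rigor and follows the same route.
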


We first prove the following:

\begin{proposition}\label{prop:infPogorelov}
Let $P$ be a polyhedron in $\HS$ and $\dot{P}$ be an infinitesimal deformation of $P$. If $\dot{P}$ is a first-order isometric deformation with respect to the induced HS metric, then $\dot{P}$ is trivial.
\end{proposition}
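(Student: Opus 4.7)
The plan is to reduce the infinitesimal rigidity statement in $\HS$ to the classical infinitesimal rigidity of convex Euclidean polyhedra (Theorem \ref{thm:Alexandrov}) via the infinitesimal Pogorelov map $\Pi : TU \to T\E^3$ introduced just above the statement. First, place $P$ inside the fixed affine chart $U = \RP^3 \setminus H_\infty$; since all polyhedra considered in the paper are convex in that affine chart, the image $\tau(P) \subset \E^3$ is a convex Euclidean polyhedron with exactly the same combinatorial structure as $P$ (same vertices, edges and faces, up to the inclusion $\tau$).

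Next, transport the deformation. By hypothesis, there is a triangulation of $\partial P$ refining its face decomposition, and on each triangle $T$ a Killing vector field $K_T$ for the HS metric, such that the $K_T$ agree on common edges and restrict to $\dot P$ on each vertex. Applying $\Pi$ face by face yields vector fields $\Pi(K_T)$ on each triangle of $\tau(P)$; by Lemma \ref{lm:Killing HS-Eur} each $\Pi(K_T)$ is a Killing field for the Euclidean metric, and since $\Pi$ is a bundle map over the inclusion $\tau$, the matching on common edges and vertices is preserved. Thus $\Pi(\dot P)$ is a first-order isometric deformation of the convex Euclidean polyhedron $\tau(P)$ in the sense of Theorem \ref{thm:Alexandrov}. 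That theorem then gives a global Euclidean Killing field $K$ on $\E^3$ whose restriction to the vertices of $\tau(P)$ equals $\Pi(\dot P)$.

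Finally, pull back. The bundle map $\Pi$ is a linear isomorphism on each tangent space (its restriction to the radial direction is a nonzero rescaling and to the orthogonal complement is $d\tau$ composed with the sign change in the last coordinate), so $\Pi^{-1}(K)$ is a well-defined vector field on $U$, and by Lemma \ref{lm:Killing HS-Eur} applied in reverse it is a Killing field for the HS metric. Its restriction to the vertices of $P$ coincides with $\dot P$, which is exactly the triviality statement. The only substantive point to verify is that the argument carries through at points on the light cone $C(x_0)$ and at hyperideal vertices lying on $\bQ^3_1$, but this is handled by the continuous extension of $\Pi$ to all of $U$ together with a continuity argument on the Killing fields on adjacent triangles; I expect this ``degenerate stratum'' check to be the main technical point, since the remaining body of the proof is essentially a direct application of the Pogorelov--Alexandrov machinery already used in \cite{DMS,weakly,fillastre3}.
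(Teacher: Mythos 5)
Your proposal is correct and follows essentially the same route as the paper: triangulate $\partial P$, push the face-by-face HS Killing fields through the infinitesimal Pogorelov map $\Pi$ using Lemma \ref{lm:Killing HS-Eur}, apply Alexandrov's theorem (Theorem \ref{thm:Alexandrov}) to $\tau(P)$, and pull the resulting global Euclidean Killing field back through $\Pi^{-1}$. The paper does not pause over the "degenerate stratum" point you flag; it relies on the fact that the HS Killing fields and the extended $\Pi$ are already defined on all of $U$, so Lemma \ref{lm:Killing HS-Eur} applies globally and no separate continuity argument is needed.
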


\begin{proof}
By assumption, there is a triangulation, say $T$, of the boundary surface $\partial P$, given by a triangulation of each face (without adding new vertices) of $P$ and there is a Killing vector field $\kappa_f$ (with respect to the HS metric) on each face $f$ of this triangulation $T$, such that the restriction of $\kappa_{f}$ to each vertex of $f$ is equal to the restriction of $\dot{P}$ to the corresponding vertex, and for any two faces $f_1$ and $f_2$ of $T$ with a common edge $e$, the Killing vector fields $\kappa_{f_1}$ and $\kappa_{f_2}$ agree on the edge $e$. By Lemma \ref{lm:Killing HS-Eur}, $\Pi(\kappa_{f})$ is the restriction of a Killing vector field of $\E^3$ to the face $\tau(f)$ of the triangulation $\tau(T)$ of the boundary surface $\partial\tau(P)$, and for any two faces $\tau(f_1)$, $\tau(f_2)$ of $\tau(T)$ sharing an edge $\tau(e)$, $\Pi(\kappa_{f_1})$ and $\Pi(\kappa_{f_2})$ agree on the edge $\tau(e)$. Moreover, for each face $\tau(f)$ of $\tau(P)$, the restriction of $\Pi(\kappa_f)$ to the vertices of $\tau (P)$ coincides with the restriction of $\Pi(\dot{P})$ to the corresponding vertices. This implies that the infinitesimal deformation $\Pi(\dot{P})$ of $\tau(P)$ does not change the induced Euclidean metric on $\partial \tau(P)$ at first order. Combined with Theorem \ref{thm:Alexandrov}, $\Pi(\dot{P})$ is the restriction to the vertices of $\tau(P)$ of a global Euclidean Killing vector field $Y$. Using Lemma \ref{lm:Killing HS-Eur} again, $\dot{P}$ is the restriction to the vertices of $P$ of a global Killing vector field $\Pi^{-1}(Y)$ of $\HS$. This implies that $\dot{P}$ is trivial. The lemma follows.
\end{proof}

\begin{proof}[\textbf{Proof of Lemma \ref{lem:local immersion_metrics}}]
  Let $P\in \cP^{(\epsilon_1, \ldots, \epsilon_N)}$ be a hyperideal AdS polyhedron with vertex signature $(\epsilon_1, \ldots, \epsilon_N)$, then the induced HS metric restricted to $\partial P\cap\AdS$ is %exactly
a complete hyperbolic metric on $\Sigma_{0,N}$ of signature $(\epsilon_1, \ldots, \epsilon_N)$,
which determines a point in $\cT^{(\epsilon_1, \ldots, \epsilon_N)}_{0,N}$.

We first claim that the HS structure on the boundary of a hyperideal polyhedron $P$ is uniquely determined by the hyperbolic metric on $\partial P\cap \AdS$. This is clear for the case that $(\epsilon_1, \ldots, \epsilon_N)=(0,\dots,0)$. 
It suffices to show the remaining cases, where the polyhedron $P$ has at least one end corresponding to a strictly hyperideal vertex.

Consider such an end, bounded by a closed geodesic $\gamma$ of the induced metric on $\partial P\cap \AdS$. Then $\gamma=\partial P\cap v^{\perp}$ where $v$ is a strictly hyperideal vertex of $P$. As a consequence, for each face $f$ of $P$ adjacent to $v$, $f$ is isometric to a polygon $\bar f$ in $\HSS$, and $\gamma\cap f$ then corresponds to the intersection of $\bar f$ with %$\bar v^*$,
$\bar{v}^{\perp}$, where $\bar v$ is the point in $\HSS$ corresponding to $v$ (as shown in Figure \ref{fig:metric_dual plane}). As a consequence, the HS structure on the connected component of $\partial P\setminus \gamma$ containing $v$ is isometric to a ``standard'' model: the quotient by the translation of length $L(\gamma)$ along %$\bar v^*$
$\bar{v}^{\perp}$ of the triangle in $\HSS$ with vertex $\bar v$ and opposite edge $\bar{v}^{\perp}\cap \HH^2$ (see e.g. Figure \ref{fig:metric_dual plane}).

\begin{figure}
\begin{subfigure}[b]{0.46\textwidth}
\centering
\begin{tikzpicture}[scale=1]
\draw  (-0.5,2.55) ellipse (2.5 and 0.5);
\draw[densely dashed] (-2.95,-2.5) .. controls (-2.95,-1.85) and (2,-1.9) .. (2,-2.55);
\draw (-2.95,-2.5) .. controls (-2.95,-3.15) and (2,-3.2) .. (2,-2.55);
\draw (-3.5,3) .. controls (-0.5,0.05) and (-0.5,0.05) .. (-3.5,-3);
\draw (2.5,3) .. controls (-0.5,0.05) and (-0.5,0.05) .. (2.5,-3);
\draw(-1.25,0) .. controls (-1.2,-0.35) and (0.25,-0.35) .. (0.25,0);
\draw[densely dashed](-1.25,0) .. controls (-1.2,0.35) and (0.25,0.35) .. (0.25,0);
\draw [fill=black](1,0) ellipse (0.03 and 0.03);
\draw(-0.4,2.05) .. controls (-0.1,0.6) and (-0.1,-1) .. (-0.28,-3);
\draw[densely dashed](0.4,3) .. controls (0,1.8) and (0,-0.9) .. (0.46,-2.08);
%\draw(-0.4,2.05) .. controls (0.4,3) and (0.4,3) .. (0.4,3);
%\draw(-0.28,-3) .. controls (0.46,-2.08) and (0.46,-2.08) .. (0.46,-2.08);
%\node at (2.1,-0.37) {$v_t=(0,1+t,0)$};
\node at (1,3.4) {$v^{\perp}\cap\mathbb{A}{\rm{d}}\mathbb{S}^3$};
\draw [fill=black](0,0) ellipse (0.03 and 0.03);
%\node at (-0.5,-0.14) {$o$};
\draw[white][fill=gray,opacity=0.3](-0.4,2.05) .. controls (-0.1,0.6) and (-0.1,-1) .. (-0.28,-3) .. controls (0.46,-2.08) and (0.46,-2.08) .. (0.46,-2.08) .. controls (0,-0.9) and (0,1.8) .. (0.4,3) .. controls (-0.4,2.05) and (-0.4,2.05) .. (-0.4,2.05);
\draw[thick](1,0) .. controls (-0.5,0.35) and (-0.5,0.35) .. (-0.5,0.35);
%\draw(1,0) .. controls (-1.25,0) and (-1.25,0) .. (-1.25,0);
\draw[thick](1,0) .. controls (-0.6,-0.08) and (-0.6,-0.08) .. (-0.6,-0.08);
\draw[thick](1,0) .. controls (-0.6,-0.4) and (-0.6,-0.4) .. (-0.6,-0.4);
\draw[red,thick](0.08,0.24) .. controls (-0.08,-0.04) and (-0.1,-0.04) .. (-0.1,-0.04);
\draw[red, thick](-0.1,-0.04) .. controls (0.06,-0.25) and (0.06,-0.25) .. (0.06,-0.25);
\draw[red,densely dashed, thick](0.08,0.24) .. controls (0.06,-0.25) and (0.06,-0.25) .. (0.06,-0.25);
\node at (1.3,0) {$v$};
\node at (1.7,-0.5) {$\gamma=\partial P\cap v^{\perp}$};
\draw(0.05,-0.32) .. controls (0.3,-0.6) and (0.6,-0.5) .. (0.6,-0.5);
\node at (-0.38,0.12) {$f$};
\end{tikzpicture}
\end{subfigure}
    \begin{subfigure}[b]{0.46\textwidth}
     \centering
\begin{tikzpicture}
\usetikzlibrary{decorations.pathreplacing}
\draw [thick] (-0.5,1) node (v1) {} ellipse (2 and 2);
\draw(3.5,1) .. controls (0.4,2.8) and (0.4,2.8) .. (0.4,2.8) .. controls (0.4,-0.8) and (0.4,-0.8) .. (0.4,-0.8) .. controls (3.5,1) and (3.5,1) .. (3.5,1);
\node at (3.8,1) {$\bar v$};
\node at (-0.35,2.3) {$\bar v^{\perp}\cap\mathbb{H}^2$};
\draw[thick](3.5,1) .. controls (0.1,1.9) and (0.1,1.9) .. (0.1,1.9);
\draw[thick](3.5,1) .. controls (0.2,1.2) and (0.2,1.2) .. (0.2,1.2);
\draw[thick](3.5,1) .. controls (0.2,0.6) and (0.2,0.6) .. (0.2,0.6);
\draw[thick](3.5,1) .. controls (0.1,0) and (0.1,0) .. (0.1,0);
\draw[red, thick](0.4,1.8) .. controls (0.4,0.1) and (0.4,0.1) .. (0.4,0.1);
\node at (-0.5,1) {$L(\gamma)$};
\draw[decorate,decoration={brace,mirror}] (0.1,1.8) -- (0.1,0.1);
\node at (2,3) {$\mathbb{HS}^2$};
\node at (0.8,0.9) {$\bar f$};
\end{tikzpicture}
    \end{subfigure}
    \caption{\small{An example of an end of $\partial P\cap\AdS$ corresponding to a strictly hyperideal vertex $v$ of $P$. The left picture is drawn in an affine chart of $\bHS^3_1$, while the right picture is drawn in an affine chart of $\HSS$. A face $f$ (partially drawn) of $P$ is isometric to a polygon $\bar{f}$ (partially drawn) in $\HSS$, with the vertex $v$ of $f$ corresponding to the vertex $\bar{v}$ of $\bar{f}$, and the geodesic segment $f\cap \gamma$ corresponding to the segment $\bar{f}\cap \bar{v}^{\perp}$.}}
    \label{fig:metric_dual plane}
    \end{figure}

This simple description of the HS structure on $\partial P$ from the hyperbolic metric on $\partial P\cap \AdS$ also implies that any first-order deformation of $P$ which does not change the induced hyperbolic structure on $\partial P\cap \AdS$ also does not change the induced HS structure on $\partial P$.

Consider now a first-order deformation $\dot P$ of $P$ which does not change the induced hyperbolic structure on $\partial P\cap \AdS$. Then $\dot P$ does not change the induced HS structure, and by Proposition \ref{prop:infPogorelov} %then
this proves that $\dot P$ is trivial.
\end{proof}

\subsection{Rigidity with respect to dihedral angles}\label{sec:rigidity_angle}

In this part we will prove Lemma \ref{lem:local immersion_angles}, which states that the map $\Psi: \cP_N\rightarrow \R^E$ is a local immersion near a non-degenerate hyperideal AdS polyhedron $P$ whose 1-skeleton is a subgraph of a triangulation $\Gamma$ of $\Sigma_{0,N}$, where $E=E(\Gamma)$. To show this, it suffices to consider the infinitesimal rigidity question of $P$ with respect to dihedral angles, which asks whether any infinitesimal deformation of $P$ that preserves the dihedral angle (at each edge of $P$) at first order is trivial.

\subsubsection{An alternative version of the rigidity question}\label{sect:alternative version}

Instead of directly considering the infinitesimal rigidity question with respect to dihedral angles, we translate it into an alternative version.

For each non-degenerate hyperideal AdS polyhedron $P$, we construct a polyhedron, say $P_0$, in the following way: if each vertex of $P$ is ideal, we let $P_0=P$; otherwise, for each strictly hyperideal vertex $v$, we denote by $v^{\perp}$ the dual plane of $v$ and by $H_v$ the half-space which is delimited by $v^{\perp}$ and does not contain $v$. Then we define $P_0$ to be the intersection of $P$ with $H_v$ over all strictly hyperideal vertices $v$.  We call such $P_0$ the \emph{truncated polyhedron} of $P$.

Indeed, the dihedral angle at each new edge of $P_0$ obtained from the truncations is always orthogonal (which is defined to be zero by our convention), since the dual plane (serving as a truncated plane) of a strictly hyperideal vertex $v$ of $P$ is orthogonal to all the faces adjacent to $v$. Therefore, the infinitesimal rigidity question of $P$ with respect to dihedral angles is equivalent to the infinitesimal rigidity question of $P_0$ with respect to dihedral angles.

Now we consider the dual polyhedron, say $P^*_0$, of $P_0$. Recall that an edge $e$ of $P_0$ connecting two vertices $v$, $v'$ is dual to an edge $e^*$ of $P^*_0$ between the two faces $v^*$, $(v')^*$ dual to $v$, $v'$ respectively. Therefore the (exterior) dihedral angle at each edge $e$ of $P_0$ is equal to the signed length of the dual edge $e^*$ of $P^*_0$ (see Section \ref{subsec:signed length}). Therefore, the infinitesimal rigidity of $P_0$ with respect to dihedral angles is equivalent to the infinitesimal rigidity question of $P^*_0$ with respect to the (signed) edge lengths.

The following is a key property of a first-order deformation (with respect to dihedral angles) $\dot{P}$ of $P$ with at least one ideal vertex.

\begin{proposition}\label{Prop:deformation at ideal vertex}
Let $P$ be a hyperideal AdS polyhedron with at least one ideal vertex $v$ and let $\dot{P}$ be an infinitesimal deformation of $P$. If $\dot{P}$ preserves the dihedral angle at each edge of $P$ at first order, then the restriction of $\dot{P}$ to the vertex $v$ is tangent to the boundary $\partial \AdS$.
\end{proposition}

\begin{proof}
  Suppose by contradiction that the restriction of $\dot{P}$ to the vertex $v$ is not tangent to the boundary $\partial \AdS$, for instance that  the vector $\dot{P}|_{v}$ is towards the exterior of $\partial \AdS$, and we will show that the sum of angles at the edges adjacent to $v$ then strictly increases (at first order). The result will clearly follow.

  Since this sum of angles remains constant when $\dot{P}$ at $v$ is tangent to the boundary, we can assume that $\dot{P}|_{v}$ is orthogonal to the boundary and of unit norm in the Euclidean metric, and that $\dot P$ vanishes at all the other vertices. We can choose an orthonormal coordinate system $(x,y,z)$ in $\R^3$ such that $v=(0,1,0)$, $\dot P_v=(0,1,0)$, while $\partial\AdS$ corresponds to the quadric of equation $x^2+y^2-z^2=1$ (see Figure \ref{fig:deform vertex}).

  Let $(P_t)_{t\in[0,\epsilon)}$ be a family of convex hyperideal AdS polyhedra that realize the infinitesimal deformation $\dot{P}$ of $P$ at $t=0$. More precisely, $P_0=P$, the vertex $v_t$ of $P_t$ (corresponding to $v$ of $P$) has coordinates $v_t=(0,1+t,0)$, and $v'_t=v'$ for any vertex $v'\not=v$ of $P$. Let $e_1$, $e_2$, ..., $e_k$ denote the edges of $P$ adjacent to $v$, let $v_i$ be the vertex of $e_i$ different from $v$, and let $e^t_i$ denote the edges of $P_t$ corresponding to $e_i$.

\begin{figure}
\begin{subfigure}[b]{0.46\textwidth}
   %\usetikzlibrary{decorations.pathreplacing}
\centering
\begin{tikzpicture}[scale=1.06]
\draw  (-0.5,2.55) ellipse (2.5 and 0.5);
\draw[densely dashed] (-2.95,-2.5) .. controls (-2.95,-1.85) and (2,-1.9) .. (2,-2.55);
\draw (-2.95,-2.5) .. controls (-2.95,-3.15) and (2,-3.2) .. (2,-2.55);
\draw (-3.5,3) .. controls (-0.5,0.05) and (-0.5,0.05) .. (-3.5,-3);
\draw (2.5,3) .. controls (-0.5,0.05) and (-0.5,0.05) .. (2.5,-3);
\draw(-1.25,0) .. controls (-1.2,-0.35) and (0.25,-0.35) .. (0.25,0);
\draw[densely dashed](-1.25,0) .. controls (-1.2,0.35) and (0.25,0.35) .. (0.25,0);
\draw[-latex](-0.5,0)--(3.6,0);
\draw[-latex](-0.5,0)--(-3,-1);
\draw[-latex](-0.5,0)--(-0.5,3.5);
\draw [fill=black](1,0) ellipse (0.03 and 0.03);
\node at (-3.2,-1.2) {$x$};
\node at (3.9,0) {$y$};
\node at (-0.7,3.3) {$z$};
%\node at (1,-0.3) {$1+t$};
\draw(-0.58,2.05) .. controls (-0.3,0.6) and (-0.3,-1) .. (-0.58,-3.03);
\draw[densely dashed](0.4,3) .. controls (0,1.8) and (0,-0.9) .. (0.46,-2.08);
%\draw(-0.58,2.05) .. controls (0.4,3) and (0.4,3) .. (0.4,3);
%\draw(-0.58,-3.03) .. controls (0.46,-2.08) and (0.46,-2.08) .. (0.46,-2.08);
\node at (2.1,-0.37) {$v_t=(0,1+t,0)$};
\node at (1.2,3.25) {$\mathbb{A}{\rm{d}}\mathbb{S}^3 \cap\{y=f(t)\}$};
\draw [fill=black](-0.1,0) ellipse (0.03 and 0.03);
\node at (-0.54,-0.14) {$o$};
\draw[white][fill=gray,opacity=0.3](-0.58,2.05) .. controls (-0.3,0.6) and (-0.3,-1) .. (-0.58,-3.03) .. controls (0.46,-2.08) and (0.46,-2.08) .. (0.46,-2.08) .. controls (0,-0.9) and (0,1.8) .. (0.4,3) .. controls (-0.58,2.05) and (-0.58,2.05) .. (-0.58,2.05);
\draw[blue,line width=0.33mm](1,0) .. controls (-0.6,0.27) and (-0.6,0.27) .. (-0.6,0.27);
\draw[blue,line width=0.33mm](1,0) .. controls (-0.6,-0.4) and (-0.6,-0.4) .. (-0.6,-0.4);
\draw[blue,line width=0.33mm,densely dashed](1,0) .. controls (-0.56,0.08) and (-0.56,0.08) .. (-0.56,0.08);
\draw[blue,line width=0.33mm](1,0) .. controls (-0.45,-0.1) and (-0.45,-0.1) .. (-0.45,-0.1);
\draw[blue,line width=0.33mm,densely dashed](1,0) .. controls (-0.5,-0.26) and (-0.5,-0.26) .. (-0.5,-0.26);
\draw[red,line width=0.35mm](0.08,0.05) .. controls (-0.18,0.2) and (-0.18,0.2) .. (-0.18,0.2);
\draw[red, line width=0.33mm](0.08,0.05) .. controls (0.01,-0.16) and (0.01,-0.16) .. (0.01,-0.16);
\draw[red, line width=0.33mm](0.01,-0.16) .. controls (-0.2,-0.3) and (-0.2,-0.3) .. (-0.2,-0.3);
\draw[red, line width=0.33mm](-0.2,-0.3) .. controls (-0.32,-0.1) and (-0.32,-0.1) .. (-0.32,-0.1);
\draw[red, line width=0.33mm](-0.32,-0.1) .. controls (-0.18,0.2) and (-0.18,0.2) .. (-0.18, 0.2);
\node at (0.6,0.24) {{\color{blue}$e_{i,t}$}};
\end{tikzpicture}
\end{subfigure}
    \begin{subfigure}[b]{0.46\textwidth}
     \centering
\begin{tikzpicture}[scale=1]
\draw[white][fill=gray,opacity=0.3](-4.2,3) .. controls (-1.5,0.05) and (-1.5,0.05) .. (-4.2,-3) .. controls (2.45,-3) and (2.45,-3) .. (2.45,-3) .. controls (-0.7,0.05) and (-0.7,0.05) .. (2.45,3) .. controls (2.45,3) and (-4.2,3) .. (-4.2,3);
\draw (-4.2,3) .. controls (-1.5,0.05) and (-1.5,0.05) .. (-4.2,-3);
\draw (2.45,3) .. controls (-0.7,0.05) and (-0.7,0.05) .. (2.45,-3);
\draw[densely dashed](-2.18,0) .. controls (0.218,0) and (0.218,0) .. (0.085,0);
%\draw [fill=black](-1,0) ellipse (0.03 and 0.03);
\node at (-1,-0.19) {{\footnotesize $O$}};
%\draw[thick, red](-2,0) .. controls (0.25,0.8) and (0.25,0.8) .. (0.25,0.8) .. controls (-0.8,-0.6) and (-0.8,-0.6) .. (-0.8,-0.6) .. controls (-2,0) and (-2,0) .. (-2,0);
%\draw [thick](-1,0) ellipse (0.33 and 0.33);
%\draw [fill=black](-0.67,0) ellipse (0.03 and 0.03);
%\draw [fill=black](0.085,0) ellipse (0.03 and 0.03);
%\draw [fill=black](-2.18,0) ellipse (0.03 and 0.03);
\draw [fill=black](-1,0) ellipse (0.03 and 0.03);
%\node at (-0.5, 0.3) {$D_t$};
%\node at (-0.2,-0.5) {$c' t$};
%\node at (0.7,0) {$c_1\sqrt{t}$};
%\node at (-2.95,0) {$-c_1\sqrt{t}$};
%\draw(-0.62,-0.05) .. controls (-0.3,-0.2) and (-0.3,-0.3) .. (-0.25,-0.35);
\node at (-1,2) {$v_t^{\perp}\cap \mathbb{A}\rm{d}\mathbb{S}^3$};
\draw(-1,-0.49) .. controls (-1,-1) and (-1,-1.5) .. (-1,-1.5);
\node at (-1,-1.8) {$v_t^{\perp}\cap P_t$};
\draw[blue,densely dashed](0.67,1.25) -- (-2.17,0.15);
\draw[red,thick](-1,0.6) -- (-2.02,0.2) -- (-1.3,-0.45) -- (-0.55,-0.46) -- (-0.1,-0.06) -- (-1,0.6);
\node at (-1,0.85) {$\bar{v}_{1,t}$};
\node at (-1.9,0.5) {$\bar{v}_{2,t}$};
\node at (-1.5,-0.8) {$\bar{v}_{3,t}$};
\node at (-0.3,-0.8) {$\bar{v}_{4,t}$};
\node at (-0.17,0.3) {$\bar{v}_{5,t}$};
\node at (1.1,1.2) {$a_{ij,t}$};
\node at (-2.6,0.2) {$b_{ij,t}$};
\end{tikzpicture}
    \end{subfigure}
    \caption{\small{An example of a deformation of an ideal vertex $v$ of $P$, with the deformation vector at $v$ outwards orthogonal to $\partial \AdS$ and of unit norm in the Euclidean metric. The left picture is drawn in an affine chart, where  $e_{i,t}$ represents an edge of $P_t$ adjacent to $v_t$. 
    The right picture zooms in the intersection part of $v_t^{\perp}$ with $P_t$, which is a convex polygon (shown in the bold lines).}} 
    \label{fig:deform vertex}
    \end{figure}

  Since $v\in\partial\AdS$,
\begin{equation}\label{sum}
\sum_{i=1}^{k}\theta(e_i)=0~.
\end{equation}
We claim that there exists a constant $c>0$ (depending on $P$) such that for $t>0$ small enough,
\begin{equation}\label{sum_t}
\sum_{i=1}^{k}\theta(e^t_i)\geq ct~.
\end{equation}

Indeed, the position of the plane $v_t^\perp$ dual to $v_t$ is uniquely determined by its intersection with the line $Oy$, denoted by $f(t)$. Since any two pairs $(p, p^{\perp})$ and $(q, q^{\perp})$ with $p, q\in{\AdS}^*$ differ by the action of a projective transformation in $\PO(2,2)$, the cross ratio (as a projective invariant) $[f(t), v_t; -1, 1]$ is equal to $[0,\infty; -1, 1]$ for all $t$. A direct computation shows that $f(t)=1/(1+t)$ and the plane $v_t^\perp$ has the equation
$$ v_t^\perp = \left\{ y=\frac 1{1+t}\right\}~. $$
As a consequence, the intersection point $\bar v_{i,t}$ of $v_t^\perp$ with $e_{i,t}$ has coordinates
$$ \bar v_{i,t} = \left(\frac{t^2+2t}{(1+t-y_i)(1+t)} x_i, \,
\frac 1{1+t},\, \frac{t^2+2t}{(1+t-y_i)(1+t)} z_i\right)~,$$
where $v_i=(x_i, y_i,z_i)$.

However the intersection ${\partial\AdS}\cap v_t^{\perp}$ has the equation
$$ x^2-z^2= 1-\left(\frac 1{1+t}\right)^2~, $$
so
$$ x^2-z^2= \frac {t^2+2t}{(1+t)^2}~. $$
As a consequence, the line through $\bar v_{i,t}$ and $\bar v_{j,t}$ (for $i\not=j\in \{ 1,\cdots, k\}$) intersects the quadric $\partial\AdS$ at two points $a_{ij,t}, b_{ij,t}$ with coordinates (which are complex numbers if the line through $\bar v_{i,t}$ and $\bar v_{j,t}$ is timelike) behaving as a constant times $\sqrt{t}$ as $t\to 0$. This implies that the AdS distance between $\bar v_{i_,t}$ and $\bar v_{j,t}$, which can be computed as (see e.g. \cite[Lemma 2.10]{FS19} and \cite[Section 2]{shu})
$$ d(\bar v_{i_,t},\bar v_{j_,t})=|\,\frac 12\log [ \bar v_{i_,t},\bar v_{j_,t}; a_{ij,t},b_{ij,t}]\,|~, $$
behaves as a constant times $\sqrt{t}$ as  $t\to 0$.

As a consequence, the area of the polygon  $v_t^{\perp}\cap P_t$ with vertices the $\bar v_{i,t}$ behaves as a constant times $t$ as $t\to 0$. The estimate \eqref{sum_t} therefore follows from the Gauss-Bonnet formula, applied to this polygon.

The proof of the proposition then follows from this lower bound.
\end{proof}

As a consequence, we obtain the following property of an infinitesimal deformation $\dot{P}^*_0$ of $P^*_0$ with at least one face contained in a light-like plane.

\begin{corollary}\label{cor:dual plane}
Let $P_0$ be a hyperideal AdS polyhedron with at least one ideal vertex $v$ and let $\dot{P}^*_0$ be an infinitesimal deformation of ${P}^*_0$. If $\dot{P}^*_0$ preserves all the edge lengths of $P^*_0$ at first order, then for any family $\{(P^*_0)_t\}_{t\in[0,\epsilon)}$ of polyhedra with derivative $\dot{P}^*_0$ at $t=0$, the face $f_v^t$ of $(P^*_0)_t$ corresponding to the face $f_v$ (dual to $v$) of $P^*_0$ remains a light-like plane tangent to $v$ at first order at $t=0$.
\end{corollary}

We now state a simple but crucial relation between the variation of edge lengths of a polyhedron $P$ in  $\RP^3$ (with all vertices disjoint from $\partial\AdS$) and the deformation of the induced HS metric,  which will be used to show Lemma \ref{lem:local immersion_angles}.

\begin{lemma}\label{lm:edge lengths}
Let $P$ be a polyhedron in $\HS$ whose vertices are all disjoint from $\partial\AdS$. Assume that $P$ admits a triangulation $T$ given by subdivising (without adding new vertices) each face. Then an infinitesimal deformation $\dot{P}$ of $P$ that preserves all the edge lengths of $T$ at first order (with respect to the induced HS metric)
is a first-order isometric deformation of $P$.
\end{lemma}

\begin{proof}
  Note that all the vertices of $P$ are disjoint from $\partial\AdS$, therefore, for each edge $e$ of the triangulation $T$, the signed length of $e$ with respect to the induced HS metric   is finite (see Section \ref{subsec:signed length}).

For each edge $e$ of $T$, the restriction of an infinitesimal deformation of $P$ to the two endpoints of $e$ completely determines the infinitesimal deformation of $e$. By assumption, $\dot{P}$ does not change the edge lengths of $T$ at first order, so that, for each face $f$ of the triangulation $T$, there is a unique Killing vector field $\kappa_f$ on $f$ such that the restriction of $\kappa_f$ to each vertex of $f$ is equal to the restriction of $\dot{P}$ to the corresponding vertex. Moreover, for any two faces $f_1$ and $f_2$ of $T$ with a common edge $e$, the Killing vector fields $\kappa_{f_1}$ and $\kappa_{f_2}$ agree on the endpoints of $e$ and hence on $e$. Therefore, $\dot{P}$ is a first-order isometric deformation of $P$.
\end{proof}

\subsubsection{Degenerate metrics induced on light-like planes}\label{subsec:degnerate metric}
In the case that $P$ has at least one ideal vertex $v$, to prove the infinitesimal rigidity of $P^*_0$ with respect to edge lengths, we need some basic facts on the degenerate metric induced on the plane $v^{\perp}$ dual to $v$.

Note that %$v^*$
$v^{\perp}$ is a totally geodesic light-like plane tangent to $\partial\AdS$ that intersects the boundary $\partial \AdS$ along two light-like lines meeting at $v$. The HS metric induced on $v^{\perp}\setminus\partial \AdS$ is degenerate. Indeed, for each geodesic line $\ell$ contained in $v^{\perp}$ passing through $v$, the induced (pseudo) distance, say $d_{HS}$, between any two points $p$, $q$ in $\ell\setminus\partial \AdS$ is zero (see Section \ref{sect:AdS}). 

It remains to determine the pseudo-distance $d_{HS}$ for any two points $p_1$ and $p_2$ distinct from $v$ and lying on two distinct geodesic rays $\overline{vp_1}$ and $\overline{vp_2}$ in $v^{\perp}\setminus\partial\AdS$ with the initial point $v$. It can be seen later that in this case it is more natural to first extend $d_{HS}$ to a signed directed measure, say $m_{HS}$, that is, $m_{HS}(p_1, p_2)$ is allowed to be negative and different from $m_{HS}(p_2,p_1)$, and then define $d_{HS}$ by determining the choice of the direction.

To clarify this, we identify the plane $v^{\perp}$ with the real 2-dimensional vector space $\R^2$ (where $v$ is identified with the origin $0$) equipped with the push-forward degenerate HS metric. Let $\ell_L$ and $\ell_R$ be two lines in $\R^2$ through $0$ with slopes $1$ and $-1$ respectively, which are identified with the left and right leaves at the intersection of $v^{\perp}$ with $\partial \AdS$. These two lines divides $\R^2$ into four regions, say $I$, $II$, $III$ and $IV$, consecutively in the anticlockwise direction with $I:=\{(x,y)\in\R^2: x>|y|>0\}$. 
A geodesic ray in $v^{\perp}$ starting from $v$ but not contained in $\partial \AdS$ is thus identified with a ray in $\R^2$ starting from $0$ with slope neither $1$ nor $-1$ (called a \emph{non-singular} ray, which is exactly contained in one of the four regions).

For any two rays $\ell_1,\ell_2$ in $\R^2$, we denote by $ \ell_10\ell_2$ a \emph{directed angle} with the initial direction $\ell_1$, the terminal direction $\ell_2$, and a specified region bounded by $\ell_1$ and $\ell_2$ with Euclidean angle at $0$ not greater than $\pi$. It is clear that the choice of this region is unique in the case that $\ell_1$ and $\ell_2$ are not opposite to each other. Otherwise, we arbitrarily fix one of the two regions.

We say the directed angle $\ell_10\ell_2$ is \emph{non-singular} if both $\ell_1$ and $\ell_2$ are non-singular. A non-singular directed angle $\ell_10\ell_2$ is called a \emph{fundamental} angle if either

(a) $\ell_1$ and $\ell_2$ are contained in the same region of the four, or

(b) the specified region of $ \ell_10\ell_2$ intersects exactly one of the two lines $\ell_L$, $\ell_R$ and $\ell_1$ is orthogonal to $\ell_2$ with respect to the Minkowski metric $\langle \cdot,\cdot\rangle_{1,1}$ on $\R^2$.

We first define the \emph{non-directed sectorial measure} with respect to $\langle \cdot,\cdot\rangle_{1,1}$ on $\R^2$ of the non-directed angle between the above two rays $\ell_1$, $\ell_2$, denoted by $\angle (\ell_1,\ell_2)$, in the following (see \cite[Definition 2]{DJJ} for an alternative version).

\begin{definition}\label{def:angles}
Let $\ell_1$ and $\ell_2$ be two non-singular rays lying in either Case (a) or Case (b).
\begin{itemize}
\item
in Case (a), if $\ell_1$ and $\ell_2$ lie in the same region $I$ or $III$ (resp. $II$ or $IV$), then the measure $\angle (\ell_1,\ell_2)$ is defined to be non-positive (resp. non-negative) and satisfies that
\begin{equation*}
\cosh \angle (\ell_1,\ell_2)= |\langle w_1, w_2\rangle_{1,1}|~,
\end{equation*}
where $w_i$ is the unit vector in the direction $\ell_i$ with respect to $\langle \cdot,\cdot\rangle_{1,1}$ for $i=1,2$.
\item
in Case (b), the measure $\angle (\ell_1,\ell_2)$ is defined to be zero.
\end{itemize}
\end{definition}

We then define the \emph{directed sectorial measure}, denoted by $\measuredangle \ell_10\ell_2$, of the non-singular directed fundamental angle $\ell_10\ell_2$ to be $\measuredangle \ell_10\ell_2:=\angle (\ell_1,\ell_2)$ if the specified region of $\ell_10\ell_2$ is obtained by an anticlockwise rotation from $\ell_1$ to $\ell_2$, otherwise, we define it as $\measuredangle \ell_10\ell_2:=-\angle (\ell_1,\ell_2)$. The following gives a natural way to extend the directed sectorial measure to a general non-singular directed angle $\ell_10\ell_2$ (see e.g. Definition 7 in \cite{DJJ}).

\begin{definition}\label{def:directed angle}
Let $\ell_10\ell_2$ be a general non-singular directed angle. The directed sectorial measure $\measuredangle\ell_10\ell_2$ of $\ell_10\ell_2$ is defined by splitting the angle $\ell_10\ell_2$ into successive non-overlapping non-singular directed fundamental angles and then summing the directed sectorial measures of these fundamental angles. One can directly check that this definition is independent of the choices of splittings.
\end{definition}

It is clear that $\measuredangle \ell_10\ell_2$ is zero whenever $\ell_1$ and $\ell_2$ are the same, opposite or orthogonal to each other. This also explains why the choice of the specified region for the directed angle $\ell_10\ell_2$ can be arbitrary in the case that $\ell_1$ and $\ell_2$ are opposite.

Now we are ready to extend $d_{HS}$ to a signed directed measure, say $m_{HS}$, for any ordered pairs of points $(p_i,p_j)$ with $p_i$ and $p_j$ lying on two non-singular rays $\ell_i$ and $\ell_j$ in $v^{\perp}$ respectively.

\begin{definition}\label{def:directed measure}
Let $\ell_1$, $\ell_2$ be two non-singular rays in $\R^2$ and let $p_1$, $p_2$ be any two points lying on $\ell_1$, $\ell_2$. The (signed) length of the directed segment $\overrightarrow{p_1p_2}$ in $\R^2$ connecting $p_1$ to $p_2$ is defined to be $m_{HS}(p_1,p_2):=\measuredangle \ell_10\ell_2$.
\end{definition}

By definition, $m_{HS}(p_1,p_2)=-m_{HS}(p_2,p_1)$. Recall that in this subsection $P$ has at least one ideal vertex say $v$ and let $f_v$ be the face of $P^*_0$ dual to $v$ of the truncated polyhedron $P_0$. By duality, for each edge $e$ of $P_0$ adjacent to $v$, the (exterior) dihedral angle at $e$ is equal to the signed length (with respect to the %induced metric
HS metric induced on $\partial P^*_0$) of $e^*$ (which is an edge of $f_v$).

Indeed, from our convention, the dihedral angle at the edge $e$ adjacent to two faces $f_1$ and $f_2$ of $P_0$ can be viewed as the directed sectorial measure of the directed angle $\ell_10\ell_2$ (resp. $\ell_20\ell_1$) with $\ell_1$ and $\ell_2$ the outwards-pointing rays (based on one interior point of $e$) orthogonal to $f_1$ and $f_2$ respectively if $\ell_2$ (resp. $\ell_1$) is obtained from $\ell_1$ (resp. $\ell_2$) by an anticlockwise rotation  with Euclidean angle less than $\pi$. This directed angle induces a natural orientation of the dual edge $e^*$ in the following way: $e^*$ is directed from the endpoints $f_1^*$ to $f_2^*$ (resp. $f_2^*$ to $f_1^*$) if the aforementioned directed angle at $e$ is $\ell_10\ell_2$ (resp. $\ell_20\ell_1$). Therefore, each edge of the face $f_v\subset v^{\perp}$ is implicitly equipped with a direction. One can check that the union of the directions over the edges of $f_v$ indeed determine an orientation of the face $f_v$, which is exactly a clockwise rotation around $\partial f_v$. The signed length of $e^*$ with respect to the induced degenerate HS metric on $\partial P^*_0$ is exactly the signed length with respect to $m_{HS}$ of the directed edge $e^*$ (see Definition \ref{def:directed measure}).

In general, the choice of the direction for a line segment in $v^{\perp}$ with endpoints (say $p_1$, $p_2$) disjoint from $\partial \AdS$ corresponds to the choice of the sign (or definition) of the dihedral angle between the two dual planes $p^{\perp}_1$, $p^{\perp}_2$ with prescribed orientations (described by prescribed normal vectors). 

The directed measure $m_{HS}$ has a convenient property, stated here for completeness.

\begin{claim}\label{clm:sum}
Let $Q$ be an oriented convex polygon in the light-like plane $v^{\perp}$ with all vertices disjoint from $\partial \AdS$. Then the sum of the signed lengths (with respect to $m_{HS}$) over all the directed edges of $Q$ is $0$.
\end{claim}

\begin{proof}
  Without   loss of generality, we assume that the boundary of $Q$ is anticlockwise oriented. By assumption, the ray starting from $v$ through any vertex of $Q$ is non-singular. Therefore, the signed length of each directed edge of $Q$ is well-defined (see Definition \ref{def:directed measure}). We discuss $Q$ in the following two cases:
\begin{itemize}
\item $Q$ contains the vertex $v$ in its boundary or in its interior.
\item The closure of $Q$ is disjoint from $v$.
\end{itemize}

In the first case, let $v_1$, $v_2$, ..., $v_k$ be the consecutive vertices of $Q$ in the anticlockwise order. Denote the ray starting from $v$ through $v_i$ by $\ell_i$ for $i=1,...,k$. Since the vertices of $Q$ are all disjoint from $\partial \AdS$, the rays $\ell_1$, $\ell_2$, ..., $\ell_k$ then split the whole plane $v^{\perp}$ into $k$ successive non-overlapping non-singular angles $\ell_10\ell_2$, $\ell_20\ell_3$, ..., $\ell_k0\ell_1$. By Definition \ref{def:directed measure}, the signed length of the directed edge with endpoints $v_i$ and $v_{i+1}$ is $\measuredangle \ell_i0\ell_{i+1}$ for $i=1, ..., k$ (where $v_{k+1}=v_1$) . Let $\ell^*_1$ deonte the ray opposite to $\ell_1$. It follows from Definition \ref{def:directed angle} that the sum of $\measuredangle \ell_i0\ell_{i+1}$ over $i=1, ..., k$ (where $\ell_{k+1}=\ell_{1}$) is equal to the sum of $\measuredangle \ell_10\ell^*_1$ and $\measuredangle \ell^*_10\ell_1$, since they share a common finer splitting of the plane $v^{\perp}$. Note that $\measuredangle \ell_10\ell^*_1+\measuredangle\ell^*_10\ell_1=\measuredangle\ell_10\ell^*_1-\measuredangle \ell_10\ell^*_1=0$. We obtain the desired result.

In the second case, we order the vertices of $Q$ and thus the corresponding rays successively in the anticlockwise order, say $\ell_1$, $\ell_2$, ..., $\ell_k$, such that the anticlockwise rotation from $\ell_1$ to $\ell_k$ crosses all the other rays $\ell_2$, $\ell_3$, ..., $\ell_{k-1}$. It is easy to check that $\measuredangle \ell_10\ell_2+\measuredangle \ell_20\ell_3+...+\measuredangle \ell_{k-1}0\ell_{k}=\measuredangle \ell_10\ell_k=-\measuredangle\ell_k0\ell_1$. By definition, the sum of the  signed lengths over all the directed edges of $Q$ is equal to $\measuredangle \ell_10\ell_2+\measuredangle \ell_20\ell_3+...+\measuredangle \ell_{k-1}0\ell_{k}+\measuredangle\ell_k0\ell_1$ and is thus zero.
\end{proof}

\begin{claim} \label{clm:edge lengths}
Let $v$ be an ideal vertex of $P$ and let $f_v$ be the face of $P^*_0$ contained in the dual plane $v^{\perp}$.  Let $T=\{T_j\}_{j=1}^m$ be a triangulation of $f_v$ (without adding new vertices). 
If an infinitesimal deformation $\dot{P}^*_0$ of $P^*_0$ preserves all the edge lengths of $P^*_0$ at first order, then $\dot{P}^*_0$ preserves the lengths of all the directed edges of %$T$
$T_j$ at first order for $1\leq j\leq m$.
\end{claim}

\begin{proof}
Let $v_1$, ..., $v_k$ be the consecutive vertices of the face $f_v$ in the clockwise order and let $\ell_i$ be the ray starting from $v$ through $v_i$ for $i=1, ..., k$. Since $v_i$ is dual to %the
a space-like face in $\AdS$ of $P_0$, $v_i$ is contained in $\AdS$ and thus contained in $v^{\perp}\setminus\partial \AdS$. This implies that $\ell_i$ is a non-singular ray for all $i=1, ..., k$. Moreover, each edge of $f_v$ connecting $v_i$ and $v_{i+1}$ is equipped with an orientation from $v_i$ to $v_{i+1}$ (where $v_{k+1}=v_1$), compatible with the sign of the dihedral angle at its dual edge. For each $T_j$, we orient $\partial T_j$
%anticlockwise.
clockwise. This is compatible with the orientation of $\partial f_v$ at the common edges.

\textbf{Step 0.} First we claim that if $\dot{P}^*_0$ preserves the (signed) lengths of two directed edges, say $e_{j_1}$, $e_{j_2}$ of a triangle say $T_j$ at first order, then $\dot{P}^*_0$ also preserves the (signed) length of the third directed edge, say $e_{j_3}$, of $T_j$ at first order. Let $v_{j_1}$, $v_{j_2}$, $v_{j_3}$ denote the three vertices of $T_j$. By definition and our convention, the sum of the (signed) lengths of directed edges of $T_j$ is either equal or opposite to the sum of dihedral angles at the edges of the (unbounded) polyhedron (say $P^*_j$) bounded by the three dual (space-like) planes $v^{\perp}_{j_1}$, $v^{\perp}_{j_2}$, $v^{\perp}_{j_3}$ (intersecting at a common point which is exactly the ideal vertex $v$) of $v_{j_1}, v_{j_2}, v_{j_3}$, where $P^*_j$ is oriented with outward-pointing time-like normal vectors. Note that $\dot{P^*_0}$ preserves all the edge lengths of  $P^*_0$ at first order, then the corresponding infinitesimal deformation, denoted by $\dot{P}$, of $P$ preserves the dihedral angle at each edge of $P$ at first order. Combined with Proposition \ref{Prop:deformation at ideal vertex}, the restriction of $\dot{P}$ to the ideal vertex $v$ is tangent to $\partial \AdS$, hence $\dot{P}$ preserves the sum of dihedral angles at the edges of the (unbounded) polyhedron bounded by at least three planes (each of which contains a face of $P$ adjacent to $v$) at first order. This implies that $\dot{P}$ preserves the sum of dihedral angles at the edges adjacent to the vertex $v$ of $P^*_j$ at first order. As a consequence, $\dot{P^*_0}$ preserves the sum of the (signed) lengths of directed edges of $T_j$ at first order and thus preserves the (signed) length of $e_{j_3}$ at first order. The claim follows.

If $f_v$ is a triangle, Claim \ref{clm:edge lengths} follows. Otherwise, we do the following procedure:

\textbf{Step 1.} We consider the triangles in $T$ with exactly two edges contained in $\partial f_v$, denoted by $T_{n_1}, ..., T_{n_{k_1}}$. 
For each $T_j$ with $j\in\{n_1,..., n_{k_1}\}$, note that $\dot{P}^*_0$ preserves the (signed) lengths of the two directed edges of $T_j$ contained in $\partial f_v$ at first order. By Step 0,  $\dot{P}^*_0$ also preserves the (signed) length of the third directed edge of $T_j$ at first order.
 On the other hand, for each edge $e$ in the triangulation $T$, if $\dot{P}^*_0$ preserves the (signed) length of a directed edge underlying $e$ at first order, then $\dot{P}^*_0$ also preserves the (signed) length of the reversely directed edge underlying $e$ at first order.

\textbf{Step 2.} If $\{T_j\}_{j=1}^m\setminus\{T_{n_1}, ..., T_{n_{k_1}}\}$ is empty or the remaining triangles are those with edges either contained in $\partial f_v$ or belonging to some triangles in $\{T_{n_1}, ..., T_{n_{k_1}}\}$, we are done. Otherwise, we consider the triangles, collected by $\{T_{n_{k_1+1}}, ..., T_{n_{k_2}}\}$, with  exactly one edge neither contained in $\partial f_v$ nor belonging to any triangle in $\{T_{n_1}, ..., T_{n_{k_1}}\}$. By the assumption of $\dot{P}^*_0$ and applying  Step 0 again as above to the triangles $T_{n_{k_1+1}}, ..., T_{n_{k_2}}$, then $\dot{P}^*_0$ preserves the lengths of the third (directed) edges of those triangles at first order.

\textbf{Step 3.} Repeat the same procedure as Step 2, by replacing $\{T_{n_1}, ..., T_{n_{k_1}}\}$ with $\{T_{n_1}, ..., T_{n_{k_1}}, T_{n_{k_1+1}}, ..., T_{n_{k_2}}\}$. After finitely many steps, we have that $\{T_j\}_{j=1}^m\setminus\{T_{n_1}, ..., T_{n_{k_1}}, ..., T_{n_{k_N}}\}$ is either empty or the remaining triangles are those with edges either contained in $\partial f_v$ or belonging to some triangles in $\{T_{n_1}, ..., T_{n_{k_1}}, ..., T_{n_{k_N}}\}$ for some $N\in\mathbb{N}^+$. This concludes the proof that $\dot{P}^*_0$ preserves the (signed) lengths of all the directed edges of $T_j$ at first order for $1\leq j\leq m$.
\end{proof}

\begin{proposition}\label{prop:angle rigidity}
Let $P\in\cP$ and let $P^*_0$ be the dual polyhedron of the truncated polyhedron $P_0$ of $P$.  Let $\dot{P}^*_0$ be an infinitesimal deformation of $P^*_0$. If $\dot{P}^*_0$ preserves all the edge lengths of $P^*_0$ at first order, then $\dot{P}^*_0$ is trivial.
\end{proposition}

\begin{proof}
We discuss $P$ according to the positions of its vertices with respect to $\partial \AdS$.

\textbf{Case 1.} All the vertices of $P$ are strictly hyperideal.

In this case, each vertex of $P_0$ has degree three. Therefore, each face of the polyhedron $P^*_0$ dual to $P_0$ is a triangle. Moreover, all the vertices of $P^*_0$ are disjoint from $\partial \AdS$. Since $\dot{P_0^*}$ does not change the edge lengths of $P^*_0$ at first order and all the faces of $P^*_0$ constitute a triangulation of the boundary surface $\partial P^*_0$, by Lemma  \ref{lm:edge lengths}, $\dot{P}^*_0$ is a first-order isometric deformation of $P^*_0$. Combined with Proposition \ref{prop:infPogorelov}, $\dot{P}_0$ is trivial.

\textbf{Case 2.} At least one vertex of $P$ is ideal.

In this case, we still consider the polyhedron $P^*_0$ dual to the truncated polyhedron $P_0$ (which coincides with $P$ if and only if all the vertices of $P$ are ideal). For each ideal vertex $v$, we denote by $f_v$ the face of $P^*_0$ dual to $v$, which is contained in a light-like plane $v^{\perp}$. In particular, all the vertices of $f_v$ are contained in $\AdS$ and thus disjoint from $\partial\AdS$.

Note that $f_v$ is not necessarily a triangle. Moreover, if $P^*_0$ has a non-triangular face, then it must be the dual face of an ideal vertex of $P_0$. After taking a triangulation (without adding new vertices) of each non-triangular face of $P^*_0$, we obtain a triangulation of $\partial P^*_0$, say $T$. By the assumption of $\dot{P}^*_0$ and Claim \ref{clm:edge lengths}, $\dot{P}^*_0$ preserves all the edge lengths of the triangulation $T$ at first order. Applying Lemma \ref{lm:edge lengths} and Proposition \ref{prop:infPogorelov} again, we conclude that $\dot{P}^*_0$ is trivial.
\end{proof}

\begin{proof}[\textbf{Proof of Lemma \ref{lem:local immersion_angles}}]
This follows immediately from Proposition \ref{prop:angle rigidity} and the equivalence between the infinitesimal rigidity of $P$ with respect to dihedral angles and that of $P^*_0$ with respect to edge lengths.
\end{proof}

This also provides an alternative method to prove the infinitesimal rigidity with respect to dihedral angles at the edges of hyperideal hyperbolic polyhedra (see \cite{bao-bonahon}) and the infinitesimal rigidity with respect to dihedral angles at the edges of ideal AdS polyhedra (see \cite{DMS}).

\subsection{The proof of Theorem \ref{thm:homeo_angles}}\label{sec:proof_angles}

 We first recall a known result of \cite{DMS} for the ideal case,  which concerns the parameterization of the space $\cP'_N$ of all marked non-degenerate convex ideal polyhedra in $\AdS$ with $N$ vertices (up to isometries) in terms of dihedral angles.

Let $\cA'_N$ denote the disjoint union of the spaces $\cA'_{\Gamma}$ of weight functions (satisfying certain angle conditions) on the edges of $\Gamma$ over $\Gamma\in\Graph(\Sigma_{0,N},\gamma)$, glued together along faces corresponding to common subgraphs. In this case, the angle conditions are a modification of Conditions (i)-(iv) in Definition \ref{def:admissible angles} obtained by replacing the inequalities $\theta(e_1)+...+\theta(e_k)\geq 0$ in Condition (ii) for each vertex by the equality $\theta(e_1)+...+\theta(e_k)= 0$ (see also \cite[Definition 1.3]{DMS}).

It was proved in \cite[Theorem 1.4]{DMS} that the map $\Psi':\cP'_N\rightarrow \cA'_N$ which assigns to a polyhedron $P'\in\cP'_N$ its dihedral angle at each edge is a homeomorphism.

\subsubsection{The case $N\geq5$}

It follows from Lemma \ref{lem:local immersion_angles} that for each triangulation $\Gamma\in\cG_N$, the angle-assignation map $\Psi: \cP_N\rightarrow \R^{E(\Gamma)}$ is a local immersion near each $P\in\cP_N$ whose 1-skeleton is a subgraph of the $\Gamma$. By Claim \ref{clm:topology of A_Gamma} and Claim \ref{clm:dimension of P_Gamma}, the spaces $\cA_{\Gamma}$ and $\cP_{\Gamma}$ have the same dimension $3N-6$ for a triangulation $\Gamma\in\cG_N$. Therefore, the map $\Psi: \cP_N\rightarrow \R^{E(\Gamma)}$ is a local homeomorphism near each $P\in\cP_N$ whose 1-skeleton is a subgraph of the $\Gamma$. Note that the map $\Psi$, pieced together from $\Psi_{\Gamma}$ over all $\Gamma\in\Graph(\Sigma_{0,N},\gamma)$, is an open map by the definition of the topology of the %complex
space $\cA_N$ (see subsection \ref{subsec:def}). As a consequence the map $\Psi: \cP_N\rightarrow \cA_N$ is a local homeomorphism in a neighbourhood of any point $P$ in the closure of the stratum $\cP_{\Gamma}$ of $\cP_N$ for each $\Gamma\in\cG_N$. Therefore, $\Psi$ is a local homeomorphism. Combined with the properness (see Lemma \ref{lem:properness of psi}), $\Psi$ is a covering. Moreover, $\Psi$ is an $m$-sheeted covering for a positive integer $m$, since $\cA_N$ is connected (see Proposition \ref{prop:topology of A}).

 It suffices to show that $m=1$. Indeed, let $\theta\in\cA'_N$ be an angle assignment. We claim that $\theta$ has a unique preimage in $\cP_N$. Recall the known result (see \cite[Theorem 1.4]{DMS}) that the parameterization map $\cP'_N\rightarrow\cA'_N$, which is exactly the restriction of $\Psi$ to the space $\cP'_N$, is a homeomorphism. Therefore, $\theta$ has a unique preimage in $\cP'_N$. It remains to show that $\theta$ has no preimage in $\cP_N\setminus\cP'_N$. Otherwise, there is a polyhedron say $P\in\cP_N\setminus\cP'_N$ whose dihedral angle-assignment is $\theta$. However, $P$ has at least one strictly hyperideal vertex say $v$, by the necessity (see Proposition \ref{prop: necessary_angle} for the proof of Condition (ii)), the sum of the dihedral angles at the edges adjacent to $v$ is greater than 0. This implies that $\theta\not\in\cA'_N$, which leads to contradiction. As a consequence, $\Psi$ is a one-sheeted covering and is therefore a homeomorphism.

\subsubsection{The case $N=4$}

In this case, $\Graph(\Sigma_{0,N},\gamma)=\cG_N$ and it consists of exactly two graphs, say $\Gamma_1$, $\Gamma_2$, which are both triangulations of $\Sigma_{0,N}$. According to the gluing construction of $\cP_{\Gamma}$ (resp. $\cA_{\Gamma}$) for $\cP_N$ (resp. $\cA_N$), $\cP_N$ (resp. $\cA_N$) has exactly two connected components $\cP_{\Gamma_1}$ and $\cP_{\Gamma_2}$ (resp. $\cA_{\Gamma_1}$ and $\cA_{\Gamma_2}$). Note that the space $\cP'_N$ (resp. $\cA'_N$) also has two connected components corresponding to $\Gamma_1$ and $\Gamma_2$ (see e.g. \cite[Section 7.3]{DMS}). Applying the same argument as the case $N\geq 5$ to the angle-assignation map $\Psi_{\Gamma_i}: \cP_{\Gamma_i}\rightarrow \cA_{\Gamma_i}$ for $i=1,2$, we have that $\Psi_{\Gamma_i}$ is a homeomorphism, which implies that $\Psi$ is a homeomorphism.

\subsection{The proof of Theorem \ref{thm:homeo_metrics}}\label{sec:proof_metric}

 We recall another known result of \cite{DMS} for the ideal case, which provides the parameterization of the space $\cP'_N\cup\polyg'_N$ of all marked non-degenerate and degenerate convex ideal polyhedra in $\AdS$ with $N$ vertices (up to isometries) in terms of induced metric on the boundary of polyhedra. Let $\cT'_{0,N}$ denote the space of complete hyperbolic metrics on $\Sigma_{0,N}$ with finite area, considered up to isotopy fixing each marked point. It was shown (see \cite[Theorem 1.5]{DMS}) that the map $\Phi':\cP'_N\cup\polyg'_N\rightarrow \cT'_{0,N}$ which takes a polyhedron $P'\in\cP'_N\cup\polyg'_N$ to the induced metric on $\partial P'$ is also a homeomorphism.

 In this proof we discuss directly the case of $N\geq 3$, since the space $\cT_{0,N}$ is connected for $N\geq 3$. The argument is almost the same as above for the map $\Psi$ in the case $N\geq 5$. We include the proof for completeness. Indeed, the restriction to $\cP^{(\epsilon_1, \ldots, \epsilon_N)}$ of the map $\Phi: \cP_N\cup\polyg_N\rightarrow \cT_{0,N}$ is a local immersion to $\cT^{(\epsilon_1, \ldots, \epsilon_N)}_{0,N}$ by Lemma \ref{lem:local immersion_metrics}.
 Note that the spaces $\cP^{(\epsilon_1, \ldots, \epsilon_N)}$ and $\cT^{(\epsilon_1, \ldots, \epsilon_N)}_{0,N}$ are smooth manifolds of the same dimension $2N-6+\epsilon_1+\ldots+ \epsilon_N$ for all $N\geq 3$ (see subsection \ref{subsec:metric} and Proposition \ref{prop:dimension of bP}),
so the restriction to $\cP^{(\epsilon_1, \ldots, \epsilon_N)}$ of $\Phi$  is a local homeomorphism to $\cT^{(\epsilon_1, \ldots, \epsilon_N)}_{0,N}$ for each $(\epsilon_1, \ldots, \epsilon_N)\in\{0,1\}^N$. Combined with the fact that $\cP_N\cup\polyg_N$ (resp. $\cT_{0,N}$) is the disjoint union of $\cP^{(\epsilon_1, \ldots, \epsilon_N)}$ (resp. $\cT^{(\epsilon_1, \ldots, \epsilon_N)}_{0,N}$) over all $(\epsilon_1, \ldots, \epsilon_N)\in\{0,1\}^N$ and the gluing structure (see subsection \ref{subsec:metric}) of the subspaces $\cP^{(\epsilon_1, \ldots, \epsilon_N)}$ (resp. $\cT^{(\epsilon_1, \ldots, \epsilon_N)}_{0,N}$ ) in $\cP_N\cup\polyg_N$ (resp. $\cT_{0,N}$), $\Phi$ is a local homeomorphism.

Combined with the properness of $\Phi$ (see Lemma \ref{lem:proper_metrics}) and the fact that $\cT_{0,N}$ is connected, $\Phi$ is an $m$-sheeted covering for some positive integer $m$. Using the known result (see \cite[Theorem 1.5]{DMS}) that the map $\Phi':\cP'_N\cup\polyg'_N\rightarrow\cT'_{0,N}$, which is exactly the restriction of $\Phi$ to $\cP'_N\cup\polyg'_N$, is a homeomorphism and the fact that the complete metric induced near a strictly hyperideal vertex on the boundary of a hyperideal polyhedron has infinite area, we can show that each metric $h\in\cT'_{0,N}$ has a unique preimage in $\cP_N\cup\polyg_N$, which implies that $m=1$. This concludes that $\Phi$ is a homeomorphism.

\bibliographystyle{amsalpha}
\bibliography{/home/jean-marc/Dropbox/papiers/outils/biblio}%,sample1}
\end{document}